
\documentclass[12pt, leqno]{amsart}

\setlength{\textwidth}{16.6cm} \setlength{\textheight}{20.6cm}
\setlength{\oddsidemargin}{0.0cm}
\setlength{\evensidemargin}{0.0cm}

\usepackage{graphicx}
\usepackage{amssymb,amsmath,amsthm,amscd}
\usepackage{mathrsfs}
\usepackage{lscape}
\usepackage{enumerate}
\usepackage[usenames,dvipsnames]{color}
\usepackage[colorlinks=true, pdfstartview=FitV,
 linkcolor=blue,citecolor=blue,urlcolor=blue]{hyperref}
\usepackage{tikz}

\usepackage[all]{xy}

\allowdisplaybreaks[3]

\numberwithin{equation}{section}

\newenvironment{rouge}{\relax\color{red}}{\relax}

\newcommand{\ber}{\begin{rouge}}
\newcommand{\er}{\end{rouge}}

\theoremstyle{plain}
\newtheorem{lemma}{Lemma}[section]
\newtheorem{proposition}[lemma]{Proposition}
\newtheorem{theorem}[lemma]{Theorem}

\theoremstyle{definition}
\newtheorem{remark}[lemma]{Remark}
\newtheorem{example}[lemma]{Example}
\newtheorem{algorithm}[lemma]{Algorithm}

\newtheorem{definition}[lemma]{Definition}
\newtheorem{corollary}[lemma]{Corollary}

\newtheorem{convention}[lemma]{Convention}

\newtheorem{observation}[lemma]{Observation}

\newcommand{\soc}{{\operatorname{soc}}}

\newcommand{\C}{\mathbb{C}}
\newcommand{\Z}{\mathbb{Z}}
\newcommand{\N}{\mathsf{N}}

\newcommand{\cmA}{\mathsf{A}}
\newcommand{\seteq}{\mathbin{:=}}

\newcommand{\al}{\alpha}

\newcommand{\bi}{\mathbf{i}}

\newcommand{\PP}{ \textbf{\textit{P}}}
\newcommand{\QQ}{ \textbf{\textit{Q}}}
\newcommand{\RR}{ \textbf{\textit{R}}}
\newcommand{\Qd}{{Q^{\diamondsuit}}}
\newcommand{\PPi}{\PP_{\lf \Qd \rf}}

\newcommand{\ko}{ \textbf{k}}

\newcommand{\ii}{ \textbf{\textit{i}}}
\newcommand{\jj}{ \textbf{\textit{j}}}

\newcommand{\ut}{ \textbf{\textit{t}}}

\newcommand{\be}{\beta}
\newcommand{\ga}{\gamma}

\newcommand{\shp}{\hspace{-0.4ex}+\hspace{-0.4ex}}

\newcommand{\PR}{\Phi^+}
\newcommand{\tw}{{\widetilde{w}}}

\newcommand{\redex}{{\widetilde{w}}}
\newcommand{\redez}{{\widetilde{w}_0}}
\newcommand{\um}{\underline{m}}

\newcommand{\us}{\underline{s}}

\newcommand{\up}{\underline{p}}

\newcommand{\tb}{\mathtt{b}}
\newcommand{\rl}{\mathsf{Q}}
\newcommand{\wt}{{\rm wt}}
\newcommand{\rev}{{\rm rev}}

\newcommand{\gdist}{{\rm gdist}}

\newcommand{\rds}{{\rm rds}}

\newcommand{\oUp}{\overline{\Upsilon}}
\newcommand{\uUp}{\underline{\Upsilon}}
\newcommand{\lf}{[\hspace{-0.3ex}[}
\newcommand{\rf}{]\hspace{-0.3ex}]}
\newcommand{\htau}{\ut\hspace{-1.5ex}-}

\newcommand{\wUp}{ \widehat{\Upsilon}}
\newcommand{\wpl}{ \widehat{+}}
\newcommand{\wmi}{ \widehat{-}}
\newcommand{\wpm}{ \widehat{\pm}}

\newcommand*\ov[1]{\overline{#1}}

\newcommand{\rectangle} {\xy (3,-3)*{}="T"; (5,-25)*{}="B";(15,-15)*{}="C";
"T"; "C" **\dir{-}; "C"; "B" **\dir{-}; "T"; "T"+(-5,-5) **\dir{-};
"T"+(-5,-5); "T"+(-10,-10) **\dir{-}; "B"; "B"+(-5,5) **\dir{-};
"B"+(-5,5); "B"+(-12,12) **\dir{-}; "C"*{\bullet}; "B"*{\bullet};
"T"*{\bullet}; "B"+(-12,12)*{\bullet}; "B"+(-17,12)*{\scriptstyle
\be}; "B"+(0,-3)*{\scriptstyle \ga}; "C"+(5,0)*{\scriptstyle
\al}; "T"+(0,3)*{\scriptstyle \eta};
\endxy}

\newlength{\mylength}
\setlength{\mylength}{\textwidth}
\addtolength{\mylength}{-20ex}

\title[Twisted Coxeter elements and folded AR-quivers: I]{Twisted Coxeter elements and folded AR-quivers via Dynkin diagram automorphisms: I}

\author[S.-j. Oh, U. Suh]{Se-jin Oh$^\dagger$,  Uhi Rinn  Suh$^\ddagger$}

\address{Department of Mathematics Ewha Womans University Seoul 120-750, Korea}
\email{sejin092@gmail.com}
\address{Department of Mathematical Sciences, KAIST, 291 Daehak-ro Yuseonggu
Daejeon, 305-701 South Korea}
\email{uhrisu@gmail.com}

\thanks{$^\dagger$ This work was supported by NRF Grant \# 2016R1C1B2013135.}
\thanks{$^\ddagger$ This work was supported by NRF Grant \# 2016R1C1B1010721.}

\keywords{longest element, $r$-cluster point, twisted Coxeter elements,
twisted AR-quivers, twisted additive property, folded AR-quivers, folded distance polynomials, denominator formulas}
\subjclass[2010]{81R50, 05E10, 16T30, 17B37}

\begin{document}

\begin{abstract}
We introduce and study the twisted adapted $r$-cluster point and its
combinatorial Auslander-Reiten quivers, called twisted AR-quivers and folded AR-quivers, of type $A_{2n+1}$ which are closely related to twisted Coxeter elements and
associated with the non-trivial Dynkin diagram automorphism. As applications of the study,
we prove that any folded AR-quiver of type $A_{2n+1}$ and hence the twisted adapted $r$-cluster point encode crucial information on the representation theory of quantum affine algebra $U_q'(B^{(1)}_{n+1})$ such as
Dorey's rule and denominator formulas.
\end{abstract}

\maketitle

\section*{Introduction}

Let $Q$ be a Dynkin quiver of finite type $ADE$.
By the Gabriel theorem \cite{Gab80}, it is well-known that the information on the representation theory for the path algebra $\C Q$ is encoded in
its Auslander-Reiten quiver $\Gamma_Q$. More precisely, each vertex in $\Gamma_Q$ corresponds to an
indecomposable module over $\C Q$ and each arrow in $\Gamma_Q$ corresponds to an irreducible morphism. Interestingly,
{\rm (i)}  the set of all vertices in $\Gamma_Q$ can be identified with the set $\PR$ of positive roots  of finite type $ADE$ via
dimension vectors, {\rm (ii)} the paths in $\Gamma_Q$ represent the convex partial order $\prec_Q$ on $\PR$, {\rm (iii)} $\Gamma_Q$
has a canonical coordinate system.
It is also well-known that $\Gamma_Q$ satisfies {\it additive property} and $\Gamma_Q$ can be constructed by using
its unique {\it Coxeter element} $\phi_Q$.

On the other hand, for each commutation class $[\redez]$ of reduced expressions of the longest element $w_0$ for a Weyl group $W$ of finite type,
there exists a convex partial order $\prec_{[\redez]}$ on $\PR$. In particular, {\rm (i)} $\prec_Q$ corresponds to the commutation class $[Q]$ of $w_0$ which is
{\it adapted} to the Dynkin quiver $Q$ of finite type $ADE$ and {\rm (ii)} all commutations classes $[Q]$ can be grouped into one {\it $r$-cluster point} $\lf Q\rf$
via reflection equivalence relations arising from reflection functor. In \cite{OS15}, the authors introduced the combinatorial AR-quiver $\Upsilon_{[\redez]}$
for every class $[\redez]$ of $w_0$ for any finite type and its combinatorial properties as a realization of $\prec_{[\redez]}$. The construction
of $\Upsilon_{[\redez]}$ was given in an inductive way and $\Upsilon_{[\redez]}$ does not have a coordinate system in general.

\medskip

In the representation theory of quantum affine algebras, Coxeter elements and their AR-quivers appear in interesting ways. More precisely,
(1) Chari-Pressley \cite{CP96} determined the conditions for (see \cite[\S 1.3]{AK} for definition of $V(\varpi_i)_x$)
$$ {\rm Hom}(V(\varpi_i)_x\otimes V(\varpi_j)_y, V(\varpi_k)_z) \ne 0, $$
referred as Dorey's rule untwisted quantum affine algebras of type $A^{(1)}_n$, $D^{(1)}_n$, $B^{(1)}_n$ and $C^{(1)}_n$. For type $A^{(1)}_n$ and $D^{(1)}_n$, they used Coxeter elements and, for type $B^{(1)}_n$ and $C^{(1)}_n$, they used twisted Coxeter elements. (The Dorey's rules for $A^{(2)}_n$ and $D^{(2)}_n$ were studied in \cite{KKKO14D,Oh14}).
(2) With newly introduced notions in \cite{Oh15E}, the first named author
 proved that we can {\it read} the denominator formulas $d_{k,l}(z)$ for $U_q'(A^{(t)}_{n})$ and $U_q'(D^{(t)}_{n})$ $(t=1,2)$ from {\it any} $\Gamma_Q$ of finite type $AD_n$,
which {\it control} their representation theory (see \cite[Theorem 2.2.1]{KKK13A}).
(3) In \cite{Oh14A,Oh14D}, he also proved that Dorey's rule for $U_q'(A^{(t)}_{n})$ and $U_q'(D^{(t)}_{n})$ $(t=1,2)$ can be interpreted as the {\it coordinates}
 of $(\al,\be,\ga)$ in some $\Gamma_Q$ where $(\al,\be)$ is a pair for $\ga=\al+\be \in \PR$. Furthermore, every pair $(\al,\be)$ of $\ga$ is $[Q]$-minimal
 when $Q$ is of type $A$.

Now, we have interesting relationships can be described the following conceptual diagram $(t=1,2)$:
\begin{align} \label{eq: concept}
\scalebox{0.84}{\raisebox{2.8em}{\xymatrix@C=6ex@R=2ex{ && U_q'(AD^{(t)}_n) \ar@{<->}[ddr]\ar@{<->}[ddl]  \\ \\
\{ [\redez] \} & \{ [Q] \in \lf Q \rf \}  \ar@{_{(}->}[l] \ar@{<->}[rr] &&  \{ \Gamma_Q \ | \ [Q] \in \lf Q \rf \} \ar@{<->}[r] & \{ \phi_Q \} }}}
\end{align}

The goal of this paper is to make analogue of the diagram \eqref{eq: concept} by using twisted Coxeter elements of finite type $A_{2n+1}$,
as Chari-Pressley used them to investigate Dorey's rule for $U_q'(B_{n+1}^{(1)})$. Here,
the twisted Coxeter elements can be understood as a generalization of Coxeter elements via the non-trivial Dynkin diagram automorphism $\vee$.
Note that the orbit of the automorphism yields the Dynkin diagram of type $B_{n+1}$
(see Definition \ref{def: twisted Coxeter}):
$$\xymatrix@R=0.5ex@C=4ex{ *{\circ}<3pt>  \ar@{<->}@/^1pc/[rrrrr] \ar@{-}[r]_<{1 \ \ }  &*{\circ}<3pt> \ar@{<->}@/^/[rrr]
\ar@{-}[r]_<{2 \ \ }  &   {}
\ar@{.}[r] & *{\circ}<3pt> \ar@{-}[r]_>{\,\,\,\ 2n} &*{\circ}<3pt>\ar@{-}[r]_>{\,\,\,\, 2n+1} &*{\circ}<3pt> }
\overset{\vee}{\longrightarrow}
\xymatrix@R=0.5ex@C=4ex{ *{\circ}<3pt>  \ar@{-}[r]_<{1 \ \ }  &*{\circ}<3pt>
\ar@{-}[r]_<{2 \ \ }  &   {}
\ar@{.}[r] & *{\circ}<3pt> \ar@{-}[r]_>{\,\,\,\ n} &*{\circ}<3pt>\ar@{=>}[r]_>{\,\,\,\, n+1} &*{\circ}<3pt> }
$$

The various approaches to study on non-simply laced type via Dynkin diagram folding were well-developed in many contexts (for examples, see \cite{DR1,DR2,GLS,Lus93}). In this paper, we do {\it not} study
on the finite type $B_{n+1}$ whose Dynkin diagram is an orbit of the Dynkin diagram $A_{2n+1}$ via the automorphism. We {\it do} study certain family of reduced expressions of the longest element of Weyl group of the same type $A_{2n+1}$, which is characterizable and related to the representation theory of {\it quantum affine algebra of type $B^{(1)}_{n+1}$}, by {\it folding its combinatorial AR-quivers via the automorphism $\vee$}.

\medskip

We first construct and characterize the cluster point $\lf \Qd \rf$ of $w_0$ {\it arising from any} twisted Coxeter elements $\phi_{[\ii_0]}\vee$
of finite type $A_{2n+1}$ (Definition \ref{def: vee-foldable}). The cluster point $\lf \Qd \rf$ can be distinguished from $\lf Q \rf$ via their {\it $\vee$-Coxeter compositions} (Definition \ref{Def:Coxeter_composition}):
$$\mathsf{C}^{\vee}_{\lf \Qd \rf}= ( \underbrace{2n+1, \ldots ,2n+1}_{ n+1\text{-times}} ) \text{ and }
\mathsf{C}^{\vee}_{\lf Q \rf}= (\underbrace{2n+2, \ldots, 2n+2}_{ n\text{-times}}, n+1)$$
(see Example \ref{ex: Coxeter composition} also). The cluster point $\lf \Qd \rf$ called the {\it twisted adapted cluster point} and its commutation classes are said to
be {\it twisted adapted} (Definition \ref{def: twisted cluster}). Surprisingly, the point $\lf \Qd \rf$ of type $A_{2n+1}$ is closely related to the point $\lf Q \rf$ of type $A_{2n}$ and
each commutation class $[\ii_0]$ in $\lf \Qd \rf$ can be obtained from some class $[Q]$ in $\lf Q \rf$ by performing a special {\it surgery}
(Theorem \ref{thm: surgery}). Thus, for $[\ii_0] \in \lf \Qd \rf$, $\Upsilon_{[\ii_0]}$ has a {\it natural coordinate system} and its construction is {\it not} inductive anymore.
The surgery can be described as the way of obtaining $\Upsilon_{[Q^<]}$ and $\Upsilon_{[Q^>]}$ from $\Gamma_Q$ by {\it inserting} new vertices in each direction:
\begin{align*}
\Upsilon_{[Q^<]} = \raisebox{2.8em}{\scalebox{0.5}{\xymatrix@C=1ex@R=1ex{
1&&&&&& \bullet \ar@{->}[ddrr] &&&& \bullet  \\  & \\
2&&&&\bullet\ar@{->}[uurr]\ar@{->}[dr] && && \bullet \ar@{->}[uurr]\ar@{->}[dr]\\
3&\bigstar\ar@{->}[dr]&& \bigstar\ar@{->}[ur] && \bigstar\ar@{->}[dr] && \bigstar\ar@{->}[ur] && \bigstar\ar@{->}[dr]  \\
4&& \bullet\ar@{->}[ur]\ar@{->}[ddrr] &&&& \bullet\ar@{->}[ur]\ar@{->}[ddrr] &&&& \bullet\ar@{->}[ddrr]\\ & \\
5&&&& \bullet \ar@{->}[uurr] &&&& \bullet \ar@{->}[uurr] &&&& \bullet \\
&\frac{1}{2}& 1 &\frac{3}{2}& 2 &\frac{5}{2}& 3 &\frac{7}{2}& 4 &\frac{9}{2}& 5 &\frac{11}{2}& 6}}}
\ \hspace{-5ex} \gets
\Gamma_Q = \raisebox{2.8em}{\scalebox{0.5}{\xymatrix@C=1ex@R=1ex{
1&&&&& \bullet \ar@{->}[ddrr] &&&& \bullet\\  & \\
2&&&\bullet\ar@{->}[uurr]\ar@{->}[ddrr] && && \bullet \ar@{->}[uurr]\ar@{->}[ddrr]\\ & \\
3& \bullet\ar@{->}[uurr]\ar@{->}[ddrr] &&&& \bullet\ar@{->}[uurr]\ar@{->}[ddrr] &&&& \bullet\ar@{->}[ddrr]\\ & \\
4&&& \bullet \ar@{->}[uurr] &&&& \bullet \ar@{->}[uurr] &&&& \bullet \\
& 1 && 2 && 3 && 4 && 5 && 6 }}}
\hspace{-3ex} \to  \
\Upsilon_{[Q^>]} = \raisebox{2.8em}{\scalebox{0.5}{\xymatrix@C=1ex@R=1ex{
1&&&&&& \bullet \ar@{->}[ddrr] &&&& \bullet  \\  & \\
2&&&&\bullet\ar@{->}[uurr]\ar@{->}[dr] && && \bullet \ar@{->}[uurr]\ar@{->}[dr]\\
3&&& \bigstar\ar@{->}[ur] && \bigstar\ar@{->}[dr] && \bigstar\ar@{->}[ur] && \bigstar\ar@{->}[dr] && \bigstar \\
4&& \bullet\ar@{->}[ur]\ar@{->}[ddrr] &&&& \bullet\ar@{->}[ur]\ar@{->}[ddrr] &&&& \bullet\ar@{->}[ddrr] \ar@{->}[ur]\\ & \\
5&&&& \bullet \ar@{->}[uurr] &&&& \bullet \ar@{->}[uurr] &&&& \bullet \\
&& 1 &\frac{3}{2}& 2 &\frac{5}{2}& 3 &\frac{7}{2}& 4 &\frac{9}{2}& 5 &\frac{11}{2}& 6}}}
\end{align*}
Moreover, we can
conclude that the number of distinct classes in $\lf \Qd \rf$ as $2^{2n}$ via the two to one and onto map $ \PP : \lf \Qd \rf \to \lf Q \rf$ (Theorem \ref{thm: 22n}).

Secondly, we study the combinatorial properties of $\Upsilon_{[\ii_0]}$ ($ [\ii_0] \in \lf \Qd \rf$), called a {\it twisted AR-quiver},
by using $\Gamma_Q$ for $\PP([\ii_0])=[Q]$ and applying notions in \cite{Oh15E}. More precisely, we suggest the ways how to label the vertices of
$\Upsilon_{[\ii_0]}$ in an efficient way (Theorem \ref{them: comp for length k ge 0})
and how to assign a {\it coordinate} to each vertex, by using the combinatorial properties of $\Gamma_Q$ and the surgery (Algorithm \ref{Alg: surgery}).
As applications, we prove the {\it twisted additive property} for $\Upsilon_{[\ii_0]}$ (Theorem \ref{thm:twisted additive} and Proposition \ref{Prop:twisted_adapted_2}),
observe the interesting properties of $\PR$ with respect to $\prec_{[\ii_0]}$ (Section \ref{subsec: dis radi}) and
characterize each {\it $[\ii_0]$-minimal pair} $(\al,\be)$ of $\ga$ in terms of coordinates in $\Upsilon_{[\ii_0]}$ (Section \ref{subsec: minimal twisted}).

In the last part, we introduce new quiver
$\widehat{\Upsilon}_{[\ii_0]}$, called a {\it folded AR-quiver} by
folding $\Upsilon_{[\ii_0]}$ via the automorphism $\vee$:
$$
\Upsilon_{[Q^>]} = \raisebox{2.8em}{\scalebox{0.5}{\xymatrix@C=1ex@R=1ex{
1&&&&&& \bullet \ar@{->}[ddrr] &&&& \bullet && && \\  & \\
2&&&&\bullet\ar@{->}[uurr]\ar@{->}[dr] && && \bullet \ar@{->}[uurr]\ar@{->}[dr] && &&\\
3&&& \bigstar\ar@{->}[ur] && \bigstar\ar@{->}[dr] && \bigstar\ar@{->}[ur] && \bigstar\ar@{->}[dr] && \bigstar \\
4 \ar@/^/@{<.>}[uu] &&  \bullet\ar@{->}[ur]\ar@{->}[ddrr] \ar@/^/@{.>}[uu] &&&& \bullet\ar@{->}[ur]\ar@{->}[ddrr] \ar@/^/@{<.>}[uu] &&&& \bullet\ar@{->}[ddrr] \ar@/^/@{<.>}[uu] \ar@{->}[ur]\\ & \\
5 \ar@/^1pc/@{<.>}[uuuuuu]  &&&& \bullet\ar@/^1pc/@{<.>}[uuuuuu] \ar@{->}[uurr] &&&& \bullet \ar@/^1pc/@{<.>}[uuuuuu]  \ar@{->}[uurr] &&&& \bullet \ar@/^1pc/@{<.>}[uuuuuu]  \\
&& 1 &\frac{3}{2}& 2 &\frac{5}{2}& 3 &\frac{7}{2}& 4 &\frac{9}{2}& 5 &\frac{11}{2}& 6}}}
\to
\widehat{\Upsilon}_{[Q^>]} = \raisebox{1.5em}{\scalebox{0.5}{\xymatrix@C=1ex@R=1ex{
\bar{1}&&&&\bullet\ar@{->}[ddrr] && \bullet \ar@{->}[ddrr] &&\bullet\ar@{->}[ddrr]&& \bullet && \bullet  \\  & \\
\bar{2}&&\bullet\ar@{->}[dr]\ar@{->}[uurr]&&\bullet\ar@{->}[uurr]\ar@{->}[dr] &&\bullet\ar@{->}[dr]\ar@{->}[uurr]&& \bullet \ar@{->}[uurr]\ar@{->}[dr] && \bullet \ar@{->}[dr]\ar@{->}[uurr]\\\
\bar{3}&&& \bigstar\ar@{->}[ur] && \bigstar\ar@{->}[ur] && \bigstar\ar@{->}[ur] && \bigstar\ar@{->}[ur] && \bigstar \\
&& 2 & 3 & 4 & 5 & 6 & 7 & 8 &9 & 10 & 11& 12}}}
$$
Then we
have a natural {\it folded} coordinate system for
$\widehat{\Upsilon}_{[\ii_0]}$. Surprisingly, any
$\widehat{\Upsilon}_{[\ii_0]}$ of $A_{2n+1}$ encodes the information
on the representation theory for $U_q'(B^{(1)}_{n+1})$ in the
following sense: {\rm (i)} By defining {\it folded distance
polynomial} $\widehat{D}_{k,l}^{[\ii_0]}(z)$ for $1\le k,l \le n$ on
$\widehat{\Upsilon}_{[\ii_0]}$, we can prove that
$\widehat{D}_{k,l}^{[\ii_0]}(z)=\widehat{D}_{k,l}^{[\ii'_0]}(z)$ for
any $[\ii_0],[\ii'_0]\in \lf\Qd\rf$ and the denominator formulas
$d^{B^{(1)}_{n+1}}_{k,l}(z)$ for $U_q'(B^{(1)}_{n+1})$ coincide with
$\widehat{D}_{k,l}^{[\ii_0]}(z)\times (z-q^{2n+1})^{\delta_{kl}}$
(Theorem \ref{thm: dist denom}) by using the results in Section \ref{subsec: dis radi}. {\rm (ii)} The Dorey's rule for
$U_q'(B^{(1)}_{n+1})$ can be interpreted as the {\it coordinates} of
$(\al,\be,\ga)$ in some $\widehat{\Upsilon}_{[\ii_0]}$ where
$(\al,\be)$ is a {\it $[\ii_0]$-minimal pair} for $\ga=\al+\be \in \PR$ of
type $A_{2n+1}$ (Theorem \ref{thm: Dorey}). Thus we can complete the twisted analogue of \eqref{eq:
concept} related to the representation theory for
$U_q'(B^{(1)}_{n+1})$:
\begin{align} \label{eq: concept2}
\scalebox{0.76}{\raisebox{2.8em}{\xymatrix@C=6ex@R=2ex{ && U_q'(B^{(1)}_{n+1}) \ar@{<->}[ddr]\ar@{<->}[ddl]  \\ \\
\{ [\redez] \} & \{ [\ii_0] \in \lf \Qd \rf \}  \ar@{_{(}->}[l] \ar@{<->}[rr] &&
\{ \widehat{\Upsilon}_{[\ii_0]}\ | \ [\ii_0] \in \lf \Qd \rf \}  & \ar@{->}[l] \{ \phi_{[\ii_0]}\vee \} }}}
\end{align}
where $\lf \Qd \rf$ and $\widehat{\Upsilon}_{[\ii_0]}$ are of type $A_{2n+1}$. Recently, in \cite{KKK13A,KKK13b,KKKO14D}, the denominator formulas $d^{A^{(t)}_{2n+1}}_{k,l}(z)$, the Dorey rule for $U_q'(A^{(t)}_{2n+1})$
and AR-quiver $\Gamma_Q$ of type $A_{2n+1}$ plays an important role for constructing the exact functor between the category $\mathcal{C}^{(t)}_Q$ of modules over $U_q'(A_{2n+1}^{(t)})$ $(t=1,2)$ (see \cite[\S 5.11]{HL11} and \cite[Definition 2.4]{KKKO14D}) and the category $\mathrm{Rep}(\mathsf{R})$ of finite dimensional modules over KLR-algebra $\mathsf{R}$ (\cite{KL09,KL11,R08}) of finite type $A_{2n+1}$.
With the results in this paper, we can define the category $\mathscr{C}_{\Qd}$ of modules over $U_q'(B_{n+1}^{(1)})$ (Definition \ref{def: [ii0] module category}).
Note that $\mathcal{C}^{(t)}_Q$ and  $\mathscr{C}_{\Qd}$ share similar property (see \cite{HL11}, \cite[Theorem 5.2]{Oh14A} and Corollary \ref{Cor: stable}):
\begin{eqnarray*} &&
\parbox{84ex}{
They are the smallest subcategories containing  $\{ V(\al_i) \ | \ i \in \Pi_{A_{2n+1}}  \}$ stable under taking subquotient, tensor product and extension.}
\end{eqnarray*}

In the forthcoming work \cite{OS16D}, we will study the analogue of \eqref{eq: concept} associated with the non-trivial Dynkin diagram automorphism of
type $D_{n+1}$. Also, the results in this paper will be crucial ingredients for observing the mysterious categorical relation
between quantum affine algebras $U_q'(A^{(t)}_{2n+1})$ $(t=1,2)$ and
$U_q'(B^{(1)}_{n+1})$ where generalized Cartan matrices of $U_q'(A^{(2)}_{2n+1})$ and $U_q'(B^{(1)}_{n+1})$ are transpose to each other \cite{KO16}.
Such observation was initiated in \cite{FH11A}.

\section{Foldable $r$-cluster point and its reduced expressions}

Let us consider the Dynkin diagram $\Delta_n$ of finite type
ADE$_n$, labeled by the index set $I_n$ of rank $n$, and their
$s$-order automorphisms $^{\vee(n,s)}$. Let $W_n$ be a Weyl group,
generated by the set of simple reflections $\{ s_i \ | \ i \in I_n
\}$ associated to $\Delta_n$ and ${}_{n}w_0$ be the longest element
of $W_n$. We usually drop $n$ if there is no danger of confusion.
Now we recall the non-trivial Dynkin diagram automorphisms $\vee$ whose orbits yield other Dynkin diagrams:
\begin{align}
B_{n+1}   &\longleftrightarrow
\big( A_{2n+1}: \xymatrix@R=0.5ex@C=4ex{ *{\circ}<3pt> \ar@{-}[r]_<{1 \ \ }  &*{\circ}<3pt>
\ar@{-}[r]_<{2 \ \ }  &   {}
\ar@{.}[r] & *{\circ}<3pt> \ar@{-}[r]_>{\,\,\,\ 2n} &*{\circ}<3pt>\ar@{-}[r]_>{\,\,\,\, 2n+1} &*{\circ}<3pt> }, \ i^{\vee(2n+1,2)} = 2n+2-i \big) \label{eq: B_n} \\
C_n     &\longleftrightarrow \left( D_{n+1}:
\raisebox{1em}{\xymatrix@R=0.5ex@C=4ex{
& & &  *{\circ}<3pt>\ar@{-}[dl]^<{ \  n} \\
*{\circ}<3pt> \ar@{-}[r]_<{1 \ \ }  &*{\circ}<3pt>
\ar@{.}[r]_<{2 \ \ } & *{\circ}<3pt> \ar@{.}[l]^<{ \ \ n-1}  \\
& & &   *{\circ}<3pt>\ar@{-}[ul]^<{\quad \ \  n+1} \\
}}, \ i^{\vee(n+1,2)} = \begin{cases} i & \text{ if } i \le n-1, \\ n+1 & \text{ if } i = n, \\ n & \text{ if } i = n+1. \end{cases} \right) \label{eq: C_n} \\
F_4   &\longleftrightarrow \left( E_{6}:
\raisebox{2em}{\xymatrix@R=3ex@C=4ex{
& & *{\circ}<3pt>\ar@{-}[d]_<{\quad \ \  6} \\
*{\circ}<3pt> \ar@{-}[r]_<{1 \ \ }  &
*{\circ}<3pt> \ar@{-}[r]_<{2 \ \ }  &
*{\circ}<3pt> \ar@{-}[r]_<{3 \ \ }  &
*{\circ}<3pt> \ar@{-}[r]_<{4 \ \ }  &
*{\circ}<3pt> \ar@{-}[l]^<{ \ \ 5 } }}, \begin{cases} 1^{\vee(6,2)}=5, \ 5^{\vee(6,2)}=1 \\ 2^{\vee(6,2)}=4, \ 4^{\vee(6,2)}=2, \\ 3^{\vee(6,2)}=3, \ 6^{\vee(6,2)}=6 \end{cases}  \right) \label{eq: F_4} \\
G_2   &\longleftrightarrow \left( D_{4}:\raisebox{1em}{
\xymatrix@R=0.5ex@C=4ex{
& &   *{\circ}<3pt>\ar@{-}[dl]^<{ \ 2} \\
*{\circ}<3pt> \ar@{-}[r]_<{1 \ \ }  &*{\circ}<3pt>
\ar@{-}[l]^<{4 \ \ }   \\
& &    *{\circ}<3pt>\ar@{-}[ul]^<{\quad \ \  3} \\
}}, \ \begin{cases} 1^{\vee(4,3)}=2, \ 2^{\vee(4,3)}=3, \
3^{\vee(4,3)}=1, \\ 4^{\vee(4,3)}=4. \end{cases} \right) \label{eq:
G_2}
\end{align}
Note that $^{\vee(n,s)}$ is not necessarily same as $^{*(n)}$ of $I$ induced by the longest element $w_0$. For example, if $n$ is even then $^{*(n)}$ for type $D_n$   is the identity so that it is not the same as $^{\vee(n,s)}$ in (\ref{eq: C_n}).  Also,  \eqref{eq: G_2} is not an involution. Usually we
abbreviate $^{(n,s)}$ to $^{(n)}$ or skip if there is no danger of
confusion.

Recall the Weyl group $W$ is a group generated by $\{ s_i  \ | \ i \in I\}$ subject to the following relations:
$$(1) \ \  s_i^2=1 \quad \quad (2) \ \  (s_is_j)^{d_{ij}+2}=1,$$
where $d_{ij}=d_{ji} \ (i,j \in I)$ denotes the number of arrows between the vertices $i$ and $j$ in $\Delta$.
The relations (2) are so-called {\it braid relations}. When $d_{ij}=d_{ji}=0$, they are simply named {\it commutation relations}.

Let $\langle I \rangle$ be the free monoid generated by $I$ and $\langle I \rangle_{{\mathrm r}}$ be the set of reduced words of $\langle I \rangle$
in the sense of Weyl group representation. We usually denote by $\bi$ for words and by $\ii$ for reduced words.

\begin{definition}
For a word $\bi$ and $J \subset I$, we define a subword $\bi_{|J}$ of $\bi=i_1\cdots i_l$ as follows:
$$  \bi_{|J} \seteq i_{t_1} \cdots i_{t_s} \text{ such that } \left\{
\begin{array}{l}
i_{t_x} \in J \text{ and } t_x < t_y \text{ for all } 1 \le x<y < s,\\
\text{if } t\not \in \{t_1, t_2, \cdots, t_s\} \text{ then } i_t\not \in J.
\end{array}\right.$$
\end{definition}

\begin{definition} \hfill
\begin{enumerate}
\item We say that two reduced words $\ii$ and $\jj$ representing $w \in W$ are {\it commutation equivalent}, denoted by $\ii
\sim \jj \in [\ii]$, if $\ii$ is obtained from $\jj$ by applying only commutation relations.
\item We say that an element $w \in W$ is {\it fully commutative} if the number of commutation classes for $w$ is the same as $1$. Thus
 we denote its unique commutation class by $[w]$.
\end{enumerate}
\end{definition}

\begin{definition} Fix a Dynkin diagram $\Delta$ of finite type.
For a commutation class $[\ii_0]$ representing $w_0$, we say that an index $i$ is a {\it sink $($resp. {\it source}$)$ of
$[\ii_0]$} if there is a reduced word $\jj_0 \in [\ii_0]$ of $w_0$ starting with $i$  (resp. ending with $i$).
\end{definition}

The following proposition may be known for experts on the research area but it is hard to find the same statement. In
\cite{OS15}, the authors proved the following proposition by using positive root systems and {\it convex total orders} induced from
reduced words for $w_0$.

\medskip

Recall the involution $^*$ on $I$ induced by $w_0$ (\cite{Bour}).

\begin{proposition} 
For the reduced word $\ii_0=i_1 i_2 \cdots i_{\N-1} i_\N$ of $w_0$, the word $\ii'_0= i_{\N}^* i_1 i_2 \cdots i_{\N-1}$
is again a reduced word of $w_0$ such that $[\ii'_0] \ne [\ii_0]$.
Similarly, $\ii''_0= i_2 \cdots i_{\N-1} i_\N i^*_1$ is again a
reduced word of $w_0$ such that $[\ii''_0] \ne [\ii_0]$.
\end{proposition}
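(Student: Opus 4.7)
The plan is to prove the two assertions sequentially: first that $\ii'_0$ (and $\ii''_0$) is a reduced word for $w_0$, then that its commutation class differs from $[\ii_0]$.

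For reducedness, I would use the standard identity $w_0 s_i = s_{i^*} w_0$ for $i\in I$, which is equivalent to $w_0\alpha_i = -\alpha_{i^*}$. Combined with $s_{i_1}\cdots s_{i_{\N-1}} = w_0 s_{i_\N}$ (read off from $\ii_0$), a direct calculation gives
\[
s_{i_\N^*}\cdot s_{i_1}\cdots s_{i_{\N-1}} \;=\; s_{i_\N^*}\, w_0\, s_{i_\N} \;=\; w_0\, s_{i_\N}^2 \;=\; w_0,
\]
so $\ii'_0$ is a word of length $\N = \ell(w_0)$ representing $w_0$ and is thus automatically reduced. The symmetric computation $s_{i_2}\cdots s_{i_\N}\cdot s_{i_1^*} = s_{i_1} w_0 s_{i_1^*} = s_{i_1}^2 w_0 = w_0$ handles $\ii''_0$.

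For the inequivalence $[\ii'_0]\ne[\ii_0]$, I plan to use the convex partial order $\prec_{[\jj]}$ on $\PR$ induced by a commutation class, viewed as the intersection of the total orders $\beta_k := s_{j_1}\cdots s_{j_{k-1}}\alpha_{j_k}$ arising from representatives $j_1\cdots j_\N$. Two facts I would invoke: first, for any reduced word of $w_0$ the first root is $\alpha_{j_1}$ and the last is $\alpha_{j_\N^*}$ (the latter via $w_0\alpha_{j_\N} = -\alpha_{j_\N^*}$); second, a positive root is maximal (resp.\ minimal) in $\prec_{[\jj]}$ iff it is realized as the last (resp.\ first) entry of some representative total order. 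Reading these off for $\ii_0$ and $\ii'_0$: the root $\alpha_{i_\N^*}$ is maximal in $\prec_{[\ii_0]}$ (witnessed by $\ii_0$) and minimal in $\prec_{[\ii'_0]}$ (witnessed by $\ii'_0$). If the two classes coincided, $\alpha_{i_\N^*}$ would be both maximal and minimal in the common partial order, and so incomparable with every other positive root.

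The crux is then an orthogonality lemma: if $\al,\ga\in\PR$ are incomparable in $\prec_{[\jj]}$, then $\langle\al,\ga^\vee\rangle=0$. The idea is that to pass between representatives in which $\al$ and $\ga$ appear in opposite orders via a chain of commutation moves, the relative order of this pair flips exactly at the single move that swaps them adjacently; that move is a commutation of two simple reflections, which, after conjugation by the common prefix (a Weyl-group isometry), becomes orthogonality of $\al$ and $\ga$ themselves. Applying this with $\al = \alpha_{i_\N^*}$ against every other simple root would force $i_\N^*$ to be an isolated vertex of $\Delta$, contradicting irreducibility (for rank $\ge 2$) and thereby yielding $[\ii'_0]\ne[\ii_0]$. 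The remaining inequality $[\ii''_0]\ne[\ii_0]$ follows by the dual argument applied to $\alpha_{i_1}$, which is minimal in $\prec_{[\ii_0]}$ but maximal in $\prec_{[\ii''_0]}$ (since the last root of $\ii''_0$ equals $\alpha_{(i_1^*)^*} = \alpha_{i_1}$). The main obstacle I anticipate is the orthogonality lemma itself: one must verify that a commutation move at positions $(k,k+1)$ genuinely produces orthogonality of the roots $\beta_k,\beta_{k+1}$, not just commutativity of $s_{i_k},s_{i_{k+1}}$. The key observation is that when two simple reflections commute, each fixes the other's simple root, so $\beta_k = s_{j_1}\cdots s_{j_{k-1}}\alpha_{j_k}$ and $\beta_{k+1} = s_{j_1}\cdots s_{j_{k-1}}\alpha_{j_{k+1}}$ are common-prefix images of orthogonal simple roots, hence orthogonal.
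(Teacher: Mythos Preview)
Your argument is correct and follows essentially the route the paper indicates (the proof is not given in the paper itself but is attributed to \cite{OS15}, with the method described as ``using positive root systems and convex total orders induced from reduced words for $w_0$''). The reducedness computation via $w_0 s_i = s_{i^*} w_0$ is standard, and your inequivalence argument---observing that $\alpha_{i_\N^*}$ would be simultaneously minimal and maximal in the common partial order, hence incomparable to every other positive root---is exactly the right setup.

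One remark: your orthogonality lemma is correct and carefully justified, but you can bypass it entirely using the convexity of $\prec_{[\ii_0]}$ already recorded in the paper. Since $\Delta$ is connected of rank $\ge 2$, the vertex $i_\N^*$ has a neighbor $j$ in the Dynkin diagram, so $\alpha_{i_\N^*} + \alpha_j \in \PR$; convexity then forces $\alpha_{i_\N^*}$ and $\alpha_j$ to be comparable in $\prec_{[\ii_0]}$, which is the desired contradiction. This is presumably closer to the cited argument and saves you the work of tracking the commutation move at which two incomparable roots swap.
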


By applying the above proposition, we can obtain new reduced words for $w_0$ by applying the operations so called
 {\it reflection functors} in the Auslander-Reiten theory, from a reduced word for $w_0$.

\begin{definition} \cite{OS15}
The right action of {\it reflection functor} $r_i$ on $[\ii_0]$ is defined by
$$[\ii_0]\, r_i =
\left\{
\begin{array}{ll}
[ i_2 \cdots i_\N i^*] & \text{ if $i$ is a sink and } \ii'_0=i_{ \ }   i_2 \cdots i_\N \in [\ii_0],\\
\ [\ii_0] &  \text{ if $i$ is not a sink of $[\ii_0]$}.
\end{array}
\right.
$$
On the other hand, the left action of {\it reflection functor} $r_i$ on $[\ii_0]$ is defined by
$$
r_i \, [\ii_0]=
\left\{
\begin{array}{ll}
[ i^* i_1 \cdots i_{\N-1} ] &  \text{ if $i$ is a source and } \ii'_0= i_1 \cdots i_{\N-1} i \in [\ii_0],\\
\ [\ii_0] &  \text{ if $i$ is not a source of $[\ii_0]$}.
\end{array}
\right.
$$
\end{definition}

For the word $\mathbf{w}= i_1 \cdots i_k$, the right  (resp. left) action of the reflection functor  $r_{\mathbf{w}}$ is defined by
$$ [\ii_0] \, r_{\mathbf{w}}=[\ii_0] r_{i_1} \cdots r_{i_k} \qquad (\text{resp. } r_{\mathbf{w}}[\ii_0]=r_{i_k}\cdots r_{i_1} [\ii_0]).$$

\begin{definition} \cite{OS15}  \label{def: ref equi}
Let  $[\ii_0]$ and $[\ii'_0]$ be two commutation classes. We say that $[\ii_0]$ and $[\ii'_0]$ are
{\it reflection equivalent} and write $[\ii_0]\overset{r}{\sim} [\ii'_0]$ if $[\ii_0']$ can be obtained from $[\ii_0]$ by a
sequence of reflection functors. The family of commutation classes
$\lf \ii_0 \rf \seteq \{\, [\ii_0]\, |\, [\ii_0]\overset{r}{\sim}[\ii_0']\, \}$ is called an {\it r-cluster point}.
\end{definition}

\begin{remark}
For any Dynkin diagram automorphism $\vee:I \to I$, $\vee$ is {\it compatible} with the involution $*$ in the sense that, for any $i\in I$, indices $i$ and $i^*$ are in the same orbit class $\bar{i}$ induced by $\vee.$
\end{remark}

\begin{definition} \label{Def:Coxeter_composition}
Fix an automorphism $\vee$ on $I$ which is compatible with the involution $^*$,
and denote by $\overline{I} \seteq \{\bar{i} \ | \ i \in I \}$
the set of orbit classes of $I$ induced by $^\vee$. For a reduced word
$\ii_0= i_1 i_2  \cdots  i_{\mathsf{N}}$ of $w_0$ and $\bar{k} \in \overline{I}$, define
$$\mathsf{C}^\vee_{[\ii_0]}(\bar{k}) = |\{ i_s \ | \ i_s \in \overline{k}, \  1 \le s \le \mathsf{N} \} |  \quad (\mathsf{N}=\ell(w_0)).$$
Here $|\vee|$ is the order of $\vee$. We call the composition
\[\mathsf{C}^{\vee}_{[\ii_0]}=\big (\mathsf{C}^\vee_{[\ii_0]}(\overline{1}),\mathsf{C}^\vee_{[\ii_0]}(\overline{2}), \ldots, \mathsf{C}^{\vee}_{[\ii_0]}(|\overline{I}|)\big),\]
 ordered by the smallest representative of orbit classes, the {\it $\vee$-Coxeter composition of $\lf \ii_0 \rf$}.
\end{definition}

Note that the notion of $\vee$-Coxeter composition is well-defined for each $r$-cluster point; i.e.,
$$\mathsf{C}^{\vee}_{\lf\ii_0\rf}\seteq \mathsf{C}^{\vee}_{[\ii_0']}=\mathsf{C}^{\vee}_{[\ii_0'']} \quad \text{ for any } \ii_0',\ii_0'' \in \lf \ii_0 \rf.$$

\begin{convention}
If there is no danger of confusion, we identify the index set $\bar{I}$ with the set $\{1,2,\cdots, |\bar{I}|\}\in \Z$ by the correspondence  $\bar{j}_s \mapsto s$ for $\bar{j}_s$ in Definition \ref{Def:Coxeter_composition}. Hence the sum (resp. subtraction) $\bar{j}_s + \bar{j}_{s'}$ (resp.  $\bar{j}_s - \bar{j}_{s'}$) is considered as the sum (resp. subtraction) on $\Z$ and, similarly,  the sum (resp. subtraction) between the index $\bar{j}_s$ and $t$ for $t\in \Z$ is defined by that on $\Z$.
\end{convention}

\begin{example} \label{ex: Coxeter composition}
\begin{enumerate}
\item For a Weyl group $W$ of type $A_5$, $\vee$ in \eqref{eq: B_n} and a reduced word
$$  \ii_0' = 5 \ 4 \ 3 \ 2 \ 1 \ 5 \ 4 \ 3 \ 2 \ 5 \ 4 \ 3 \ 5 \ 4 \ 5 $$
we have
\begin{itemize}
\item the number of $1$ and $5$ appearing in $\ii_0'$ is 6,
\item the number of $2$ and $4$ appearing in $\ii_0'$ is 6,
\item the number of $3$ appearing in $\ii_0'$ is 3.
\end{itemize}
Hence
$$\mathsf{C}^{\vee}_{\lf\ii_0'\rf} = (6,6,3).$$
\item For a Weyl group $W$ of type $A_5$, $\vee$ in \eqref{eq: B_n} and a reduced word
$$  \ii_0 = 1 \ 2 \ 3 \ 5 \ 4 \ 3 \ 1 \ 2 \ 3 \ 5 \ 4 \ 3 \ 1 \ 2 \ 3 $$
we have
$$\mathsf{C}^{\vee}_{\lf\ii_0\rf} = (5,5,5).$$
\end{enumerate}
\end{example}

\begin{definition} \label{def: vee-foldable}
For an automorphism $^\vee$ and an $r$-cluster point $\lf \redez \rf$ of finite ADE types,
we say that an $r$-cluster point $\lf \ii_0 \rf$ is {\it $\vee$-foldable} if
$$\mathsf{C}^{\vee}_{\lf \ii_0 \rf}(k)= \mathsf{C}^{\vee}_{\lf \ii_0 \rf}(l)  \qquad \text{ for any } k, l \in \overline{I}.$$
\end{definition}

If a $\vee$-foldable $r$-cluster point exists, then its $\vee$-Coxeter composition should be the following form:
The $\vee$-Coxeter composition of any $\vee$-foldable cluster point $\lf \ii_0 \rf$ is
\begin{align*}
\mathsf{C}^{\vee}_{\lf \ii_0 \rf}= \begin{cases}
( \underbrace{2n+1, \ldots ,2n+1}_{ n+1\text{-times}} ) & \text{ if } \vee=\eqref{eq: B_n}, \\
( \underbrace{n+1, \ldots ,n+1}_{ n\text{-times}} ) & \text{ if } \vee=\eqref{eq: C_n}, \\
\hspace{4.5ex} (9,9,9,9)& \text{ if } \vee=\eqref{eq: F_4},\\
\hspace{6.5ex} (6,6)& \text{ if } \vee=\eqref{eq: G_2}.
\end{cases}
\end{align*}

\begin{remark}
We remark here an interesting relationships between the dual Coxeter numbers $\mathsf{h}_X^\vee$ for type $X$:
\begin{align} \label{eq: dual Coxeter}
\mathsf{h}_{B_{n+1}}^\vee=\mathsf{h}_{A_{2n}}^\vee=2n+1, \ \mathsf{h}_{C_n}^\vee=\dfrac{1}{2}\mathsf{h}_{D_{n+1}}^\vee=\mathsf{h}^\vee_{A_n}=n+1.
\end{align}
\end{remark}

\section{Twisted Coxeter elements and induced reduced expressions}

\begin{convention}
We can view $W$ as a
subgroup of $GL(\C\Phi)$ generated by the set of simple reflections $\{ s_i \ | \ i \in I \}$.
In this case, we use the term ``reduced expression" instead of ``reduced word". Moreover, we sometimes abuse the notation
$\ii$ to represent reduced expressions.
\end{convention}

Let $\sigma \in GL(\C\Phi)$ be a linear transformation of finite order
which preserves some base $\Pi$ of $\Phi$. Hence $\sigma$ preserves
$\Phi$ itself and normalizes $W$ and so $W$ acts by conjugation on the
coset $W\sigma$.

\begin{definition} \label{def: twisted Coxeter} \hfill
\begin{enumerate}
\item Let $\{ \Pi_{i_1}, \ldots,\Pi_{i_k} \}$ be the all orbits of $\Pi$ in
$\Phi$ with respect to $\sigma$. For each $r\in \{1, \cdots, k \}$, choose $\alpha_{i_r} \in \Pi_{i_r}$ arbitrarily,
and let $s_{i_r} \in W$ denote the corresponding reflection. Let $w$
be the product of $s_{i_1}, \ldots , s_{i_k}$ in any order. The
element $w\sigma \in W\sigma$ thus obtained is called a {\it
$\sigma$-Coxeter element}.
\item If $\sigma$ in (1) is $\vee$ in \eqref{eq: B_n}, \eqref{eq: C_n}, \eqref{eq: F_4},
then  {\it $\sigma$-Coxeter element} is also called a {\it twisted Coxeter element}.
\item If $\sigma$ in (1) is $\vee$ or $\vee^2$ in \eqref{eq: G_2}
then  {\it $\sigma$-Coxeter element} is also called a {\it triply twisted Coxeter element}.
\end{enumerate}
\end{definition}

\begin{example}
Take $\sigma$ as $\vee$ in \eqref{eq: B_n} for $A_{5}$. There are
$12$ distinct twisted Coxeter elements in $W \vee$ given as follows:
\begin{align*}
& s_1s_2s_3\vee, \ s_2s_1s_3\vee, \ s_3s_1s_2\vee, \ s_3s_2s_1\vee, \ s_5s_2s_3\vee, \ s_3s_2s_5\vee, \\
& s_1s_4s_3 \vee, \ s_3s_1s_4\vee, \ s_5s_4s_3\vee, \ s_4s_5s_3\vee,
\ s_3s_5s_4\vee, \ s_3s_4s_5\vee.
\end{align*}
\end{example}

\begin{example}
Take $\sigma$ as $\vee$ in \eqref{eq: C_n} for $D_{4}$: \\
There are $8$ distinct twisted Coxeter elements in $W \vee$ given as
follows:
\begin{align*}
& s_1s_2s_3\vee, \ s_1s_3s_2\vee, \ s_2s_1s_3\vee, \ s_3s_2s_1\vee, \\
& s_1s_2s_4\vee, \ s_1s_4s_2\vee, \ s_2s_1s_4\vee, \ s_4s_2s_1\vee.
\end{align*}
\end{example}

\begin{remark} \label{rem: basic prop twist}
Note that $w$ in ${\rm (1)}$ of Definition \ref{def: twisted Coxeter} is fully commutative of length $|\overline{I}|$. Thus, for $i_r$ contained in
the orbit of an extremal vertex of $\Delta$, $s_{i_r}$ is a sink or a source of $[w]$.
\end{remark}

In the proof of the following proposition, we shall use the notion of Coxeter elements and their properties will be reviewed in Subsection \ref{subsec: Adapted cluster}:

\begin{proposition}\label{prop: number tCox elts} \hfill
\begin{enumerate}
\item[{\rm (1)}] The number of twisted Coxeter elements associated to \eqref{eq: B_n} is $4\times 3^{n-1}.$
\item[{\rm (2)}] The number of twisted Coxeter elements associated to \eqref{eq: C_n} is $2^n.$
\item[{\rm (3)}] The number of twisted Coxeter elements associated to \eqref{eq: F_4} is $24.$
\item[{\rm (4)}] The number of triply twisted Coxeter elements associated to \eqref{eq: G_2} is $12$.
\end{enumerate}
\end{proposition}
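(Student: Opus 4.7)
The plan is to prove each count by separating the data of a $\sigma$-Coxeter element $w\sigma$ into two ingredients: (i) a choice $\epsilon$ of one representative $\alpha_{i_r}\in\Pi_{i_r}$ from each $\sigma$-orbit of $\Pi$, and (ii) an ordering of the chosen reflections $s_{i_1},\ldots,s_{i_k}$ up to commutation. By Remark \ref{rem: basic prop twist}, $w$ is fully commutative of length $|\overline{I}|$, so the number of distinct $w$'s arising from a fixed $\epsilon$ equals the number of Coxeter elements of the Weyl group whose Dynkin diagram $\Delta(\epsilon)$ is the full subgraph of $\Delta$ on the chosen vertices. Since $\Delta(\epsilon)\subset\Delta$ is a forest, the standard bijection between Coxeter elements and acyclic orientations gives that this count equals $2^{m-c}$ when $\Delta(\epsilon)$ has $m$ vertices and $c$ connected components.

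Under this reduction, cases (2), (3), (4) follow by short case analyses. In (2), every orbit is a singleton except $\{n,n+1\}$, so there are $2$ choices and in either one $\Delta(\epsilon)\cong A_n$, yielding $2\cdot 2^{n-1}=2^n$. In (3), the $4$ choices from the two size-$2$ orbits $\{1,5\}$ and $\{2,4\}$ of $E_6$ (the fixed points $3,6$ are always chosen) produce, according as the selected reps lie on the same ``side'' or not, either a tree on $4$ vertices (Coxeter count $8$) or $A_1\sqcup A_3$ (count $4$); two of each kind give $2\cdot 8+2\cdot 4=24$. In (4), each of the cosets $W\vee$ and $W\vee^2$ contributes separately: the three choices from the orbit $\{1,2,3\}$ each give $\Delta(\epsilon)\cong A_2$, since the fixed vertex $4$ is adjacent in $D_4$ to every element of $\{1,2,3\}$, producing $3\cdot 2=6$ per coset and $12$ altogether.

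Case (1) is the main obstacle and requires a real summation. Parametrise the $2^n$ choices by $\epsilon=(\epsilon_1,\ldots,\epsilon_n)\in\{0,1\}^n$ with $i_r=r$ or $2n+2-r$ according to whether $\epsilon_r=0$ or $1$, and the fixed $i_{n+1}=n+1$. A brief case-check on $A_{2n+1}$-adjacency shows that for $r\le n-1$ the chosen reps $i_r,i_{r+1}$ are adjacent iff $\epsilon_r=\epsilon_{r+1}$, that $i_n,i_{n+1}$ are always adjacent (because both $n$ and $n+2$ are neighbours of $n+1$), and that no other pair is adjacent. Hence $\Delta(\epsilon)$ is a disjoint union of paths whose number of components equals $c(\epsilon)+1$, where $c(\epsilon)$ denotes the number of sign changes in $(\epsilon_1,\ldots,\epsilon_n)$; its Coxeter count is therefore $2^{(n+1)-(c(\epsilon)+1)}=2^{n-c(\epsilon)}$.

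Summing over $\epsilon$, and using that the number of $\epsilon\in\{0,1\}^n$ with exactly $c$ sign changes is $2\binom{n-1}{c}$, we obtain
\[
\sum_{\epsilon\in\{0,1\}^n} 2^{n-c(\epsilon)} \;=\; 2\sum_{c=0}^{n-1}\binom{n-1}{c}\,2^{n-c} \;=\; 2^{n+1}\bigl(1+\tfrac{1}{2}\bigr)^{n-1} \;=\; 4\cdot 3^{n-1},
\]
which is (1). The only step I expect to require genuinely careful verification is the sign-change characterisation of adjacency in case (1); everything else reduces to standard facts about forests and Coxeter elements.
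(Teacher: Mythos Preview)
Your proof is correct and proceeds along a genuinely different route from the paper's.

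The paper treats the four cases by separate ad hoc devices: an induction on $n$ for (1) (each twisted Coxeter element of $A_{2n-1}$ extends in exactly three ways to one of $A_{2n+1}$), a direct bijection with the Coxeter elements of $A_n$ for (2), and explicit exhaustive listings of all $24$ and $12$ elements for (3) and (4). Your approach is uniform: fix a choice $\epsilon$ of orbit representatives, observe that the distinct products $w$ are exactly the Coxeter elements of the induced subgraph $\Delta(\epsilon)$, and use the acyclic-orientation count $2^{\#\text{edges}}$ for forests. This turns (2), (3), (4) into short case checks with no enumeration, and reduces (1) to the binomial identity $\sum_c 2\binom{n-1}{c}2^{n-c}=4\cdot 3^{n-1}$. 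Your adjacency analysis in (1) is correct: the only possible edges among the chosen $i_1,\ldots,i_{n+1}$ are between consecutive $i_r,i_{r+1}$, the edge $i_n\text{--}i_{n+1}$ is always present, and for $r\le n-1$ adjacency holds precisely when $\epsilon_r=\epsilon_{r+1}$, so the component count is $c(\epsilon)+1$ as you claim.

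What each approach buys: the paper's induction in (1) makes the factor $3$ structurally visible (three extensions per step), which ties in with the later fact that $|\QQ|=2\cdot 3^{n-1}$; your summation is more self-contained and delivers the closed form in one stroke. For (3) and (4) your argument is strictly more economical, avoiding the explicit lists entirely. The one implicit ingredient you rely on---that two orderings of distinct simple reflections yield the same element iff they are commutation-equivalent---follows from Matsumoto's theorem, since no letter repeats and hence only commutation moves are available.
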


\begin{proof}
(1) For $A_3$ type, there are four twisted Coxeter elements $s_1s_2 \vee=[1\ 2]\vee$,  $s_2s_1\vee=[2\ 1]\vee$, $s_3s_2\vee=[3\ 2]\vee$, $s_2s_3\vee=[2\ 3]\vee$.

Assume that our assertion is true for type $A_{2n-1}$. Take a twisted Coxeter element
\[  [i_1\ i_2 \cdots i_{n}]\vee  \text{ of type } A_{2n-1}.\]
Let $J= \{2, 3, \cdots, 2n\} \subset I_{2n+1}$. We shall find twisted Coxeter element $\ii\vee$ of $A_{2n+1}$ satisfying \[\ii_{|J}= i_1^+ \ i_2^+\ \cdots i_n^+, \text{ where } i_k^+=i_k+1\] for $k=1, \cdots, n.$
\begin{enumerate}
\item[{\rm (a)}]
Assume that there is $t\in \{1, \cdots, n\}$ such that $i_t^+=2.$ Note that, in this case, $\ii$ does not have the index $2n$. Then $[\ii]$ can be three of following classes:
\[ \ [ 1\ \ii_{|J}]  , \quad [\ii_{|J}\ 1], \quad [2n+1\ \ii_{|J}].\ \]
Here we use the property that $s_1s_{i^+_k} = s_{i^+_k} s_{1}$ for $k\in \{1, \cdots, n\}\backslash\{t\}$ and $s_{2n+1} s_{i^+_k}= s_{i^+_k}s_{ 2n+1}$ for $k\in \{1, \cdots, n\}.$
\item[{\rm (b)}]
Assume that there is $t\in \{1, \cdots, n\}$ such that $i_t^+=2n.$ Note that, in this case, $\ii$ does not have the index $2$. Then $[\ii]$ can be three of following classes:
\[ \ [ 2n+1\ \ii_{|J}]  , \quad [\ii_{|J}\ 2n+1], \quad [1\ \ii_{|J}]\]
Here we use the property that $s_{2n+1}s_{i^+_k} = s_{i^+_k} s_{2n+1}$ for $k\in \{1, \cdots, n\}\backslash\{t\}$ and $s_{1} s_{i^+_k}= s_{i^+_k}s_{1}$ for $k\in \{1, \cdots, n\}.$
\end{enumerate}

Hence we can get three distinct twisted Coxeter elements of type $A_{2n+1}$ induced from a twisted Coxeter element of type $A_{2n-1}.$
Therefore the number of twisted Coxeter elements of type $A_{2n+1}$ is $4\times 3^{n-1}$ by Remark \ref{rem: basic prop twist}.\\
(2) There exists a canonical one-to-one correspondence between Coxeter elements of type $A_n$ and twisted Coxeter elements of $D_{n+1}$ with $s_n$ defined by
\[ s_{i_1}s_{i_2}\cdots s_{i_n} \leftrightarrow s_{i_1}s_{i_2}\cdots s_{i_n}\vee.\]
On the other hand, there is the one-to-one correspondence between  twisted Coxeter elements of $D_{n+1}$ with $s_n$ and  twisted Coxeter elements of $D_{n+1}$ with $s_{n+1}$ defined by
\[s_{i_1}s_{i_2}\cdots s_{i_n}\vee \leftrightarrow s_{i'_1}s_{i'_2}\cdots s_{i'_n}\vee\]
where $i'_k= i_k+\delta_{i_k, n}.$
Since we know the number of Coxeter elements of $A_n$ is $2^{n-1}$, the number of twisted Coxeter elements of $D_{n+1}$ is $2^n$ by Remark \ref{rem: basic prop twist}.

\noindent
(3) We list all the twisted Coxeter elements of $E_6$:
\begin{align*}
& s_6s_5s_4s_3\vee, \  s_6s_4s_5s_3\vee, \  s_6s_3s_5s_4\vee, \  s_6s_3s_4s_5\vee, \  s_6s_1s_4s_3\vee, \  s_6s_3s_4s_1\vee,
 s_6s_5s_2s_3 \vee, \  s_6s_3s_5s_1\vee, \\ &   s_6s_1s_2s_3\vee, \  s_6s_2s_1s_3\vee, \  s_6s_3s_1s_2\vee, \  s_6s_3s_2s_1\vee,
 s_5s_4s_3 s_6\vee, \ s_4s_5s_3 s_6\vee, \ s_3s_5s_4 s_6\vee, \ s_3s_4s_5 s_6\vee, \\ & s_1s_4s_3 s_6\vee, \ s_3s_4s_1 s_6\vee,
 s_5s_2s_3 s_6 \vee, \ s_3s_5s_1 s_6\vee, \ s_1s_2s_3 s_6\vee, \ s_2s_1s_3 s_6\vee, \ s_3s_1s_2 s_6\vee, \ s_3s_2s_1 s_6\vee .
\end{align*}
(4) We list all the triply twisted Coxeter elements of $D_4$ for \eqref{eq: G_2}:
\begin{align*}
& s_1s_4\vee, \ s_2 s_4\vee, \ s_3 s_4\vee,\ s_4 s_1\vee, \ s_4 s_2\vee, \ s_4 s_3\vee \\
& s_1s_4\vee^2, \ s_2 s_4\vee^2, \ s_3 s_4\vee^2,\ s_4 s_1\vee^2, \ s_4 s_2\vee^2, \ s_4 s_3\vee^2.
\end{align*}
\end{proof}

\section{Adapted cluster point of type $ADE$, convex orders and (combinatorial) Auslander-Reiten quiver}

In this section, we shortly review the theories on the reduced expressions of $w_0$ which are adapted to some Dynkin quiver $Q$,
convex orders on the set of positive roots of finite types and (combinatorial) Auslander-Reiten quivers.
For the precise reference, we mainly refer \cite{ARS,ASS,Gab80,Oh14A,Oh15E,OS15}.

\subsection{Adapted cluster point of type ADE} \label{subsec: Adapted cluster}
In this subsection, we consider the case when the Dynkin diagram $\Delta$ is simply laced; that is, $\Delta$
is of type ADE$_n$.

\medskip

A Dynkin quiver $Q$ is obtained by orienting all edges of $\Delta$. We denote by $\Phi^+$ the set of all positive roots
and $\Pi=\{ \al_i \ | \ i \in I\, \}$ the set of simple roots associated to $\Delta$.
The Gabriel theorem
states that there exists a bijection between {\rm (i)}  the set of isomorphism classes of indecomposable modules ${\rm Ind}Q$ in the finite dimensional module category over
the path algebra $\mathbb{C}Q$ and {\rm (ii)} $\Phi^+$, via taking dimension vectors on ${\rm Ind}Q$ (\cite{Gab80}).

We say that a vertex is a {\it sink} (resp. {\it source}) of $Q$ if and only if there are only entering arrows into (resp. exiting arrows out of) it.
Let $i$ be a sink or a source of $Q$. We denote by $i \ Q$ the quiver obtained from $Q$ by reversing the orientation of each arrow incident with $i$ in $Q$.
For a reduced word $\ii$, we say that $\ii=i_1 i_2 \cdots i_l$ is {\it adapted to} $Q$ if
\begin{align*}
\text{ $i_k$ is a sink of $ i_{k-1} \cdots i_2 i_1 \ Q$ for all $1 \le k \le l$.}
\end{align*}

The followings are well-known and we record here for later use:

\begin{theorem} \label{thm: Qs} \hfill
\begin{enumerate}
\item[{\rm (1)}] A reduced word $\ii_0$ of $w_0$ is adapted at most one Dynkin quiver $Q$.
\item[{\rm (2)}] For each Dynkin quiver $Q$, there is a reduced word $\ii_0$ of $w_0$ adapted to $Q$.
\item[{\rm (3)}] Reduced expressions $\ii_0$ and $\ii_0'$ are adapted to $Q$ if and only if $\ii_0 \sim \ii_0'$ and $\ii_0$ is adapted to $Q$.
Hence the commutation class $[Q]$ of $w_0$ adapted to $Q$ is well-defined.
\item[{\rm (4)}] For every commutation class $[Q]$, there exists a unique Coxeter element $\phi_Q$. We have {\rm (i)} $\phi_Q$ is a product of all simple reflections,
{\rm (ii)} $\phi_Q$ is fully commutative, {\rm (iii)} $\phi_Q \cdot Q =Q$, {\rm (iv)} $\phi_Q$ is adapted to $Q$.
\item[{\rm (5)}] For every Coxeter element $\phi$, there exists a unique Dynkin quiver $Q$ such that $\phi=\phi_Q$.
\end{enumerate}
\end{theorem}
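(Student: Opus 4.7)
The plan is to establish the five parts sequentially, using the adaptedness definition and, where convenient, the structure of the Auslander--Reiten quiver $\Gamma_Q$ of $Q$.

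For part $(1)$, I would show that the orientation of every edge of $\Delta$ is forced by $\ii_0$. For any edge $\{i,j\}$ of $\Delta$, let $k$ be the least index with $i_k\in\{i,j\}$, and say $i_k=i$. Since neither $i$ nor $j$ appears in $i_1,\ldots,i_{k-1}$, no reflection incident to the edge $\{i,j\}$ has been performed, so this edge has the same orientation in $Q_{k-1}\seteq s_{i_{k-1}}\cdots s_{i_1}Q$ as in $Q$. The adaptedness condition that $i_k=i$ be a sink of $Q_{k-1}$ then forces $j\to i$ in $Q$. Since every edge is determined this way, $Q$ is unique.

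For part $(2)$, I would construct an adapted reduced expression inductively by iterated sink reflection: pick a sink $i_1$ of $Q$, set $Q_1\seteq s_{i_1}Q$, pick a sink $i_2$ of $Q_1$, and continue. To see that the procedure terminates at length $\mathsf{N}=\ell(w_0)$, yielding a reduced word for $w_0$, I would appeal to the Auslander--Reiten quiver $\Gamma_Q$: its vertex set is in bijection with $\Phi^+$ via dimension vectors, and it encodes a partial order $\prec_Q$ on $\Phi^+$ any of whose linear extensions is an adapted reduced word of $w_0$.

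The main obstacle is part $(3)$. The ``if'' direction is a straightforward local check: if $\ii_0$ is adapted to $Q$ and $\ii_0'$ is obtained from $\ii_0$ by a commutation move $s_is_j\leftrightarrow s_js_i$ with $i,j$ non-adjacent in $\Delta$, then neither the sink status of any intermediate reflected quiver nor its interaction with vertices $i$ and $j$ is affected, since $s_i$ and $s_j$ act on disjoint stars in $\Delta$. For the converse, I would use the bijection from $(2)$: both adapted reduced expressions give linear extensions of the same partial order $\prec_Q$ on $\Phi^+$, and any two linear extensions of a finite partial order are related by a sequence of transpositions of adjacent incomparable elements; each such transposition corresponds to a pair of non-adjacent simple reflections at the relevant steps (comparability in $\prec_Q$ forces adjacency in $\Delta$), hence to a commutation move.

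For parts $(4)$ and $(5)$, I would define $\phi_Q$ as the element represented by the length-$n$ prefix $s_{i_1}\cdots s_{i_n}$ of any reduced word of $w_0$ adapted to $Q$ in which each generator appears exactly once in the first $n$ letters (existence is ensured by $(2)$). Property $(\mathrm{i})$ is immediate; $(\mathrm{ii})$ follows from the fact that a product of $n$ distinct simple reflections admits only commutation moves among its reduced expressions; $(\mathrm{iii})$ holds because each vertex is reflected exactly once so each edge is reversed exactly twice, returning the quiver to $Q$; and $(\mathrm{iv})$ is by construction. Well-definedness of $\phi_Q$ (independence from the choice of successive sinks in the prefix) follows from $(3)$. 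For $(5)$, given a Coxeter element $\phi$, I would orient each edge $\{i,j\}$ of $\Delta$ as $j\to i$ iff $i$ precedes $j$ in a reduced expression of $\phi$ (well-defined since adjacent generators do not commute, so their relative order is an invariant of the commutation class of $\phi$); part $(1)$ then shows $\phi=\phi_Q$ for this unique $Q$.
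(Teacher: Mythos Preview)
The paper does not prove this theorem: it is introduced with ``The followings are well-known and we record here for later use'' and no argument is given. There is therefore nothing to compare your approach against; instead I comment on the soundness of your outline.

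Your arguments for $(1)$, $(2)$, and $(5)$ are standard and correct. Two points deserve tightening. In $(3)$, your parenthetical ``comparability in $\prec_Q$ forces adjacency in $\Delta$'' is stated the wrong way round; what you need (and what actually holds) is that if $i_k$ and $i_{k+1}$ are \emph{adjacent} in $\Delta$ then there is an arrow between $\beta_k$ and $\beta_{k+1}$ in $\Gamma_Q$, hence they are comparable. The contrapositive then gives that incomparable consecutive roots have non-adjacent residues, so the transposition is a commutation move. In $(4)$, you assert existence of an adapted reduced word whose first $n$ letters contain each index exactly once and cite $(2)$, but $(2)$ as stated only produces \emph{some} adapted word. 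You should instead observe directly that since $Q$ is acyclic one may list its vertices $i_1,\ldots,i_n$ so that $i_k$ is a sink of $s_{i_{k-1}}\cdots s_{i_1}Q$ for each $k$ (an admissible ordering), which gives the desired Coxeter prefix; independence of this prefix from the choice of ordering then follows from full commutativity, not from $(3)$.
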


\vskip -1em

\begin{eqnarray} &&
\parbox{79ex}{
Furthermore, all commutation classes in $\{ [Q] \}$ are reflective equivalent via reflection functors and form an $r$-cluster point $\lf Q \rf$.
Since there are $2^{|I|-1}$-many Dynkin quivers, the number of commutation classes inside of $\lf Q \rf$ and of Coxeter elements are the same as $2^{|I|-1}$.
More precisely, we have one-to-one correspondences
$$
\xymatrix@C=8ex@R=4ex{
\{ [Q] \} \ar@{<->}[r]^{1-1} & \{ \phi_Q \} \ar@{<->}[r]^{1-1} & \{ Q \}}
$$
between order $2^{|I|-1}$ sets and  call $\lf Q \rf$ {\it the adapted cluster point}.
}\label{eq: 2n-1many}
\end{eqnarray}

\subsection{Convex orders on $\Phi^+$}
Recall that $W$ acts on the set of all roots $\Phi$. For $w \in W$, consider the subset $\Phi(w)$ of $\Phi$ defined as follows:
\begin{align} \label{eq: Phiw}
\Phi(w) \seteq \{ \beta \in \Phi^+ \ | \ w^{-1}(\beta) \in \Phi^- \}.
\end{align}

\begin{lemma} \cite{Bour} For $w \in W$, the followings hold:
\begin{enumerate}
\item[{\rm (1)}] $\Phi(w) = \{ s_{i_1}s_{i_2} \cdots s_{i_{k-1}}(\alpha_{i_k}) \ | \ k =1,\ldots, l\}$ for any reduced word $\ii=i_1i_2 \cdots i_l$ of $w$.
\item[{\rm (2)}] $|\Phi(w)|=\ell(w)$ and hence $\Phi(w_0)=\Phi^+$.
\end{enumerate}
\end{lemma}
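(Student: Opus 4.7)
My approach is to prove (1) by induction on $l = \ell(w)$, from which (2) drops out as a byproduct of the inductive step. The base case $l=0$ is trivial since $w = e$ forces $\Phi(w) = \emptyset$. For the inductive step, I set $w' \seteq s_{i_1}\cdots s_{i_{l-1}}$ and $\be_k \seteq s_{i_1}\cdots s_{i_{k-1}}(\al_{i_k})$, so that $w = w' s_{i_l}$ is a reduced factorization, and the inductive hypothesis supplies $\Phi(w') = \{\be_1,\ldots,\be_{l-1}\}$ with $|\Phi(w')|=l-1$.

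The inductive engine rests on two well-known preliminaries about any simple reflection $s_i$: (a) $s_i$ permutes $\PR \setminus \{\al_i\}$ while sending $\al_i$ to $-\al_i$; (b) $\ell(v s_i) = \ell(v)+1$ iff $v(\al_i) \in \PR$. Both are classical consequences of the strong exchange condition, and I would quote them from standard references (e.g., Bourbaki Ch.~VI) rather than reprove them.

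Given these facts, my strategy is to unpack the defining condition $w^{-1}(\al) \in \NR$, i.e., $s_{i_l}(w')^{-1}(\al) \in \NR$, by splitting on whether $(w')^{-1}(\al) = \al_{i_l}$. The first case contributes precisely the element $\al = w'(\al_{i_l}) = \be_l$, which lies in $\PR$ by (b) applied to the reduced product $w' s_{i_l}$; the second case, using (a), reduces to $\al \in \Phi(w')$. This yields
\[ \Phi(w) \;=\; \Phi(w') \,\sqcup\, \{\be_l\} \;=\; \{\be_1,\ldots,\be_{l-1}\} \,\sqcup\, \{\be_l\}, \]
and the disjointness is free since $(w')^{-1}(\be_l) = \al_{i_l} \in \PR$ forces $\be_l \notin \Phi(w')$. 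This proves (1), and the cardinality count at each step simultaneously gives $|\Phi(w)| = l = \ell(w)$, which is (2); specializing to $w_0$ with $\ell(w_0) = |\PR|$ then yields $\Phi(w_0) = \PR$.

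I do not anticipate any serious obstacle beyond invoking (a) and (b); the only real care is in tracking the case split that produces the disjoint union, which is precisely where reducedness of the expression is used (through (b)) to guarantee that $\be_l$ is positive and new.
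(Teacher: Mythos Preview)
Your argument is correct and is the standard inductive proof of this classical fact. Note, however, that the paper does not give its own proof of this lemma: it is stated with a citation to Bourbaki and used as a black box. So there is no ``paper's proof'' to compare against; what you have written is essentially the argument one finds in Bourbaki or Humphreys. One tiny point worth making explicit in your case split: when $(w')^{-1}(\al) \neq \al_{i_l}$, you should also rule out $(w')^{-1}(\al) = -\al_{i_l}$ before invoking (a), but this is immediate since that would force $\al = -\be_l \in \NR$ (by (b)), contradicting $\al \in \PR$.
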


In this paper, we mainly focus on when $w=w_0$. By the above lemma, we can define a total order $<_{\ii_0}$ on $\Phi^+$ depending on $\ii_0=i_1i_2 \cdots i_\N$ of $w_0$ as follows:
$$ \be^{\ii_0}_k \seteq s_{i_1}s_{i_2} \cdots s_{i_{k-1}}(\alpha_{i_k}) \ (1 \le k \le \N) \quad \text{ and } \quad \be^{\ii_0}_k <_{\ii_0} \be^{\ii_0}_l \iff k <l.$$
Interestingly, the order $<_{\ii_0}$ is convex in the following sense.

\begin{definition}
We say that an order $\prec$ on $\Phi^+$ is {\it convex} if it satisfies the following property: For $\al,\be \in \Phi^+$ satisfying $\al+\be \in \Phi^+$, we have either
\begin{align*}
\al \prec \al+\be \prec \be \quad\text{ or }\quad \be \prec \al+\be \prec \al.
\end{align*}
\end{definition}

By considering  $<_{\ii'_0}$ for all $\ii_0' \in [\ii_0]$, we can construct a partial order $\prec_{[\ii_0]}$ depending only on $[\ii_0]$: For $\al,\be \in \Phi^+$,
\begin{align}
\al \prec_{[\ii_0]}  \be \quad \text{ if } \al <_{\ii_0'} \be \text{ for all } \ii_0' \in [\ii_0].
\end{align}
The order $\prec_{[\ii_0]}$ is well-defined and also {\it convex} due to the following observation: For reduced expressions
$\ii_0 \sim \ii_0'$ and any pair $(\al,\be)$ such that $\al+\be \in \Phi^+$, we have
\[ \al <_{\ii_0} \al+\be <_{\ii_0} \be \ \text{ if  and only if } \ \al <_{\ii'_0} \al+\be <_{\ii'_0} \be.\]

\subsection{(Combinatorial) Auslander-Reiten quivers}

\begin{definition} For a Dynkin quiver $Q$ of finite type $ADE$, let us choose any reduced word $\ii_0$ in $[Q]$. The quiver
$\Gamma_Q=(\Gamma_Q^0,\Gamma_Q^1)$ is called the {\it Auslander-Reiten quiver} (AR-quiver) if
\begin{enumerate}
\item each vertex in $\Gamma_Q^0$ corresponds to an isomorphism class $[M]$ in ${\rm Ind}Q$,
\item an arrow $[M] \to [M']$ in $\Gamma_Q^1$ represents an irreducible morphism $M \to M'$.
\end{enumerate}
\end{definition}

By the Gabriel theorem \cite{Gab80}, we can replace the vertex set ${\rm Ind}Q$ of $\Gamma_Q$ with $\Phi^+$. On the other hand, we can construct the AR-quiver $\Gamma_Q$
in a purely combinatorial way by using only its Coxeter element $\phi_Q$.

\medskip

(A) As we observed in \eqref{eq: Phiw}, $\Phi(\phi_Q)$ is well-defined for the Coxeter element $\phi_Q=s_{i_1}s_{i_2}\cdots s_{i_n}$:
$$  \Phi(\phi_Q)=\{ \be^{\phi_Q}_1=\al_{i_1}, \be^{\phi_Q}_2=s_{i_1}(\al_{i_1}),\ldots,\be^{\phi_Q}_n =s_{i_1}\cdots s_{i_{n-1}}(\al_{i_n}) \}.$$
Furthermore, the residue $i_k$ on $\be^{\phi_Q}_k$ is also well-defined and does not depend on the choice of reduced word for $\phi_Q$. (Recall that $\phi_Q$ is fully commutative.)

(B) The {\it height function} $\xi$ associated to $Q$ is a map on $Q$ satisfying $\xi(j)=\xi(i)+1$ if there exists an arrow $i \to j$ in $Q$. Note that the connectedness of
$Q$ implies the uniqueness of $\xi$ up to constant. In this paper, we fix $\xi$.

\medskip

Hence we can assign $\be^{\phi_Q}_k$ to $(i_k,\xi(i_k)) \in I \times \Z$ which does depend only on $Q$ and hence $\phi_Q$:

\begin{algorithm}
The AR-quiver $\Gamma_Q$, whose set of vertices is also identified with a subset of  $I \times \Z$,
can be constructed by the following injective map $\Omega_Q:\Phi \to I \times \Z$ in an inductive way
$($cf. \cite{HL11}$):$
\begin{enumerate}
\item[{\rm (1)}] $\Omega_Q(\be^{\phi_Q}_k) \seteq (i_k,\xi(i_k))$.
\item[{\rm (2)}] If $\Omega_Q(\be)$ is already assigned as $(i,p)$ and $\phi_Q(\be) \in \Phi^+$, then $$\Omega_Q(\phi_Q(\be)) = (i,p-a_{ii})=(i,p-2).$$
\item[{\rm (3)}] For $(i,p),(j,q) \in {\rm Im}(\Omega_Q)$, there exists an arrow  $(i,p) \to (j,q)$ if and only if $i$ and $j$ are adjacent in $\Delta$ and
$$p-q=a_{ij}=-1.$$
\end{enumerate}
\end{algorithm}
\noindent
For $\be$ with $\Omega_Q(\be)=(i,p)$, we call $i$ the residue of $\beta$ with respect to $[Q]$ and $(i,p)$ is a {\it coordinate of $\be$} in $\Gamma_Q$.

Also,  the AR-quiver $\Gamma_Q$  has the following property.

\begin{proposition} \cite{B99, Gab80, R80} \label{Prop:AR}
Let us denote by $\mathsf{h}^\vee$ the dual Coxeter number associated to $Q$. For an index $i\in I$, let
\[ r^Q_i= \frac{\mathsf{h}^\vee+a_i-b_i}{2}\]
where $a^Q_i$ is the number of arrows in $Q$ between $i$ and $i^*$ directed toward $i$ and $b^Q_i$ is the number of arrows in $Q$ between $i$ and $i^*$
directed toward $i^*$. Then the number of vertex in $\Gamma_Q \cap \{i\} \times \Z$ is $r^Q_i$ and
\[ \Gamma_Q \cap \{i\} \times \Z= \{ \, (i, \xi(i)-2k)\, | \, k=0, \cdots, r_i-1\}.\]
\end{proposition}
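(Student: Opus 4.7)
The plan is to identify the column of residue $i$ in $\Gamma_Q$ with a single $\phi_Q$-orbit segment in $\Phi^+$, invoke the known orbit structure of the Coxeter element to reduce the proposition to a sign-balance identity, and verify this identity by induction on the orientation data of $Q$.

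By rule (1) of the Algorithm, the top vertex of the residue-$i$ column is $\gamma_i^Q \seteq \beta^{\phi_Q}_{k_i}$, placed at $(i, \xi(i))$, where $k_i$ is the position of $i$ in any reduced expression $\phi_Q = s_{i_1}\cdots s_{i_n}$ compatible with the sink order of $Q$; the full commutativity of $\phi_Q$ from Theorem~\ref{thm: Qs}(4)(ii) guarantees that $\gamma_i^Q$ is independent of the choice. Rule (2) then iterates: each application of $\phi_Q$ shifts the second coordinate by $-2$, as long as the image remains in $\Phi^+$. Hence the residue-$i$ column occupies exactly $\{(i, \xi(i)-2k) \mid 0 \le k \le r^Q_i - 1\}$, where $r^Q_i \seteq \#\{\,k \ge 0 \mid \phi_Q^k(\gamma_i^Q) \in \Phi^+\,\}$. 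This already proves the second assertion and reduces the proposition to the formula for $r^Q_i$.

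Next, by Kostant's theorem every $\phi_Q$-orbit on $\Phi$ has exact length $\mathsf{h}^\vee$ (equal to the Coxeter number $\mathsf{h}$ in simply-laced type), so the orbit of $\gamma_i^Q$ contains $r^Q_i$ positive and $\mathsf{h}^\vee - r^Q_i$ negative roots. The claim is therefore equivalent to the sign-balance identity $2r^Q_i - \mathsf{h}^\vee = a^Q_i - b^Q_i$. I would prove this by induction on the number of arrows on the Dynkin path between $i$ and $i^*$ whose orientation in $Q$ is flipped relative to a fixed reference quiver $Q_0$. For the base case, take $Q_0$ with every arrow on the $i$-to-$i^*$ path directed toward $i^*$ (so $a^{Q_0}_i = 0$ and $b^{Q_0}_i = d(i,i^*)$), and verify the identity by a direct expansion of $\phi_{Q_0}^k(\gamma_i^{Q_0})$. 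For the inductive step, use reflection equivalence (Definition~\ref{def: ref equi}) between $[Q]$ and $[s_jQ]$ for a sink $j$ of $Q$; tracking how the orbit of $\gamma_i^Q$ under $\phi_Q$ is related to that of $\gamma_i^{s_jQ}$ under $\phi_{s_jQ}$ via the $s_j$-image shows that the number of positive roots in the orbit changes in parallel with $(a^Q_i - b^Q_i)/2$.

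The main obstacle is the inductive step, specifically tracking precisely which root crosses the $\Phi^+/\Phi^-$ divide under the reflection at $j$, since reflecting at an internal sink $j$ may reverse two path-arrows simultaneously and yield a compound change in $a^Q_i - b^Q_i$. A careful case analysis according to whether $j \in \{i, i^*\}$, $j$ is an internal vertex of the path, or $j$ lies off the path should resolve the matter; alternatively, one may derive the formula module-theoretically by interpreting $r^Q_i$ as the length of the $\tau$-orbit in the AR-quiver of $\C Q$-modules connecting a distinguished injective and projective, as in \cite{Gab80, R80}.
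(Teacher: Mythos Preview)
The paper does not actually prove this proposition: it is stated with citations to \cite{B99, Gab80, R80} and no argument is given. So there is no ``paper's own proof'' to compare against; the result is imported from the Auslander--Reiten theory literature.

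Your proposal is a reasonable self-contained approach. The first reduction is correct: the Algorithm in the paper places $\gamma_i^Q$ at $(i,\xi(i))$ and then iterates $\phi_Q$ shifting the second coordinate by $-2$ until the image leaves $\Phi^+$, so the residue-$i$ column is exactly the positive part of the $\phi_Q$-orbit through $\gamma_i^Q$; this gives the second assertion immediately. Invoking Kostant's theorem that every Coxeter orbit on $\Phi$ has length $\mathsf{h}$ (which equals $\mathsf{h}^\vee$ in simply-laced type) correctly reduces the formula for $r^Q_i$ to the sign-balance identity $2r^Q_i-\mathsf{h}^\vee=a^Q_i-b^Q_i$.

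Where you are honest about the difficulty is the right place: the inductive step under a sink reflection $s_j$ is the real content, and your sketch does not yet pin down which root in the orbit changes sign. A cleaner route than tracking individual orbit roots is to use the known identity $\phi_{s_jQ}=s_j\,\phi_Q\,s_j^{-1}$ together with the explicit description of $\gamma_i^{s_jQ}$ in terms of $\gamma_i^Q$; then the positive-root count changes by $0$ or $\pm 1$ according to whether $j$ lies on the $i$--$i^*$ path and in which direction the flipped arrow was pointing, matching the change in $a^Q_i-b^Q_i$ exactly. Alternatively, as you note at the end, the standard argument in the cited references is module-theoretic: $r^Q_i$ is the length of the $\tau$-orbit from the injective $I(i)$ to the projective $P(i^*)$ in $\mathrm{mod}\,\C Q$, and the formula then falls out of the explicit description of these modules in terms of paths in $Q$. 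Either completion would be acceptable; what you have written is a correct outline with the hard step identified but not executed.
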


The reflection functor $r_i: [Q] \mapsto [Q]r_i$ for a sink $i$ of $[Q]$ can be understood by the map from $\Gamma_Q$ to $\Gamma_{i Q}$ described
using coordinates and the dual Coxeter number $\mathsf{h}^\vee$ as follows:
\begin{algorithm} \label{alg: Ref Q}
Let $\mathsf{h}^\vee$ be the dual Coxeter number associated to $Q$.
\begin{enumerate}
\item[{\rm (A1)}] Remove the vertex $(i,p)$ such that $\Omega_Q(\al_i)=(i,p)$ and the arrows entering into $(i,p)$ in $\Gamma_Q$.
\item[{\rm (A2)}] Add the vertex $(i^*,p-\mathsf{h}^\vee)$ and the arrows to all $(j,p-\mathsf{h}^\vee+1)$ for $j$ adjacent to $i^*$ in $\Delta$.
\item[{\rm (A3)}] Label the vertex $(i^*,p-\mathsf{h}^\vee)$ with $\al_i$ and change the labels $\be$ to $s_i(\be)$ for all $\be \in \Gamma_Q \setminus \{\al_i\}$.
\end{enumerate}
\end{algorithm}

For $\be \in \Phi^+$ with $\phi_Q(\be) \in \Phi^+$, the AR-quiver $\Gamma_Q$ satisfies {\it the additive property} in the following sense:
\begin{align}\label{eq: addtive property}
\be+\phi_Q(\be) = \sum_{\Omega_Q(\ga)=(j,p-1)} \ga,
\end{align}
where $\Omega_Q(\be)=(i,p-2)$ and $j$ runs over all vertices  with coordinates $(j,p-1)$ such that $j$ is adjacent with $i$ in $\Delta$.

Interestingly, $\Gamma_Q$ can be also understood as a visualization of $\prec_{Q} \seteq \prec_{[Q]}$ and is closely related to $[Q]$:

\begin{theorem} \cite{B99,OS15,R96}
\begin{enumerate}
\item[{\rm (1)}] $\al \prec_Q \be$ if and only if there exists a path from $\be$ to $\al$ inside of $\Gamma_Q$.
\item[{\rm (2)}] By reading residues of vertices in a way {\it compatible with} arrows, we can obtain all reduced words $\ii_0 \in [Q]$.
\end{enumerate}
\end{theorem}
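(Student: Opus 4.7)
The strategy is to prove (1) first by tracking $p$-coordinates via $\Omega_Q$, and then derive (2) inductively using Algorithm \ref{alg: Ref Q}.

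The crucial preliminary step is a monotonicity lemma: for any $\ii_0 = i_1 \cdots i_\N$ in $[Q]$, writing $\Omega_Q(\be^{\ii_0}_k) = (i_k, p_k)$, the sequence $p_1 \geq p_2 \geq \cdots \geq p_\N$ is weakly decreasing, and whenever $p_k = p_{k'}$ the residues $i_k, i_{k'}$ are non-adjacent in $\Delta$. I would prove this by induction on $k$, using the adaptedness of $\ii_0$ (each $i_k$ is a sink of $s_{i_{k-1}} \cdots s_{i_1} Q$) together with a parity argument: the height function $\xi$ takes opposite parities on adjacent vertices, and $\phi_Q$ shifts $p$-coordinates by $-2$ on each orbit, so roots sharing a $p$-slice never carry adjacent residues.

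For the ($\Leftarrow$) direction of (1), an arrow $\be \to \al$ with coordinates $(i,p) \to (j, p+1)$ forces $p_\al > p_\be$, and by the monotonicity lemma $\al$ appears strictly before $\be$ in every reduced word $\ii_0 \in [Q]$. Hence $\al \prec_Q \be$, and the statement extends along paths by transitivity. For ($\Rightarrow$), assume $\al \prec_Q \be$: the case $p_\al = p_\be$ would force non-adjacent residues and thus commutation, letting us produce reduced words in $[Q]$ realizing both orders between $\al$ and $\be$ (contradiction), so $p_\al > p_\be$ strictly. I would then build the path $\be \to \cdots \to \al$ inductively, using the additive property \eqref{eq: addtive property} at each intermediate $p$-slice to locate a neighbor at the next level that still satisfies the comparison with $\al$; the convexity of $\prec_Q$ guarantees that at least one such neighbor exists among the summands. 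The main technical obstacle will be this path-construction step: verifying that convexity rules out all summands lying strictly above $\al$ in the order requires careful bookkeeping around the boundary of each slice.

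Part (2) then follows readily. Given any $\ii_0 \in [Q]$, the sequence $\be^{\ii_0}_1, \ldots, \be^{\ii_0}_\N$ is, by part (1), a linear extension of the poset induced by paths in $\Gamma_Q$, so its residue sequence is a reading compatible with arrows. Conversely, given a compatible reading $(\ga_1, \ldots, \ga_\N)$, the vertex $\ga_1$ has no incoming arrow and hence lies in the initial slice of $\Gamma_Q$, so its residue $i_1$ is a sink of $Q$; deleting $\ga_1$ matches the transition $\Gamma_Q \mapsto \Gamma_{s_{i_1}Q}$ of Algorithm \ref{alg: Ref Q}, and induction on $\N$ shows the full residue sequence gives a reduced word of $w_0$ adapted to $Q$.
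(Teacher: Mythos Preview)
The paper does not prove this theorem; it is quoted from \cite{B99,OS15,R96}, and in fact it is the special case $[\ii_0]=[Q]$ of the later Theorem~\ref{thm: OS14} (itself cited from \cite{OS15}). So there is no in-paper proof to compare against, and I will assess your argument directly.

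Your crucial preliminary step is false as stated. The claim that the $p$-coordinates $p_1,\dots,p_\N$ are weakly decreasing for \emph{every} $\ii_0\in[Q]$ fails already in type $A_4$. Take $Q:1\to 2\to 3\to 4$ (unique sink $4$), height $\xi(i)=i-1$. The word $\ii_0=4\,3\,2\,1\,4\,3\,2\,4\,3\,4$ is adapted to $Q$: at step~$4$ the current quiver is $1\leftarrow 2\to 3\to 4$ with sinks $\{1,4\}$, so $i_4=1$ is legal. But $\be_4=[1,4]$ sits at $\Omega_Q([1,4])=(1,0)$ while $\be_5=\phi_Q(\al_4)=\al_3$ sits at $\Omega_Q(\al_3)=(4,1)$, giving $p_4=0<1=p_5$. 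The parity observation is fine, but ``weakly decreasing'' is not; what is actually true is only that $p_k>p_l$ whenever there is an \emph{arrow} $\be_l\to\be_k$, and that already requires the result you are proving. Consequently your $(\Leftarrow)$ argument, which deduces $k<l$ from $p_k>p_l$ via global monotonicity, does not go through.

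The $(\Rightarrow)$ direction has a separate problem that you yourself flag: the inductive step ``use the additive property plus convexity to find a neighbor $\gamma$ of $\be$ with $\al\preceq_Q\gamma$'' is not justified. The additive identity \eqref{eq: addtive property} relates $\be+\phi_Q(\be)$ to a \emph{sum} of several roots in the adjacent slice, and convexity of $\prec_Q$ controls only pairs $\al,\be$ with $\al+\be\in\PR$; neither yields the existence of the needed neighbor. The standard route (and the one in the cited references) avoids coordinates entirely and works with the heap of the commutation class: one shows that the covering relations of $\prec_{[\ii_0]}$ are exactly the pairs $(\be_k,\be_l)$ satisfying (Ar1)--(Ar3) of Algorithm~\ref{Alg_AbsAR}, so the Hasse diagram of $\prec_{[\ii_0]}$ \emph{is} $\Upsilon_{[\ii_0]}$, and then one identifies $\Upsilon_{[Q]}$ with $\Gamma_Q$. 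Your treatment of part~(2) is essentially correct once~(1) is in hand, but since both directions of~(1) currently rest on the false monotonicity lemma, the proposal needs to be rebuilt on the heap description of covers rather than on $p$-coordinate monotonicity.
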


\begin{example}
The AR-quiver $\Gamma_Q$ associated to $\xymatrix@R=3ex{ *{ \bullet }<3pt> \ar@{<-}[r]_<{1}  &*{\bullet}<3pt>
\ar@{->}[r]_<{2}  &*{\bullet}<3pt>
\ar@{<-}[r]_<{3} &*{\bullet}<3pt>
\ar@{<-}[r]_<{4}  & *{\bullet}<3pt> \ar@{-}[l]^<{\ \ 5}
}$ with the height function such that $\xi(1)=0$ is given as follows:
\[ \scalebox{0.84}{\ \xymatrix@C=2ex@R=1ex{
( i,p ) &-6&-5&-4&-3&-2&-1&0\\
1& [5]\ar@{->}[dr] & & [4]\ar@{->}[dr] & &[2,3] \ar@{->}[dr] & &[1]  \\
2&&[4,5]\ar@{->}[ur]\ar@{->}[dr]& &[2,4] \ar@{->}[ur]\ar@{->}[dr]&& [1,3] \ar@{->}[ur]\ar@{->}[dr]\\
3&&& [2,5]\ar@{->}[ur]\ar@{->}[dr] && [1,4]\ar@{->}[ur]\ar@{->}[dr] && [3]\\
4&&  [2] \ar@{->}[ur]\ar@{->}[dr]  && [1,5] \ar@{->}[ur] \ar@{->}[dr] && [3,4] \ar@{->}[ur]\\
5&&&[1,2] \ar@{->}[ur] &&[3,5] \ar@{->}[ur]}}.
\]
Here $[a,b]$ $(1 \le a,b \le 5)$ denotes the positive root $\sum_{k=a}^b \al_k$.
\end{example}

With the above theorem, we can extend the correspondences in \eqref{eq: 2n-1many} with $\{ \prec_Q \}$ and $\{ \Gamma_Q \}$:
\begin{align}\label{eq: 2n-1many 2}
\raisebox{3.6em}{\xymatrix@C=8ex@R=4ex{
& \{ \prec_Q \} \ar@{<->}[dl]_{1-1} \ar@{<->}[d]_{1-1} \ar@{<->}[dr]^{1-1}  \\
\{ [Q] \} \ar@{<->}[r]^{1-1} & \{ \phi_Q \} \ar@{<->}[r]^{1-1} & \{ Q \} \\
& \{ \Gamma_Q \} \ar@{<->}[ul]^{1-1}\ar@{<->}[ur]_{1-1} \ar@{<->}[u]^{1-1}}} \text{for any $[Q] \in \lf Q \rf$}.
\end{align}

Note that there are reduced words which are not adapted to any Dynkin quivers (see $\ii_0$ in Example \ref{ex: Coxeter composition}). In \cite{OS15}, the authors introduced combinatorial AR-quivers $\Upsilon_{[\ii_0]}$,
for {\it any} commutation class $[\ii_0]$ of any finite type. The quiver can be understood as a generalization of $\Gamma_Q$:

\begin{algorithm} \label{Alg_AbsAR}
Let $\ii_0=(i_1 i_2 i_3 \cdots i_{\N})$ be
a reduced expression of the longest element $w_0\in W$. Then we can label
$\PR$ as follows:
\begin{align}\label{eq: computing for label}
\beta^{\ii_0}_k = s_{i_1}\cdots s_{i_{k-1}}(\al_{k}) \quad \text{ for } 1 \le k \le \N.
\end{align}
The quiver $\Upsilon_{\ii_0}=(\Upsilon^0_{\ii_0},
\Upsilon^1_{\ii_0})$ associated to $\ii_0$ is constructed in the
following algorithm:
\begin{enumerate}
\item[{\rm (Q1)}] $\Upsilon_{\ii_0}^0$ consists of $\N$ vertices labeled by $\beta^{\ii_0}_1, \cdots, \beta^{\ii_0}_{\N}$.
\item[{\rm (Q2)}] The quiver $\Upsilon_{\ii_0}$ consists of $|I|$ residues and each vertex $\beta^{\ii_0}_k\in \Upsilon^0_{\ii_0}$ lies in the $i_k$-th residue.
\item[{\rm (Q3)}] There is an arrow from $\beta^{\ii_0}_k$ to $\beta^{\ii_0}_j$ if the followings hold:
\begin{enumerate}
\item[{\rm (Ar1)}] two vertices $i_k$ and $i_j$ are connected in the Dynkin diagram,
\item[{\rm (Ar2)}] $ j= \max \{ j'\, |\, j' <k, \, i_{j'}=i_j \} $,
\item[{\rm (Ar3)}] $ k= \min \{ k'\, |\, k' >j, \, i_{k'}=i_k \} $.
\end{enumerate}
\item[{\rm (Q4)}] Assign the color $m_{jk}=-(\alpha_{i_j}, \alpha_{i_k})$ to each arrow $\beta^{\ii_0}_k\to \beta^{\ii_0}_j$ in {\rm (Q3)}; that is,
$\beta^\redex_k \xrightarrow{m_{jk}} \beta^\redex_j$.  Replace
$\xrightarrow{1}$ by $\rightarrow$,  $\xrightarrow{2}$ by
$\Rightarrow$ and  $\xrightarrow{3}$ by  $\Rrightarrow$.
\end{enumerate}
\end{algorithm}

\begin{theorem} \cite{OS15} \label{thm: OS14}
Let us choose any commutation class $[\ii_0]$ and a reduced word $\ii_0$ in $[\ii_0]$.
\begin{enumerate}
\item[{\rm (1)}] The construction of $\Upsilon_{\ii_0}$ does depend only on its commutation class $[\ii_0]$ and hence $\Upsilon_{[\ii_0]}$
is well-defined.
\item[{\rm (2)}] $\al \prec_{[\ii_0]} \be$ if and only if there exists a path from $\be$ to $\al$ in $\Upsilon_{[\ii_0]}$.
\item[{\rm (3)}] By defining the notion, standard tableaux of shape $\Upsilon_{[\ii_0]}$, every reduced word $\ii'_0 \in [\ii_0]$
corresponds to a standard tableau of shape $\Upsilon_{[\ii_0]}$ and can be obtained by reading residues in a way compatible with the tableau.
\item[{\rm (4)}] When $[\ii_0]=[Q]$, $\Upsilon_{[Q]}$ is isomorphic to $\Gamma_Q$ as quivers.
\end{enumerate}
\end{theorem}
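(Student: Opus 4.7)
The plan is to establish the four assertions in the stated order, leveraging the explicit description of $\Upsilon_{\ii_0}$ given by Algorithm \ref{Alg_AbsAR}. For \textbf{(1)}, I would verify that $\Upsilon_{\ii_0}$ is invariant under a single commutation move exchanging adjacent letters $i_k$ and $i_{k+1}$ with $(\al_{i_k},\al_{i_{k+1}})=0$. The labels $\be^{\ii_0}_j$ for $j\ne k,k+1$ are unaffected since the two adjacent reflections commute, while the labels at positions $k$ and $k+1$ are merely interchanged; conditions (Q1)--(Q4) are insensitive to the order between two non-adjacent residues, so the quiver itself, both as a vertex set and as an arrow set, is unchanged.

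For \textbf{(2)}, the key observation is that $\al \prec_{[\ii_0]} \be$ amounts to $\al$ appearing before $\be$ in the total order $<_{\ii'_0}$ for every $\ii'_0 \in [\ii_0]$. The forward direction follows since an arrow $\be^{\ii_0}_l \to \be^{\ii_0}_k$ produced by (Q3) requires $i_k$ and $i_l$ to be adjacent in $\Delta$, which in turn prevents their positions from being swapped under any commutation; iterating along the path $\be \to \cdots \to \al$ preserves the relative order of $\al$ and $\be$ in every $\ii'_0 \in [\ii_0]$. The converse is the subtle direction: assuming no path $\be \to \al$ exists in $\Upsilon_{[\ii_0]}$, I would produce a greedy sequence of commutation moves, guided by the absence of forcing arrows, that transports some reduced word in $[\ii_0]$ to one in which $\be$ precedes $\al$, contradicting $\al \prec_{[\ii_0]} \be$.

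For \textbf{(3)}, one defines a standard tableau of shape $\Upsilon_{[\ii_0]}$ to be a bijection $\tau:\Upsilon^0_{[\ii_0]}\to\{1,\ldots,\N\}$ that is a linear extension of the order reverse to the arrows. Each $\ii_0'\in[\ii_0]$ yields such a tableau by $\tau(\be^{\ii_0'}_k)=k$, well-defined thanks to (1); conversely, any such tableau is realized by reading off its residues in the order prescribed by $\tau$, producing a reduced word in $[\ii_0]$ and giving the bijection. For \textbf{(4)}, when $[\ii_0]=[Q]$ I would identify $\Upsilon_{[Q]}$ with $\Gamma_Q$ by matching vertex sets with $\PR$ via dimension vectors and checking that the adapted hypothesis makes the classical coordinate-based AR-arrow rule (involving $(i,p)$ and adjacency in $\Delta$) coincide with the minimality/maximality description in (Q3); this reduces to the observation that, for an adapted reduced word, the previous occurrence of an adjacent residue corresponds precisely to the predecessor in the mesh. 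The main obstacle is the converse direction of (2), which demands an explicit construction of a commutation-equivalent reduced word reversing a prescribed pair, and thus a finer understanding of how $[\ii_0]$ is traversed when no arrow forces the relative order of $\al$ and $\be$.
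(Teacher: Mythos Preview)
The paper does not prove this theorem; it is quoted from \cite{OS15} as an external result, so there is no in-paper argument to compare your proposal against. Your outline is a reasonable reconstruction of how such a result is typically established, and the individual steps you describe for (1), (3), and the forward direction of (2) are standard and essentially correct.

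That said, your proposal is a plan rather than a proof, and you have correctly located the real content in the converse direction of (2). The assertion ``if no path $\be\to\al$ exists then some $\ii_0'\in[\ii_0]$ has $\be$ before $\al$'' is the heart of the theorem, and the phrase ``greedy sequence of commutation moves'' hides the work: one must show that the absence of a forcing arrow chain genuinely permits reordering, which typically requires either an induction on the position of $\be$ together with a careful case analysis of which neighbors of $\be$ can be commuted past it, or an appeal to the general theory of linear extensions of the heap poset associated to a commutation class. Without that argument, (2) is not established. Likewise for (4), the matching of (Q3) with the mesh structure of $\Gamma_Q$ is more delicate than a one-line observation; one needs the compatibility of the adapted word with the Auslander--Reiten translate $\phi_Q$, which is what forces the ``previous occurrence of an adjacent residue'' to coincide with the coordinate shift $(i,p)\to(j,p-1)$.
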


\begin{example} \label{D4 non-adapted D-1}
Let $\ii_0 = (123124123124)$ be a reduced word of $w_0$ of type $D_4$. Note that $\ii_0$ is not adapted to any Dynkin quiver of
type $D_4$. We can draw the combinatorial AR-quiver
$\Upsilon_{[\ii_0]}$ as follows:
$$\scalebox{0.84}{\xymatrix@C=1ex@R=1ex{
1&&& \alpha_1 \shp \alpha_2 \shp \alpha_4\ar@{->}[dr] && \alpha_3 \ar@{->}[dr]&& \alpha_2 \ar@{->}[dr] && \alpha_1\\
2&& \alpha_2 \shp \alpha_4 \ar@{->}[ur]\ar@{->}[dr] && \alpha_1 \shp
\alpha_2 \shp \alpha_3 \shp \alpha_4 \ar@{->}[ur]\ar@{->}[ddr]
 && \alpha_2 \shp \alpha_3 \ar@{->}[ur]\ar@{->}[dr]&& \alpha_1 \shp \alpha_2 \ar@{->}[ur]\\
3& && \alpha_2 \shp \alpha_3 \shp \alpha_4 \ar@{->}[ur]&& && \alpha_1 \shp \alpha_2 \shp \alpha_3 \ar@{->}[ur] \\
4& \alpha_4 \ar@{->}[uur] &&  && \alpha_1 \shp 2\alpha_2 \shp
\alpha_3 \shp \alpha_4 \ar@{->}[uur]&& }}
$$
\end{example}

\section{Twisted adapted cluster point of type $A_{2n+1}$}

In this section, we shall introduce a special $r$-cluster point, say the {\it twisted cluster point}, associated to the set of twisted Coxeter elements related to $\vee$
in \eqref{eq: B_n}.
As we have seen in Proposition \ref{prop: number tCox elts}, the number of all twisted Coxeter elements of $W_{2n+1}$ related to $\vee$ in \eqref{eq: B_n}
is the same as $4 \times 3^{n-1}$ while the number of all Coxeter elements and hence the number of distinct commutation classes in $\lf Q \rf$ is
the same as $2^{2n}$. In this section, we show that the number of distinct commutation classes in the twisted cluster point is also $2^{2n}$
even though the number of twisted Coxeter elements is $4 \times 3^{n-1}$.

\medskip

Let us consider the following word $\bi_0$ of $W$ of the finite type $A_{2n+1}$:
\[ \bi_0 = \prod_{k=0}^{2n} (1\ 2\ 3\cdots n+1)^{k\vee}.\]
Here
\begin{align} \label{eq: vee def}
& (j_1 \cdots j_n)^\vee \seteq j^\vee_1 \cdots j^\vee_n \text{ and }
(j_1 \cdots j_n)^{k \vee} \seteq  ( \cdots ((j_1 \cdots j_n \underbrace{ )^\vee )^\vee \cdots )^\vee}_{ \text{ $k$-times} }.
\end{align}

\begin{observation}
Let us denote by $\ut=s_1s_2\cdots s_{n}s_{n+1}s_{2n+1}s_{2n}\cdots s_{n+1}$ and
$\htau=s_1s_2\cdots s_{n}s_{n+1}$. Then one can easily check that
\begin{align*}
& \htau = \left( \begin{matrix} 1& 2 & \ldots & n+1 &n+2& n+3 & \ldots & 2n+1 & 2n+2 \\ 2 & 3 & \ldots & n+2 & 1 & n+3 & \ldots & 2n+1 & 2n+2  \end{matrix} \right), \\
& \ut = \left( \begin{matrix} 1& 2 & \ldots & n+1 & n+2 & n+3 & \ldots & 2n+1 & 2n+2 \\ 2 & 3 & \ldots & 2n+2 & n+2 & 1 & \ldots & 2n & 2n+1  \end{matrix} \right),
\end{align*}
with the two-line notation. Then one can easily check that
$$ \bi_0= \ut^n\htau = \left( \begin{matrix} 1& 2 & \ldots & 2n-1 & 2n \\ 2n & 2n-1 & \ldots & 2 & 1   \end{matrix} \right),$$
which is the same as $w_0$. Note that $\bi_0$ is reduced since the length of the word $\bi_0$
is the same as the number of positive roots. Thus, from now on, we denote by $\ii_0$ instead of $\bi_0$, i.e.,
\[ \ii_0 = \prod_{k=0}^{2n} (1\ 2\ 3\cdots n+1)^{k\vee} \]
Furthermore, for $\vee$ in \eqref{eq: B_n},
\begin{itemize}
\item the $r$-cluster $\lf \ii_0 \rf$ is $\vee$-foldable and any reduced word in $\lf \ii_0 \rf$
is not adapted to any Dynkin quiver since
$$\mathsf{C}^{\vee}_{\lf \ii_0 \rf}= ( \underbrace{2n+1, \ldots ,2n+1}_{ n+1\text{-times}} ) \text{ and }
\mathsf{C}^{\vee}_{\lf Q \rf}= (2n+2, \ldots, 2n+2, n+1), $$
\item $s_1s_2\cdots s_{n}s_{n+1} \vee$ is a twisted Coxeter element.
\end{itemize}
\end{observation}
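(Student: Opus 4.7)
The observation decomposes into several routine assertions: the two-line notations for $\htau$ and $\ut$, the identity $\ii_0 = \ut^n\htau = w_0$ together with reducedness of $\ii_0$, the $\vee$-foldability of $\lf \ii_0 \rf$, the non-adaptedness of any reduced word in $\lf \ii_0 \rf$, and the twisted-Coxeter recognition of $s_1 s_2 \cdots s_{n+1}\vee$. The plan is to treat each as an elementary bookkeeping exercise; the only substantive computation is the cycle structure of $\ut$, and even that is direct once $\htau$ is handled.

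For the two-line notations I would identify $s_i$ with the transposition $(i,\, i+1)$ and compute the action of $\htau$ and $\ut$ on each $j\in\{1,\ldots,2n+2\}$ by applying the simple reflections right-to-left. This gives $\htau$ as the cycle $(1,\, 2,\, \ldots,\, n+2)$ fixing $n+3,\ldots,2n+2$. The same device applied to the right block of $\ut$ shows that $s_{2n+1}s_{2n}\cdots s_{n+1}$ is the cycle $(n+1,\, 2n+2,\, 2n+1,\, \ldots,\, n+2)$ fixing $1,\ldots,n$, and composing with $\htau$ produces a single $(2n+1)$-cycle $\ut = (1,\, 2,\, \ldots,\, n+1,\, 2n+2,\, 2n+1,\, \ldots,\, n+3)$ with $n+2$ as its unique fixed point, matching the stated matrix.

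To establish $\ii_0 = \ut^n\htau$ as Weyl-group elements, I would parse the product $\prod_{k=0}^{2n}(1\ 2\ \cdots\ n+1)^{k\vee}$ into $n$ consecutive pairs plus a trailing block: since $\vee$ is an involution, the pair indexed by $(2m,\, 2m+1)$ reads exactly $(1\ 2\ \cdots\ n+1)\,(2n+1\ 2n\ \cdots\ n+1)$, which is the syllable of $\ut$, while the leftover block at $k=2n$ is $\htau$. The identity $\ut^n\htau = w_0$ then follows by a direct case-check on $(\ut^n\htau)(j)$ using the $(2n+1)$-cycle structure: the $n$-fold shift inside the cycle, precomposed with $\htau$ (and with the fixed point $n+2$ of $\ut$ absorbing $\htau(n+1) = n+2$), sends $j \mapsto 2n+3-j$, which is $w_0$. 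Reducedness is then automatic, since the length of $\ii_0$ is $(n+1)(2n+1) = \binom{2n+2}{2} = \ell(w_0)$. The only place requiring any care, and the closest thing to an obstacle here, is the translation of cycle positions back to the standard labels in $\{1,\ldots,2n+2\}$.

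The remaining three claims are immediate. Each block $(1\ 2\ \cdots\ n+1)^{k\vee}$ contains exactly one letter from each of the $n+1$ $\vee$-orbits $\bar 1,\ldots,\overline{n+1}$, so the $2n+1$ blocks give $\mathsf{C}^{\vee}_{\lf \ii_0 \rf} = (2n+1,\ldots,2n+1)$; this is constant across orbits (so $\lf \ii_0 \rf$ is $\vee$-foldable by Definition~\ref{def: vee-foldable}) and differs from $\mathsf{C}^{\vee}_{\lf Q \rf} = (2n+2,\ldots,2n+2, n+1)$. Because the $\vee$-Coxeter composition depends only on the $r$-cluster point, $\lf \ii_0 \rf$ is disjoint from the adapted cluster point, yielding the non-adaptedness of every $\ii_0' \in \lf \ii_0 \rf$. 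Finally, $s_1 s_2 \cdots s_{n+1}\vee$ matches Definition~\ref{def: twisted Coxeter} because the $\vee$-orbits of $\Pi$ are the pairs $\{\alpha_i,\alpha_{2n+2-i}\}$ for $1\le i\le n$ together with the singleton $\{\alpha_{n+1}\}$, and $s_1 s_2 \cdots s_{n+1}$ is the product of the simple reflections corresponding to one representative from each orbit.
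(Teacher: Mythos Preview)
Your proposal is correct and follows precisely the approach the paper intends: the observation is stated with ``one can easily check'' in lieu of a proof, and you have supplied exactly the direct permutation computation and length count that this phrase invites. Your parsing of $\bi_0$ into $n$ copies of the $\ut$-syllable followed by $\htau$, the cycle description $\ut = (1,2,\ldots,n+1,2n+2,2n+1,\ldots,n+3)$ with fixed point $n+2$, and the orbit-count argument for $\mathsf{C}^\vee_{\lf\ii_0\rf}$ versus $\mathsf{C}^\vee_{\lf Q\rf}$ are all accurate and constitute the natural verification the paper leaves to the reader.
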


In Example \ref{ex: Coxeter composition}, $\ii_0'$ is adapted to
$$  Q = \xymatrix@R=3ex{ *{ \bullet }<3pt> \ar@{->}[r]_<{1}  &*{\bullet}<3pt>
\ar@{->}[r]_<{2}  &*{\bullet}<3pt>
\ar@{->}[r]_<{3} &*{\bullet}<3pt>
\ar@{->}[r]_<{4}  & *{\bullet}<3pt> \ar@{-}[l]^<{\ \ 5}
} \quad \text{ and } \quad \mathsf{C}^{\vee}_{\lf\ii_0\rf} = (6,6,3)$$
while $\ii_0$ is not adapted to any $Q$,
$$   \ii_0 = \prod_{k=0}^{4} (1\ 2\ 3)^{k\vee} \quad \text{ and } \quad  \mathsf{C}^{\vee}_{\lf\ii_0\rf} = (5,5,5)$$ which implies that $\lf \ii_0' \rf$ is foldable.

\begin{definition} \label{def: twisted cluster} \hfill
\begin{enumerate}
\item The $r$-cluster point $\lf \Qd \rf \seteq \lf \ii_0 \rf$ is called the {\it twisted adapted cluster point} of type $A_{2n+1}.$
\item A class $[\ii_0']\in \lf \Qd \rf$ is called a {\it twisted adapted class} of type $A_{2n+1}.$
\end{enumerate}
\end{definition}

Consider the monoid homomorphism
\[ \PP: \langle  I_{2n+1} \rangle \to \langle I_{2n} \rangle, \]
such that
\[ \PP(i)=i, \ \PP(j)=j-1, \ \PP(n+1)={\rm id} \quad\text{ for } i=1, \cdots, n \text{ and } j=n+2, \cdots, 2n+1.\]

The following lemma is obvious and useful to prove propositions in this section.

\begin{lemma} \label{Lemma} Let $m$ be any non-negative integer.
\begin{enumerate}
\item[{\rm (1)}]
Let $\ii=i_1\  i_2\cdots i_l$ be a reduced word for $w \in W$ of type $A_m$ and let $1\leq k_1<k_2\leq l $.
If $\ii' = i_{\sigma(1)}i_{\sigma(2)}\cdots i_{\sigma(l)}\in [\ii]$ for $\sigma \in \mathfrak{S}_l$ satisfies
\[ \sigma^{-1}(k_1)>\sigma^{-1}(k_2)\]
then, for all $ k_1<k_3<k_2$, we have
\begin{equation*}
s_{ i_{k_1}}s_{i_{k_3}}=s_{i_{k_3}}s_{i_{k_1}}\quad \text{ or } \quad s_{i_{k_2}}s_{i_{k_3}}=s_{i_{k_3}}s_{i_{k_2}}
\end{equation*}
and
\begin{equation*}
s_{i_{k_1}}s_{i_{k_2}}=s_{i_{k_2}}s_{i_{k_1}}.
\end{equation*}
\item[{\rm (2)}]
Let $J=\{i,i+1,i+2\}\subset I_m$ of $A_m$ and $\ii$ be a reduced word for $w$. If there is an $i+1$ between any adjacent $i$ and $i+2$ in $\ii$ then
\[ \ii_{|J} = \ii'_{|J},\quad  \text{ for any } \ii'\in [\ii].\]
In other words, any $\ii'\in [\ii]$ can be obtained from $\ii$ by other commutation relations than $s_i s_{i+2} = s_{i+2} s_{i}.$
\end{enumerate}
\end{lemma}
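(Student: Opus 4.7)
The plan is to prove both parts by analyzing a sequence of elementary commutation moves $\ii = \ii^{(0)} \to \ii^{(1)} \to \cdots \to \ii^{(N)} = \ii'$ connecting the two reduced words within their common commutation class. The governing principle, used throughout, is that without braid moves the relative order of any two letters can change only when those two letters are adjacent in some intermediate word and then get swapped, which forces their simple reflections to commute. I will use this principle in two different ways: for Part (1) by following one triple of letters, and for Part (2) by propagating a global invariant.

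For Part (1), I would pick the \emph{first} step $t$ at which the descendants of the letters originally at positions $k_1$ and $k_2$ reverse their relative order. Just before step $t$ these two letters sit adjacently in the current word, and the move at step $t$ is a commutation between them; this immediately yields $s_{i_{k_1}}s_{i_{k_2}} = s_{i_{k_2}}s_{i_{k_1}}$. For each $k_3$ with $k_1 < k_3 < k_2$, the descendant of $i_{k_3}$ started between these two letters in $\ii$ but, at the moment of the first swap, must have exited the interval and must now lie to one side. If it has crossed the (descendant of) $i_{k_1}$, then $s_{i_{k_1}}s_{i_{k_3}} = s_{i_{k_3}}s_{i_{k_1}}$; if it has crossed the (descendant of) $i_{k_2}$, then $s_{i_{k_2}}s_{i_{k_3}} = s_{i_{k_3}}s_{i_{k_2}}$. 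This gives the required disjunction.

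For Part (2), I would recast the hypothesis as the invariant that between every pair of occurrences of $i$ and $i+2$ which are adjacent in the subword $\ii_{|\{i,i+2\}}$, there lies at least one occurrence of $i+1$ in the corresponding segment of $\ii$. I would then show that this invariant is preserved under every commutation move inside $[\ii]$. Since $s_{i+1}$ does not commute with either $s_i$ or $s_{i+2}$ in type $A$, an $i+1$ sandwiched between such a pair cannot be transported past them, so the only commutation that could alter $\ii_{|J}$ would be a direct swap $s_i s_{i+2} = s_{i+2} s_i$; but such a swap needs $i$ and $i+2$ to be adjacent in some intermediate word, directly contradicting the invariant. A short case analysis on the residues of the two letters being swapped (neither, exactly one, or both in $J$) then confirms persistence of the invariant and rules out the $(i,i+2)$-swap altogether, giving $\ii_{|J} = \ii'_{|J}$.

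The delicate point in Part (1) is justifying that the descendant of $i_{k_3}$ really must cross $i_{k_1}$ or $i_{k_2}$ rather than merely drift laterally among unrelated letters; but this follows from the very same governing principle, since the property of lying strictly between two designated letters in the current word can be destroyed only by an order-reversal with one of them. The delicate point in Part (2) is the preservation of the invariant when exactly one of the swapped letters belongs to $J$; here one verifies that moving an $i$ (resp.\ $i+2$) past some $b \notin \{i-1,i,i+1,i+2\}$ does not change the set of letters strictly between any adjacent $(i,i+2)$-pair in $\ii_{|\{i,i+2\}}$, so no $i+1$ is ever lost from such an interval, and the invariant carries over to the new word.
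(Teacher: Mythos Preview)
The paper gives no proof of this lemma, merely calling it ``obvious''. Your argument is correct and fills in precisely what is needed. For Part~(1) your first-crossing argument through a chain of elementary commutation moves is the natural one; note only that $\sigma$ must be read as the canonical tracking bijection induced by the commutation moves (for an arbitrary $\sigma$ that merely matches repeated letter values the conclusion can fail, e.g.\ $\ii=\ii'=1\,2\,1$ in type $A_2$ with $\sigma$ the transposition of the two end positions, $k_1=1$, $k_2=3$, $k_3=2$), and this is clearly the intended reading and the one you use. For Part~(2) your invariant-preservation argument is sound; in the ``exactly one swapped letter lies in $J$'' case you write out only the sub-cases where that letter is $i$ or $i{+}2$, but the remaining sub-case where it is $i{+}1$ is identical in spirit (the $i$- and $i{+}2$-positions do not move, and the $i{+}1$ cannot leave its bounding interval since the non-$J$ letter it swaps with cannot occupy an endpoint).
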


\begin{proposition}\label{Prop:1_1102} \hfill
\begin{enumerate}
\item[{\rm (1)}] $\PP(\ii_0)= (1\ 2\ 3\cdots n\ 2n\ 2n-1\cdots n+1)^n(1\, 2\, 3\cdots n)$ is a reduced word of the longest element ${}_{2n}w_0$ of $A_{2n}.$
\item[{\rm (2)}]  For $\ii'_0 \in [\ii_0]$, we have $[\PP(\ii'_0)] = [\PP(\ii_0)].$
\end{enumerate}
\end{proposition}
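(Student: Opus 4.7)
The plan is to treat the two parts in sequence, with part~(1) reducing to a length count plus an element-level check, and part~(2) resting on the observation that no commutation move inside $\lf\ii_0\rf$ can ever swap an $s_n$ with an adjacent $s_{n+2}$.

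For part~(1), I would first verify the length. Each of the $2n+1$ blocks $(1\,2\cdots n+1)^{k\vee}$ contains exactly one $s_{n+1}$, appearing at position $n+1$ of the block (since $(n+1)^{\vee}=n+1$), so $\PP$ deletes exactly $2n+1$ letters and yields a word of length $(n+1)(2n+1)-(2n+1)=n(2n+1)=\ell({}_{2n}w_0)$. It therefore suffices to show that $\PP(\ii_0)$ represents ${}_{2n}w_0$ in $W_{2n}\cong S_{2n+1}$; reducedness then follows automatically. I would factor $\PP(\ii_0)=c^{\,n}\sigma$, where $\sigma=s_1s_2\cdots s_n$ and $c=s_1s_2\cdots s_n s_{2n}s_{2n-1}\cdots s_{n+1}$ is a Coxeter element of $W_{2n}$, concretely the one adapted to the Dynkin quiver $1\leftarrow 2\leftarrow\cdots\leftarrow n\leftarrow n+1\to n+2\to\cdots\to 2n$. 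A direct two-line-notation computation in $S_{2n+1}$ shows that $c$ is the single $(2n+1)$-cycle $(1,2,\dots,n+1,2n+1,2n,\dots,n+2)$, whence $c^n$ shifts each cycle position by $n$; combining with the action of $\sigma$ then gives $(c^n\sigma)(k)=2n+2-k$ for every $k\in\{1,\dots,2n+1\}$, which is precisely ${}_{2n}w_0$.

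For part~(2), the key structural step is the claim: in $\ii_0$, between every two consecutive occurrences of letters from $\{s_n,s_{n+2}\}$ there is at least one $s_{n+1}$. Indexing blocks by $k=0,\dots,2n$, even-$k$ blocks $(1\,2\cdots n\,n+1)$ contribute an $s_n$ at block-position $n$ and an $s_{n+1}$ at block-position $n+1$, while odd-$k$ blocks $(2n+1\,2n\cdots n+2\,n+1)$ contribute an $s_{n+2}$ at block-position $n$ and an $s_{n+1}$ at block-position $n+1$. So between an $s_n$ occurring in block $k$ and the next $s_{n+2}$ occurring in block $k+1$ (or vice versa), the $s_{n+1}$ closing block $k$ always intervenes. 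Applying Lemma~\ref{Lemma}(2) with $J=\{n,n+1,n+2\}$, every $\ii_0'\in[\ii_0]$ can be obtained from $\ii_0$ without ever invoking the commutation $s_n s_{n+2}=s_{n+2}s_n$.

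With that in hand, I would conclude by induction on the number of elementary commutations transforming $\ii_0$ into $\ii_0'$. Each swap interchanges adjacent commuting letters $s_i,s_j$ of the current word and falls into two cases. If $\{i,j\}\ni n+1$, then since $\PP$ deletes $s_{n+1}$, the two resulting words $\PP(\ii_0)$ and $\PP(\ii_0')$ are literally identical. Otherwise $i,j\in I_{2n+1}\setminus\{n+1\}$ with $\{i,j\}\neq\{n,n+2\}$ by the previous paragraph; a short case split (both $\leq n$, both $\geq n+2$, or one in each half) shows that $|i-j|>1$ forces $|\PP(i)-\PP(j)|>1$ in $A_{2n}$, so the swap descends to a genuine commutation in $\PP(\ii_0)$. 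Hence $\PP(\ii_0')\sim\PP(\ii_0)$, i.e., $[\PP(\ii_0')]=[\PP(\ii_0)]$. The main obstacle is precisely the separation-by-$s_{n+1}$ observation, since $\PP$ kills exactly one Coxeter commutation of $A_{2n+1}$: the images of $s_n,s_{n+2}$ under $\PP$ are the adjacent generators $s_n,s_{n+1}$ of $A_{2n}$, which no longer commute; ruling out the swap $s_n\leftrightarrow s_{n+2}$ inside $[\ii_0]$ via Lemma~\ref{Lemma}(2) is the heart of part~(2), while everything else is bookkeeping.
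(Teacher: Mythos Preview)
Your proof is correct and follows essentially the same approach as the paper. For part~(2) the paper argues identically: it notes that the only commutation in $A_{2n+1}$ whose $\PP$-image fails to commute in $A_{2n}$ is $s_n s_{n+2}=s_{n+2}s_n$, computes $\ii_{0|J}=(n\ n+1\ n+2\ n+1)^n(n\ n+1)$ for $J=\{n,n+1,n+2\}$, and then invokes Lemma~\ref{Lemma}(2) exactly as you do. For part~(1) the paper simply asserts that $\PP(\ii_0)$ is reduced and adapted to the quiver $1\leftarrow 2\leftarrow\cdots\leftarrow n\leftarrow n+1\to\cdots\to 2n$; your explicit length count and cycle computation of $c^n\sigma$ supply the details behind that assertion.
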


\begin{proof}
(1) One can easily check that $\PP(\ii_0)$ is a reduced expression of ${}_{2n}w_0$ and is adapted to the Dynkin quiver
\[ Q = \scalebox{0.84}{\xymatrix@R=3ex{ \bullet
\ar@{<-}[r]_<{ \ 1} &  \bullet
\ar@{<-}[r]_<{ \ 2}  & \cdots &  \bullet
\ar@{<-}[r]_<{ \ n} &\bullet \ar@{->}[r]_<{ \ n+1}
&\bullet \ar@{->}[r]_<{ \ n+2} & \cdots
& \bullet \ar@{<-}[l]^<{\ \ \ \ \ \ 2n} }}. \]
(2) If  $i,j\in I_{2n+1}$ satisfies
\[ \, [\PP(ij)]\neq [\PP(ji)] \text{ and } i-j \geq 2\]
then $i=n+2$ and $j=n.$ Hence it is enough to check that
\begin{eqnarray} &&
  \parbox{80ex}{
any $\ii'_0\in [\ii_0]$ can be obtained by a sequence of commutation relations $ij=ji$ for $(i,j)\neq(n,n+2), (n+2, n)$.
}\label{Eqn:n,n+2}
\end{eqnarray}

To see this property, observe that the word (not reduced) $\ii_{0|J}$ for $J=\{n, n+1, n+2\}\subset I_{2n+1}$ is
\[\ii_{0|J}=(n\ n+1\ n+2\ n+1)^n(n\ n+1).\]
Since there is $n+1$ between every adjacent $n$ and $n+2$, (\ref{Eqn:n,n+2}) holds by Lemma \ref{Lemma}.
\end{proof}

\begin{proposition} \label{Prop:2_1102}
Let $[\ii'_0]$ be a class of reduced expressions in $\lf \Qd \rf$. Then we have the following properties.
\begin{enumerate}
\item[{\rm (1)}] There is $n+1$ between every adjacent $n$ and $n+2$ in $\ii'_0$.
\item[{\rm (2)}] Let $J=\{n,n+1, n+2\} \in I_{2n+1}.$ We have $\PP(\ii'_{0|J})= (n\ n+1)^n n \ \text{ or } \ (n+1 \ n)^n n+1.$
\end{enumerate}
\end{proposition}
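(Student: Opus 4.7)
The plan is to establish (1) and (2) at the base word $\ii_0$ by direct inspection, transfer them to every reduced word in $[\ii_0]$ via Lemma~\ref{Lemma}(2), and then propagate them across all of $\lf\Qd\rf$ by a reflection-functor induction with a strengthened invariant.

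\textbf{Step 1: the base class.} Using $i^\vee = 2n+2-i$, one has $(1\ 2\cdots n{+}1)^\vee=(2n{+}1\ 2n\cdots n{+}1)$ and $(1\ 2\cdots n{+}1)^{2\vee}=(1\ 2\cdots n{+}1)$, so $\ii_0$ is the concatenation of $n+1$ copies of the block $(1\ 2\cdots n{+}1)$ alternating with $n$ copies of $(2n{+}1\ 2n\cdots n{+}1)$. Reading off the $J$-letters from each block yields $\ii_{0|J}=(n\ n{+}1\ n{+}2\ n{+}1)^n(n\ n{+}1)$, which visibly has an $n{+}1$ between every adjacent $n$ and $n{+}2$ and satisfies $\PP(\ii_{0|J})=(n\ n{+}1)^n\,n$. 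Because of this separation property, Lemma~\ref{Lemma}(2) applied to $J$ gives $\ii'_{0|J}=\ii_{0|J}$ for every $\ii'_0\in[\ii_0]$, so both (1) and (2) already hold throughout $[\ii_0]$.

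\textbf{Step 2: the four-state invariant.} Since every class in $\lf\Qd\rf$ is reached from $[\ii_0]$ by finitely many reflection functors, I induct on the length of such a chain, strengthening the inductive statement to $\ii'_{0|J}\in\{A_0,A_1,B_0,B_1\}$, where
\begin{align*}
A_0&=(n\ n{+}1\ n{+}2\ n{+}1)^n(n\ n{+}1), & A_1&=(n{+}1\ n\ n{+}1\ n{+}2)^n(n{+}1\ n),\\
B_0&=(n{+}2\ n{+}1\ n\ n{+}1)^n(n{+}2\ n{+}1), & B_1&=(n{+}1\ n{+}2\ n{+}1\ n)^n(n{+}1\ n{+}2).
\end{align*}
Each of the four words has an $n{+}1$ between every adjacent $n$ and $n{+}2$, and one computes $\PP(A_0)=\PP(A_1)=(n\ n{+}1)^n\,n$ while $\PP(B_0)=\PP(B_1)=(n{+}1\ n)^n\,(n{+}1)$, so the invariant is strictly stronger than (1) combined with (2). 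Step 1 supplies the base case $\ii_{0|J}=A_0$.

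\textbf{Step 3: verifying the inductive step.} Let $[\ii'_0]$ satisfy the invariant and let $i$ be a sink. If $i\notin J$, then since $J$ is $\vee$-stable neither $i$ nor $i^*$ lies in $J$, and $\ii'_{0|J}$ is unaffected. If $i\in J$, condition (1) forbids the commutation $s_ns_{n+2}=s_{n+2}s_n$ inside $[\ii'_0]$, so every representative produces the same $\ii'_{0|J}$, which therefore must begin with $i$. The four states begin respectively with $n,n{+}1,n{+}2,n{+}1$, so only specific reflections are legal; a direct check of ``drop the leading $i$ and append $i^*$'' yields the closed cycle
\[A_0\xrightarrow{r_n}B_1\xrightarrow{r_{n+1}}B_0\xrightarrow{r_{n+2}}A_1\xrightarrow{r_{n+1}}A_0,\]
preserving the invariant. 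Left reflections at sources produce the reverse cycle by the symmetric computation ``drop the trailing $i$ and prepend $i^*$''. The main obstacle is precisely this case $i\in J$: naively one might worry that dropping a $J$-letter and appending its $\vee$-image could leave a new $n,n{+}2$ pair with no $n{+}1$ separating them, thereby destroying (1). The four-state invariant is designed so that the leading and trailing $n{+}1$-buffers have exactly the right size at each admissible reflection, reducing the entire verification to the mechanical finite check above.
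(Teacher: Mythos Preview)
Your proof is correct and follows essentially the same approach as the paper. The paper's proof is very terse: it records the base case $\jj_{0|J}=(n\ n{+}1\ n{+}2\ n{+}1)^n(n\ n{+}1)$ and the $\vee$-action on $J$, then simply asserts ``hence $\ii'_{0|J}$ is one of the following'' four words (which coincide with your $A_0,B_1,B_0,A_1$ up to rewriting) and leaves the verification to the reader. Your Steps~2--3 make that implicit reflection-functor induction explicit, carefully checking that the four $J$-restrictions form a closed cycle under the admissible $r_i$ with $i\in J$ and are preserved when $i\notin J$; this is exactly the content hidden in the paper's ``hence''.
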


\begin{proof}
We know the following facts:
\begin{enumerate}
\item[{\rm (a)}] Any reduced expression $\jj_0$ in $[\ii_0]$ satisfies $\jj_{0|J}=(n\ n+1\ n+2\ n+1)^n(n\ n+1)$,
\item[{\rm (b)}]  $n^\vee= n+2$, $n+1^\vee= n+1$ and $n+2^\vee=n$.
\end{enumerate}
Hence $\ii'_{0|J}$ is one of the followings:
\begin{eqnarray} &&
  \parbox{85ex}{
\begin{enumerate}
\item[{\rm (i)}] $(n\ n+1\ n+2\ n+1)^n(n\ n+1)$,
\item[{\rm (ii)}] $(n+1\ n+2\ n+1)(n\ n+1\ n+2\ n+1)^{n-1}(n\ n+1\ n^\vee)=(n+1\ n+2)(n+1\ n\ n+1\ n+2)^n$,
\item[{\rm (iii)}] $n+2(n+1\ n\ n+1\ n+2)^n n+1^\vee =(n+2 \ n+1\ n\ n+1)^n(n+2\  n+1),$
\item[{\rm (iv)}] $( n+1\ n\ n+1)(n+2 \ n+1\ n\ n+1)^{n-1}(n+2\  n+1\ n)=(n+1\ n) (n+1\ n+2\ n+1\ n)^n.$
\end{enumerate}
}\label{eq: 4cases}
\end{eqnarray}
We can check that (i), (ii), (iii), (iv) satisfy (1) and (2). Hence we proved the proposition.
\end{proof}

\begin{remark}
In \eqref{eq: 4cases}, one can observe that $(n+1)^\vee=n+1$ becomes a sink or a source (but not both) for any $[\ii'_0]\in \lf \Qd \rf$.
\end{remark}

\begin{proposition} \label{Prop:3_1102} \hfill
\begin{enumerate}
\item[{\rm (1)}] If $\ii'_0 ,\ii''_0  \in  [\ii_0] \in \lf \Qd \rf$ then $[\PP(\ii'_0)]=[\PP(\ii''_0)].$
Hence we can denote
\[ \PP([\ii'_0]):= [\PP(\ii'_0)].\]
\item[{\rm (2)}]  Let $[\ii'_0]$ be in the cluster point $\lf \Qd \rf$. Then $[\PP(\ii'_0)]$ is in the cluster point $\lf Q \rf$ for a Dynkin quiver $Q$ of $A_{2n}.$
\end{enumerate}
\end{proposition}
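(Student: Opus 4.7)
The plan is to pin down (1) by showing that the commutation moves connecting $\ii'_0$ and $\ii''_0$ never invoke the one relation that would fail under $\PP$, and then to deduce (2) by induction along a chain of reflection functors joining $[\ii'_0]$ to the base class $[\ii_0]$, using that $\PP$ intertwines reflection functors. The hard part in both halves is that $\PP$ sends the non-adjacent indices $n$ and $n+2$ of $A_{2n+1}$ to the adjacent indices $n$ and $n+1$ of $A_{2n}$, so the commutation $s_n s_{n+2} = s_{n+2} s_n$ cannot be permitted as a move on reduced words; once it is excluded, the rest is a mechanical case check.

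For (1), I would first invoke Proposition \ref{Prop:2_1102}(1), which places an $n+1$ between every adjacent pair $n, n+2$ in $\ii'_0$. Lemma \ref{Lemma}(2) with $J=\{n,n+1,n+2\}$ then gives that any $\ii''_0 \in [\ii'_0]$ is obtained from $\ii'_0$ by a sequence of commutations $s_j s_k = s_k s_j$ with $(j,k) \neq (n,n+2), (n+2,n)$. Under $\PP$, each such move either (a) involves $n+1$, in which case $\PP$ simply erases $s_{n+1}$ from the word and the move is trivial on $\PP$-images, or (b) involves $j,k \neq n+1$ with $|j-k|\geq 2$ and $(j,k) \neq (n,n+2)$; for case (b) a short check on the three sub-cases $j,k \leq n$, $j,k \geq n+2$, and the split case gives $|\PP(j) - \PP(k)|\geq 2$, so the commutation lifts to a valid commutation in $A_{2n}$. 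Thus $\PP(\ii'_0) \sim \PP(\ii''_0)$, so $\PP([\ii'_0])$ is well-defined.

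For (2), Proposition \ref{Prop:1_1102}(1) provides the base case: $[\PP(\ii_0)] \in \lf Q \rf$ for the explicit Dynkin quiver $Q$ exhibited there. Since any $[\ii'_0] \in \lf \Qd \rf$ is obtained from $[\ii_0]$ by a sequence of reflection functors, it suffices by induction to show that whenever $i$ is a sink of a class $[\ii'_0]$ with $[\PP(\ii'_0)] \in \lf Q \rf$, the class $[\PP([\ii'_0]\, r_i)]$ is again in $\lf Q \rf$. If $i = n+1$, taking a representative $(n+1)\, i_2 \cdots i_\N$ of $[\ii'_0]$ and using $(n+1)^* = n+1$ shows that $\ii'_0$ and $\ii'_0\, r_{n+1}$ have the same $\PP$-image, so nothing is to check. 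If $i \neq n+1$, the identity $\PP(j^*) = \PP(j)^*$ for $j \neq n+1$, which I would verify by a two-line computation using $j^* = 2n+2-j$ in $A_{2n+1}$ and $j^* = 2n+1-j$ in $A_{2n}$, implies that $\PP(i)$ is a sink of $[\PP(\ii'_0)]$ and $\PP([\ii'_0]\, r_i) = [\PP(\ii'_0)]\, r_{\PP(i)} \in \lf Q \rf$, closing the induction.
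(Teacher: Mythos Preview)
Your proposal is correct and follows essentially the same approach as the paper's proof: for (1) you invoke Proposition~\ref{Prop:2_1102} and Lemma~\ref{Lemma} to exclude the $(n,n+2)$ commutation, exactly as the paper does by analogy with Proposition~\ref{Prop:1_1102}(2); for (2) you use the base case $\PP([\ii_0]) \in \lf Q \rf$ from Proposition~\ref{Prop:1_1102}(1) together with the compatibility $\PP([\ii'_0]\, r_i) = [\PP(\ii'_0)]\, r_{\PP(i)}$ via $\PP(i^*) = \PP(i)^*$, which is precisely the paper's argument. Your write-up is in fact more explicit than the paper's in spelling out the case split for $i = n+1$ versus $i \neq n+1$ and in verifying the commutation cases under $\PP$.
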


\begin{proof}
(1) Using Proposition \ref{Prop:2_1102} and Lemma \ref{Lemma}, the analogous argument to the proof of Proposition \ref{Prop:1_1102} (2) works.\\
(2) By (1), we can see that $i\in I_{2n+1}\backslash\{n+1\}$ is a sink (resp. source) in $\ii'_0$ if and only if $\PP(i)\in I_{2n}$ is a sink (resp. source) in $\PP(\ii'_0).$ Also, $\PP(i^\vee) = (\PP(i))^\vee$. Hence If we regard $r_{id}$ as the identity map then
\[ \PP([\ii'_0] \cdot r_i) = [\PP(\ii'_0)]\cdot r_{\PP(i)}.\]
In Proposition \ref{Prop:1_1102}, we showed $\PP([\ii_0])$ is adapted to a quiver $Q$ of $A_{2n}$. Since every adapted reduced expression consists of the unique cluster point $\lf Q \rf$, we proved (2).
\end{proof}

\begin{example} For $\ii_0$ in Example \ref{ex: Coxeter composition},
$$\PP([\ii_0]) =1 \ 2 \ 4 \ 3 \ 1 \ 2 \ 4 \ 3 \ 1 \ 2$$
which is a reduced expression of ${}_4w_0$ and adapted to
$$  Q = \xymatrix@R=3ex{ *{ \bullet }<3pt> \ar@{<-}[r]_<{1}  &*{\bullet}<3pt>
\ar@{<-}[r]_<{2}  &*{\bullet}<3pt>
\ar@{->}[r]_<{3} &*{\bullet}<3pt>
\ar@{-}[l]^<{ \ \ 4} }.$$
\end{example}

\begin{remark}
By Proposition \ref{Prop:3_1102}, if we restrict $\PP$ to reduced expressions in $\lf \Qd \rf$  then the map can be considered as a map between classes in $\lf \Qd \rf$ of $A_{2n+1}$ and $\lf Q \rf$ of $A_{2n}$. We denote
\[ \PPi\ : \ \lf \Qd \rf \to \lf Q \rf, \quad [\ii'_0]\mapsto [\PP(\ii'_0)]=: \PPi([\ii'_0]).  \]
\end{remark}

\begin{proposition} \label{Prop:5.7_1103}
For a given Dynkin quiver $Q'$ of $A_{2n}$, there are at least two distinct classes $[\ii'_0], [\ii''_0]\in \lf \Qd \rf$ such that $\PPi([\ii'_0])=\PPi([\ii''_0])=Q'.$
\end{proposition}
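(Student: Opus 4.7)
My plan is to leverage the commutation identity $\PP([\ii'_0]\cdot r_i)=[\PP(\ii'_0)]\cdot r_{\PP(i)}$ established inside the proof of Proposition \ref{Prop:3_1102}(2), specialized to the index $i=n+1$. Since $\PP(n+1)$ is the identity, the right action of the reflection functor $r_{n+1}$ on classes in $\lf\Qd\rf$ leaves the image under $\PPi$ unchanged. Accordingly, I will produce two preimages of $[Q']$ by first exhibiting one preimage $[\ii'_0]\in\lf\Qd\rf$ via a lifting procedure, and then applying $r_{n+1}$ to obtain a second preimage $[\ii''_0]$.

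For the existence of $[\ii'_0]$, set $[Q_0]\seteq\PPi([\ii_0])\in\lf Q\rf$. Because the adapted cluster point $\lf Q\rf$ of $A_{2n}$ consists of a single orbit under the reflection functors (see \eqref{eq: 2n-1many}), there is a finite sequence of indices $j_1,\ldots,j_k\in I_{2n}$ taking $[Q_0]$ to $[Q']$, with each $j_\ell$ a sink of the intermediate class. Lifting each $j_\ell$ to the unique $\tilde j_\ell\in I_{2n+1}\setminus\{n+1\}$ with $\PP(\tilde j_\ell)=j_\ell$, the sink/source correspondence for indices away from $n+1$ recorded in the proof of Proposition \ref{Prop:3_1102}(2) will show inductively that $\tilde j_\ell$ is a sink of the corresponding intermediate class in $\lf\Qd\rf$. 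Hence $[\ii'_0]\seteq[\ii_0]\cdot r_{\tilde j_1}\cdots r_{\tilde j_k}$ is well-defined, lies in $\lf\Qd\rf$, and satisfies $\PPi([\ii'_0])=[Q']$ by iterating the commutation identity.

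To produce the second preimage, I will invoke the Remark following Proposition \ref{Prop:2_1102}, which guarantees that $n+1$ is either a sink or a source of $[\ii'_0]$, but not both. Without loss of generality assume $n+1$ is a sink of $[\ii'_0]$ (the source case is symmetric, handled via the left action of $r_{n+1}$). Put $[\ii''_0]\seteq[\ii'_0]\cdot r_{n+1}$, which remains in $\lf\Qd\rf$ because cluster points are closed under reflection functors; the commutation identity then yields $\PPi([\ii''_0])=\PPi([\ii'_0])=[Q']$. Finally, since $(n+1)^*=n+1$ in $A_{2n+1}$, the reflection functor $r_{n+1}$ moves the leading $n+1$ of a reduced word in $[\ii'_0]$ to the end, so $n+1$ becomes a source of $[\ii''_0]$; the Remark then forces $n+1$ not to be a sink of $[\ii''_0]$, distinguishing $[\ii''_0]$ from $[\ii'_0]$.

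The main obstacle I expect is the bookkeeping in the surjectivity step, where at every stage of the lift I must verify that the chosen $\tilde j_\ell$ is actually a sink of the current intermediate class in $\lf\Qd\rf$, even as the role of $n+1$ itself may fluctuate between sink, source, or internal along the reflection sequence. This is conceptually controlled by the sink/source correspondence from Proposition \ref{Prop:3_1102}(2), but it is the only genuinely inductive portion of the argument and requires the most care to write out cleanly.
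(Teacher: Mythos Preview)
Your argument for producing the second preimage via $r_{n+1}$ is correct and matches the paper's, but there is a genuine gap in your surjectivity step. You rely on the biconditional ``$\tilde j\in I_{2n+1}\setminus\{n+1\}$ is a sink of $[\ii'_0]$ if and only if $\PP(\tilde j)$ is a sink of $[\PP(\ii'_0)]$'' quoted from the proof of Proposition~\ref{Prop:3_1102}(2). Only the forward implication is valid (and only that direction is actually used there). The converse fails: in case~(ii) of~\eqref{eq: 4cases} one has $\ii'_{0|J}=(n{+}1\ n{+}2)(n{+}1\ n\ n{+}1\ n{+}2)^n$, so by Lemma~\ref{Lemma}(2) the first $J$-letter of every representative is $n{+}1$, hence $n{+}2$ is \emph{not} a sink of $[\ii'_0]$; yet $\PP(n{+}2)=n{+}1$ can be a sink of $[\PP(\ii'_0)]$ since $\PP(\ii'_{0|J})=(n{+}1\ n)^n(n{+}1)$. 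Concretely, starting from $[\ii_0]$ (case~(i)) and applying your lift $r_{\tilde n}=r_n$ lands you in case~(ii); if the next required move in $A_{2n}$ is $r_{n+1}$, your lift $r_{\tilde{n+1}}=r_{n+2}$ acts trivially on the $A_{2n+1}$ side, so the commutation identity breaks and $\PPi$ of your end class is no longer $[Q']$.

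The paper circumvents this by lifting two letters at a time via the monoid map $\RR$ with $\RR(n)=n\ n{+}1$ and $\RR(n{+}1)=n{+}2\ n{+}1$. The crucial property is that every class met along the sequence has the form $[\RR({}_{2n}\ii'_0)]$ for some adapted ${}_{2n}\ii'_0$; in such a word each occurrence of $n$ or $n{+}2$ is immediately followed by $n{+}1$, so whenever $j$ is a sink of $[{}_{2n}\ii'_0]$ the two-letter block $\RR(j)$ can be brought to the front and $r_{\RR(j)}$ genuinely reflects. This is exactly what your one-letter lift cannot guarantee. Your approach can be repaired by inserting an extra $r_{n+1}$ whenever $n{+}1$ blocks the intended sink, but once you do this systematically you have recovered the paper's $\RR$.
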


\begin{proof}
Consider the monoid homomorphism
\[ \RR:\langle I_{2n} \rangle \to \langle I_{2n+1} \rangle \]
such that
\[ i \mapsto \left\{ \begin{array}{ll} i & \text{ if }i=1, \cdots, n-1; \\ i+1 & \text{ if } i=n+2, \cdots, 2n; \\ n \ n+1 & \text{ if } i=n; \\ n+2\ n+1 & \text{ if } i=n+1. \end{array}\right.\]
Then
\begin{enumerate}[(i)]
\item  $\PP\circ \RR= {\rm id}.$
\item $\RR$ preserves commutation relations, that is $\RR(i)\RR(j)= \RR(j)\RR(i)$ if $|i-j|\geq 2.$
\item If $\RR(i) \ii$ is a reduced expression of $w_0$ then $[\RR(i) \ii]\cdot r_{\RR(i)}=[ \ii  \RR(i^\vee)]$.
\end{enumerate}

If we denote the reduced expression ${}_{2n}\ii_0=(1\ 2\ \cdots n \ 2n \  2n-1\ \cdots n+1)^n  (1\ 2\ \cdots n)$ of the longest element
${}_{2n}w_0$ of $A_{2n}$ then $\ii_0= \RR({}_{2n}\ii_0).$ Now we assume that
 \[\ [{}_{2n}\ii'_0]=[{}_{2n}\ii_0] \cdot r_{\bi}=[i_1i_2\cdots i_l] \text{ is a class of reduced expressions of } {}_{2n}w_0  \] and
 \[ \ [\ii'_0]= [\ii_0]\cdot r_{\RR(\bi)}=[\RR({}_{2n}\ii'_0)]= [\RR(i_1\cdots i_l)] \text{ is a class of  reduced expressions of }w_0 .\]
 Then, by (iii), $i\in I_{2n}$ is a source of $[{}_{2n}\ii'_0]$ if and only if there is a reduced expression in  $[\ii'_0]$ which starts with $\RR(i)$. Moreover, we have
 \[ \ [\ii'_0]\cdot r_{\RR(i_1)}=[\RR(i_2\cdots i_l) \ \RR(i^\vee_1) ]=[ \RR(i_2\cdots i_l \ i^\vee_1)]. \]
As a conclusion, for any word $\bi$ consisting of $\{1, \cdots, 2n\}$, the class of reduced expressions
\[\ [\ii_0]\cdot r_{\RR(\bi)}=[\RR(i_1\cdots i_l)] \text{ satisfies }\PPi([\ii_0]\cdot r_{\RR(\bi)})=[i_1, \cdots, i_l] =[{}_{2n}\ii_0] \cdot r_{\bi}.\]
Since every reduced expression adapted to a quiver consists of one cluster point, we proved that $\PPi$ is an onto map.

In addition, since  $[\ii_0]\cdot r_{\RR(\bi)}=[\RR(i_1\cdots i_l)]$ has $n+1$ as a source and  $r_{n+1}\cdot( [\ii_0]\cdot r_{\RR(\bi)})$ has $n+1$ as a sink,
these two are distinct classes. The following equation is easy to check:
\[ \PPi([\ii_0]\cdot r_{\RR(\bi)})= \PPi(r_{n+1}\cdot( [\ii_0]\cdot r_{\RR(\bi)})).\]
Hence we proved the proposition.
\end{proof}

\begin{proposition} \label{prop: 2to1}
The map $\PPi$ is a two-to-one and onto map. More precisely, for each Dynkin quiver $Q'$ of $A_{2n}$,
there is a unique class of reduced expressions in $\lf \Qd \rf$ which has $n+1$ as a source $($resp. sink$)$.
\end{proposition}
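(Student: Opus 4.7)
The plan is to combine Proposition~\ref{Prop:5.7_1103} with the Remark following Proposition~\ref{Prop:2_1102}. The Remark forces every class in $\lf \Qd \rf$ to have $n+1$ as either a source or a sink (exclusively), giving a partition $\lf \Qd \rf = \lf \Qd \rf^+ \sqcup \lf \Qd \rf^-$ according to this $\pm$-status. The reflection functor $r_{n+1}$ (applied on the left when $n+1$ is a source and on the right when it is a sink) is an involution that swaps $\lf \Qd \rf^+$ and $\lf \Qd \rf^-$, and because $\PP(n+1)$ is the empty word, the identity $\PPi \circ r_{n+1} = \PPi$ holds. Proposition~\ref{Prop:5.7_1103} then supplies, for each $[\jj_0]\in\lf Q \rf$ with $[\jj_0]=[{}_{2n}\ii_0]\cdot r_\bi$, two explicit preimages: $[\tilde\ii_0]:=[\ii_0]\cdot r_{\RR(\bi)}\in \lf \Qd \rf^+$ and $r_{n+1}[\tilde\ii_0]\in \lf \Qd \rf^-$. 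Consequently each fiber of $\PPi$ contains at least one source-class and at least one sink-class of $n+1$.

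To finish, it remains to show that each of the restrictions $\PPi|_{\lf \Qd \rf^\pm}$ is injective; the two statements are equivalent via the $r_{n+1}$-symmetry, so I focus on $\lf \Qd \rf^+$. My strategy is to verify that the candidate assignment $L : \lf Q \rf \to \lf \Qd \rf^+$ defined by $L([{}_{2n}\ii_0]\cdot r_\bi):=[\ii_0]\cdot r_{\RR(\bi)}$ is well-defined on commutation classes. Once this is established, the identity $\PPi\circ L=\mathrm{id}$ (immediate from $\PP\circ\RR=\mathrm{id}$ and the intertwining computation in the proof of Proposition~\ref{Prop:5.7_1103}) makes $L$ a two-sided inverse of $\PPi|_{\lf \Qd \rf^+}$, which together with the analogous statement for $\lf \Qd \rf^-$ yields that each fiber has exactly two elements.

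The main obstacle is this well-definedness of $L$: if two words $\bi,\bi'$ in $I_{2n}$ yield $[{}_{2n}\ii_0]\cdot r_\bi=[{}_{2n}\ii_0]\cdot r_{\bi'}$, I must show $[\ii_0]\cdot r_{\RR(\bi)}=[\ii_0]\cdot r_{\RR(\bi')}$. My plan is to reduce any equality of reflection-functor sequences on $\lf Q \rf$ to elementary moves (commutations of $r_j$'s at non-interacting indices, and trivial action of $r_j$ when $j$ is neither a source nor a sink of the current class) and to track each move through the lift $\RR$. The nontrivial cases come from $\RR(n)=n\,(n{+}1)$ and $\RR(n{+}1)=(n{+}2)\,(n{+}1)$, which introduce extra $r_{n+1}$-factors on the lifted side. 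Using the subword description of $\ii'_{0|\{n,n+1,n+2\}}$ from Proposition~\ref{Prop:2_1102} (one of the four explicit shapes in \eqref{eq: 4cases}) together with Lemma~\ref{Lemma} (which guarantees that $n$ and $n+2$ are always separated by $n+1$ in any reduced word in $\lf \Qd \rf$), one verifies that along any true identity of sequences on $\lf Q \rf$ the introduced $r_{n+1}$-factors appear in matched pairs and cancel in the lifted identity, so the lifted relation remains valid in $\lf \Qd \rf^+$. With $L$ thus well-defined, the bijection $\PPi|_{\lf \Qd \rf^+}:\lf \Qd \rf^+\to \lf Q \rf$ follows, and the proposition is proved.
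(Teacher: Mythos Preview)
Your proposal has a genuine logical gap at the key step. You claim that once $L$ is well-defined and $\PPi\circ L=\mathrm{id}_{\lf Q\rf}$, then $L$ is a \emph{two-sided} inverse of $\PPi|_{\lf\Qd\rf^+}$. But $\PPi\circ L=\mathrm{id}$ only shows that $L$ is a section (a right inverse); it gives you injectivity of $L$ and surjectivity of $\PPi$, not injectivity of $\PPi|_{\lf\Qd\rf^+}$. For that you would also need $L$ to be surjective onto $\lf\Qd\rf^+$, i.e.\ that every source-class can be written as $[\ii_0]\cdot r_{\RR(\bi)}$ for some word $\bi$ in $I_{2n}$. Nothing in your argument establishes this: a priori, a class in $\lf\Qd\rf^+$ is only known to be $[\ii_0]\cdot r_{\mathbf w}$ for some word $\mathbf w$ in $I_{2n+1}$, which may contain isolated $r_{n+1}$'s not absorbed into an $\RR$-image. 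Your sketch about ``matched pairs of $r_{n+1}$'s cancelling'' is phrased as a statement about identities among reflection sequences on $\lf Q\rf$, so it addresses well-definedness of $L$, not its surjectivity.

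The paper avoids this detour entirely and argues injectivity of $\PPi|_{\lf\Qd\rf^+}$ directly at the level of reduced words. Given two source-classes $[\ii'_0],[\ii''_0]$ with the same image $[Q']$, it chooses representatives $\jj'_0\in[\ii'_0]$, $\jj''_0\in[\ii''_0]$ in which every $n{+}1$ sits immediately after an $n$ or an $n{+}2$ (possible by Proposition~\ref{Prop:2_1102} and the commutation relations $s_{n+1}s_j=s_js_{n+1}$ for $j\notin\{n,n{+}1,n{+}2\}$). For such representatives one has $\RR(\PP(\jj'_0))=\jj'_0$ and $\RR(\PP(\jj''_0))=\jj''_0$. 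Since $\PP(\jj'_0)$ and $\PP(\jj''_0)$ lie in the same commutation class $[Q']$, and $\RR$ preserves commutation relations, the lifts $\jj'_0,\jj''_0$ are commutation equivalent, hence $[\ii'_0]=[\ii''_0]$. This is exactly the missing surjectivity of $L$ repackaged: the choice of special representative shows every source-class lies in the image of $\RR$ at the word level, which is what you needed but did not prove. If you want to salvage your approach, insert precisely this ``normal form'' step to show $\mathrm{Im}(L)=\lf\Qd\rf^+$; your well-definedness program for $L$ then becomes unnecessary.
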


\begin{proof}
Suppose $\ii'_0$ and $\ii''_0$ are two distinct reduced expressions in $\lf \Qd \rf$ such that
\begin{enumerate}[(a)]
\item both $[\ii'_0]$ and $[\ii''_0]$ have $n+1$ as a source
\item $\PPi([\ii'_0])=\PPi([\ii''_0])=[Q']$ for a quiver $Q'$ of type $A_{2n}.$
\end{enumerate}

As we saw in Proposition \ref{Prop:2_1102}, we have
\[ \ii'_{0|J}=\ii''_{0|J}=(\RR(n)\RR(n+1))^n\RR(n) \text{ or } (\RR(n+1)\RR(n))^n\RR(n+1)\]
for $J=\{n,n+1, n+2\}$. Since $s_{i+1}s_{j}= s_{j}s_{i+1}$ for any $j\in I_{2n+1}\backslash J,$ using these commutation relations, we can find $\jj'_0\in [\ii'_0]$ and  $\jj''_0\in [\ii''_0]$ such that every $n+1$ exists right after $n$ or $n+2.$ Hence we have
\[ \text{ (i) both $[\jj'_0]$ and $[\jj''_0]$ have $n+1$ as a source, (ii) $\PPi([\jj'_0])=\PPi([\jj''_0])=Q'.$ }\]
Let us denote $\PP(\jj'_0)= {}_{2n}\ii'_0$ and  $\PP(\jj''_0)= {}_{2n}\ii''_0.$ Then
\begin{enumerate}[(i)]
\item both ${}_{2n}\ii'_0$ and ${}_{2n}\ii''_0$ are reduced expression of ${}_{2n}w_0$ adapted to $Q'$
\item $\RR({}_{2n}\ii'_0)=\jj'_0$ and $\RR({}_{2n}\ii''_0)=\jj''_0.$
\end{enumerate}
If ${}_{2n}\ii''_0$ can be obtained from ${}_{2n}\ii'_0$ by a sequence of commutation relations $s_{i_k}s_{j_k}=s_{j_k}s_{i_k}$ ($k=1, \cdots, t$) then
$\jj''_0$ can be obtained from $\jj'_0$ by a sequence of commutation relations $s_{\RR(i_k)}s_{\RR(j_k)}=s_{\RR(j_k)}s_{\RR(i_k)}$ for $k=1, \cdots, t,$
where $s_{n\ n+1}= s_n s_{n+1}.$ Hence $[\ii'_0]=[\jj'_0]=[\jj''_0]=[\ii''_0],$ i.e., there is a unique class $[\ii'_0]$ of reduced expressions of $w_0$ such that
\begin{enumerate}[(a)]
\item  $[\ii'_0]$ has $n+1$ as a source,
\item $\PPi([\ii'_0])=Q'$ for a quiver $Q'$ of type $A_{2n}.$
\end{enumerate}

Similarly, we can show that there is a unique class $[\ii'_0]$ of reduced expressions such that
\begin{enumerate}[(a)]
\item  $[\ii'_0]$ has $n+1$ as a sink,
\item $\PPi([\ii'_0])=Q'$ for a quiver $Q'$ of type $A_{2n}.$
\end{enumerate}

Recall that we showed $\PPi$ is an onto map in Proposition \ref{Prop:5.7_1103}. So we proved the proposition.
\end{proof}

\begin{theorem} \label{thm: 22n}
The number of classes in $\lf \Qd \rf$ is $2^{2n}.$
\end{theorem}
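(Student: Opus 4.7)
The plan is to derive the count directly from the preceding structural results, so the proof is essentially a one-step count rather than a fresh construction. The key inputs are Proposition \ref{prop: 2to1}, which tells us that $\PPi : \lf \Qd \rf \to \lf Q \rf$ is surjective and exactly two-to-one, and the statement recorded in \eqref{eq: 2n-1many}, which says that for a simply-laced Dynkin diagram $\Delta$ of rank $|I|$, the adapted cluster point consists of exactly $2^{|I|-1}$ commutation classes.

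Concretely, I would proceed as follows. First, apply \eqref{eq: 2n-1many} to the target: since $\PPi$ lands in the adapted cluster point $\lf Q \rf$ of type $A_{2n}$, whose rank is $|I_{2n}| = 2n$, we know
\[ |\lf Q \rf| = 2^{2n-1}. \]
Second, apply Proposition \ref{prop: 2to1}: for every Dynkin quiver $Q'$ of $A_{2n}$, the fiber $\PPi^{-1}([Q'])$ consists of exactly two classes, namely the unique class in $\lf \Qd \rf$ with $n+1$ as a source and the unique class with $n+1$ as a sink. By the surjectivity part of Proposition \ref{prop: 2to1}, these fibers cover all of $\lf \Qd \rf$ and are obviously disjoint. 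Therefore
\[ |\lf \Qd \rf| = 2 \cdot |\lf Q \rf| = 2 \cdot 2^{2n-1} = 2^{2n}, \]
as required.

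There is no genuine obstacle left in this statement itself, because the substantive work has already been done in Propositions \ref{Prop:5.7_1103} and \ref{prop: 2to1}: the onto property was established by explicitly lifting any class in $\lf Q \rf$ via the section $\RR$ and the reflection sequence $[\ii_0]\cdot r_{\RR(\bi)}$, while the fact that the fiber has size at most two was established by analyzing $\ii'_{0|J}$ for $J=\{n,n+1,n+2\}$, normalizing so that every $n+1$ sits adjacent to an $n$ or an $n+2$, and then transporting commutation relations from $\PP(\jj'_0)$ back to $\jj'_0$ via $\RR$. The role of the present theorem is therefore only to combine these two facts with the known cardinality of $\lf Q \rf$ in type $A_{2n}$.
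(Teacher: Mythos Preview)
Your proposal is correct and essentially identical to the paper's own proof: both count $|\lf Q\rf|=2^{2n-1}$ (the paper by directly counting the $2^{2n-1}$ orientations of the $2n-1$ edges of $A_{2n}$, you via \eqref{eq: 2n-1many}) and then multiply by $2$ using the two-to-one surjectivity of $\PPi$ from Proposition~\ref{prop: 2to1}.
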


\begin{proof}
Since there are $2n-1$ arrows in a Dynkin quiver of $A_{2n}$ and each arrow has two possible directions, the number of  Dynkin quivers is $2^{2n-1}$. Since the map $\PPi$ is a two-to-one and onto map, the number of classes in $\lf \Qd \rf$ is $2 \times 2^{2n-1}=2^{2n}.$
\end{proof}

Now we focus on classes of reduced expressions related to twisted Coxeter elements.

\begin{proposition} \label{Prop:twisted 2n}
Let $\vee: i\mapsto n+1-i$ for $i\in I_{2n}$ and let $i_1 i_2 \cdots i_n \vee$ be a twisted Coxeter element of $W_{2n}$.
Then
 \begin{equation} \label{Eqn:3_1104}
\prod_{k=0}^{2n} (i_1\ i_2\ i_3\cdots i_n)^{k\vee}
 \end{equation}
is a reduced expression of ${}_{2n}w_0$ adapted to a Dynkin quiver $Q'$ of type $A_{2n}.$
\end{proposition}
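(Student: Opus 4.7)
The plan is to rewrite the given word as $\phi_{Q'}^{n}\,(i_1 i_2 \cdots i_n)$ for an appropriately chosen Coxeter element $\phi_{Q'}$, and then invoke Theorem~\ref{thm: Qs}. Since $\vee$ has order $2$ on $I_{2n}$, the blocks $(i_1 \cdots i_n)^{k\vee}$ alternate: even $k$ yields $(i_1 \cdots i_n)$ and odd $k$ yields $(i_1^\vee \cdots i_n^\vee)$. Grouping pairs of consecutive blocks, the word reads
\begin{equation*}
\underbrace{(i_1 \cdots i_n)(i_1^\vee \cdots i_n^\vee)(i_1 \cdots i_n)\cdots(i_1^\vee \cdots i_n^\vee)(i_1 \cdots i_n)}_{2n+1 \text{ blocks}} \;=\; \phi^{n}\,(i_1 \cdots i_n),
\end{equation*}
where $\phi := (s_{i_1}\cdots s_{i_n})(s_{i_1^\vee}\cdots s_{i_n^\vee})\in W_{2n}$.

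By Definition~\ref{def: twisted Coxeter}, $\{i_1,\ldots,i_n\}$ is a $\vee$-orbit transversal of $I_{2n}$, and so $\{i_1,\ldots,i_n\}\sqcup\{i_1^\vee,\ldots,i_n^\vee\}=I_{2n}$. Therefore $\phi$ is a product of all $2n$ simple reflections of $W_{2n}$, each appearing exactly once, and hence is a Coxeter element. By Theorem~\ref{thm: Qs}(5), there is a unique Dynkin quiver $Q'$ of type $A_{2n}$ with $\phi=\phi_{Q'}$. Because Coxeter elements of finite Weyl groups have length equal to the rank, the written expression $(i_1 \cdots i_n\, i_1^\vee \cdots i_n^\vee)$ is already reduced, and Theorem~\ref{thm: Qs}(4) tells us it is adapted to $Q'$; the same theorem also gives $\phi_{Q'}\cdot Q'=Q'$.

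Adaptedness of the entire word then follows by concatenation. Since each iteration of $\phi_{Q'}$ starts and ends at the quiver $Q'$, the word $\phi_{Q'}^{n}$ is adapted to $Q'$. The trailing block $(i_1 \cdots i_n)$ is a prefix of one further adapted copy of $\phi_{Q'}$ starting again at $Q'$, and is therefore itself adapted to $Q'$. Consequently the whole word is adapted to $Q'$, in particular reduced, with length $n(2n+1)=\binom{2n+1}{2}=\ell({}_{2n}w_0)$, so it necessarily represents ${}_{2n}w_0$. The one step requiring care is justifying that a prefix of an adapted word is adapted, and that adapted words can be legally concatenated when the intermediate quivers agree; both points are immediate from the definition of adaptedness, and the agreement of intermediate quivers is guaranteed by $\phi_{Q'}\cdot Q'=Q'$.
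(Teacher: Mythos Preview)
Your argument has a genuine gap at the step ``Consequently the whole word is adapted to $Q'$, in particular reduced.'' Satisfying the sink condition at every step does \emph{not} imply that a word is reduced. For instance, in type $A_{2}$ with $Q\colon 1\leftarrow 2$ and $\phi_Q=s_1s_2$, the word $s_1s_2s_1s_2$ passes the sink test at each letter (since $\phi_Q\cdot Q=Q$), yet it represents $s_2s_1$ and is not reduced. More to the point of your setup, in $A_3$ with $Q'\colon 1\leftarrow 2\leftarrow 3$ one has $\phi_{Q'}=s_1s_2s_3$, and the word $(s_1s_2s_3)^2$ has length $6=\ell(w_0)$ and satisfies the sink condition throughout, but $\beta_6=s_1s_2s_3s_1s_2(\alpha_3)=-\alpha_1$ is negative, so the word is not reduced. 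Thus neither ``sink condition'' nor ``sink condition $+$ correct length'' forces the word to be a reduced expression of $w_0$. In the paper's conventions adaptedness is only defined for reduced words, so you have not yet shown the word is adapted at all.

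What is missing is precisely the structural constraint on $Q'$ that the paper isolates. Your word $\phi_{Q'}^{\,n}(i_1\cdots i_n)$ contains $n{+}1$ copies of each $i_t$ and $n$ copies of each $i_t^\vee$; for it to be reduced one needs, by Proposition~\ref{Prop:AR}, that $r^{Q'}_{i_t}\ge n{+}1$ for every $t$ (otherwise some residue is used more often than it occurs in $\Gamma_{Q'}$ and the corresponding $\beta_k$ becomes negative). Since $r^{Q'}_{i_t}+r^{Q'}_{i_t^\vee}=2n{+}1$, this forces $r^{Q'}_{i_t}=n{+}1$, i.e.\ $a_{i_t}-b_{i_t}=1$. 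This property of $Q'$ is \emph{not} automatic for an arbitrary Coxeter element; it depends on the specific orbit representatives $i_1,\ldots,i_n$ coming from a twisted Coxeter element, and the paper establishes it by an induction on $n$ that analyses how the two outermost edges of $Q'$ are oriented relative to the $A_{2n-2}$ subquiver. Your concatenation argument, which uses nothing about $Q'$ beyond $\phi_{Q'}\cdot Q'=Q'$, cannot distinguish the good quivers from the bad ones and therefore cannot close this gap.
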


\begin{proof}
It is enough to see that, for the reduced word $i_1  i_2  \cdots  i_n$ of the twisted Coxeter element, there is a Dynkin quiver $Q'$ of type $A_{2n}$ such that
\begin{enumerate}
\item[{\rm (1)}]  $i_1\  i_2 \cdots i_n\ i^\vee_1\ i^\vee_2\ \cdots i^\vee_n$ is adapted to $Q',$
\item[{\rm (2)}] $(i_1\  i_2 \cdots i_n\ i^\vee_1\ i^\vee_2\ \cdots i^\vee_n)^n (i_1\  i_2 \cdots i_n)$ is a reduced expression of ${}_{2n}w_0.$
\end{enumerate}
Since $i_1\  i_2 \cdots i_n\ i^\vee_1\ i^\vee_2\ \cdots i^\vee_n$ is a Coxeter element of $W_{2n}$,
it is well known that there exists a unique quiver $Q'$ determined by (1). Also, by Proposition \ref{Prop:AR}, an adapted reduced expression to $Q'$ has the property that
\[ \text{ $\#$ of $i$'s in the reduced expression} =\frac{ 2n+1 + a_i -b_i }{2} \]
where $a_i$ is the number of arrows toward $i$ in $Q'$ and $b_i$ is the number of arrows toward $i^\vee$ in $Q'$ between the vertices $i$
and $i^\vee.$ Hence instead of (2), we shall prove the quiver $Q'$ in (1) has the property that
\begin{equation} \label{Eqn:1_1104}
 \frac{ 2n+1 + a_{i_t} -b_{i_t} }{2}-\frac{ 2n+1 + a_{i^\vee_t} -b_{i^\vee_t}}{2}=1
 \end{equation}
for $t=1, \cdots, n.$ Since $a_{i^\vee_t}=b_{i_t}$ and $b_{i^\vee_t}=a_{i_t}$, (\ref{Eqn:1_1104}) is equivalent to
\begin{enumerate}
\item[{\rm (2$'$)}]  $a_{i_t}-b_{i_t}=1$ for all $1 \le t \le n$.
\end{enumerate}

We use an induction to prove this proposition. For $A_2$,  two reduced expressions $121$ and $212$ which are adapted to the Dynkin quivers
\[  \scalebox{0.84}{\xymatrix@R=3ex{ \bullet
\ar@{<-}[r]_<{1} &
 \bullet \ar@{-}[l]^< {\ \ \ \ 2} }}\qquad \text{ and } \qquad \scalebox{0.84}{\xymatrix@R=3ex{ \bullet
\ar@{->}[r]_<{1} &
 \bullet \ar@{-}[l]^< {\ \ \ \ 2} }},  \]
 respectively.

 Suppose we proved for $A_{2n-2}$ and take a reduced word $i_1 i_2 \cdots i_n$ of some twisted Coxeter element of type  $A_{2n}$: \\

\noindent
(Case 1)  Suppose $i_\alpha=1$ and $i_\beta=2$ for  $1\leq \alpha < \beta\leq n$. Then
\[(i_1\ i_2\ \cdots\ i_{\alpha-1}\ i_{\alpha+1}\ \cdots \ i_n) (i_1\ i_2\ \cdots\ i_{\alpha-1}\ i_{\alpha+1}\ \cdots \ i_n)^\vee\] is adapted to a
quiver $Q^-_{+1}$ which is isomorphic to a quiver $Q^-$ of type $A_{2n-2},$ via $i+1\mapsto i$ for $i\in I_{2n-2}.$
Note that by the induction hypothesis, the quiver $Q^-$ satisfies (1) and (2$'$). Now consider the quiver
 \[ Q'= \scalebox{0.84}{\xymatrix@R=3ex{ \bullet
\ar@{<-}[r]_<{1} &
 [\bullet \ar@{-}[l]^< {\ \ \ \ 2} & &Q^-_{+1}  &  &  \bullet]
\ar@{->}[r]_<{2n-1} &
 \bullet \ar@{-}[l]^< {\ \ \ \ 2n } } } \]
 of $A_{2n}.$ We can see that $Q'$ is the Dynkin quiver satisfying (1) and (2$'$).

\noindent
 (Case 2)
  Suppose $i_\alpha=1$ and $i_\beta=2$ for  $1\leq \beta < \alpha \leq n$.  Let $Q^-_{+1}$ and $Q^-$ be quivers defined in similar ways with those of (Case 1).   Then
  \[ Q'= \scalebox{0.84}{\xymatrix@R=3ex{ \bullet
\ar@{->}[r]_<{1} &
 [\bullet \ar@{-}[l]^< {\ \ \ \ 2} & &Q^-_{+1}  &  &  \bullet]
\ar@{<-}[r]_<{2n-1} &
 \bullet \ar@{-}[l]^< {\ \ \ \ 2n } } } \]
 is the Dynkin quiver satisfying (1) and (2$'$).

\noindent
 (Case 3)
  Suppose $i_\alpha=2n$ and $i_\beta=2$ for  $1\leq \alpha, \beta \leq n$.  Let $Q^-_{+1}$ and $Q^-$ be quivers defined in similar ways with those of (Case 1).   Then
  \[ Q'= \scalebox{0.84}{\xymatrix@R=3ex{ \bullet
\ar@{->}[r]_<{1} &
 [\bullet \ar@{-}[l]^< {\ \ \ \ 2} & &Q^-_{+1}  &  &  \bullet]
\ar@{->}[r]_<{2n-1} &
 \bullet \ar@{-}[l]^< {\ \ \ \ 2n } } } \]
 is the Dynkin quiver satisfying (1) and (2$'$).

We omit the cases when  there is $1\leq \beta \leq n$ such that $i_\beta=2n-1$. The same argument works for these cases. Hence we proved the proposition.
\end{proof}

\begin{theorem} \label{thm:twisted longest}
Let $[i_1\ i_2\ \cdots i_{n+1}]\vee$ be a twisted Coxeter element of $A_{2n+1}.$ Then
\begin{equation}\label{Eqn: 2n+1 twsited}
 \ii'_0 = \prod_{k=0}^{2n} (i_1\ i_2\ \cdots i_{n+1})^{k\vee}
 \end{equation}
is a reduced expression of $w_0$ in $\lf \Qd \rf.$
\end{theorem}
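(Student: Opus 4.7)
My strategy is to handle both the reducedness of $\ii'_0$ and its membership in $\lf \Qd \rf$ simultaneously, by bouncing through type $A_{2n}$: project $\ii'_0$ down via $\PP$ to a reduced word of ${}_{2n}w_0$, lift it back via $\RR$ to a reduced word in $\lf \Qd \rf$, and identify $\ii'_0$ with this lift up to commutation.

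For the projection step, a direct check yields $\PP(j^\vee)=\PP(j)^\vee$ for every $j \in I_{2n+1}$, where $\vee$ on the right denotes the non-trivial involution $k \mapsto 2n+1-k$ on $I_{2n}$. Hence
\[
\PP(\ii'_0) \;=\; \prod_{k=0}^{2n} \PP(i_1\, i_2\, \cdots\, i_{n+1})^{k\vee}.
\]
Because $(n+1)^\vee = n+1$, exactly one of $i_1,\ldots,i_{n+1}$ equals $n+1$, so $\PP(i_1 \cdots i_{n+1})$ has length $n$ and picks one representative from each $\vee$-orbit on $I_{2n}$; that is, it is a reduced word of a twisted Coxeter element of $A_{2n}$. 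Proposition \ref{Prop:twisted 2n} now applies, showing $\PP(\ii'_0)$ is a reduced expression of ${}_{2n}w_0$ adapted to some Dynkin quiver $Q'$ of $A_{2n}$, so $[\PP(\ii'_0)] = [Q']$.

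For the lifting step, set $\widetilde{\ii}_0 \seteq \RR(\PP(\ii'_0))$, where $\RR$ is the lifting homomorphism from the proof of Proposition \ref{Prop:5.7_1103}. That proof establishes that whenever a reduced word ${}_{2n}\jj_0$ is adapted to a Dynkin quiver $Q'$ of $A_{2n}$, the lift $\RR({}_{2n}\jj_0)$ is a reduced expression of $w_0$ whose commutation class belongs to $\lf \Qd \rf$. Applying this with ${}_{2n}\jj_0 = \PP(\ii'_0)$ yields $[\widetilde{\ii}_0] \in \lf \Qd \rf$. It therefore suffices to show $\ii'_0 \sim \widetilde{\ii}_0$ as words.

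The identification step is the main obstacle. Since $\PP \circ \RR = \mathrm{id}$, the words $\ii'_0$ and $\widetilde{\ii}_0$ agree after erasing every letter equal to $n+1$; the entire discrepancy is confined to the placement of the $2n+1$ letters $n+1$ in the two words. My plan is to use only the commutation relations $s_{n+1} s_k = s_k s_{n+1}$ for $k \in I_{2n+1} \setminus \{n,n+1,n+2\}$ to slide each $s_{n+1}$ of $\ii'_0$ into its $\widetilde{\ii}_0$-position. The crucial enabler is Proposition \ref{Prop:2_1102}, which pins down $\ii'_0|_{\{n,n+1,n+2\}}$ to one of the four explicit forms in \eqref{eq: 4cases}; in each of these forms, between each $s_{n+1}$ of $\ii'_0$ and its prescribed target, no obstructing $s_n$ or $s_{n+2}$ intervenes, so the sliding is legitimate. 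The case analysis is driven by the position of $n+1$ within $i_1 \cdots i_{n+1}$ and by the parity of the block index $k$. Once $\ii'_0 \sim \widetilde{\ii}_0$ is verified, $[\ii'_0] = [\widetilde{\ii}_0] \in \lf \Qd \rf$ follows; in particular $\ii'_0$ is reduced, and since $\ell(\ii'_0)=(n+1)(2n+1)=\ell(w_0)$, it represents $w_0$.
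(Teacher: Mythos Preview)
Your approach works cleanly for what the paper calls ``Type~1'' twisted Coxeter elements (those in which $n+1$ occurs \emph{after} $n$ or $n+2$ in $i_1\cdots i_{n+1}$), and in that case it is essentially the paper's argument. But for ``Type~2'' elements (those with $n+1$ occurring \emph{before} $n$ or $n+2$) the key step fails: $\ii'_0$ and $\widetilde{\ii}_0 \seteq \RR(\PP(\ii'_0))$ are \emph{not} commutation equivalent. To see this, compute the restrictions to $J=\{n,n+1,n+2\}$ directly from the block structure. If, say, $n+1$ precedes $n$ in $i_1\cdots i_{n+1}$, then $\ii'_0|_J=(n{+}1\ n)(n{+}1\ n{+}2\ n{+}1\ n)^n$, whereas $\widetilde{\ii}_0|_J=(n\ n{+}1\ n{+}2\ n{+}1)^n(n\ n{+}1)$ because $\RR$ always places $n+1$ immediately \emph{after} $n$ or $n+2$. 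Both restrictions have an $n+1$ between every adjacent $n$ and $n+2$, so by Lemma~\ref{Lemma}(2) they are invariant under commutation; since they differ as words, $\ii'_0\not\sim\widetilde{\ii}_0$. Your assertion that ``no obstructing $s_n$ or $s_{n+2}$ intervenes'' is thus false in Type~2: within each block the $s_{n+1}$ must cross the unique $s_n$ or $s_{n+2}$ to reach its $\widetilde{\ii}_0$-position.

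There is also a circularity: you invoke Proposition~\ref{Prop:2_1102} to constrain $\ii'_0|_J$, but that proposition presupposes $[\ii'_0]\in\lf\Qd\rf$, which is exactly the conclusion you are after. (This is easily repaired by computing $\ii'_0|_J$ directly from the block form, as above.) The substantive fix for Type~2 is the paper's device: commute $n+1$ to the front of $i_1\cdots i_{n+1}$, apply the reflection functor $r_{n+1}$, and observe that $[\ii'_0]\,r_{n+1}=[\prod_k(i'_2\cdots i'_{n+1}\,i'_1)^{k\vee}]$ is now Type~1; membership in $\lf\Qd\rf$ then transfers back along $r_{n+1}$. Your projection--lift framework cannot replace this reflection step, because $\RR$ always produces the class with $n+1$ as a source, and in Type~2 the target class has $n+1$ as a sink (cf.\ Proposition~\ref{prop: 2to1}).
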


\begin{proof}
We recall that the following fact: In the proof of Proposition \ref{Prop:5.7_1103}, we showed that if ${}_{2n}\ii''_0$ is a reduced expression of ${}_{2n}w_0$
adapted to $Q'$ of $A_{2n}$ then $\RR({}_{2n}\ii''_0)$ is a reduced expression of $w_0$ in $\lf \Qd \rf.$ There are two types of twisted Coxeter elements
of type $A_{2n+1}$:
\begin{enumerate}
\item[{\rm (1)}] The twisted Coxeter element $[i_1\ i_2\ \cdots i_{n+1}]\vee$ is said to be type $1$ if there is $\alpha$ and $\beta$ such that
$1\leq \alpha<\beta\leq n+1$ and $i_{\alpha}=n$ or $n+2$ and $i_\beta=n+1$.
\item[{\rm (2)}] The twisted Coxeter element $[i_1\ i_2\ \cdots i_{n+1}]\vee$ is said to be type $2$ if there is $\alpha$ and $\beta$ such that
$1\leq \alpha<\beta\leq n+1$ and $i_{\alpha}=n+1$ and $i_\beta=n$ or $n+2$.
\end{enumerate}

If $[i_1\ i_2\ \cdots i_{n+1}]\vee$ is type $1$, by using the commutation relations $s_{n+1}s_j=s_j s_{n+1}$ for $j\in I_{2n+1}\backslash\{n,n+1, n+2\}$, we can find $i'_1, i'_2, \cdots, i'_{n+1}$ such that
\begin{enumerate}[(i)]
\item $[i'_1\ i'_2\ \cdots i'_{n+1}]=[i_1\ i_2\ \cdots i_{n+1}],$
\item $i'_{\alpha}=n$ or $n+2$ and $i'_{\alpha+1}=n+1$ for some $\alpha \in \{1, \cdots, n\}.$
\end{enumerate}
Then
\begin{enumerate}[(i)]
\item $\left[\ \prod_{k=0}^{2n} (i'_1\ i'_2\ \cdots i'_{n+1})^{k\vee}\ \right]=\left[\ \prod_{k=0}^{2n} (i_1\ i_2\ \cdots i_{n+1})^{k\vee}\ \right]$,
\item $\prod_{k=0}^{2n} (i'_1\ i'_2\ \cdots i'_{n+1})^{k\vee}=\RR \left( \prod(i'_1\ i'_2\ \cdots i'_{n})^{k\vee} \right)$.
\end{enumerate}
By Proposition \ref{Prop:twisted 2n},  $\prod(i'_1\ i'_2\ \cdots i'_{n})^{k\vee}$ is a reduced expression of ${}_{2n}w_0$ adapted to a Dynkin quiver of type $A_{2n}$ and by Proposition \ref{Prop:5.7_1103},  $\RR( \prod(i'_1\ i'_2\ \cdots i'_{n})^{k\vee})$ is a reduced expression of $w_0$ in $\lf \Qd \rf.$

By (ii), we know that  $\prod_{k=0}^{2n} (i'_1\ i'_2\ \cdots i'_{n+1})^{k\vee}$ is  a reduced expression of $w_0$ in $\lf \Qd \rf$ and hence, by (i), $\prod_{k=0}^{2n} (i_1\ i_2\ \cdots i_{n+1})^{k\vee}$ is  a reduced expression of $w_0$ in $\lf \Qd \rf.$

\vskip 2mm

If $[i_1\ i_2\ \cdots i_{n+1}]\vee$ is type $2$, by using the commutation relations $s_{n+1}s_j=s_j s_{n+1}$ for $j\in I_{2n+1}\backslash\{n,n+1, n+2\}$, we can find $i'_1, i'_2, \cdots, i'_{n+1}$ such that
\begin{enumerate}[(i)]
\item $[i'_1\ i'_2\ \cdots i'_{n+1}]=[i_1\ i_2\ \cdots i_{n+1}],$
\item $i'_1=n+1$.
\end{enumerate}
Then
\[ \ \left[\ \prod_{k=0}^{2n} (i_1\ i_2\ \cdots i_{n+1})^{k\vee}\ \right] =\left[\ \prod_{k=0}^{2n} (i'_1\ i'_2\ \cdots i'_{n+1})^{k\vee}\ \right]\] and
\[ \ \left[\ \prod_{k=0}^{2n} (i'_1\ i'_2\ \cdots i'_{n+1})^{k\vee}\ \right] \cdot r_{n+1}= \left[\ \prod_{k=0}^{2n} (i'_2\ i'_3\ \cdots i'_{n+1}\ i'_1)^{k\vee}\ \right]. \]
Since $[ i'_2\ i'_3\ \cdots i'_{n+1}\ i'_1]\vee$ is a type 1 twisted Coxeter element, we conclude  $[\ \prod_{k=0}^{2n} (i_1\ i_2\ \cdots i_{n+1})^{k\vee}\ ] $ is a reduced expression of $w_0$ in $\lf \Qd \rf.$
\end{proof}

\begin{proposition} \label{Prop:Dynkin quiver 2n twisted}
The number of Dynkin quiver $Q'$of type $A_{2n}$ such that
\begin{equation} \label{Eqn:char}
|a_{i}-b_{i}|=1, \quad i\in I_{2n},
\end{equation}
where $a_i$ is the number of arrows toward $i$ and $b_i$ is the number of arrows toward $i^\vee$ between the vertices $i$ and $i^\vee$ is $2 \times 3^{n-1}.$
\end{proposition}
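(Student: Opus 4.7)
My plan is to parametrize Dynkin quivers of type $A_{2n}$ by sign sequences $\epsilon = (\epsilon_1,\ldots,\epsilon_{2n-1}) \in \{\pm 1\}^{2n-1}$, where $\epsilon_k = +1$ if the arrow between consecutive vertices $k$ and $k+1$ points from $k$ toward $k+1$, and $\epsilon_k = -1$ otherwise. There are $2^{2n-1}$ such quivers; the goal is to select those satisfying $|a_i - b_i| = 1$ for every $i \in I_{2n}$.

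The next step is to translate the condition into a condition on signed partial sums. For $i \le n$, the unique path from $i$ to $i^\vee = 2n+1-i$ in the Dynkin diagram traverses the arrows at positions $i, i+1, \ldots, 2n-i$, and an arrow at position $k$ in this range is directed toward $i^\vee$ (resp.\ toward $i$) precisely when $\epsilon_k = +1$ (resp.\ $-1$). Setting
\[ s_i \seteq \sum_{k=i}^{2n-i} \epsilon_k, \]
we get $b_i - a_i = s_i$, so the condition $|a_i - b_i| = 1$ is equivalent to $s_i \in \{\pm 1\}$. Since $(a_{i^\vee}, b_{i^\vee}) = (b_i, a_i)$, the conditions for $i$ and $i^\vee$ coincide, so it suffices to require $|s_i|=1$ for $i = 1, 2, \ldots, n$. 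Observe moreover that $s_n = \epsilon_n$, so the condition at $i = n$ is automatic.

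I will then count the valid sequences by fixing signs in the order $\epsilon_n$, then the pairs $(\epsilon_{n-1},\epsilon_{n+1})$, $(\epsilon_{n-2},\epsilon_{n+2})$, \ldots, $(\epsilon_1,\epsilon_{2n-1})$, and using the telescoping identity
\[ s_i = s_{i+1} + \epsilon_i + \epsilon_{2n-i} \quad (1 \le i \le n-1). \]
Given inductively that $s_{i+1} \in \{\pm 1\}$, exactly one of the four choices of $(\epsilon_i,\epsilon_{2n-i}) \in \{\pm 1\}^2$ pushes $s_i$ to $\pm 3$ (namely $(+,+)$ when $s_{i+1} = +1$, or $(-,-)$ when $s_{i+1} = -1$), and the remaining three keep $s_i \in \{\pm 1\}$. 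Hence there are $2$ choices for $\epsilon_n$ and $3$ choices at each of the $n-1$ subsequent stages, giving the total count $2 \cdot 3^{n-1}$.

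The main obstacle is simply carrying out the translation from the geometric description of $a_i, b_i$ (``arrows toward $i$ between $i$ and $i^\vee$'') into the sign-sum formalism, together with the symmetry observation that reduces to $i \le n$; once that is set up, the recursive counting is immediate. A small bookkeeping check — that the inductive procedure terminates properly because $s_n = \epsilon_n$ always lies in $\{\pm 1\}$ without any prior constraint — is also needed, but it is trivial.
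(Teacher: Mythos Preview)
Your proof is correct and follows essentially the same induction as the paper: both arguments fix the inner $A_{2n-2}$ data and observe that exactly three of the four choices for the two outermost arrows preserve the condition, yielding the factor $3^{n-1}$ on top of the $2$ base cases. Your partial-sum formulation $s_i = s_{i+1} + \epsilon_i + \epsilon_{2n-i}$ packages the paper's type~1/type~2 case split (on the sign of $a_2 - b_2$) into a single arithmetic step, which makes the count more transparent, but the structure of the induction is the same.
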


\begin{proof}
There are two Dynkin quivers of type $A_2$ and both satisfies the condition (\ref{Eqn:char}). Suppose we proved for $A_{2n-2}.$

If a Dynkin quiver $Q'$ of type $A_{2n}$ satisfies the condition (\ref{Eqn:char}), the subquiver $Q'|_{J}$ for $J=\{2, \cdots, 2n-1\}$ of $Q$ consists of vertices $2, \cdots, 2n-1$ is isomorphic to a quiver of type $A_{2n-2}$ satisfying (\ref{Eqn:char}). There are two types of Dynkin quivers satisfying (\ref{Eqn:char}). We call the quiver $Q'$ by type 1 if $a_2-b_2=1$ and by type 2 if  $a_2-b_2=-1.$

A type 1 quiver $Q'$ has  one of the following forms:
 \[ Q'= \scalebox{0.84}{\xymatrix@R=3ex{ \bullet
\ar@{<-}[r]_<{1} &
 [\bullet \ar@{-}[l]^< {\ \ \ \ 2} & &Q'|_{J}  &  &  \bullet]
\ar@{->}[r]_<{2n-1} &
 \bullet \ar@{-}[l]^< {\ \ \ \ 2n } } } \]
  \[ Q'= \scalebox{0.84}{\xymatrix@R=3ex{ \bullet
\ar@{->}[r]_<{1} &
 [\bullet \ar@{-}[l]^< {\ \ \ \ 2} & &Q'|_{J}  &  &  \bullet]
\ar@{<-}[r]_<{2n-1} &
 \bullet \ar@{-}[l]^< {\ \ \ \ 2n } } } \]
  \[ Q'= \scalebox{0.84}{\xymatrix@R=3ex{ \bullet
\ar@{->}[r]_<{1} &
 [\bullet \ar@{-}[l]^< {\ \ \ \ 2} & &Q'|_{J} &  &  \bullet]
\ar@{->}[r]_<{2n-1} &
 \bullet \ar@{-}[l]^< {\ \ \ \ 2n } } } \]

 A type 2 quiver $Q'$ has  one of the following forms:
 \[ Q'= \scalebox{0.84}{\xymatrix@R=3ex{ \bullet
\ar@{<-}[r]_<{1} &
 [\bullet \ar@{-}[l]^< {\ \ \ \ 2} & &Q'|_{J}  &  &  \bullet]
\ar@{->}[r]_<{2n-1} &
 \bullet \ar@{-}[l]^< {\ \ \ \ 2n } } } \]
  \[ Q'= \scalebox{0.84}{\xymatrix@R=3ex{ \bullet
\ar@{->}[r]_<{1} &
 [\bullet \ar@{-}[l]^< {\ \ \ \ 2} & &Q'|_{J}  &  &  \bullet]
\ar@{<-}[r]_<{2n-1} &
 \bullet \ar@{-}[l]^< {\ \ \ \ 2n } } } \]
  \[ Q'= \scalebox{0.84}{\xymatrix@R=3ex{ \bullet
\ar@{<-}[r]_<{1} &
 [\bullet \ar@{-}[l]^< {\ \ \ \ 2} & &Q'|_{J} &  &  \bullet]
\ar@{<-}[r]_<{2n-1} &
 \bullet \ar@{-}[l]^< {\ \ \ \ 2n } } } \]

In other words, for each Dynkin quiver of type $A_{2n-2}$ satisfying (\ref{Eqn:char}), we get three different Dynkin quivers of type $A_{2n}$ satisfying (\ref{Eqn:char}). Hence there are $2 \times 3^{n-1}$ Dynkin quivers of type $A_{2n}$ satisfying (\ref{Eqn:char}).
\end{proof}

\begin{proposition}\label{Prop:6.14_1104}
Let
\[\Omega=\left\{ \left[  \left.\ \prod_{k=0}^{2n} \ii^{k\vee} \right] \ \right| \ [\ii]\vee \text{ is a twisted Coxeter element of type $A_{2n+1}$}\subset \lf \Qd \rf \ \right\}\]
and
\[\QQ=\{ \text{ Dynkin quiver $Q'$ of type $A_{2n}$ satisfying the condition \eqref{Eqn:char} }\}\subset \lf Q\rf \]
The map
\[ \left.\PPi\right|_{\Omega}: \Omega \to \QQ \]
is a two-to-one and onto map.
\end{proposition}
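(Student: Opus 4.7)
The plan is to establish two claims: (i) $\PPi(\Omega)\subseteq\QQ$, and (ii) both preimages of each $[Q']\in\QQ$ under $\PPi$ lie in $\Omega$. Combined with Proposition~\ref{prop: 2to1}, which states that $\PPi$ on $\lf\Qd\rf$ is two-to-one and onto, these yield that $\left.\PPi\right|_{\Omega}:\Omega\to\QQ$ is two-to-one and onto.

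For (i), take $[\ii'_0]=\left[\prod_{k=0}^{2n}(i_1\cdots i_{n+1})^{k\vee}\right]\in\Omega$ associated to a twisted Coxeter element $[i_1\cdots i_{n+1}]\vee$. When this twisted Coxeter element is of type~$1$, the commutation argument from the proof of Theorem~\ref{thm:twisted longest} lets me rearrange so that some $n$ or $n+2$ occurs immediately before an $n+1$; the resulting reduced expression is of the form $\RR(\ii''_0)$ with $\ii''_0=\prod_k(j_1\cdots j_n)^{k\vee}$ associated to a twisted Coxeter element $[j_1\cdots j_n]\vee$ of $A_{2n}$. Proposition~\ref{Prop:twisted 2n} ensures $\ii''_0$ is adapted to a quiver $Q'$ of $A_{2n}$ satisfying \eqref{Eqn:char}, i.e.\ $Q'\in\QQ$; since $\PP\circ\RR=\mathrm{id}$, we get $\PPi([\ii'_0])=[Q']\in\QQ$. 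When the twisted Coxeter element is of type~$2$, the same proof of Theorem~\ref{thm:twisted longest} furnishes a class $[\ii'_0]\cdot r_{n+1}$ coming from a type~$1$ twisted Coxeter element; since $\PP$ kills $n+1$, the reflection $r_{n+1}$ preserves the image under $\PPi$, so $\PPi([\ii'_0])=\PPi([\ii'_0]\cdot r_{n+1})\in\QQ$ as well.

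For (ii), fix $[Q']\in\QQ$. By Proposition~\ref{prop: 2to1}, $\PPi^{-1}([Q'])$ consists of exactly two classes, one with $n+1$ as source and one with $n+1$ as sink. A converse of Proposition~\ref{Prop:twisted 2n}---every $Q'\in\QQ$ arises as the adapted quiver of some twisted Coxeter element $[j_1\cdots j_n]\vee$ of $A_{2n}$---supplies, after applying $\RR$ and reading the first $n+1$ positions as a type~$1$ twisted Coxeter element of $A_{2n+1}$, a class in $\Omega\cap\PPi^{-1}([Q'])$. Applying $r_{n+1}$ (from the left if $n+1$ is a source of this class, from the right otherwise) then produces the other preimage; by the proof of Theorem~\ref{thm:twisted longest} it arises from a type~$2$ twisted Coxeter element and so lies in $\Omega$ as well. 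The main obstacle is precisely this converse of Proposition~\ref{Prop:twisted 2n}, which I would address either by an inductive construction mirroring the proof of Proposition~\ref{Prop:Dynkin quiver 2n twisted}, or by verifying that the map from twisted Coxeter elements of $A_{2n}$ to $\QQ$ is injective (distinct twisted Coxeter elements yield non-commutation-equivalent reduced expressions by Theorem~\ref{thm: Qs}(3)) and then comparing cardinalities with $|\QQ|=2\cdot 3^{n-1}$.
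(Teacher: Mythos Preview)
Your approach is correct but takes a more constructive route than the paper. The paper argues as follows: first $\PPi(\Omega)\subset\QQ$ by a direct index count (in $\prod_{k}\ii^{k\vee}$ each index $j\ne n+1$ appears exactly $n$ or $n+1$ times, so after applying $\PP$ the adapted quiver $Q'$ has $r_i^{Q'}\in\{n,n+1\}$, hence $|a_i-b_i|=1$); then, since $\PPi$ is globally two-to-one by Proposition~\ref{prop: 2to1}, the restriction is at most two-to-one; finally, from $|\Omega|=4\cdot 3^{n-1}$ (Proposition~\ref{prop: number tCox elts}) and $|\QQ|=2\cdot 3^{n-1}$ (Proposition~\ref{Prop:Dynkin quiver 2n twisted}) it follows by pure cardinality that $\PPi|_\Omega$ is exactly two-to-one and onto. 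This sidesteps entirely the converse of Proposition~\ref{Prop:twisted 2n} that you flagged as the main obstacle.

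By contrast, your argument for (i) re-derives the containment $\PPi(\Omega)\subset\QQ$ through the type~1/type~2 machinery of Theorem~\ref{thm:twisted longest}; this works but is heavier than the one-line index count. Your argument for (ii) builds explicit preimages and so yields more information (an actual twisted Coxeter element over each $Q'\in\QQ$), but at the cost of proving that converse. The injectivity-plus-counting fix you propose for it is exactly the spirit of the paper's shortcut---pushed one level up to $\Omega\to\QQ$ directly, the same counting finishes the proof without ever passing through twisted Coxeter elements of $A_{2n}$.
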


\begin{proof}
By Theorem \ref{thm:twisted longest}, we know that $\PPi(\Omega)\subset\lf Q\rf. $ By counting the indices in $[\ \prod_{k=0}^{2n} \ii^{k\vee}\ ]\in \Omega $, we can easily see that  $\PPi(\Omega)\subset\QQ.$ Since the map $\PPi$ is a two-to-one map,
$\left.\PPi\right|_{\Omega}$ is also a two-to-one map. Moreover, since by Proposition \ref{prop: number tCox elts} and  Proposition \ref{Prop:Dynkin quiver 2n twisted}, we know that
\[ |\Omega|= 4 \times 3^{n-1} \quad \text{ and } \quad |\QQ|=2 \times 3^{n-1}.\]
Hence $ \left.\PPi\right|_{\Omega}$ is onto.
\end{proof}

Now we summarize results in this section, which can be understood as a {\it surgery} for
obtaining $[\Qd] \in \lf \Qd \rf$ of type $A_{2n+1}$ from $[Q] \in \lf Q \rf$ of type $A_{2n}$:

\begin{theorem} \label{thm: surgery}
Let $\lf \Qd \rf$ be the twisted adapted cluster point of type $A_{2n+1}$ and $\lf Q\rf$ be the adapted cluster point of type $A_{2n}$. \hfill
\begin{enumerate}
\item[{\rm (1)}]   The map $\PPi:\lf \Qd \rf \to \lf Q\rf$ is a two-to-one and onto map.
\item[{\rm (2)}] Let $\ii'_0$ be a reduced expression such that  $[\ii'_0]\in \lf\ii_0\rf$. Then $[\ii'_0]$ has $n+1$ as a source or a sink, but not both.
\item[{\rm (3)}] For each Dynkin quiver $Q'$ of type $A_{2n}$, we get $\PPi^{-1}(Q')$ by the following procedure.
\begin{enumerate}[(i)]
\item[{\rm (i)}] Let ${}_{2n}\ii'_0$ be a reduced expression of ${}_{2n}w_0$ adapted to $Q'$.
\item[{\rm (ii)}] Substitute $i\in \{n+1, \cdots, 2n\}$ in ${}_{2n}\ii'_0$ by $i^+=i+1$.
\item[{\rm (iii)}] Between each adjacent $n$ and $n+2$, insert $n+1$.
\item[{\rm (iv)}] Insert another $n+1$ at the beginning or at the end $($not both$)$ of the sequence obtained in {\rm (iii)}.
\end{enumerate}
Note that, in {\rm (iii)}, the positions of $n+1$'s are not unique. However, by commutation relations, we get a unique class of reduced expressions.
In {\rm (iv)}, we get two distinct classes of reduced expressions.
\item[{\rm (4)}] The number of classes in $\lf\ii_0\rf$ is $2^{2n}$.
\item[{\rm (5)}]  The number of twisted Coxeter elements is $4 \times 3^{n-1}$
\item[{\rm (6)}] We can associate a  twisted Coxeter element to a reduced expression of $w_0$ in $\lf \Qd \rf$, in the sense of {\rm Theorem \ref{thm:twisted longest}}.
\item[{\rm (7)}] A class of reduced expressions associated to a twisted Coxeter element is associated to a Dynkin quiver
satisfying \eqref{Eqn:char}, via the map  $\PPi:\lf \Qd \rf \to \lf Q\rf$.
\end{enumerate}
\end{theorem}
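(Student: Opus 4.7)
The plan is to assemble the seven assertions by invoking the propositions already established in this section, since the theorem serves as a consolidated summary of the ``surgery'' picture relating $\lf \Qd \rf$ to $\lf Q \rf$. For (1) I would simply cite Proposition \ref{prop: 2to1}; there is nothing further to do. For (2) I would note that by Proposition \ref{Prop:2_1102}(2), in any $[\ii_0']\in \lf\Qd\rf$ the subword $\ii'_{0|J}$ (for $J=\{n,n+1,n+2\}$) must be one of the four forms listed in \eqref{eq: 4cases}; in each case the index $n+1$ occurs either only at the beginning or only at the end of $\ii'_{0|J}$ (because $(n+1)^\vee=n+1$), so by the commutation rules $s_{n+1}s_j=s_js_{n+1}$ for $j\notin J$ we may slide the terminal $n+1$ to an extremity of the full reduced expression. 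Thus $n+1$ is a sink or a source, but not both (the ``not both'' part follows because the other three entries in the subword $\ii'_{0|J}$ separate its two occurrences of $n+1$-type indices).

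For (3) I would use the section map $\RR:\langle I_{2n}\rangle \to \langle I_{2n+1}\rangle$ constructed in the proof of Proposition \ref{Prop:5.7_1103}, whose properties (i)--(iii) given there show that starting from a reduced expression ${}_{2n}\ii'_0$ adapted to $Q'$, steps (ii) and (iii) realize exactly the action of $\RR$ (substituting $i\mapsto i+1$ for $i\in\{n+1,\dots,2n\}$ and inserting an $n+1$ between each adjacent $n$ and $n+2$). That this is well defined up to commutation is Lemma \ref{Lemma}(2). Step (iv) then produces the two distinct preimages of $[Q']$ guaranteed by the two-to-one statement (1), one with $n+1$ as a source, the other with $n+1$ as a sink (as in the final paragraph of the proof of Proposition \ref{prop: 2to1}).

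Claim (4) is Theorem \ref{thm: 22n}, and claim (5) is Proposition \ref{prop: number tCox elts}(1); both have already been proved. Claim (6) is exactly the content of Theorem \ref{thm:twisted longest}: given any twisted Coxeter element $[i_1\cdots i_{n+1}]\vee$, the product $\prod_{k=0}^{2n}(i_1\cdots i_{n+1})^{k\vee}$ is a reduced expression of $w_0$ lying in $\lf \Qd \rf$. For (7) I would invoke Proposition \ref{Prop:6.14_1104}: by counting indices, an element of $\Omega$ maps under $\PPi$ to a quiver in $\QQ$, i.e., to a quiver of type $A_{2n}$ satisfying \eqref{Eqn:char}; conversely $\left.\PPi\right|_\Omega$ is shown there to be two-to-one and onto by a simple cardinality comparison using (5) and Proposition \ref{Prop:Dynkin quiver 2n twisted}.

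Since every component is already in place, there is no real obstacle; the main bookkeeping task is making sure that the surgery description in (3) is identified cleanly with the monoid map $\RR$, and that the two choices in step (iv) correspond precisely to the source/sink dichotomy in (2). Once this identification is made explicit, parts (1)--(7) follow by direct citation and a short cardinality/uniqueness argument.
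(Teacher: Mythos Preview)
Your proposal is correct and matches the paper's approach: the theorem is explicitly announced as a summary of the section (``Now we summarize results in this section''), and no separate proof is given. Your identifications of (1)--(7) with Proposition~\ref{prop: 2to1}, \eqref{eq: 4cases} and the remark following Proposition~\ref{Prop:2_1102}, the section map $\RR$ from the proof of Proposition~\ref{Prop:5.7_1103}, Theorem~\ref{thm: 22n}, Proposition~\ref{prop: number tCox elts}(1), Theorem~\ref{thm:twisted longest}, and Proposition~\ref{Prop:6.14_1104} respectively are exactly the intended citations.
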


The two distinct classes in {\rm (iii)} obtained from $Q$ of type $A_{2n}$ are denoted by
\begin{align} \label{eq: two classes}
\begin{cases}
[Q^<] & \text{ if $n+1$ is a source of the class}, \\
[Q^>] & \text{ if $n+1$ is a sink of the class}.
\end{cases}
\end{align}

\begin{example} \label{Ex: twisted Coxeter}
Consider a Dynkin quiver $Q$ of $A_6$ :
$$ Q = {\xymatrix@C=4ex{ \bullet
\ar@{->}[r]_<{ \ 1} &  \bullet
\ar@{<-}[r]_<{ \ 2}  &  \bullet
\ar@{->}[r]_<{ \ 3} &\bullet \ar@{->}[r]_<{ \ 4}
&\bullet \ar@{<-}[r]_<{ \ 5}
& \bullet \ar@{-}[l]^<{\ \ \ \ \ \ 6} }}.$$
The quiver $Q$ satisfies (\ref{Eqn:char}) and $s_5 s_4 s_6 s_2 s_3 s_1$ is the corresponding Coxeter element. The commutation class adapted to $Q$ is
\[ [Q]= [(5\ 4\ 6\ 2\ 3\ 1)^3\ 5\ 4\ 6].\]
Then
\[\PPi^{-1}( [Q] )=\{\ [Q^<] \seteq [\ ( 6\ 4\ 5\ 7\ 2\ 4\  3\ 1)^3\ \ 6\ 4\ 5\ 7\ ],  \ [Q^>] \seteq [\ ( 6\ 5\ 4\ 7\ 2\  3\ 4\ 1)^3\ \ 6\  5\ 4 \ 7\ ] \ \} ,  \]
whose twisted Coxeter elements are $6\ 4\ 5\ 7 \vee$ and $6\ 5\ 4\ 7 \vee.$
The AR-quiver $\Gamma_Q$ has the shape of
\[ \ \ \ \scalebox{0.6}{\xymatrix@C=2ex@R=1ex{
& 1  && 2 && 3 && 4 && 5 && 6 && 7 && 8  \\
1 & &&  \bullet\ar@{->}[ddrr] && &&  \bullet\ar@{->}[ddrr]  && && \bullet\ar@{->}[ddrr]  && && \\
& &&&&& &&&& &&&& &&\\
2 & && &&\bullet\ar@{->}[ddrr] \ar@{->}[uurr]  && && \bullet \ar@{->}[ddrr]\ar@{->}[uurr] && && \bullet &&  \\
&&&&&& &&&&&&&&\\
3 & && \bullet \ar@{->}[ddrr] \ar@{->}[uurr] && && \bullet\ar@{->}[ddrr] \ar@{->}[uurr]  && && \bullet\ar@{->}[ddrr] \ar@{->}[uurr]  && && \\
&&&&&& &&&& &&&& &&\\
4 & \bullet\ar@{->}[ddrr] \ar@{->}[uurr]  && && \bullet\ar@{->}[ddrr] \ar@{->}[uurr]  && && \bullet \ar@{->}[ddrr] \ar@{->}[uurr] && && \bullet\ar@{->}[ddrr]   &&\\
&&&&&&&&&&&&&&\\
5 & && \bullet\ar@{->}[ddrr] \ar@{->}[uurr]  && && \bullet\ar@{->}[ddrr] \ar@{->}[uurr]  && && \bullet\ar@{->}[ddrr] \ar@{->}[uurr]  && && \bullet  \\
 &&&&&& &&&&&&&&&&\\
6 & \bullet \ar@{->}[uurr]  && && \bullet \ar@{->}[uurr]  && && \bullet  \ar@{->}[uurr] && && \bullet  \ar@{->}[uurr] &&
}} \]
and a combinatorial AR-quiver related to $\PPi^{-1}( [Q] )$ has the shape of one of the followings:
\[ \ \ \ \scalebox{0.5}{\xymatrix@C=2ex@R=1ex{
1 && [Q^<] &&  \bullet\ar@{->}[ddrr] && &&  \bullet\ar@{->}[ddrr]  && && \bullet\ar@{->}[ddrr]  && && \\
& &&&&& &&&& &&&& &&\\
2 && && &&\bullet\ar@{->}[ddrr] \ar@{->}[uurr]  && && \bullet \ar@{->}[ddrr]\ar@{->}[uurr] && && \bullet &&  \\
&&&&&& &&&&&&&&\\
3 & & && \bullet \ar@{->}[dr] \ar@{->}[uurr] && && \bullet\ar@{->}[dr] \ar@{->}[uurr]  && && \bullet\ar@{->}[dr] \ar@{->}[uurr]  && && \\
4& \bigstar\ar@{->}[dr]&&\bigstar\ar@{->}[ur]&&\bigstar\ar@{->}[dr]& &\bigstar\ar@{->}[ur]&&\bigstar\ar@{->}[dr]& &\bigstar\ar@{->}[ur]&&\bigstar\ar@{->}[dr]& &&\\
5 && \bullet\ar@{->}[ddrr] \ar@{->}[ur]  && && \bullet\ar@{->}[ddrr] \ar@{->}[ur]  && && \bullet \ar@{->}[ddrr] \ar@{->}[ur] && && \bullet\ar@{->}[ddrr]   &&\\
&&&&&&&&&&&&&&\\
6 && && \bullet\ar@{->}[ddrr] \ar@{->}[uurr]  && && \bullet\ar@{->}[ddrr] \ar@{->}[uurr]  && && \bullet\ar@{->}[ddrr] \ar@{->}[uurr]  && && \bullet  \\
 &&&&&& &&&&&&&&&&\\
7 && \bullet \ar@{->}[uurr]  && && \bullet \ar@{->}[uurr]  && && \bullet  \ar@{->}[uurr] && && \bullet  \ar@{->}[uurr] &&
}} \ \ \ \
 \scalebox{0.5}{\xymatrix@C=2ex@R=1ex{
1 && [Q^>] &&  \bullet\ar@{->}[ddrr] && &&  \bullet\ar@{->}[ddrr]  && && \bullet\ar@{->}[ddrr]  && && \\
& &&&&& &&&& &&&& &&\\
2 && && &&\bullet\ar@{->}[ddrr] \ar@{->}[uurr]  && && \bullet \ar@{->}[ddrr]\ar@{->}[uurr] && && \bullet &&  \\
&&&&&& &&&&&&&&\\
3 & & && \bullet \ar@{->}[dr] \ar@{->}[uurr] && && \bullet\ar@{->}[dr] \ar@{->}[uurr]  && && \bullet\ar@{->}[dr] \ar@{->}[uurr]  && && \\
4& &&\bigstar\ar@{->}[ur]&&\bigstar\ar@{->}[dr]& &\bigstar\ar@{->}[ur]&&\bigstar\ar@{->}[dr]& &\bigstar\ar@{->}[ur]&&\bigstar\ar@{->}[dr]& &  \bigstar&\\
5 && \bullet\ar@{->}[ddrr] \ar@{->}[ur]  && && \bullet\ar@{->}[ddrr] \ar@{->}[ur]  && && \bullet \ar@{->}[ddrr] \ar@{->}[ur] && && \bullet\ar@{->}[ddrr] \ar@{->}[ur]  &&\\
&&&&&&&&&&&&&&\\
6 && && \bullet\ar@{->}[ddrr] \ar@{->}[uurr]  && && \bullet\ar@{->}[ddrr] \ar@{->}[uurr]  && && \bullet\ar@{->}[ddrr] \ar@{->}[uurr]  && && \bullet  \\
 &&&&&& &&&&&&&&&&\\
7 && \bullet \ar@{->}[uurr]  && && \bullet \ar@{->}[uurr]  && && \bullet  \ar@{->}[uurr] && && \bullet  \ar@{->}[uurr] &&
}}
 \]
\end{example}

\begin{example} \label{Ex: non twisted Coxeter}
Consider a Dynkin quiver $Q$ of $A_6$
\[ Q = {\xymatrix@R=3ex{ \bullet
\ar@{->}[r]_<{ \ 1} &  \bullet
\ar@{->}[r]_<{ \ 2}  &  \bullet
\ar@{->}[r]_<{ \ 3} &\bullet \ar@{->}[r]_<{ \ 4}
&\bullet \ar@{<-}[r]_<{ \ 5}
& \bullet \ar@{-}[l]^<{\ \ \ \ \ \ 6} }}. \]
The quiver $Q$ does not satisfy (\ref{Eqn:char}). A reduced expression adapted to $Q$ is
\[ [Q]= [5\ 6\ 4\ 3\ 2\ 1\ 5\ 6\ 4\ 3\ 2\ 1\ 5\ 6\ 4\ 3\ 5\ 6\ 4\ 5\ 6] .\]
Then $\PPi^{-1}( [Q] )$ is
\begin{equation*}
\begin{aligned}
& \left\{ \ [Q^{<}] \seteq [ \ 6\ 7 \ 4\ 5\ 4 \ 3\ 2\ 1\ 6\ 7\ 4\ 5\ 4\ 3\ 2\ 1\ 6\ 7\ 4\ 5\ 4\ 3\ 6\ 7\ 4\ 5\ 6\ 7  \ ] \right., \\
& \left. \ \ \ \ \ \   [Q^{>}] \seteq [ \ 6\ 7 \ 5\ 4\ 3 \ 4\ 2\ 1\ 6\ 7\  5\ 4\ 3\ 4\ 2\ 1\ 6\ 7\ 5\ 4\ 3\ 4\ 6\ 7\ 5\ 4 \ 6\ 7  \ ]  \right\}.
\end{aligned}
\end{equation*}
These two classes of reduced expressions do {\it not} have twisted Coxeter elements.
The AR-quiver $\Gamma_Q$ has the shape of
\[ \ \ \ \scalebox{0.6}{\xymatrix@C=2ex@R=1ex{
& 1 && 2 && 3 && 4 && 5  && 6 && 7 && 8 && 9 && 10 \\
1 && & &&  &&\bullet\ar@{->}[ddrr]  && && \bullet\ar@{->}[ddrr]  && &&  && && \\
& &&&&& &&&& &&&& &&\\
2  && & && && &&\bullet\ar@{->}[ddrr] \ar@{->}[uurr]  && && \bullet \ar@{->}[ddrr] && && &&  \\
&&&&&& &&&&&&&&\\
3 && & && && \bullet \ar@{->}[ddrr] \ar@{->}[uurr] && && \bullet\ar@{->}[ddrr] \ar@{->}[uurr]  && && \bullet\ar@{->}[ddrr]  && && \\
&&&&&& &&&& &&&& &&\\
4 && & && \bullet\ar@{->}[ddrr] \ar@{->}[uurr]  && && \bullet\ar@{->}[ddrr] \ar@{->}[uurr]  && && \bullet \ar@{->}[ddrr] \ar@{->}[uurr] && && \bullet\ar@{->}[ddrr]   &&\\
&&&&&&&&&&&&&&\\
5 && &  \bullet\ar@{->}[ddrr] \ar@{->}[uurr]  && && \bullet\ar@{->}[ddrr] \ar@{->}[uurr]  && && \bullet\ar@{->}[ddrr] \ar@{->}[uurr]  && && \bullet\ar@{->}[ddrr] \ar@{->}[uurr]  && && \bullet  \\
 &&&&&& &&&&&&&&&&\\
6  &  \bullet \ar@{->}[uurr]  & && & \bullet \ar@{->}[uurr]  && && \bullet \ar@{->}[uurr]  && && \bullet  \ar@{->}[uurr] && && \bullet  \ar@{->}[uurr] &&
}} \]
and the combinatorial AR-quiver related to $\PPi^{-1}( [Q] )$ has the shape of one of the followings:
\[ \ \ \ \scalebox{0.46}{\xymatrix@C=2ex@R=1ex{
1 &[Q^<] &  & &&  &&\bullet\ar@{->}[ddrr]  && && \bullet\ar@{->}[ddrr]  && &&  && && \\
& &&&&& &&&& &&&& &&\\
2  && & && && &&\bullet\ar@{->}[ddrr] \ar@{->}[uurr]  && && \bullet \ar@{->}[ddrr] && && &&  \\
&&&&&& &&&&&&&&\\
3 && & && && \bullet \ar@{->}[dr] \ar@{->}[uurr] && && \bullet\ar@{->}[dr] \ar@{->}[uurr]  && && \bullet\ar@{->}[dr]  && && \\
4 &&&&\bigstar\ar@{->}[dr]&& \bigstar \ar@{->}[ur]  &&\bigstar\ar@{->}[dr]&& \bigstar \ar@{->}[ur] &&\bigstar\ar@{->}[dr]&& \bigstar \ar@{->}[ur] &&\bigstar\ar@{->}[dr]\\
5 && & && \bullet\ar@{->}[ddrr] \ar@{->}[ur]  && && \bullet\ar@{->}[ddrr] \ar@{->}[ur]  && && \bullet \ar@{->}[ddrr] \ar@{->}[ur] && && \bullet\ar@{->}[ddrr]   &&\\
&&&&&&&&&&&&&&\\
6 && &  \bullet\ar@{->}[ddrr] \ar@{->}[uurr]  && && \bullet\ar@{->}[ddrr] \ar@{->}[uurr]  && && \bullet\ar@{->}[ddrr] \ar@{->}[uurr]  && && \bullet\ar@{->}[ddrr] \ar@{->}[uurr]  && && \bullet  \\
 &&&&&& &&&&&&&&&&\\
7  &  \bullet \ar@{->}[uurr]  & && & \bullet \ar@{->}[uurr]  && && \bullet \ar@{->}[uurr]  && && \bullet  \ar@{->}[uurr] && && \bullet  \ar@{->}[uurr] &&
}}
\ \ \ \ \  \scalebox{0.46}{\xymatrix@C=2ex@R=1ex{
1 &[Q^>]&  & &&  &&\bullet\ar@{->}[ddrr]  && && \bullet\ar@{->}[ddrr]  && &&  && && \\
& &&&&& &&&& &&&& &&\\
2  && & && && &&\bullet\ar@{->}[ddrr] \ar@{->}[uurr]  && && \bullet \ar@{->}[ddrr] && && &&  \\
&&&&&& &&&&&&&&\\
3 && & && && \bullet \ar@{->}[dr] \ar@{->}[uurr] && && \bullet\ar@{->}[dr] \ar@{->}[uurr]  && && \bullet\ar@{->}[dr]  && && \\
4 &&&&&& \bigstar \ar@{->}[ur]  &&\bigstar\ar@{->}[dr]&& \bigstar \ar@{->}[ur] &&\bigstar\ar@{->}[dr]&& \bigstar \ar@{->}[ur] &&\bigstar\ar@{->}[dr]&& \bigstar\\
5 && & && \bullet\ar@{->}[ddrr] \ar@{->}[ur]  && && \bullet\ar@{->}[ddrr] \ar@{->}[ur]  && && \bullet \ar@{->}[ddrr] \ar@{->}[ur] && && \bullet\ar@{->}[ddrr]\ar@{->}[ur]   &&\\
&&&&&&&&&&&&&&\\
6 && &  \bullet\ar@{->}[ddrr] \ar@{->}[uurr]  && && \bullet\ar@{->}[ddrr] \ar@{->}[uurr]  && && \bullet\ar@{->}[ddrr] \ar@{->}[uurr]  && && \bullet\ar@{->}[ddrr] \ar@{->}[uurr]  && && \bullet  \\
 &&&&&& &&&&&&&&&&\\
7  &  \bullet \ar@{->}[uurr]  & && & \bullet \ar@{->}[uurr]  && && \bullet \ar@{->}[uurr]  && && \bullet  \ar@{->}[uurr] && && \bullet  \ar@{->}[uurr] &&
}} \]
\end{example}

\begin{remark} \label{Rem:surgery}
As we can see in Example \ref{Ex: twisted Coxeter} and Example \ref{Ex: non twisted Coxeter}, the combinatorial AR-quiver $\Upsilon_{[\ii_0]}$ such that  $[\ii_0]\in \lf \Qd \rf$ of type $A_{2n+1}$ is closely related to the AR-quiver $\Gamma_Q$ for $Q \seteq {\PPi([\ii_0])}$ of type $A_{2n}$.
\begin{enumerate}
\item From $\Upsilon_{[\ii_0]}$ to  $\Gamma_Q$ :  \\
By deleting vertices in the $n+1$-th residue of  $\Upsilon_{[\ii_0]}$ and unifying every  length two paths passing through $n+1$-th residue into an arrow,
we obtain a quiver isomorphic to $\Gamma_Q$. We rename the $m$-th residue $(m \ge n+2)$ to $m-1$-th residue.
\item From   $\Gamma_Q$ to  $\Upsilon_{[\ii_0]}$ :
\begin{enumerate}
\item we put a vertex on every arrow between the $n$-th residue to the $n+1$-th residue,
\item since vertices obtained in (a) positioned in the $n+1$-th residue, the original $m$-th residue for $m\geq n+1$ should be renamed by the $m+1$-th residue,
\item break every arrow with the new vertex in (a) into two arrows, from the $n$-th residue to the $n+1$-th residue and from the $n+1$-th residue to the $n+2$-th residue  (resp. from the $n+2$-th residue to the $n+1$-th residue and from the $n+1$-th residue to the $n$-th residue),
\item put another vertex in the $n+1$-th residue when $\alpha_i$ is a source or a sink $\Upsilon_{[\ii_0]}$.
\end{enumerate}
\end{enumerate}
\end{remark}

\begin{definition} For $[\ii_0] \in \lf \Qd \rf$ such that $\PPi([\ii_0])=[Q]$,
we denote by $\Gamma_Q \cap \Upsilon_{[\ii_0]}$ the set of all vertices in $\Upsilon_{[\ii_0]}$
whose residues are contained in $I_{2n+1} \setminus \{ n+1 \}$ and by $\Upsilon_{[\ii_0]} \setminus \Gamma_Q$ the set of all vertices in
$\Upsilon_{[\ii_0]}$ whose residues are $n+1$.
\end{definition}

\begin{remark}
By Remark \ref{Rem:surgery}, we sometimes identify the vertex in $\Gamma_Q \cap \Upsilon_{[\ii_0]}$ as a vertex in $\Gamma_Q$ also, and call it {\it induced}.
Also,we call a subquiver $\rho$ in $\Upsilon_{[\ii_0]}$ {\it induced} if it contains a vertex in $\Gamma_Q \cap \Upsilon_{[\ii_0]}$ and a subquiver $\rho$
in $\Upsilon_{[\ii_0]}$ {\it totally induced} if all vertices $\rho$ are contained in $\Gamma_Q \cap \Upsilon_{[\ii_0]}$.
\end{remark}

\section{ Twisted AR-quivers}

\subsection{Coordinate system} Since every class $[\ii_0]$ in $\lf \Qd \rf$ is induced from some Dynkin quiver $Q$ of type $A_{2n}$, we can assign
coordinates for vertices in $\Upsilon_{[\ii_0]}$ using those of $\Gamma_Q$. To describe the algorithm for $\Upsilon_{[\ii_0]}$, we use the observations in
Example \ref{Ex: twisted Coxeter}, Example \ref{Ex: non twisted Coxeter} and Remark \ref{Rem:surgery}.
From now on, we call a combinatorial AR-quiver associated to twisted adapted class of type $A_{2n+1}$ as a {\it twisted AR-quiver} for simplicity.

\begin{algorithm} \label{Alg: surgery} \hfill
\begin{enumerate}
\item[{\rm (a)}] For a vertex $\Gamma_Q \cap \Upsilon_{[\ii_0]}$ whose coordinate in $\Gamma_Q$ is $(i,p)$ $(i \in I_{2n})$, we assign a new coordinate $(i^+,p)$, where $i^+=i+1$ if $i>n$ and $i^+=i$ if $i\leq n$.
\item[{\rm (b)}] For a vertex $ \Upsilon_{[\ii_0]} \setminus \Gamma_Q$ whose residue is $n+1$, we assign the coordinate :
\begin{itemize}
\item[{\rm (i)}] $(n+1,p-\frac{1}{2})$ if the vertex is a sink and $(j,p)$ is a vertex
which is connected with the vertex by an arrow in $\Upsilon_{[\ii_0]}$.
\item[{\rm (ii)}] $(n+1,p+\frac{1}{2})$ if the vertex is a source and $(j,p)$ is a vertex
which is connected with the vertex by an arrow in $\Upsilon_{[\ii_0]}$.
\item[{\rm (iii)}] $(n+1,p+\frac{1}{2})$ if the vertex is neither a sink nor a source and
$$ (j,p) \to \text{ the vertex } \to  (j',p+1) \text{ in } \Upsilon_{[\ii_0]}$$
where $\{ j,j'\}=\{ n,n+2 \}$.
\end{itemize}
\end{enumerate}
For $\be \in \Phi^+$, we denote by $\Omega_{[\ii_0]}(\be) \in I \times \Z/2$ the coordinate for $\be$ in $\Upsilon_{[\ii_0]}$
\end{algorithm}

\begin{example} \label{ex: Q<Q>}
The coordinates for $\Upsilon_{[Q^<]}$ and $\Upsilon_{[Q^>]}$ in Example \ref{Ex: twisted Coxeter} can be depicted as follows:
\[ \ \ \ \scalebox{0.5}{\xymatrix@C=2ex@R=1ex{
& 1 & 1\frac{1}{2} &2 & 2\frac{1}{2} &3 & 3\frac{1}{2} &4 & 4\frac{1}{2} &5 & 5\frac{1}{2} &
6 & 6\frac{1}{2} &7 & 7\frac{1}{2} &8 \\
1 && [Q^<] &&  \bullet\ar@{->}[ddrr] && &&  \bullet\ar@{->}[ddrr]  && && \bullet\ar@{->}[ddrr]  && && \\
& &&&&& &&&& &&&& &&\\
2 && && &&\bullet\ar@{->}[ddrr] \ar@{->}[uurr]  && && \bullet \ar@{->}[ddrr]\ar@{->}[uurr] && && \bullet &&  \\
&&&&&& &&&&&&&&\\
3 & & && \bullet \ar@{->}[dr] \ar@{->}[uurr] && && \bullet\ar@{->}[dr] \ar@{->}[uurr]  && && \bullet\ar@{->}[dr] \ar@{->}[uurr]  && && \\
4& \bigstar\ar@{->}[dr]&&\bigstar\ar@{->}[ur]&&\bigstar\ar@{->}[dr]& &\bigstar\ar@{->}[ur]&&\bigstar\ar@{->}[dr]& &\bigstar\ar@{->}[ur]&&\bigstar\ar@{->}[dr]& &&\\
5 && \bullet\ar@{->}[ddrr] \ar@{->}[ur]  && && \bullet\ar@{->}[ddrr] \ar@{->}[ur]  && && \bullet \ar@{->}[ddrr] \ar@{->}[ur] && && \bullet\ar@{->}[ddrr]   &&\\
&&&&&&&&&&&&&&\\
6 && && \bullet\ar@{->}[ddrr] \ar@{->}[uurr]  && && \bullet\ar@{->}[ddrr] \ar@{->}[uurr]  && && \bullet\ar@{->}[ddrr] \ar@{->}[uurr]  && && \bullet  \\
 &&&&&& &&&&&&&&&&\\
7 && \bullet \ar@{->}[uurr]  && && \bullet \ar@{->}[uurr]  && && \bullet  \ar@{->}[uurr] && && \bullet  \ar@{->}[uurr] &&
}} \ \ \ \
 \scalebox{0.5}{\xymatrix@C=2ex@R=1ex{
 & 1 & 1\frac{1}{2} &2 & 2\frac{1}{2} &3 & 3\frac{1}{2} &4 & 4\frac{1}{2} &5 & 5\frac{1}{2} &
6 & 6\frac{1}{2} &7 & 7\frac{1}{2} &8 \\
1 & [Q^>] &&  \bullet\ar@{->}[ddrr] && &&  \bullet\ar@{->}[ddrr]  && && \bullet\ar@{->}[ddrr]  && && \\
& &&&&& &&&& &&&& &&\\
2 & && &&\bullet\ar@{->}[ddrr] \ar@{->}[uurr]  && && \bullet \ar@{->}[ddrr]\ar@{->}[uurr] && && \bullet &&  \\
&&&&&& &&&&&&&&\\
3  & && \bullet \ar@{->}[dr] \ar@{->}[uurr] && && \bullet\ar@{->}[dr] \ar@{->}[uurr]  && && \bullet\ar@{->}[dr] \ar@{->}[uurr]  && && \\
4 &&\bigstar\ar@{->}[ur]&&\bigstar\ar@{->}[dr]& &\bigstar\ar@{->}[ur]&&\bigstar\ar@{->}[dr]& &\bigstar\ar@{->}[ur]&&\bigstar\ar@{->}[dr]& &  \bigstar&\\
5 & \bullet\ar@{->}[ddrr] \ar@{->}[ur]  && && \bullet\ar@{->}[ddrr] \ar@{->}[ur]  && && \bullet \ar@{->}[ddrr] \ar@{->}[ur] && && \bullet\ar@{->}[ddrr] \ar@{->}[ur]  &&\\
&&&&&&&&&&&&&&\\
6 & && \bullet\ar@{->}[ddrr] \ar@{->}[uurr]  && && \bullet\ar@{->}[ddrr] \ar@{->}[uurr]  && && \bullet\ar@{->}[ddrr] \ar@{->}[uurr]  && && \bullet  \\
 &&&&&& &&&&&&&&&&\\
7 & \bullet \ar@{->}[uurr]  && && \bullet \ar@{->}[uurr]  && && \bullet  \ar@{->}[uurr] && && \bullet  \ar@{->}[uurr] &&
}}
 \]
\end{example}

\subsection{Labeling}

\begin{definition} \cite[Definition 1.6]{Oh14A} Fix any class $[\jj_0]$ of $w_0$ of type $A_n$.
\begin{enumerate}
\item[{\rm (a)}] A path in $\Upsilon_{[\jj_0]}$ is {\it $N$-sectional} (resp. {\it $S$-sectional})
if it is a concatenation of upward arrows (resp. downward arrows).
\item[{\rm (b)}] An $N$-sectional (resp. $S$-sectional) path $\rho$ is {\it maximal} if there is no longer
$N$-sectional (resp. $S$-sectional) path containing $\rho$.
\item[{\rm (c)}] For a sectional path $\rho$, the length of $\rho$ is the number of all arrows in $\rho$.
\end{enumerate}
\end{definition}

Recall that, for every $1 \le a \le b \le n$,
$\beta = \sum_{a \le k \le b} \alpha_k$ is a positive root in $\Phi^+$ and every positive root in $\Phi^+$ is of the form.
Thus we sometimes identify $\beta \in \Phi^+$ (and hence vertex in $\Upsilon_{[\jj_0]}$) with the segment $[a,b]$.
For $\beta=[a,b]$, we say $a$ is the {\it first component} of $\beta$ and $b$ is the {\it second component} of $\beta$. If $\beta$ is simple, we write $\beta$ as $[a]$.

\begin{proposition} \label{pro: section shares}
\cite[Proposition 4.5]{OS15} Fix any class $[\jj_0]$ of $w_0$ of type $A_n$.
Let $\rho$ be an $N$-sectional $($resp. $S$-sectional$)$ path in $\Upsilon_{[\jj_0]}$. Then every positive roots contained in
$\rho$ has the same first $($resp. second$)$ component.
\end{proposition}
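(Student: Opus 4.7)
The plan is to induct on the length $m$ of the sectional path $\rho$: for $m=1$ the statement is vacuous, and for the inductive step it suffices to check that two consecutive vertices of an $N$-sectional (resp. $S$-sectional) path share their first (resp. second) component. Hence the whole argument reduces to the single-arrow case $\beta^{\ii_0}_k \to \beta^{\ii_0}_j$ with $j<k$.

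By (Ar1)--(Ar3) of Algorithm \ref{Alg_AbsAR}, such an arrow satisfies $|i_k - i_j|=1$ and the intermediate indices obey $i_s \notin \{i_j,i_k\}$ for every $j<s<k$; moreover the arrow belongs to an $N$-sectional path precisely when $i_j = i_k-1$ and to an $S$-sectional path precisely when $i_j=i_k+1$. I would then realize the type $A_n$ root system inside $\mathbb{R}^{n+1}$ in the standard way: positive roots are $e_a-e_{b+1}$ identified with the segment $[a,b]$, and simple reflections act as transpositions $s_i=(i,i+1)$ in $\mathfrak{S}_{n+1}$. Writing $\pi_s := s_{i_1}\cdots s_{i_{s-1}}$, one has
\[
\beta^{\ii_0}_s = e_{\pi_s(i_s)} - e_{\pi_s(i_s+1)},
\]
so the first component of the segment is $\pi_s(i_s)$ and the second is $\pi_s(i_s+1)-1$. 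Setting $u := s_{i_j} s_{i_{j+1}} \cdots s_{i_{k-1}}$ so that $\pi_k = \pi_j\, u$, the task becomes computing $u(i_k)$ in the $N$-sectional case and $u(i_k+1)$ in the $S$-sectional case.

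The key observation is that the transposition $s_l$ fixes a value $v \in \{1,\dots,n+1\}$ unless $l \in \{v-1,v\}$. In the $N$-sectional case the pair $\{i_k-1,i_k\}=\{i_j,i_k\}$ is exactly the set of residues forbidden at intermediate positions by (Ar2)--(Ar3); hence $s_{i_s}(i_k)=i_k$ for every $j<s<k$, and the final application of $s_{i_j}=s_{i_k-1}$ sends $i_k \mapsto i_k-1 = i_j$. This yields $u(i_k)=i_j$ and $\pi_k(i_k)=\pi_j(i_j)$, so the two roots share their first component. The $S$-sectional case is symmetric: the pair $\{i_k,i_k+1\}=\{i_k,i_j\}$ is again forbidden at intermediate positions and coincides with the only values of $l$ that can move $i_k+1$, so $u$ fixes $i_k+1$ up to the final step where $s_{i_j}=s_{i_k+1}$ maps $i_k+1 \mapsto i_k+2 = i_j+1$, matching second components.

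The only subtle point, and hence the main obstacle, is recognising that (Ar2) and (Ar3) forbid exactly the simple reflections which could disturb $i_k$ (respectively $i_k+1$) during the evaluation of $u$; once this is articulated, the argument collapses to a one-line permutation computation, and the inductive gluing to a path of arbitrary length is immediate.
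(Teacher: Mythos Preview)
Your argument is correct. The paper does not supply its own proof of this proposition; it simply cites \cite[Proposition~4.5]{OS15}, so there is no in-paper proof to compare against. Your reduction to a single arrow via induction on path length, together with the permutation computation $u(i_k)=i_j$ (resp.\ $u(i_k+1)=i_j+1$) using that (Ar2)--(Ar3) forbid exactly the residues $\{i_j,i_k\}$ at intermediate positions, is clean and complete. One small remark: you silently fix a representative $\ii_0\in[\jj_0]$ to carry out the computation, which is legitimate since by Theorem~\ref{thm: OS14}(1) the labelled quiver $\Upsilon_{[\jj_0]}$ is independent of this choice, but it would do no harm to say so explicitly.
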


\begin{theorem} \label{thm: labeling GammaQ}
\cite[Corollary 1.12]{Oh14A} Fix any Dynkin quiver $Q$ of type $A_n$.
For $1 \le i \le n$, $\Gamma_Q$
contains a maximal $N$-sectional path of length $n-i$ once and exactly once whose vertices share $i$ as the first component.
At the same time, $\Gamma_Q$ contains a maximal $S$-sectional path of length $i-1$ once and exactly
once whose vertices share $i$ as the second component.
\end{theorem}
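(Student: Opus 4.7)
The plan is to prove the statement in two stages: an \emph{a priori} upper bound via Proposition \ref{pro: section shares} and counting, followed by an achievability argument via reflection-functor propagation.

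For the upper bound, Proposition \ref{pro: section shares} asserts that every $N$-sectional (resp.\ $S$-sectional) path in $\Gamma_Q$ has all its vertices sharing a common first (resp.\ second) component. In $A_n$, the positive roots with first component $i$ are exactly $\{[i,i],[i,i+1],\ldots,[i,n]\}$, of cardinality $n-i+1$, and those with second component $i$ are $\{[1,i],[2,i],\ldots,[i,i]\}$, of cardinality $i$. Therefore any maximal $N$-sectional path with first component $i$ has length at most $n-i$, and any maximal $S$-sectional path with second component $i$ has length at most $i-1$.

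For achievability and uniqueness, I would show that both bounds are attained by propagating from a base case along reflection functors across the adapted cluster point $\lf Q\rf$. For the base, take the linearly oriented quiver $Q_0 : 1\to 2\to\cdots\to n$. Running the Coxeter-based construction of $\Omega_{Q_0}$ (starting from $\beta^{\phi_{Q_0}}_{n-i+1}=[i,n]$ and iterating the rule $\phi_{Q_0}([a,b])=[a-1,b-1]$) yields the closed formula $\Omega_{Q_0}([a,b]) = (n-b+a,\ a+b-n-1)$. For fixed first component $i$, the roots $\{[i,k] : i\le k \le n\}$ then sit at coordinates $(n-k+i,\ i+k-n-1)$; as $k$ increases by $1$ the residue decreases by $1$ while the position increases by $1$, so by the arrow rule $p-q=-1$ these vertices form a single $N$-sectional path of length $n-i$. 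The $S$-sectional case is dual. Uniqueness is then automatic from the upper bound.

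To transport this to an arbitrary $Q\in\lf Q\rf$, I write $Q = r_{j_m}\cdots r_{j_1}\, Q_0$ as a composition of reflections at successive sinks and induct on $m$. At each step, Algorithm \ref{alg: Ref Q} deletes $\al_j$ at $(j,p)$, relabels every surviving $\be$ as $s_j(\be)$, and reinserts $\al_j$ at $(j^*, p-\mathsf{h}^\vee)$. The main obstacle is to verify that the sectional-path property is preserved under this surgery. The key computation is that on $\PR \setminus \{\al_j\}$ one has $s_j([a,b]) = [a,b]$ unless $j \in \{a-1,a,b,b+1\}$, in which case the first or second component shifts by $\pm 1$ in a controlled way. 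A short case analysis then shows that in $\Gamma_{jQ}$ the roots of a given first (resp.\ second) component $i$ still occupy a single maximal sectional path of the required length: for $i$ outside $\{j, j\pm 1\}$ nothing changes, while for $i\in\{j,j\pm 1\}$ the two relevant sectional paths in $\Gamma_Q$ recombine under the $s_j$-induced swaps of components, with the freshly inserted $\al_j$ at $(j^*, p-\mathsf{h}^\vee)$ supplying the endpoint that was lost in the deletion. Carrying out this case check carefully is the crux of the argument; once completed, iterating from $\Gamma_{Q_0}$ gives the theorem.
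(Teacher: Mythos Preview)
The paper does not prove this statement at all: it is imported verbatim from \cite[Corollary 1.12]{Oh14A} and used as a black box. So there is no in-paper argument to compare against; the question is simply whether your outline stands on its own.

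Your upper bound is correct and clean: Proposition~\ref{pro: section shares} plus the count of roots with a fixed first/second component gives the length bound, and uniqueness indeed follows once the bound is saturated (all $n-i+1$ roots with first component $i$ must lie on that single path, so no second maximal path can exist). Your base case for the linear orientation is also correct; the closed formula $\Omega_{Q_0}([a,b])=(n-b+a,\,a+b-n-1)$ checks out and does produce a single $N$-sectional chain for each fixed first component.

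The gap is the inductive step, which you yourself flag as ``the crux.'' The issue is not just bookkeeping. When you reflect at a sink $j$, the roots with first component $j$ (other than $\alpha_j$) get relabeled to first component $j{+}1$ and vice versa, while $\alpha_j$ is deleted and reinserted at $(j^*,p-\mathsf h^\vee)$. For the statement to propagate you must show that this reinserted vertex lands exactly at an end of the (relabeled) path that now carries first component $j$, so that the new path has length $n-j$ rather than $n-j-1$. That requires knowing the precise coordinate of the relevant endpoint in $\Gamma_Q$, which in turn depends on the shape data $r^Q_i$ of Proposition~\ref{Prop:AR}. None of this is hard, but it is not a one-line check: you have to verify the coordinate match in both the $N$- and $S$-sectional directions, and the $j\leftrightarrow j\pm1$ swaps on second components must be tracked simultaneously. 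As written, the proposal is a correct strategy with the main technical verification omitted; to make it a proof you should either carry out that coordinate computation explicitly (using $r^Q_i$ and the height function), or bypass induction entirely by arguing directly from the contiguity of $\Gamma_Q\cap\{i\}\times\Z$ in Proposition~\ref{Prop:AR} that each residue-$1$ vertex starts a maximal $N$-path of the required length.
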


With the above theorem, we can label the vertices of $\Gamma_Q$ without computing like \eqref{eq: computing for label} (see \cite[Remark 1.14]{Oh14A}).
In this subsection, we give an algorithm on the labeling $\Upsilon_{[\ii_0]}$ for $[\ii_0] \in \lf \Qd \rf$,
by only using Proposition \ref{pro: section shares} and Theorem \ref{thm: labeling GammaQ}.

\begin{definition} \label{def: central}
Fix any class $[\ii_0]$ in $\lf \Qd \rf$ of type $A_{2n+1}$ such that $\PPi([\ii_0])=[Q]$.
\begin{enumerate}
\item[{\rm (a)}] A vertex is a {\it central vertex} of $\Upsilon_{[\ii_0]}$ (i) if there exist {\it two} sectional paths which contain the vertex and
the other vertex whose residue is $n+1$, or (ii) if it is contained in $\Upsilon_{[\ii_0]} \setminus \Gamma_Q$.
\item[{\rm (b)}] The full subquiver $\oUp_{[\ii_0]}$ of $\Upsilon_{[\ii_0]}$ consisting of all central vertices is called a
{\it center} of $\Upsilon_{[\ii_0]}$.
\item[{\rm (c)}] The full subquiver $\uUp^{{\rm NE}}_{[\ii_0]}$ (resp. $\uUp^{{\rm SE}}_{[\ii_0]}$,$\uUp^{{\rm NW}}_{[\ii_0]}$ and $\uUp^{{\rm SW}}_{[\ii_0]}$)
of $\Upsilon_{[\ii_0]}$ consisting of all vertices which are not contained in
$\oUp_{[\ii_0]}$ and located in the North-East (resp. South-East, North-West and South-West) part of $\Upsilon_{[\ii_0]}$.
\end{enumerate}
\end{definition}

\begin{remark} \label{rem: cetral in GammaQ}
By \cite[Lemma 3.2.1]{KKK13b}, \cite[(3.2)]{KKK13b} and Theorem \ref{thm: labeling GammaQ}, every induced central vertex in $\Gamma_Q$ is located
at the intersection of a maximal $S$-sectional path and a maximal $N$-sectional path of $\Gamma_Q$ whose lengths are lager than or equal to $n-1$.
Thus an induced central vertex in $\Gamma_Q$ corresponds to a positive root $\beta$ of type $A_{2n}$ which has $\alpha_n$ or $\alpha_{n+1}$ as its {\it support}.
Here the support of $\beta$, denoted by ${\rm supp}(\be)$, is defined by the following way:
$${\rm supp}(\be) \seteq \{ \al_k \ | \ a \le k \le b\} \quad \text{ where } \quad \be=\sum_{a \le k \le b} \al_k.$$
\end{remark}

\begin{definition} \label{def: multiplicity}
For any $\gamma \in \PR \setminus \Pi$,
{\it the multiplicity} of $\gamma$, denoted by $\mathsf{m}(\gamma)$ is a positive integer defined as follows:
$$\mathsf{m}(\gamma) = \max \{\, m_i\, | \, \sum_{i\in I} m_i\alpha_i=\ga\, \}.$$
If $\mathsf{m}(\gamma)=1$, we say that $\gamma$ is {\it multiplicity free}.
\end{definition}

\begin{example} For $[\ii_0]=[Q^<]$ in Example (\ref{ex: Q<Q>}), we can decompose $\Upsilon_{[\ii_0]}$ into
$\uUp^{{\rm NE}}_{[\ii_0]}(\heartsuit)$, $\uUp^{{\rm SE}}_{[\ii_0]}(\square)$,$\uUp^{{\rm NW}}_{[\ii_0]}(\diamondsuit)$, $\uUp^{{\rm SW}}_{[\ii_0]}(\triangle)$ and
$\oUp_{[\ii_0]}(\bullet,\bigstar)$ as follows:
$$  \scalebox{0.65}{\xymatrix@C=2ex@R=1ex{
1 &&&&  \diamondsuit\ar@{->}[ddrr] && &&  \bullet\ar@{->}[ddrr]  && && \heartsuit\ar@{->}[ddrr]  && && \\
& &&&&& &&&& &&&& &&\\
2 && && &&\bullet\ar@{->}[ddrr] \ar@{->}[uurr]  && && \bullet \ar@{->}[ddrr]\ar@{->}[uurr] && && \heartsuit &&  \\
&&&&&& &&&&&&&&\\
3 & & && \bullet \ar@{->}[dr] \ar@{->}[uurr] && && \bullet\ar@{->}[dr] \ar@{->}[uurr]  && && \bullet\ar@{->}[dr] \ar@{->}[uurr]  && && \\
4& \bigstar\ar@{->}[dr]&&\bigstar\ar@{->}[ur]&&\bigstar\ar@{->}[dr]& &\bigstar\ar@{->}[ur]&&\bigstar\ar@{->}[dr]& &\bigstar\ar@{->}[ur]&&\bigstar\ar@{->}[dr]& &&\\
5 && \bullet\ar@{->}[ddrr] \ar@{->}[ur]  && && \bullet\ar@{->}[ddrr] \ar@{->}[ur]  && && \bullet \ar@{->}[ddrr] \ar@{->}[ur] && && \square\ar@{->}[ddrr]   &&\\
&&&&&&&&&&&&&&\\
6 && && \bullet\ar@{->}[ddrr] \ar@{->}[uurr]  && && \bullet\ar@{->}[ddrr] \ar@{->}[uurr]  && && \square\ar@{->}[ddrr] \ar@{->}[uurr]  && && \square  \\
 &&&&&& &&&&&&&&&&\\
7 && \triangle \ar@{->}[uurr]  && && \bullet \ar@{->}[uurr]  && && \square  \ar@{->}[uurr] && && \square  \ar@{->}[uurr] &&
}}
$$
\end{example}

Note that, for any Dynkin quiver $Q$ of type $A_{2n}$, the number of $n$ and $n+1$ appear in $[Q]$ are the same as $n$ or $n+1$ whose sum is $2n+1$. Thus, the equation \ref{eq: 4cases} can be depicted in the $\{n,n+1,n+2\}$-th residues of
$\Upsilon_{[\ii_0]}$ $([\ii_0] \in  \lf \Qd \rf)$ by the one of four followings:
\begin{align}\label{eq: four situ}
\begin{cases}
\raisebox{1.2em}{\scalebox{0.5}{\xymatrix@C=2ex@R=1ex{
n &  &  &&\bullet \ar@{->}[dr]&&&& \cdots\cdots &&&&\bullet\ar@{->}[dr] \\
n+1 & \bigstar\ar@{->}[dr] && \bigstar \ar@{->}[ur] && \bigstar \ar@{->}[dr] &&&&&& \bigstar\ar@{->}[ur]&&\bigstar\ar@{->}[dr]\\
n+2 & & \bullet \ar@{->}[ur] &&&& \bullet && \cdots\cdots  && \bullet \ar@{->}[ur]&&&&\bullet \\
}}} & \text{(1) } \ \Upsilon_{[Q^<]} \ \text{ for } \ Q \ \text{ such that } \ {\xymatrix@R=3ex{ \bullet \ar@{-}[r]_<{ n \ } &  \bullet \ar@{->}[l]^<{ \ n+1} }}, \\
\raisebox{1.2em}{\scalebox{0.5}{\xymatrix@C=2ex@R=1ex{
n   &&&\bullet \ar@{->}[dr]&&&& \cdots\cdots &&&&\bullet\ar@{->}[dr] \\
n+1 && \bigstar \ar@{->}[ur] && \bigstar \ar@{->}[dr] &&&&&& \bigstar\ar@{->}[ur]&&\bigstar\ar@{->}[dr] && \bigstar \\
n+2 & \bullet \ar@{->}[ur] &&&& \bullet && \cdots\cdots  && \bullet \ar@{->}[ur]&&&&\bullet \ar@{->}[ur] \\
}}} & \text{(2) } \ \Upsilon_{[Q^>]} \ \text{ for } \ Q \ \text{ such that } \ {\xymatrix@R=3ex{ \bullet \ar@{-}[r]_<{ n \ } &  \bullet \ar@{->}[l]^<{ \ n+1} }},\\
\raisebox{1.2em}{\scalebox{0.5}{\xymatrix@C=2ex@R=1ex{
n   &&\bullet \ar@{->}[dr]&&&& \cdots\cdots &&&&\bullet\ar@{->}[dr] &&&&\bullet \\
n+1 & \bigstar \ar@{->}[ur] && \bigstar \ar@{->}[dr] &&&&&& \bigstar\ar@{->}[ur]&&\bigstar\ar@{->}[dr] && \bigstar\ar@{->}[ur] \\
n+2 &&&& \bullet && \cdots\cdots  && \bullet \ar@{->}[ur]&&&&\bullet \ar@{->}[ur] \\
}}} & \text{(3) } \ \Upsilon_{[Q^<]} \ \text{ for } \ Q \ \text{ such that } \ {\xymatrix@R=3ex{ \bullet \ar@{->}[r]_<{ n \ } &  \bullet \ar@{-}[l]^<{ \ n+1} }},\\
\raisebox{1.2em}{\scalebox{0.5}{\xymatrix@C=2ex@R=1ex{
n   &\bullet \ar@{->}[dr]&&&& \cdots\cdots &&&&\bullet\ar@{->}[dr] &&&&\bullet \ar@{->}[dr]\\
n+1 && \bigstar \ar@{->}[dr] &&&&&& \bigstar\ar@{->}[ur]&&\bigstar\ar@{->}[dr] && \bigstar\ar@{->}[ur] && \bigstar \\
n+2 &&& \bullet && \cdots\cdots  && \bullet \ar@{->}[ur]&&&&\bullet \ar@{->}[ur] \\
}}} & \text{(4) } \ \Upsilon_{[Q^>]} \ \text{ for } \ Q \ \text{ such that } \ {\xymatrix@R=3ex{ \bullet \ar@{->}[r]_<{ n \ } &  \bullet \ar@{-}[l]^<{ \ n+1} }}.
\end{cases}
\end{align}

\begin{remark} \label{rem: directions}
By the above observation, \cite[Lemma 3.2.1]{KKK13b} and Theorem \ref{thm: labeling GammaQ}, we can conclude the followings:
\begin{eqnarray} &&
  \parbox{80ex}{ $\uUp^{{\rm NE}}_{[\ii_0]}$ and $\uUp^{{\rm SW}}_{[\ii_0]}$ (resp. $\uUp^{{\rm SE}}_{[\ii_0]}$ and $\uUp^{{\rm NW}}_{[\ii_0]}$)
consist of totally induced maximal $S$-sectional (resp. $N$-sectional) paths whose length are less than $n$
and every totally induced maximal $S$-sectional  (resp. $N$-sectional) paths appear in $\uUp^{{\rm NE}}_{[\ii_0]}$ or $\uUp^{{\rm SW}}_{[\ii_0]}$
(resp. $\uUp^{{\rm SE}}_{[\ii_0]}$ or $\uUp^{{\rm NW}}_{[\ii_0]}$).
}\label{eq: totally induced}
\end{eqnarray}

Note that $\Upsilon_{[\ii_0]}$ consists of induced maximal $N$-sectional (resp. $S$-sectional) paths and non-induced vertices.
Also every non-induced vertex is contained in some induced maximal sectional path whose length is larger than or equal to $n$
and is {\it not} located in an intersection of an induced maximal $N$-sectional path and an induced maximal $S$-sectional path.
Furthermore, the lengths of induced maximal $N$-sectional (resp. $S$-sectional) paths are
\begin{eqnarray} &&
  \parbox{80ex}{
\begin{itemize}
\item[{\rm (a)}] $\overbrace{ 0,1,\ldots,n-1}^{\text{totally induced}},n+1,n+2,\ldots,2n$
(resp. $\overbrace{ 0,1,\ldots,n-2}^{\text{totally induced}},n,n+1\ldots,2n$) in (1) (resp. (4)) of \eqref{eq: four situ},
\item[{\rm (b)}] $\overbrace{ 0,1,\ldots,n-2}^{\text{totally induced}},n,n+1,\ldots,2n$
(resp. $\overbrace{ 0,1,\ldots,n-1}^{\text{totally induced}},n+1,n+2,\ldots,2n$) in (2) (resp. (3)) of \eqref{eq: four situ}.
\end{itemize}
}\label{eq: total non-total}
\end{eqnarray}
We also remark here that induced maximal $N$-sectional (resp. $S$-sectional) of length $k$ exists uniquely, if it exists.
\end{remark}

By Theorem \ref{thm: OS14}, we can get a reduced word $\ii_0' \in [\ii_0]$ by reading residues of vertices in $\Upsilon_{[\ii_0]}$
by the following order
\begin{align} \label{eq: order}
\{ \uUp^{{\rm NE}}_{[\ii_0]}, \ \uUp^{{\rm SE}}_{[\ii_0]} \}, \ \{ \oUp_{[\ii_0]} \} \text{ and } \{ \uUp^{{\rm NW}}_{[\ii_0]}, \uUp^{{\rm SW}}_{[\ii_0]} \}.
\end{align}

Note that
\begin{eqnarray} &&
  \parbox{80ex}{
\begin{enumerate}
\item[{\rm (a)}] all residues in $\uUp^{{\rm NE}}_{[\ii_0]}$ and $\uUp^{{\rm NW}}_{[\ii_0]}$ are less than or equal to $n$,
\item[{\rm (b)}] all residues in $\uUp^{{\rm SE}}_{[\ii_0]}$ and $\uUp^{{\rm SW}}_{[\ii_0]}$ are larger than or equal to $n+2$,
\item[{\rm (c)}] $\uUp^{{\rm NE}}_{[\ii_0]}$,$\uUp^{{\rm NW}}_{[\ii_0]}$, $\uUp^{{\rm SE}}_{[\ii_0]}$ $\uUp^{{\rm SW}}_{[\ii_0]} \subset \Gamma_Q \cap \Upsilon_{[\ii_0]}$
where $\PPi([\ii_0])=[Q]$.
\end{enumerate}
}\label{eq: from Q labelling}
\end{eqnarray}

By Remark \ref{Rem:surgery}, Theorem \ref{thm: labeling GammaQ},  Remark \ref{rem: directions} and \eqref{eq: from Q labelling}, we have the following lemma:

\begin{lemma} \label{lem: comp for length k le n} \hfill
\begin{enumerate}
\item[{\rm (1)}] If a vertex $v \in \uUp^{{\rm NE}}_{[\ii_0]}$ and the labeling of $v$ in $\Gamma_Q$ is the same as $[a,b]$ $(b \le n)$, then
the labeling of $v$ in $\Upsilon_{[\ii_0]}$ is $[a,b]$.
\item[{\rm (2)}] If a vertex $v \in \uUp^{{\rm SE}}_{[\ii_0]}$ and the labeling of $v$ in $\Gamma_Q$ is the same as $[a,b]$ $(a \ge n+1)$, then
the labeling of $v$ in $\Upsilon_{[\ii_0]}$ is $[a+1,b+1]$.
\item[{\rm (3)}] If a vertex $v \in \uUp^{{\rm NW}}_{[\ii_0]}$ and the labeling of $v$ in $\Gamma_Q$ is the same as $[a,b]$ $(a \ge n+1)$, then
the labeling of $v$ in $\Upsilon_{[\ii_0]}$ is $[a+1,b+1]$.
\item[{\rm (4)}] If a vertex $v \in \uUp^{{\rm SW}}_{[\ii_0]}$ and the labeling of $v$ in $\Gamma_Q$ is the same as $[a,b]$ $(b \le n)$, then
the labeling of $v$ in $\Upsilon_{[\ii_0]}$ is $[a,b]$.
\end{enumerate}
\end{lemma}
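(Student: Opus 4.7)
The plan is to analyze each of the four cases by combining the structural description of the non-central subquivers from Remark \ref{rem: directions} with Proposition \ref{pro: section shares} and Theorem \ref{thm: labeling GammaQ}, and then translating via Algorithm \ref{Alg: surgery}. The key input is that by \eqref{eq: totally induced}, every vertex $v$ in one of the four non-central subquivers $\uUp^{\star}_{[\ii_0]}$ is induced (so it comes from a vertex of $\Gamma_Q$), and the maximal sectional path of one of the two types through $v$ is totally induced of length at most $n-1$: $S$-sectional for the NE and SW subquivers, $N$-sectional for the SE and NW subquivers.

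Applying Proposition \ref{pro: section shares} in $\Upsilon_{[\ii_0]}$, the label of $v$ is determined by reading off the residues of the top endpoint of the maximal $N$-sectional path through $v$ (the first component) and the bottom endpoint of the maximal $S$-sectional path through $v$ (the second component). The same is true in $\Gamma_Q$ via Theorem \ref{thm: labeling GammaQ}, so the strategy is to compare these endpoints across the surgery. For case (1), $v \in \uUp^{{\rm NE}}_{[\ii_0]}$ with $\Gamma_Q$-label $[a,b]$, $b \le n$, forces $a \le b \le n$; Algorithm \ref{Alg: surgery}(a) does not shift residues in $\{1, \ldots, n\}$, and by definition of $\uUp^{{\rm NE}}_{[\ii_0]}$ neither of the two relevant endpoints for $v$ is perturbed by the insertions of Algorithm \ref{Alg: surgery}(b), so the label remains $[a,b]$. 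Case (4) is identical after swapping $N$ and $S$ sectional paths together with NE and SW. For cases (2) and (3), $v \in \uUp^{{\rm SE}}_{[\ii_0]}$ (resp. $\uUp^{{\rm NW}}_{[\ii_0]}$) with $\Gamma_Q$-label $[a,b]$, $a \ge n+1$, so both endpoint residues lie in $\{n+1, \ldots, 2n\}$ and are shifted by $+1$ under Algorithm \ref{Alg: surgery}(a), yielding the label $[a+1, b+1]$.

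The principal obstacle is the bookkeeping of the second, possibly long, sectional path through $v$, which in general crosses the central region: for instance, in case (1) the maximal $N$-sectional path through $v=[a,b]$ in $\Gamma_Q$ has length $2n-a$ and passes through residues $n, n+1, n+2$ after surgery. One must verify that its top endpoint in $\Upsilon_{[\ii_0]}$ still sits at the same residue as in $\Gamma_Q$ (for case (1) this is $a\le n$, unshifted; for case (2) this is $a\ge n+1$, shifted to $a+1$), i.e.\ that the $\bigstar$-insertions at residue $n+1$ do not create a new upward arrow out of the $\Gamma_Q$-top endpoint and do not otherwise extend the path upward. This reduces to a direct inspection of the four diagrams in \eqref{eq: four situ}, where one checks that the new arrows introduced by Algorithm \ref{Alg: surgery}(b) connect only residues in $\{n, n+1, n+2\}$, and in particular cannot disturb the extremal endpoints of sectional paths whose labels originate strictly outside the central region. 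Together with \eqref{eq: totally induced} for the short sectional path, this completes the proof of all four cases.
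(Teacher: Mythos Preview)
Your argument has a gap at its central step. You claim that, in $\Upsilon_{[\ii_0]}$, the first (respectively second) component of the label of $v$ can be read off as the \emph{residue} of the top endpoint of the maximal $N$-sectional path (respectively the bottom endpoint of the maximal $S$-sectional path) through $v$. Proposition~\ref{pro: section shares} does not say this: it only says that all vertices on an $N$-sectional path share the same first component, without identifying that common value with any residue. Theorem~\ref{thm: labeling GammaQ} does determine the shared component in $\Gamma_Q$, but in terms of the \emph{length} of the maximal path, not the residue of an endpoint; and that theorem applies only to adapted classes $[Q]$, not yet to $[\ii_0]\in\lf\Qd\rf$ (indeed, extending it is precisely the content of the subsequent Theorem~\ref{them: comp for length k ge 0}, which uses the present lemma as input). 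A concrete counterexample to your endpoint-residue claim already appears in the paper's $A_5$ example of $\Gamma_Q$: the maximal $N$-sectional path $[4,5]\to[4]$ has top endpoint $[4]$ at residue~$1$, whereas the shared first component is~$4$.

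The paper avoids this difficulty by computing labels directly from the formula~\eqref{eq: computing for label}. By~\eqref{eq: order} one may choose a reduced word $\ii'_0\in[\ii_0]$ that reads the vertices of $\uUp^{{\rm NE}}_{[\ii_0]}$ (or of $\uUp^{{\rm SE}}_{[\ii_0]}$) first. For $v$ at position $k$ in the NE block, the prefix $i_1\cdots i_k$ then consists entirely of indices $\le n$; since $\RR$ is the identity on such indices, the product $s_{i_1}\cdots s_{i_{k-1}}(\alpha_{i_k})$ computed in $W_{2n+1}$ coincides with the same product computed in $W_{2n}$ for the corresponding vertex of $\Gamma_Q$, giving (1). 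For the SE block the prefix consists of indices $\ge n+2$, and the uniform shift $i\mapsto i+1$ under $\RR$ produces the shift $[a,b]\mapsto[a+1,b+1]$ of (2). Parts (3) and (4) follow by passing to the reversed word $\ii_0^{\rm rev}$. Your sectional-path comparison could perhaps be salvaged if you argued via path \emph{lengths} and supplied an anchor vertex whose $\Upsilon_{[\ii_0]}$-label is known independently, but as written it does not determine the labels in $\Upsilon_{[\ii_0]}$.
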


\begin{proof}
(1) By reading $\uUp^{{\rm NE}}_{[\ii_0]}$ first in \eqref{eq: order}, the labeling for $v$ in $\Upsilon_{[\ii_0]}$ should be the same as that in $\Gamma_Q$.
since $\RR$ is an identity map for $i<n$.

(2) By reading $\uUp^{{\rm SE}}_{[\ii_0]}$ first in \eqref{eq: order}, the labeling for $v$ in $\Upsilon_{[\ii_0]}$ should be shifted by one, since the image of $i>n+1$
via $\RR$ is the same as $i+1$.

The remained assertions follow from $\ii_0^{\rm rev}$ where $\ii_0^{\rm rev}=i_{l}i_{l-1} \cdots i_1$ for $\ii_0=i_1 i_2 \cdots i_{l}$.
\end{proof}

\begin{remark}
Note that all vertices in a maximal $N$-sectional (resp. $S$-sectional) path of length $k$ have labels $[2n-k,b]$ for $b \ge 2n-k$ (resp. $[b,k+1]$ for $b \le k+1$)
in $\Gamma_Q$. Hence every vertex has a label of the form $[2n+1-k,b]$ $(b \le n)$ $\uUp^{{\rm NW}}_{[\ii_0]}$ or $\uUp^{{\rm SE}}_{[\ii_0]}$
(resp. $[b,k]$ $(b \ge n)$ in $\uUp^{{\rm NE}}_{[\ii_0]}$ or $\uUp^{{\rm SW}}_{[\ii_0]}$). In other words,
we can label  all vertices in $\uUp^{{\rm NE}}_{[\ii_0]}$, $\uUp^{{\rm SW}}_{[\ii_0]}$, $\uUp^{{\rm NW}}_{[\ii_0]}$ or $\uUp^{{\rm SE}}_{[\ii_0]}$
by using \cite[Remark 1.14]{Oh14A}. Furthermore, we can say that every totally induced sectional path keeps the rule in Theorem \ref{thm: labeling GammaQ}:
\begin{eqnarray*} &&
\parbox{80ex}{
If a maximal $N$-sectional path of length $k < n$ exits in $\Upsilon_{[\ii_0]}$, its vertices share $2n+1-k$ as the first component. \\
If a maximal $S$-sectional path of length $k < n$ exits in $\Upsilon_{[\ii_0]}$, its vertices share $k+1$ as the second component.
}
\end{eqnarray*}
\end{remark}

\begin{proposition} \hfill \label{prop: comp for length k ge n}
\begin{enumerate}
\item[{\rm (1)}] Every vertex in a maximal $N$-sectional path whose length is $k\ge n$ has $2n+1-k$ as the first component.
\item[{\rm (2)}] Every vertex in a maximal $S$-sectional path whose length is $k\ge n$ has $k+1$ as the second component.
\end{enumerate}
\end{proposition}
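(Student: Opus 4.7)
The plan is to reduce the statement to the known labeling of $\Gamma_Q$ (Theorem \ref{thm: labeling GammaQ}) for $Q=\PPi([\ii_0])$ via the surgery of Remark \ref{Rem:surgery}. By Proposition \ref{pro: section shares} it is enough to identify the shared first (resp.\ second) component on a single vertex of the path; I treat (1) in detail and sketch (2) at the end.

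Fix a maximal $N$-sectional path $\rho$ in $\Upsilon_{[\ii_0]}$ of length $k\ge n$. By Remark \ref{rem: directions} together with \eqref{eq: total non-total}, $\rho$ is not totally induced, so it must meet the residue-$(n+1)$ row; strict monotonicity of residues along $\rho$ then forces a \emph{unique} central vertex $v_c$ on $\rho$. I split according to the local behavior at $v_c$.

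If $v_c$ is a pass-through for $\rho$ (the two arrows of $\rho$ at $v_c$ leave the central row, one incoming from residue $n+2$ and one outgoing to residue $n$), then this pair of arrows is the surgery subdivision of a single arrow $(n+1,p)\to(n,p+1)$ of $\Gamma_Q$; collapsing it back gives a maximal $N$-sectional path $\bar\rho$ in $\Gamma_Q$ of type $A_{2n}$ of length $k-1$. Theorem \ref{thm: labeling GammaQ} then says the vertices of $\bar\rho$ share first component $2n-(k-1)=2n+1-k$. When $k\ge n+1$ this value is $\le n$, and Lemma \ref{lem: comp for length k le n}(1) (no shift for first components $\le n$ under $\RR$) transports it to the $\Upsilon_{[\ii_0]}$-labeling of $\rho$, giving the claim. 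If instead $\rho$ terminates at $v_c$, then $v_c$ is an extra source/sink central vertex, existing only in cases (2) and (3) of \eqref{eq: four situ} (where $n+1$ is a source or sink of $[\ii_0]$). This terminal subcase is forced when $k=n$: otherwise $\bar\rho$ would have length $n-1$ and first component $n+1$, so its top vertex would sit at residue $\ge n+1$ of $\Gamma_Q$, contradicting the fact that the portion of $\rho$ above $v_c$ lies at residues $\le n$. In this forced subcase, the label of $v_c$ is the simple root $\alpha_{n+1}=[n+1,n+1]$, computed directly from $\beta^{\ii_0}_k=s_{i_1}\cdots s_{i_{k-1}}(\al_{i_k})$ taking $i_1=n+1$ or $i_\N=n+1$; hence every vertex of $\rho$ has first component $n+1=2n+1-k$ by Proposition \ref{pro: section shares}.

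Statement (2) is proved by the symmetric argument: replace $N$-sectional by $S$-sectional, swap $\uUp^{\mathrm{NE}/\mathrm{NW}}$ with $\uUp^{\mathrm{SW}/\mathrm{SE}}$, apply parts (2) and (4) of Lemma \ref{lem: comp for length k le n}, and use the second-component half of Theorem \ref{thm: labeling GammaQ}. The main obstacle is the $k=n$ boundary: one must both exclude the pass-through possibility and confirm that the terminal central vertex carries precisely the label $\alpha_{n+1}$; once those two items are pinned down, the rest is bookkeeping through the surgery.
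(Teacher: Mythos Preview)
Your surgery approach differs from the paper's and has a genuine gap at $k=n+1$. In the pass-through case you collapse $\rho$ to $\bar\rho$ in $\Gamma_Q$ and correctly read off the first component $2n+1-k$ there, but then you invoke ``Lemma~\ref{lem: comp for length k le n}(1) (no shift for first components $\le n$)'' to transport the label to $\Upsilon_{[\ii_0]}$. Lemma~\ref{lem: comp for length k le n} only covers the four corner regions $\uUp^{\mathrm{NE}},\uUp^{\mathrm{NW}},\uUp^{\mathrm{SE}},\uUp^{\mathrm{SW}}$; it says nothing about central vertices. When $k=n+1$, the path $\bar\rho$ has first component $n$ and its vertices are $[n,n],[n,n+1],\ldots,[n,2n]$, each of which has $\alpha_n$ in its support. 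By Remark~\ref{rem: cetral in GammaQ} these all correspond to \emph{induced central} vertices, so the entire path $\rho$ lies in $\oUp_{[\ii_0]}$ and Lemma~\ref{lem: comp for length k le n} gives you nothing. (For $k\ge n+2$ your argument does work: the vertex $[2n+1-k,\,2n+1-k]$ on $\bar\rho$ has second component $<n$, hence is non-central, and Lemma~\ref{lem: comp for length k le n}(1) or (4) applies to it.) The ``no shift'' principle you want for central vertices is exactly Corollary~\ref{cor:label1}, but that is proved \emph{via} Proposition~\ref{prop: comp for length k ge n} and Theorem~\ref{them: comp for length k ge 0}, so invoking it here is circular. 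Your justification that the terminal subcase is ``forced when $k=n$'' is also unclear: you assert that a pass-through $\bar\rho$ of length $n-1$ would have its top vertex at residue $\ge n+1$, but the residue of a vertex in $\Gamma_Q$ is not determined by its first component alone.

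The paper's proof sidesteps the transport issue entirely with a pigeonhole argument and downward induction on $k$. The maximal $N$-sectional path of length $2n$ has $2n+1$ vertices, all sharing a first component $a$ by Proposition~\ref{pro: section shares}; since there are only $2n+2-a$ positive roots with first component $a$, one gets $a\le 1$, so $a=1$. Inductively, once the paths of lengths $2n,2n-1,\ldots,k+1$ are shown to have first components $1,2,\ldots,2n-k$ respectively, each such path of length $l$ contains \emph{all} $2n+2-(2n+1-l)=l+1$ roots with that first component, so these values are unavailable for shorter paths; the path of length $k$ then has first component $\ge 2n+1-k$ and (by the same count) $\le 2n+1-k$. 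No reference to $\Gamma_Q$ or Lemma~\ref{lem: comp for length k le n} is needed.
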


\begin{proof}
Note that we have a maximal $N$-sectional path whose length is $2n$. By Proposition \ref {pro: section shares}, its first component should be $1$.
Then, by the induction on length, our assertion easily follows. The second assertion follows in the same way.
\end{proof}

By Lemma \ref{lem: comp for length k le n} and Proposition \ref{prop: comp for length k ge n}, we have the following theorem:
\begin{theorem} \hfill \label{them: comp for length k ge 0}
\begin{enumerate}
\item[{\rm (1)}] Every induced maximal $N$-sectional path whose length is $k$ shares $2n+1-k$ as a first component.
\item[{\rm (2)}] Every induced maximal $S$-sectional path whose length is $k$ shares $k+1$ as a second component.
\end{enumerate}
Hence, for every vertex in $\Upsilon_{[\ii_0]}$, we can label it as $[a,b] \in \Phi^+$ for some $1 \le a \le b \le 2n+1$.
\end{theorem}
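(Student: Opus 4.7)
The plan is to deduce Theorem \ref{them: comp for length k ge 0} by combining the two preceding results in tandem: Lemma \ref{lem: comp for length k le n} handles induced maximal sectional paths of short length ($k<n$), while Proposition \ref{prop: comp for length k ge n} handles the long ones ($k\ge n$). Together these two ranges exhaust all lengths that appear, as predicted by \eqref{eq: total non-total}. The final labeling assertion will be deduced from (1) and (2) jointly.

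For assertion (1) with $k<n$, the characterization \eqref{eq: totally induced} in Remark \ref{rem: directions} forces every induced maximal $N$-sectional path of length less than $n$ to be totally induced, hence to lie entirely in $\uUp^{{\rm NW}}_{[\ii_0]}$ or $\uUp^{{\rm SE}}_{[\ii_0]}$. The surgery description of Remark \ref{Rem:surgery} then identifies such a path length-preservingly with a maximal $N$-sectional path of length $k$ in $\Gamma_Q$ of type $A_{2n}$, which by Theorem \ref{thm: labeling GammaQ} has first component $2n-k$. Applying the label shift $[a,b]\mapsto [a+1,b+1]$ supplied by Lemma \ref{lem: comp for length k le n}(2)(3) produces first component $(2n-k)+1 = 2n+1-k$ in $\Upsilon_{[\ii_0]}$, as required. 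For $k\ge n$, assertion (1) is exactly Proposition \ref{prop: comp for length k ge n}(1), whose proof starts from the unique longest path (length $2n$, first component $1$ by Proposition \ref{pro: section shares} and counting of positive roots) and proceeds by downward induction on length. Assertion (2) is argued symmetrically, using the label-preserving parts (1) and (4) of Lemma \ref{lem: comp for length k le n} (the case $b\le n$) together with Theorem \ref{thm: labeling GammaQ} for $k<n$, and Proposition \ref{prop: comp for length k ge n}(2) for $k\ge n$.

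The labeling statement is then immediate: every positive root of $A_{2n+1}$ is determined uniquely by the pair (first component, second component) subject to $a\le b$. Each vertex of $\Upsilon_{[\ii_0]}$ lies on a unique maximal $N$-sectional path and a unique maximal $S$-sectional path; (1) and (2) jointly fix both components, yielding a well-defined label $[a,b]\in \PR$ with $1\le a\le b\le 2n+1$.

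The main technical point I foresee is verifying the length-preserving correspondence used in the short-path case---that the surgery of Remark \ref{Rem:surgery} never inserts a star vertex into a totally induced sectional path. This is controlled by the explicit four-case breakdown of \eqref{eq: four situ}, which confines totally induced maximal $N$-sectional (resp.\ $S$-sectional) paths to the corner regions $\uUp^{{\rm NW}}_{[\ii_0]}\cup\uUp^{{\rm SE}}_{[\ii_0]}$ (resp.\ $\uUp^{{\rm NE}}_{[\ii_0]}\cup\uUp^{{\rm SW}}_{[\ii_0]}$), neither of which crosses the $(n+1)$-th row of stars; hence the surgery leaves them intact, and the label bookkeeping of Lemma \ref{lem: comp for length k le n} applies cleanly.
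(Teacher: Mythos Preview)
Your argument for parts (1) and (2) is correct and is essentially an expanded version of what the paper records as ``immediate consequences of Lemma~\ref{lem: comp for length k le n} and Proposition~\ref{prop: comp for length k ge n}''. The case split $k<n$ versus $k\ge n$ lines up with the dichotomy in \eqref{eq: total non-total}, and your bookkeeping on the label shift $[a,b]\mapsto[a+1,b+1]$ is right.

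There is, however, a gap in your deduction of the final labeling assertion. You claim that every vertex lies on both a maximal $N$-sectional and a maximal $S$-sectional path, so that (1) and (2) together pin down both components. This fails for the \emph{extra} non-induced vertex---the star inserted at the very beginning or end in step~(iv) of Theorem~\ref{thm: surgery}(3). In Example~\ref{ex: label}, for instance, the vertex $[4]$ at $(4,\tfrac12)$ is a source of $\Upsilon_{[Q^<]}$: it has a single outgoing arrow to $[3,4]$ at residue~$5$ and no arrow at all toward residue~$3$. Thus $[4]$ sits on an induced maximal $S$-sectional path (giving second component $4$), but the only $N$-sectional path through it is the singleton $\{[4]\}$, which is neither induced nor covered by~(1). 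So (1) does not supply its first component.

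The paper closes this gap differently: after labeling every \emph{induced} central vertex (each of which lies at the intersection of two long induced sectional paths, so both components are determined), it observes that on each long path exactly one vertex---the non-induced one---remains unlabeled, and its missing component is then forced by elimination, since the labels $[a,a],[a,a+1],\ldots,[a,2n+1]$ (respectively $[1,b],\ldots,[b,b]$) must all appear on that path. You should add this elimination step to your argument; as written, the source/sink star is not handled.
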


\begin{proof}
(1) and (2) are immediate consequences of Lemma \ref{lem: comp for length k le n} and Proposition \ref{prop: comp for length k ge n}.
The last assertion follows from the argument:
(a) By Remark \ref{rem: cetral in GammaQ} and Remark \ref{rem: directions}, every induced central vertex in $\oUp_{[\ii_0]}$ is located at the intersection
of two maximal induced (but not totally induced) sectional paths whose lengths are larger $n$ and hence we can label them as $[a,b]$ for some $1 \le a \le b \le 2n+1$.
(b) By (a), only one vertex for each sectional path whose length is lager $k>0$ have not determined.
Due to the system of $\Phi^+$, we can label the remained ones also.
\end{proof}

\begin{corollary} \label{cor: label for non-induced} \hfill
\begin{enumerate}
\item[{\rm (a)}] For the case of (1) or (4) in \eqref{eq: four situ}, every positive root whose second component (resp. first component) is $n$ (resp. $n+1$) corresponds to a non-induced vertex
contained in a maximal $N$-sectional of length $k \ge n$ (resp. a maximal $S$-sectional of length $k \ge n+1$).
\item[{\rm (a)}]   For the case of (2) or (3) in \eqref{eq: four situ}, every positive root whose second component (resp. first component) is $n+1$ (resp. $n+2$) corresponds to a non-induced vertex
contained in a maximal $N$-sectional of length $k \ge n+1$ (resp. a maximal $S$-sectional of length $k \ge n+2$).
\end{enumerate}
\end{corollary}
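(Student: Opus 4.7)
The argument proceeds by a case analysis on the four possibilities of \eqref{eq: four situ}. First, by $\vee$-foldability of $\lf \Qd \rf$, the number of non-induced vertices in $\Upsilon_{[\ii_0]}$ is exactly $\mathsf{C}^{\vee}_{[\ii_0]}(n+1) = 2n+1$. On the other hand, in each case the positive roots singled out by the corollary also number $2n+1$: in cases (1) and (4) one has $n$ roots with second component $n$ (namely $[a,n]$ with $1\le a\le n$) together with $n+1$ roots with first component $n+1$ (namely $[n+1,b]$ with $n+1\le b\le 2n+1$), and symmetrically in (2) and (3). Hence matching counts reduces the corollary to showing that each non-induced vertex carries a label of one of the claimed forms.

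The key tools are Theorem \ref{them: comp for length k ge 0} and Remark \ref{rem: directions}. The theorem converts lengths of induced maximal $N$- (resp.\ $S$-)sectional paths into first (resp.\ second) components via $a=2n+1-k_N$ and $b=k_S+1$. The remark lists, case by case, the lengths of induced maximal sectional paths \eqref{eq: total non-total}; moreover, every non-induced vertex is contained in some induced maximal sectional path of length $\ge n$ in exactly one of the two sectional directions, with the other direction contributing only a length-$0$ stub at that vertex. The component of the label not pinned down by the theorem is then uniquely forced by the positive-root system, as in the ``system of $\Phi^+$'' argument at the end of the proof of Theorem \ref{them: comp for length k ge 0}. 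In each case of \eqref{eq: four situ}, the alternation pattern of $\bigstar$'s at residue $n+1$ determines for each non-induced vertex which sectional direction carries the induced path, and \eqref{eq: total non-total} determines its precise length.

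Applied to case (1), picture (1) of \eqref{eq: four situ} exhibits the split of the $2n+1$ bigstars into two groups by the direction of their outgoing arrow. A direct matching against the ``missing-length'' data in \eqref{eq: total non-total}(a) shows that one group produces labels $[a,n]$ $(1\le a\le n)$ sitting on a maximal $N$-sectional path of length $k\ge n$, while the other group produces labels $[n+1,b]$ $(n+1\le b\le 2n+1)$ sitting on a maximal $S$-sectional path of length $k\ge n+1$; together these account for all $2n+1$ non-induced labels, which is exactly the content of (a). Cases (2), (3), (4) follow by the same pattern, the shift in the statement from $n,n+1$ to $n+1,n+2$ between (1)/(4) and (2)/(3) being precisely the shift in the ``missing'' sectional length recorded in \eqref{eq: total non-total}. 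The principal obstacle is the fourfold bookkeeping: each case requires a separate verification of the $(n,n+1,n+2)$-sub-picture in \eqref{eq: four situ} and a separate tally against the corresponding line of \eqref{eq: total non-total}. The underlying idea, however, is completely uniform across the four cases.
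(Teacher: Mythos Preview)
Your counting reduction is clean and correct: there are indeed $2n+1$ non-induced vertices and $2n+1$ roots of the stated shape, so it suffices to prove one inclusion. However, the step where you pin down the \emph{second} coordinate of each non-induced vertex is not fully supported by the results you cite. In case~(1), for instance, \eqref{eq: total non-total}(a) records only the $N$-sectional lengths (missing length~$n$); it says nothing about $S$-sectional lengths in that case. From the missing $N$-length you can legitimately conclude that no induced vertex has first component $n+1$, hence every root $[n+1,b]$ sits at a non-induced vertex on its $S$-path. But for the bigstars lying on long $N$-paths you only get the first component $a=2n+1-k$ from Theorem~\ref{them: comp for length k ge 0}; your appeal to ``the system of $\Phi^+$'' tells you the remaining component is \emph{some} determined value, not that it equals~$n$. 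To force $b=n$ you would need to know that the induced $S$-path of length $n-1$ is absent in case~(1), and that datum is simply not in \eqref{eq: total non-total}. The same issue arises symmetrically in the other three cases.

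The paper closes exactly this gap by arguing in the opposite direction: it first pins down the labels of \emph{all} induced vertices---the non-central ones carry $b\le n-1$ or $a\ge n+2$ by Lemma~\ref{lem: comp for length k le n} and \eqref{eq: total non-total}, while the induced central ones sit at intersections of a long $S$-path ($k\ge n$) and a long $N$-path ($l\ge n+1$), hence carry $[2n+1-l,\,k+1]$ with $a\le n$, $b\ge n+1$---and then obtains the non-induced labels by elimination. Your argument can be completed, but doing so amounts either to deriving the missing $S$-sectional (resp.\ $N$-sectional) length data for cases (1),(2) (resp.\ (3),(4)) from the pictures in \eqref{eq: four situ}, or to running the paper's elimination argument after all.
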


\begin{proof}
{\rm (a)} By \eqref{eq: total non-total} and Lemma \ref{lem: comp for length k le n}, all labels of the form $[a,b]$ $(b \le n-1)$
(resp. $[a,b]$ $(a \ge n+2)$) appear in $\uUp^{{\rm NE}}_{[\ii_0]}$ or $\uUp^{{\rm SW}}_{[\ii_0]}$ (resp. $\uUp^{{\rm NW}}_{[\ii_0]}$ or $\uUp^{{\rm SE}}_{[\ii_0]}$).
By Theorem \ref{them: comp for length k ge 0} and \eqref{eq: total non-total}, every induced central vertex is located
at an intersection of maximal $S$-sectional path of length $k \ge n$ and an intersection of maximal $N$-sectional path of length $l \ge n+1$.
Hence the set of all induced central vertices has one to one correspondence with the set of all positive roots of the form $[2n+1-l,k+1]$ $(k \ge n, \ l \ge n+1)$.
Hence our assertion follows from Theorem \ref{them: comp for length k ge 0}.

The assertion for {\rm (b)} follows in the similar way.
\end{proof}

The following corollary follows from {\rm (a)} in Definition \ref{def: central}
and Corollary \ref{cor: label for non-induced}:

\begin{corollary} \label{cor: supports for induced central} \hfill
\begin{enumerate}
\item[{\rm (a)}]  For the case of (1) or (4) in \eqref{eq: four situ}, every induced central vertex in $\Upsilon_{[\ii_0]}$ corresponds $\be$ having
$\al_n$ and $\al_{n+1}$ as its support if (1) or (4) in \eqref{eq: four situ}.
\item[{\rm (b)}]  For the case of (2) or (3) in \eqref{eq: four situ}, every induced central vertex in $\Upsilon_{[\ii_0]}$ corresponds $\be$ having
$\al_{n+1}$ and $\al_{n+2}$ as its support if (2) or (3) in \eqref{eq: four situ}.
\end{enumerate}
\end{corollary}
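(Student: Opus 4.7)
The plan is to read the claim straight off the labels of induced central vertices, using Theorem \ref{them: comp for length k ge 0} together with the intersection description established in the proof of Corollary \ref{cor: label for non-induced}.

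First I would unpack what an \emph{induced} central vertex is. By Definition \ref{def: central}(a), an induced central vertex $v$ lies in $\Gamma_Q \cap \Upsilon_{[\ii_0]}$ and sits on two sectional paths, each of which reaches a vertex of residue $n+1$. Since the residue $n+1$ occupies the middle row in every diagram of \eqref{eq: four situ}, one of these two sectional paths must be $N$-sectional (ascending to residue $n+1$) and the other $S$-sectional (descending to residue $n+1$). Inspection of \eqref{eq: four situ}, combined with the length data recorded in \eqref{eq: total non-total} and in Remark \ref{rem: directions}, tells us the minimum lengths of these two sectional paths: in cases (1) and (4) the $N$-sectional path through $v$ has length $l \ge n+1$ and the $S$-sectional path through $v$ has length $k \ge n$; in cases (2) and (3) the inequalities are swapped to $l \ge n$ and $k \ge n+1$.

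Next I would apply Theorem \ref{them: comp for length k ge 0}: a vertex sitting at the intersection of a maximal $N$-sectional path of length $l$ and a maximal $S$-sectional path of length $k$ bears the label $[2n+1-l,\,k+1] \in \PR$. Substituting the length bounds just obtained:
\begin{itemize}
\item in cases (1) and (4), the first component satisfies $2n+1-l \le n$ and the second component satisfies $k+1 \ge n+1$, so $\operatorname{supp}(v) \supseteq \{\alpha_n, \alpha_{n+1}\}$;
\item in cases (2) and (3), one has $2n+1-l \le n+1$ and $k+1 \ge n+2$, so $\operatorname{supp}(v) \supseteq \{\alpha_{n+1}, \alpha_{n+2}\}$.
\end{itemize}
This is exactly the content of the corollary.

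The only step that requires any care is the first one, namely checking that the two sectional paths forced by Definition \ref{def: central}(a) are genuinely one $N$-sectional and one $S$-sectional, and that their lengths cannot be shorter than the bounds claimed. This is purely a matter of inspecting the local shape of $\Upsilon_{[\ii_0]}$ around the $\{n,n+1,n+2\}$-th residues in each of the four scenarios of \eqref{eq: four situ}; after that, the label computation is immediate from Theorem \ref{them: comp for length k ge 0}, so I anticipate no real obstacle.
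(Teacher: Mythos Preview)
Your proposal is correct and follows essentially the same route as the paper: the paper simply records that the corollary follows from Definition \ref{def: central}(a) together with Corollary \ref{cor: label for non-induced}, and your argument is precisely an unpacking of that reference, using the intersection description of induced central vertices (as the crossing of an $N$-sectional and an $S$-sectional path with the length bounds from \eqref{eq: total non-total}) and then reading off the label $[2n+1-l,k+1]$ via Theorem \ref{them: comp for length k ge 0}.
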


\begin{definition} \label{def: folded multiplicity}
For any $\gamma \in \PR$,
{\it the folded multiplicity} of $\gamma$, denoted by $\overline{\mathsf{m}}(\gamma)$ is a positive integer defined as follows:
$$\overline{\mathsf{m}}(\gamma) = \max\left\{\, \sum_{j \in \overline{i}} m_j\, \big| \, \overline{i} \in \overline{I} \text{ and } \sum_{i\in I} m_i\alpha_i=\ga\, \right\}.$$
If $\overline{\mathsf{m}}(\gamma)=1$, we say that $\gamma$ is {\it folded multiplicity free}.
\end{definition}

\begin{remark} \label{rem: corr}
By Corollary \ref{cor: supports for induced central}, every folded multiplicity non-free positive root corresponds to an induced central vertex.
However, every induced central vertex does not correspond to a folded multiplicity non-free positive root.
\end{remark}

For an induced vertex in $\Upsilon_{[\ii_0]}$, we can summarize as follows:
\begin{corollary} \label{cor:label1}
Consider the map $\iota^+: I_{2n}\to I_{2n+1}$ such that $\iota^+(i)=i$ for $i=1, \cdots, n$ and $\iota^+(i)= i+1$ for $i=n+1, \cdots, 2n.$
Then the labeling for the induced vertex $v$ in $\Upsilon_{[\ii_0]}$ corresponding to $[a,b]$ in $\Gamma_Q$ is determined as follows:
\begin{align}\label{eq: induced label}
\left\{ \begin{array}{ll}  \sum_{i=a}^{b} \alpha_{\iota^+(i)} +\alpha_{n+1} & \text{ if }  v \in  \oUp_{[\ii_0]}, \\
 \sum_{i=a}^{b} \alpha_{\iota^+(i)} & \text{ otherwise.}    \end{array}  \right.
\end{align}
\end{corollary}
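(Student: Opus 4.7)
The plan is to split the proof into two cases according to whether $v$ is central or non-central, and in each to translate the labelling from $\Gamma_Q$ into $\Upsilon_{[\ii_0]}$ by combining the surgery description of Remark \ref{Rem:surgery} with the sectional-path labelling rule of Theorem \ref{them: comp for length k ge 0}.

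First, for an induced non-central vertex $v$, the result is an immediate consequence of Lemma \ref{lem: comp for length k le n} together with \eqref{eq: from Q labelling}. Indeed, if the $\Gamma_Q$-label is $[a,b]$ with $b\le n$, then $v\in\uUp^{{\rm NE}}_{[\ii_0]}\cup\uUp^{{\rm SW}}_{[\ii_0]}$, the map $\iota^+$ is the identity on $\{a,\ldots,b\}$, and Lemma \ref{lem: comp for length k le n} gives the $\Upsilon_{[\ii_0]}$-label $[a,b]=\sum_{i=a}^{b}\alpha_{\iota^+(i)}$. Dually, if $a\ge n+1$, then $v\in\uUp^{{\rm NW}}_{[\ii_0]}\cup\uUp^{{\rm SE}}_{[\ii_0]}$, $\iota^+(i)=i+1$ on the relevant range, and the $\Upsilon_{[\ii_0]}$-label is $[a+1,b+1]=\sum_{i=a}^{b}\alpha_{\iota^+(i)}$, matching the second case of \eqref{eq: induced label}.

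For the central case, I would fix an induced central vertex $v\in\oUp_{[\ii_0]}$ whose $\Gamma_Q$-label is $[a,b]$. By Remark \ref{rem: cetral in GammaQ}, $v$ sits in $\Gamma_Q$ at the intersection of a maximal $N$-sectional path of length $k'$ and a maximal $S$-sectional path of length $l'$, both of length at least $n-1$, and applying the natural analogue of Theorem \ref{thm: labeling GammaQ} to the rank $2n$ quiver $\Gamma_Q$ forces $a=2n-k'$ and $b=l'+1$; in particular $a\le n$ and $b\ge n+1$. The surgery in Remark \ref{Rem:surgery} inserts a non-induced vertex in the $(n+1)$-residue on every arrow between the $n$-th and $(n+2)$-th residues of $\Gamma_Q$, breaking each such arrow into a length-two path through the inserted vertex; consequently the two sectional paths through $v$ each acquire exactly one additional vertex, so the maximal sectional paths through $v$ in $\Upsilon_{[\ii_0]}$ have lengths $k=k'+1$ and $l=l'+1$. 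Theorem \ref{them: comp for length k ge 0} then assigns to $v$ in $\Upsilon_{[\ii_0]}$ the $A_{2n+1}$-label with first component $2n+1-k=a$ and second component $l+1=b+1$, that is $\sum_{i=a}^{b+1}\alpha_i$. A direct rearrangement
\[
\sum_{i=a}^{b+1}\alpha_i = \sum_{i=a}^{n}\alpha_i+\alpha_{n+1}+\sum_{i=n+2}^{b+1}\alpha_i = \sum_{i=a}^{b}\alpha_{\iota^+(i)}+\alpha_{n+1}
\]
matches the first case of \eqref{eq: induced label}.

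The main obstacle will be the careful bookkeeping in the central case, namely verifying that the induced central vertex of $\Upsilon_{[\ii_0]}$ corresponds under the inverse surgery to the induced central vertex of $\Gamma_Q$ carrying the same $[a,b]$, and that the sectional paths through it lengthen by exactly one upon insertion of the $(n+1)$-residue: this is precisely where the extra simple root $\alpha_{n+1}$ in the label arises. The variants $(2)$ and $(3)$ of \eqref{eq: four situ} are handled by the same argument after a symmetric re-labelling, so no separate case analysis is required.
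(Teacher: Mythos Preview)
Your proof is correct and follows the same route the paper takes: the corollary is stated there as a summary of Lemma~\ref{lem: comp for length k le n}, Theorem~\ref{them: comp for length k ge 0}, and Corollary~\ref{cor: label for non-induced}/\ref{cor: supports for induced central}, and your argument simply unpacks this summary by tracking how the sectional-path lengths change under the surgery of Remark~\ref{Rem:surgery}.

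One small imprecision: from Remark~\ref{rem: cetral in GammaQ} alone you only get $k',l'\ge n-1$, hence $a\le n+1$ and $b\ge n$ rather than the sharper $a\le n$, $b\ge n+1$ you claim. This does not affect your conclusion, since your displayed identity
\[
\sum_{i=a}^{b+1}\alpha_i=\sum_{i=a}^{b}\alpha_{\iota^+(i)}+\alpha_{n+1}
\]
remains valid in the boundary cases $a=n+1$ or $b=n$ with the natural empty-sum convention (and indeed the sharper bounds do hold once one separates the four situations of~\eqref{eq: four situ}, as Corollary~\ref{cor: supports for induced central} records). Your observation that both maximal sectional paths through an induced central vertex necessarily acquire exactly one non-induced vertex under surgery, because Definition~\ref{def: central}(a)(i) forces each such path to meet the $(n{+}1)$-residue, is exactly the point that makes the first component stay fixed while the second shifts by~$1$.
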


Now we shall give an example on the labeling without computation, for readers convenience:

\begin{example} \label{ex: label}
For a Dynkin quiver $Q = {\xymatrix@R=3ex{ \bullet
\ar@{->}[r]_<{ \ 1} &  \bullet
\ar@{<-}[r]_<{ \ 2}  &  \bullet
\ar@{->}[r]_<{ \ 3} &\bullet \ar@{->}[r]_<{ \ 4}
&\bullet \ar@{<-}[r]_<{ \ 5}
& \bullet \ar@{-}[l]^<{\ \ \ \ \ \ 6} }},$
let us consider the combinatorial AR-quiver for $\Upsilon_{[Q^<]}$
$$ \scalebox{0.6}{\xymatrix@C=1ex@R=1ex{
1 && &&  \bullet\ar@{->}[ddrr] && &&  \bullet\ar@{->}[ddrr]  && && \bullet\ar@{->}[ddrr]  && && \\
& &&&&& &&&& &&&& &&\\
2 && && &&\bullet\ar@{->}[ddrr] \ar@{->}[uurr]  && && \bullet \ar@{->}[ddrr]\ar@{->}[uurr] && && \bullet &&  \\
&&&&&& &&&&&&&&\\
3 & & && \bullet \ar@{->}[dr] \ar@{->}[uurr] && && \bullet\ar@{->}[dr] \ar@{->}[uurr]  && && \bullet\ar@{->}[dr] \ar@{->}[uurr]  && && \\
4& \bigstar\ar@{->}[dr]&&\bigstar\ar@{->}[ur]&&\bigstar\ar@{->}[dr]& &\bigstar\ar@{->}[ur]&&\bigstar\ar@{->}[dr]& &\bigstar\ar@{->}[ur]&&\bigstar\ar@{->}[dr]& &&\\
5 && \bullet\ar@{->}[ddrr] \ar@{->}[ur]  && && \bullet\ar@{->}[ddrr] \ar@{->}[ur]  && &&\bullet \ar@{->}[ddrr] \ar@{->}[ur] && && \bullet\ar@{->}[ddrr]   &&\\
&&&&&&&&&&&&&&\\
6 && && \bullet\ar@{->}[ddrr] \ar@{->}[uurr]  && && \bullet\ar@{->}[ddrr] \ar@{->}[uurr]  && && \bullet\ar@{->}[ddrr] \ar@{->}[uurr]  && &&\bullet  \\
 &&&&&& &&&&&&&&&&\\
7 && \bullet \ar@{->}[uurr]  && && \bullet \ar@{->}[uurr]  && && \bullet \ar@{->}[uurr] && &&\bullet \ar@{->}[uurr]
}}.$$
which is the case {\rm (1)} in \eqref{eq: four situ}. By  Theorem \ref{them: comp for length k ge 0},
we can complete finding labels for $\Upsilon_{[Q^<]}$ in three steps as follows:
\begin{align*}
& \scalebox{0.45}{\xymatrix@C=1ex@R=1ex{
1 && &&  [7]\ar@{->}[ddrr] && &&  [*,6]\ar@{->}[ddrr]  && && [*,2]\ar@{->}[ddrr]  && && \\
& &&&&& &&&& &&&& &&\\
2 && && &&[*,7]\ar@{->}[ddrr] \ar@{->}[uurr]  && && [*,6] \ar@{->}[ddrr]\ar@{->}[uurr] && && [2] &&  \\
&&&&&& &&&&&&&&\\
3 & & && [*,5] \ar@{->}[dr] \ar@{->}[uurr] && && [*,7]\ar@{->}[dr] \ar@{->}[uurr]  && && [*,6]\ar@{->}[dr] \ar@{->}[uurr]  && && \\
4& [*,4] \ar@{->}[dr]&& \bigstar \ar@{->}[ur]&&[*,5]\ar@{->}[dr]& & \bigstar \ar@{->}[ur]&&[*,7]\ar@{->}[dr]& &\bigstar \ar@{->}[ur]&&[*,6]\ar@{->}[dr]& &&\\
5 && [*,4]\ar@{->}[ddrr] \ar@{->}[ur]  && && [*,5]\ar@{->}[ddrr] \ar@{->}[ur]  && &&[*,7] \ar@{->}[ddrr] \ar@{->}[ur] && && [*,6]\ar@{->}[ddrr]   &&\\
&&&&&&&&&&&&&&\\
6 && && [*,4]\ar@{->}[ddrr] \ar@{->}[uurr]  && && [*,5]\ar@{->}[ddrr] \ar@{->}[uurr]  && && [*,7]\ar@{->}[ddrr] \ar@{->}[uurr]  && &&[*,6]  \\
 &&&&&& &&&&&&&&&&\\
7 && [*,1] \ar@{->}[uurr]  && && [*,4] \ar@{->}[uurr]  && && [*,5] \ar@{->}[uurr] && &&[*,7] \ar@{->}[uurr] &&
}}
 \scalebox{0.45}{\xymatrix@C=1ex@R=1ex{
1 && &&  [7]\ar@{->}[ddrr] && &&  [3,6]\ar@{->}[ddrr]  && && [1,2]\ar@{->}[ddrr]  && && \\
& &&&&& &&&& &&&& &&\\
2 && && &&[3,7]\ar@{->}[ddrr] \ar@{->}[uurr]  && && [1,6] \ar@{->}[ddrr]\ar@{->}[uurr] && && [2] &&  \\
&&&&&& &&&&&&&&\\
3 & & && [3,5] \ar@{->}[dr] \ar@{->}[uurr] && && [1,7]\ar@{->}[dr] \ar@{->}[uurr]  && && [2,6]\ar@{->}[dr] \ar@{->}[uurr]  && && \\
4& [*,4] \ar@{->}[dr]&&[3,*]\ar@{->}[ur]&&[*,5]\ar@{->}[dr]& &[1,*]\ar@{->}[ur]&&[*,7]\ar@{->}[dr]& &[2,*]\ar@{->}[ur]&&[*,6]\ar@{->}[dr]& &&\\
5 && [3,4]\ar@{->}[ddrr] \ar@{->}[ur]  && && [1,5]\ar@{->}[ddrr] \ar@{->}[ur]  && &&[2,7] \ar@{->}[ddrr] \ar@{->}[ur] && && [5,6]\ar@{->}[ddrr]   &&\\
&&&&&&&&&&&&&&\\
6 && && [1,4]\ar@{->}[ddrr] \ar@{->}[uurr]  && && [2,5]\ar@{->}[ddrr] \ar@{->}[uurr]  && && [5,7]\ar@{->}[ddrr] \ar@{->}[uurr]  && &&[6]  \\
 &&&&&& &&&&&&&&&&\\
7 && [1] \ar@{->}[uurr]  && && [2,4] \ar@{->}[uurr]  && && [5] \ar@{->}[uurr] && &&[6,7] \ar@{->}[uurr] &&
}} \\
& \qquad\qquad\qquad
\scalebox{0.6}{\xymatrix@C=1ex@R=1ex{
& \frac{1}{2} &\frac{2}{2} &\frac{3}{2} &\frac{4}{2} &\frac{5}{2} &\frac{6}{2} &\frac{7}{2} &\frac{8}{2} &\frac{9}{2}
&\frac{10}{2} &\frac{11}{2} &\frac{12}{2} &\frac{13}{2} & \frac{14}{2} &\frac{15}{2} &\frac{16}{2} \\
1&& &&  [7]\ar@{->}[ddrr] && &&  [3,6]\ar@{->}[ddrr]  && && [1,2]\ar@{->}[ddrr]  && && \\
& &&&&& &&&& &&&& &&\\
2&& && &&[3,7]\ar@{->}[ddrr] \ar@{->}[uurr]  && && [1,6] \ar@{->}[ddrr]\ar@{->}[uurr] && && [2] &&  \\
&&&&&& &&&&&&&&\\
3& & && [3,5] \ar@{->}[dr] \ar@{->}[uurr] && && [1,7]\ar@{->}[dr] \ar@{->}[uurr]  && && [2,6]\ar@{->}[dr] \ar@{->}[uurr]  && && \\
4& [4]\ar@{->}[dr]&&[3]\ar@{->}[ur]&&[4,5]\ar@{->}[dr]& &[1,3]\ar@{->}[ur]&&[4,7]\ar@{->}[dr]& &[2,3]\ar@{->}[ur]&&[4,6]\ar@{->}[dr]& &&\\
5&& [3,4]\ar@{->}[ddrr] \ar@{->}[ur]  && && [1,5]\ar@{->}[ddrr] \ar@{->}[ur]  && &&[2,7] \ar@{->}[ddrr] \ar@{->}[ur] && && [5,6]\ar@{->}[ddrr]   &&\\
&&&&&&&&&&&&&&\\
6&& && [1,4]\ar@{->}[ddrr] \ar@{->}[uurr]  && && [2,5]\ar@{->}[ddrr] \ar@{->}[uurr]  && && [5,7]\ar@{->}[ddrr] \ar@{->}[uurr]  && &&[6]  \\
 &&&&&& &&&&&&&&&&\\
7&& [1] \ar@{->}[uurr]  && && [2,4] \ar@{->}[uurr]  && && [5] \ar@{->}[uurr] && &&[6,7] \ar@{->}[uurr] &&
}}
\end{align*}
\end{example}

\section{Properties of twisted AR-quivers} In this section, we introduce and investigate the twisted additive property on
twisted AR-quiver $\Upsilon_{[\ii_0]}$ for $[\ii_0] \in  \lf \Qd \rf$. Then we study the distance of a pair
$(\al,\be)$ with respect to the bi-lexicographical order $\prec^\tb_{[\ii_0]}$ induced from $\prec_{[\ii_0]}$,
which were studied in \cite{Oh15E} for $\prec^\tb_{[Q]}$.

\subsection{Twisted additive property} Throughout this subsection, $[\ii_0]$ denotes a class in $\lf \Qd \rf$ of type $A_{2n+1}$. The following is an immediate consequence of
Theorem \ref{them: comp for length k ge 0}:

\begin{corollary}\label{cor: big mesh} Assume we have the following rectangle subquiver in $\Upsilon_{[\ii_0]} \colon$
\begin{align} \label{eq:rectangle}
\scalebox{0.9}{{\xy (0,0)*++{\rectangle}; \endxy}}
\end{align}
where $\al,\be,\ga,\eta \in \Phi^+$. Then $\al+\be=\ga+\eta$ More precisely, if we denote $\al=[a,b]$ and $\beta=[a',b']$ then $\gamma=[a,b']$ and $\eta=[a',b].$
\end{corollary}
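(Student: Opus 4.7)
My plan is to read off the $\Phi^+$-labels of all four corners directly from Theorem~\ref{them: comp for length k ge 0}, and then verify the additive identity by an elementary multiset computation on integer intervals. First, I would identify the sides of the rectangle in \eqref{eq:rectangle} as sectional subpaths of $\Upsilon_{[\ii_0]}$: the pairs $\{\eta,\beta\}$ and $\{\gamma,\alpha\}$ each lie on a common $N$-sectional path (going up-and-to-the-right), while the pairs $\{\eta,\alpha\}$ and $\{\beta,\gamma\}$ each lie on a common $S$-sectional path. Extending each such subpath to the maximal sectional path containing it and invoking Theorem~\ref{them: comp for length k ge 0}, we conclude that vertices on a common $N$-sectional (resp.\ $S$-sectional) maximal path share the same first (resp.\ second) component of their $\Phi^+$-label.

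Next, writing $\alpha=[a,b]$ and $\beta=[a',b']$, these identifications immediately force $\eta=[a',b]$ (first component inherited from $\beta$, second from $\alpha$) and $\gamma=[a,b']$ (first component inherited from $\alpha$, second from $\beta$), establishing the precise form of the labels in the statement. The inequalities $a\le b'$ and $a'\le b$ needed for $\gamma,\eta$ to be positive roots are automatic, since $\gamma,\eta$ already occur as vertices of $\Upsilon_{[\ii_0]}$.

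The identity $\alpha+\beta=\gamma+\eta$ then reduces to the multiset equality
\[
\{a,a+1,\ldots,b\} \sqcup \{a',a'+1,\ldots,b'\} \;=\; \{a,a+1,\ldots,b'\} \sqcup \{a',a'+1,\ldots,b\},
\]
which can be verified at each integer via the indicator identity $\chi_{[a,b]}+\chi_{[a',b']} = \chi_{[\min(a,a'),\min(b,b')]}+\chi_{[\max(a,a'),\max(b,b')]} = \chi_{[a,b']}+\chi_{[a',b]}$; summing the corresponding simple roots $\alpha_k$ on both sides yields the desired equality.

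The only point requiring real care is ensuring the argument applies uniformly, including when some of the four corners fall inside the central region $\oUp_{[\ii_0]}$, where a label carries an additional $\alpha_{n+1}$ contribution from Corollary~\ref{cor:label1}. This is handled automatically: Theorem~\ref{them: comp for length k ge 0} delivers the complete $\Phi^+$-label of every vertex of $\Upsilon_{[\ii_0]}$ in a single uniform format, absorbing any such $+\alpha_{n+1}$ correction into the support of $[a,b]$. Consequently no case analysis on centrality is needed, and the above two-step reduction is purely combinatorial.
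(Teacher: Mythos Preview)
Your proof is correct and follows essentially the same approach as the paper, which simply records the corollary as an immediate consequence of Theorem~\ref{them: comp for length k ge 0}. As a minor simplification, you could cite Proposition~\ref{pro: section shares} directly (it already gives the shared-component property for arbitrary sectional paths in $\Upsilon_{[\jj_0]}$), which lets you skip the extension to maximal paths and the discussion of the central region altogether.
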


The following proposition is another immediate consequence of Corollary \ref{cor: label for non-induced}.

\begin{corollary}\label{cor: triangle mesh} Assume we have the following subquivers in $\Upsilon_{[\ii_0]} \colon$
$$
\scalebox{0.6}{\xymatrix@C=2ex@R=1ex{
k & &&& \ga \ar@{->}[dr] &&&&\\
\vdots & && \bullet \ar@{->}[ur]&& \bullet \ar@{->}[dr]&&& \\
\vdots & & \bullet\ar@{.>}[ur]&&&& \bullet \ar@{->}[dr]&&&\\
n+1 & \alpha \ar@{->}[ur] &&&&&& \be
}} \
\scalebox{0.6}{\xymatrix@C=2ex@R=1ex{
n+1 & \alpha \ar@{->}[dr] &&&&&& \be\\
\vdots & & \bullet\ar@{.>}[dr]&&&& \bullet \ar@{->}[ur]&&&\\
\vdots & && \bullet \ar@{->}[dr]&& \bullet \ar@{->}[ur]&&& \\
l & &&& \ga \ar@{->}[ur] &&&&
}}
$$
where $k < n+1 < l$,  the root $\ga$ corresponds to an induced central vertex and $(\al,\be)$ corresponds to a couple of non-induced vertices.
Then we have $\al+\be=\ga.$
\end{corollary}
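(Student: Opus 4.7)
My plan is to pin down $\alpha$, $\beta$, and $\gamma$ as explicit intervals in $\Phi^+$ of type $A_{2n+1}$ and then verify $\alpha + \beta = \gamma$ by interval addition.

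I would first analyze the left-hand configuration, with $\gamma$ at the top. By Proposition \ref{pro: section shares}, the vertices on the $N$-sectional path from $\alpha$ to $\gamma$ share a common first component, while those on the $S$-sectional path from $\gamma$ to $\beta$ share a common second component. Writing $\gamma = [a, b]$, this identifies the first component of $\alpha$ as $a$ and the second component of $\beta$ as $b$.

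The remaining components are pinned down by the structural hypotheses on $\gamma$, $\alpha$, $\beta$ together with the local shape of $\Upsilon_{[\ii_0]}$ around residue $n+1$. Since $\gamma$ is an induced central vertex, Corollary \ref{cor: supports for induced central} gives that its support contains $\{\alpha_n, \alpha_{n+1}\}$ in cases (1) and (4) of \eqref{eq: four situ}, so $a \le n$ and $b \ge n+1$; analogously, the support is $\{\alpha_{n+1}, \alpha_{n+2}\}$ in cases (2) and (3), so $a \le n+1$ and $b \ge n+2$. Corollary \ref{cor: label for non-induced} then offers the non-induced vertex $\alpha$ only two possible labels at residue $n+1$, and the support bound on $a$ leaves only one possibility. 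This yields $\alpha = [a, n]$ and, symmetrically, $\beta = [n+1, b]$ in cases (1), (4); it yields $\alpha = [a, n+1]$ and $\beta = [n+2, b]$ in cases (2), (3). In either situation $\alpha + \beta = [a, b] = \gamma$.

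The right-hand configuration is handled by the symmetric argument, swapping the roles of the $N$- and $S$-sectional paths: now $\alpha$ shares $\gamma$'s second component and $\beta$ shares its first component, and Corollaries \ref{cor: supports for induced central} and \ref{cor: label for non-induced} force $\alpha = [n+1, b]$ and $\beta = [a, n]$ in cases (1), (4) (or $\alpha = [n+2, b]$ and $\beta = [a, n+1]$ in cases (2), (3)), after which the same interval addition closes the argument. The main obstacle I foresee is the case analysis in \eqref{eq: four situ}: one has to verify in each of the four local shapes that the support condition on $\gamma$ rules out the ``wrong'' alternative in Corollary \ref{cor: label for non-induced}, so that the shared-component reasoning uniquely pinpoints $\alpha$ and $\beta$ and the sum collapses to $[a,b]$.
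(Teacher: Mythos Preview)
Your proposal is correct and matches the paper's approach: the paper states this corollary as ``another immediate consequence of Corollary~\ref{cor: label for non-induced}'' without further proof, and your argument simply unpacks that claim via Proposition~\ref{pro: section shares} and Corollary~\ref{cor: supports for induced central}. The only minor sharpening I'd suggest is that the direction of the sectional path through $\alpha$ (respectively $\beta$) already forces which of the two label-types in Corollary~\ref{cor: label for non-induced} applies, so the support bound on $\gamma$ is really needed only to confirm the intervals concatenate to $[a,b]$, not to disambiguate the label of $\alpha$; but your version of the argument via the support bound on $a$ is equally valid.
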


Thus we have a {\it twisted additive} property, which can be understood as a generalization of the additive property  \eqref{eq: addtive property} of $\Gamma_Q$,
as follows: (cf. Proposition \ref{Prop:twisted_adapted_2})
\begin{theorem} \label{thm:twisted additive}
 For $\beta \in \Phi^+$ with $\Omega_{[\ii_0]}(\be)=(i,p) \in I \times \Z/2$, assume that $\Omega_{[\ii_0]}(\al)=(i, p-2+\delta_{i,n+1})$.
Then we have
\begin{align}\label{eq: twisted addtive property}
\be+\al =  \sum_{\Omega_{[\ii_0]}(\ga)=(j ,q)}  \ga,
\end{align}
where
$$ q=p-\frac{1}{1+\delta_{i,n+1}} \quad \text{ and } \quad  j=\begin{cases}
n \text{ or } n+3 & \text{ if } i =n+2, \\
n-1 \text{ or } n+2 & \text{ if } i =n, \\
i \pm 1 & \text{ otherwise}.
\end{cases} $$
\end{theorem}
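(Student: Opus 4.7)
The plan is to reduce the claimed identity to an application of Corollary \ref{cor: big mesh} by identifying, in each of three cases distinguished by the residue $i$, a quadrilateral subquiver of $\Upsilon_{[\ii_0]}$ whose four corners are $\be$, $\al$, and the summands on the right-hand side of the identity.

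First, suppose $i \notin \{n, n+1, n+2\}$. Then both $\be$ at $(i,p)$ and $\al$ at $(i, p-2)$ lie in $\Gamma_Q \cap \Upsilon_{[\ii_0]}$, and whenever they exist the neighbours $(i{\pm}1, p-1)$ also lie there with integer $p$-coordinates. These four vertices form a unit quadrilateral with sectional sides of length one, so Corollary \ref{cor: big mesh} immediately yields $\be + \al = \ga_+ + \ga_-$ with $\ga_\pm$ at $(i{\pm}1, p-1)$. Boundary subcases ($i=1$ or $i=2n+1$) are absorbed by dropping the absent $\ga_\pm$ from the right-hand sum.

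Second, suppose $i \in \{n, n+2\}$; by the $i \leftrightarrow 2n+2-i$ symmetry I may take $i = n$. The would-be neighbour at $(n+1, p-1)$ is not a vertex of $\Upsilon_{[\ii_0]}$ because the central residue $n+1$ carries only half-integer $p$-coordinates. To compensate, I extend the sectional paths through the centre by one step: the $S$-sectional path out of $\al = (n, p-2)$ passes through the central vertex $(n+1, p - \tfrac{3}{2})$ and reaches $(n+2, p-1)$, and symmetrically the $N$-sectional path into $\be = (n, p)$ originates from that same $(n+2, p-1)$ via $(n+1, p - \tfrac{1}{2})$. The existence of these extended sectional paths is forced by the local pattern at residues $\{n,n+1,n+2\}$ in \eqref{eq: four situ} combined with Corollary \ref{cor: label for non-induced}. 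Thus $\al$, $(n-1, p-1)$, $\be$, $(n+2, p-1)$ form a longer quadrilateral in $\Upsilon_{[\ii_0]}$ with sectional sides of lengths $1, 1, 2, 2$, and Corollary \ref{cor: big mesh} yields $\be + \al = \ga_{n-1} + \ga_{n+2}$.

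Third, suppose $i = n+1$; both $\be = (n+1, p)$ and $\al = (n+1, p-1)$ are non-induced central vertices, so $p \in \Z + \tfrac{1}{2}$ and hence $p - \tfrac{1}{2} \in \Z$. The four vertices $\al$, $(n, p - \tfrac{1}{2})$, $\be$, $(n+2, p - \tfrac{1}{2})$ form a unit quadrilateral of horizontal stride $\tfrac{1}{2}$, and one more application of Corollary \ref{cor: big mesh} delivers $\be + \al = \ga_n + \ga_{n+2}$. The principal technical obstacle, localised to the second case, is the verification that the two extended sectional paths through residue $n+1$ genuinely exist in $\Upsilon_{[\ii_0]}$ and meet at the common vertex $(n+2, p-1)$; this reduces to a brief case check against the four local configurations in \eqref{eq: four situ}, combined with the characterisation of central vertices afforded by Corollary \ref{cor: label for non-induced}, with the usual convention that absent $\ga$-vertices drop out of the right-hand sum.
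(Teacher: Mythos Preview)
Your case decomposition and the use of Corollary~\ref{cor: big mesh} in Cases~1 and~2 match the paper's intended argument, but Case~3 contains a genuine gap. When $i=n+1$, the alleged unit quadrilateral with corners $\al$, $(n,p-\tfrac12)$, $\be$, $(n+2,p-\tfrac12)$ \emph{never} exists as a full rectangle: the local patterns in \eqref{eq: four situ} show that each non-induced central vertex at residue $n+1$ carries exactly one outgoing and one incoming arrow, so two consecutive such vertices $\al$ and $\be$ are joined by a single length-two path through exactly one of the residues $n$ or $n+2$, never both. Hence exactly one of your two $\ga$'s is present, the configuration is always a triangle, and Corollary~\ref{cor: big mesh} (which hypothesises a genuine rectangle with all four corners) does not apply --- the convention of ``dropping absent summands'' is not something that corollary licenses. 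The paper supplies precisely the missing tool in Corollary~\ref{cor: triangle mesh}, which treats this triangle and yields $\al+\be=\ga$ directly; it is placed immediately before the theorem for exactly this reason, and you should invoke it in place of Corollary~\ref{cor: big mesh} in your third case.

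A smaller related point: the boundary subcases $i=1$ and $i=2n+1$ in Case~1 rely on the same unjustified convention. Those degenerate instances lie entirely in $\Gamma_Q\cap\Upsilon_{[\ii_0]}$ and are already covered by the classical additive property \eqref{eq: addtive property} for $\Gamma_Q$, which you may cite directly rather than stretching Corollary~\ref{cor: big mesh}.
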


\begin{example}
In combinatorial AR-quiver of Example \ref{ex: label}, we have $n=3$. It is not hard to see the twisted additive property holds, for examples,
\[ \, (1)\, [1,7]+[3,5]= [1.5]+[3,7], \quad(2)\,  [1,6]+[3,7]=[3,6]+[1,7], \quad (3)\, [1,3]+[4,5]=[1,5]. \]
Note that if $i$ is the index of $\beta$ in Theorem \ref{thm:twisted additive}, then (1) is an example for $i=n$, (2) is for $i\neq n,n+1, n+2$ and (3) is for $i=n+1.$
\end{example}

From the fact that $^*=^{\vee(2n+1,2)}$ and \eqref{eq: dual Coxeter}, the reflection functor $r_i:[\ii_0] \mapsto [\ii_0]r_i$ can be understood
as a map $\Upsilon_{[\ii_0]} \mapsto \Upsilon_{[\ii_0]r_i}$ described by coordinates (cf. Algorithm \ref{alg: Ref Q}):

\begin{algorithm} \label{alg: tRef Q}
Let $\mathsf{h}^\vee=2n+1$ be a dual Coxeter number of $A_{2n}$ or $B_{n+1}$ and $i$ be a sink of $[\ii_0] \in \lf \Qd \rf$.
\begin{enumerate}
\item[{\rm (A1)}] Remove the vertex $(i,p)$ such that $\Omega_{[\ii_0]}(\al_i)=(i,p)$ and the arrows entering into $(i,p)$ in $\Upsilon_{[\ii_0]}$.
\item[{\rm (A2)}] Add the vertex $(i^*,p-\mathsf{h}^\vee)$ and the arrows to all vertices
whose coordinates are $(\, j\, ,\, p-\mathsf{h}^\vee+1/(1+\delta_{j,n+1}+\delta_{i^*,n+1})\, ) \in \Upsilon_{[\ii_0]}$ where $j$ is an index adjacent to $i^*$ in he Dynkin diagram
$\Delta$ of type $A_{2n}$.
\item[{\rm (A3)}] Label the vertex $(i^*,p-\mathsf{h}^\vee)$ with $\al_i$ and change the labels $\be$ to $s_i(\be)$ for all $\be \in \Upsilon_{[\ii_0]} \setminus \{\al_i\}$.
\end{enumerate}
\end{algorithm}

\subsection{Notions on sequences} In this section, we briefly review the notions on sequences of positive roots
which were mainly introduced in \cite{Mc12,Oh15E}.

\begin{convention} \label{conv: sequence}
Let us choose a reduced expression $\jj_0=i_1i_2 \cdots i_{\N}$ of $w_0 \in W$ and fix a convex total order $\le_{\jj_0}$
induced by $\jj_0$ and a labeling of $\Phi^+$ as follows:
$$\beta^{\jj_0}_k \seteq s_{i_1}\cdots s_{i_{k-1}}\alpha_{i_k} \in \Phi^+, \   \beta^{\jj_0}_k \le_{\jj_0} \beta^{\jj_0}_l \text{ if and only if } k \le l.$$

\begin{enumerate}
\item[{\rm (i)}] We identify a sequence $\um_\tw=(m_1,m_2,\ldots,m_{\N}) \in \Z_{\ge 0}^{\N}$ with
$$(m_1\beta^{\jj_0}_1 ,m_2\beta^{\jj_0}_2,\ldots,m_{\N} \beta^{\jj_0}_{\N}) \in (\rl^+)^{\N},$$
where $\rl^+$ is the positive root lattice.
\item[{\rm (ii)}] For a sequence $\um_{\jj_0}$ and another reduced expression ${\jj_0}'$ of $w_0$, the sequence $\um_{{\jj_0}'}\in\Z_{\ge 0}^{\N}$ is induced from $\um_{\jj_0}$ as follows : (a) Consider $\um_{\jj_0}$  as a sequence of positive roots, (b)
rearranging with respect to $<_{\jj_0'}$, (c)  applying the convention
{\rm (i)}.
\end{enumerate}
For simplicity of notations, we usually drop the script ${\jj_0}$ if there is no fear of confusion.
\end{convention}

\begin{definition} \cite{Mc12} \label{def: redezletb}
For sequences $\um$, $\um' \in \Z_{\ge 0}^{\N}$, we define an order $\le^\tb_{\jj_0}$ as follows:
\begin{eqnarray*}&&
\parbox{65ex}{
$\um'=(m'_1,\ldots,m'_{\N}) <^\tb_{\redex}  \um=(m_1,\ldots,m_{\N}) $ if and only if there exist integers $k$ and $s$ such that $1 \le k \le s \le \N$ satisfying the properties:  \\
 (a) $m_t'=m_t \ ( 1\leq t<k \text{ or } s<t\leq\N),$  (b) $m'_k  <  m_k$,  (c) $m'_{s}  <  m_{s}.$
}
\end{eqnarray*}
\end{definition}

\begin{definition} \cite{Oh15E} \label{def: redezprectb}
For sequences $\um$, $\um' \in \Z_{\ge 0}^{\N}$, we define an order $\prec^\tb_{[\jj_0]}$ as follows:
\begin{eqnarray}&&
\parbox{65ex}{
$\um'=(m'_1,\ldots,m'_{\N}) \prec^\tb_{[\jj_0]}  \um=(m_1,\ldots,m_{\N}) $ if and only if  $\um'_{\jj_0'} <^\tb_{\jj_0'}  \um_{\jj_0'}$
for all reduced words $\jj_0' \in [\jj_0]$.
}\label{eq: redezprectb}
\end{eqnarray}
\end{definition}

\begin{definition} \cite[Definition 1.10]{Oh15E} \hfill
\begin{enumerate}
\item[{\rm (i)}] A sequence $\um=(m_1,m_2,\ldots,m_{\N}) \in \Z_{\ge 0}^{\N}$ is called  {\it basic} if $m_i \le 1$  for all $i$.
\item[{\rm (ii)}] A sequence $\um$ is called a {\it pair} if it is basic and $|\um|\seteq \sum_{i=1}^{\N} m_i=2$.
\item[{\rm (iii)}] A {\it weight} $\wt(\um)$ of a sequence $\um$ is defined by $\sum_{i=1}^{\N} m_i\beta_i \in \rl^+$.
\end{enumerate}
\end{definition}

In this paper, we use the notation $\up$ for a pair sequence. For brevity, we write a pair $\up$ as $(\alpha,\beta) \in (\PR)^2$ or $(\up_{i_1},\up_{i_2})$ such that
$\beta_{i_1}=\alpha$, $\beta_{i_2}=\beta$ and $i_1 < i_2$.

\begin{definition} \cite[Definition 1.13, Definition 1.14]{Oh15E} \hfill
\begin{enumerate}
\item[{\rm (i)}] A pair $\up$ is called $[\jj_0]$-simple if there exists no sequence $\um \in \Z^{\N}_{\ge 0}$ such that
\begin{align}\label{eq: simple pair}
\um \prec^\tb_{[\jj_0]} \up \quad \text{ and } \quad \wt(\um)=\wt(\up).
\end{align}
\item[{\rm (ii)}] A sequence $\um=(m_1,m_2,\ldots,m_{\N}) \in \Z^{\N}_{\ge 0}$ is called $[\jj_0]$-simple if \ber $\um=(m_k \beta^{\jj_0}_k)$ \er or all pairs $(\up_{i_1},\up_{i_2})$
such that $m_{i_1},m_{i_2} >0$ are $[\jj_0]$-simple pairs.
\end{enumerate}
\end{definition}

\begin{definition} \cite[Definition 1.15]{Oh15E} For a given $[\jj_0]$-simple sequence $\us=(s_1,\ldots,s_{\N}) \in \Z^{\N}_{\ge 0}$,
we say that a sequence $\um \in \Z^{\N}_{\ge 0}$ is called a {\it $[\jj_0]$-minimal sequence of $\us$} if
\begin{enumerate}
\item[{\rm (i)}] $\us \prec^\tb_{\jj_0} \um$ and $\wt(\um)=\wt(\us)$,
\item[{\rm (ii)}] there exists no sequence $\um'\in \Z^{\N}_{\ge 0}$ such that it satisfies the conditions in {\rm (i)} and
$$  \us \prec^\tb_{[\jj_0]} \um' \prec^\tb_{[\jj_0]}  \um.$$
\end{enumerate}
\end{definition}

\begin{definition} \cite[Definition 4.8]{Oh15E} \label{def: gdist}
We say that a sequence $\um$ has {\it generalized $[\jj_0]$-distance $k$} $(k \in\Z_{\ge 0})$, denoted by $\gdist_{[\jj_0]}(\um)=k$, if $\um$ is {\it not} $[\jj_0]$-simple and
\begin{enumerate}
\item[{\rm (i)}] there exists a set of non $[\jj_0]$-simple sequences $\{ \um^{(s)} \ | \ 1 \le s \le k, \ \wt(\um^{(s)})=\wt(\um) \}$ such that
\begin{align}\label{eq: gdist seq}
\um^{(1)} \prec^\tb_{[\jj_0]} \cdots \prec^\tb_{[\jj_0]} \um^{(k)}=\um,
\end{align}
\item[{\rm (ii)}] the set of non $[\jj_0]$-simple sequences $\{ \um^{(s)} \}$ has maximal cardinality among sets of sequences satisfying \eqref{eq: gdist seq}.
\end{enumerate}
If $\um$ is $[\jj_0]$-simple, we define $\gdist_{[\jj_0]}(\um)=0$.
\end{definition}

\begin{definition}\cite[Definition 1.19]{Oh15E}
For a non-simple positive root $\gamma \in \PR \setminus \Pi$, the {\it $[\jj_0]$-radius} of $\gamma$, denoted by $\rds_{[\jj_0]}(\gamma)$,
is the integer defined as follows:
$$\rds_{[\jj_0]}(\gamma)=\max({\rm gdist}_{[\jj_0]}(\up) \ | \ \gamma  \prec_{[\jj_0]}^\tb \up \text{ and } \wt(\up)=\gamma).$$
\end{definition}

\begin{definition}\cite[Definition 1.21]{Oh15E} \label{def: jj_0-socle}
For a pair $\up$, the {\it $[\jj_0]$-socle} of $\up$, denoted by $\soc_{[\jj_0]}(\up)$, is a $[\jj_0]$-simple sequence $\us$ such that
$$\wt(\up)=\wt(\us) \quad \text{and} \quad \us \preceq^\tb_{[\jj_0]} \up,$$
if such $\us$ exists uniquely.
\end{definition}

Note that $\mathsf{m}(\gamma) =1$ for every positive root $\ga \in \PR$ of type $A_m$.

\begin{theorem}\cite[Theorem 4.15, Theorem 4.20]{Oh15E}\cite[Theorem 3.4]{Oh14A} \label{thm: known for Q} Let $Q$ be any Dynkin quiver of type $A_m$.
\begin{enumerate}
\item[{\rm (1)}] For any pair $(\al,\be) \in \PR$,  we have $$\gdist_{[Q]}(\al,\be) \le \max\{ \mathsf{m}(\gamma) \ | \ \ga \in \PR \}=1.$$
\item[{\rm (2)}] For any $\gamma \in \PR \setminus \Pi$, we have $$\rds_{[Q]}(\gamma)=\mathsf{m}(\gamma).$$
\item[{\rm (3)}] For any $\up$, $\soc_{[Q]}(\up)$ is well-defined.
\end{enumerate}
\end{theorem}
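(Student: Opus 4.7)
The plan is to leverage the coordinate system on $\Gamma_Q$ (guaranteed in type $A_m$ by Theorem \ref{thm: labeling GammaQ}) together with the convexity of $\prec_Q$ and the additive property \eqref{eq: addtive property} to control the structure of pairs. First, I would fix the labeling $\beta=[a,b]$ for positive roots and note that every decomposition $\gamma=\alpha+\beta'$ for $\gamma=[a,d]\in \PR\setminus\Pi$ is of the form $([a,k],[k+1,d])$ with $a\le k<d$; hence the set of pairs summing to $\gamma$ is naturally indexed by $k$, and $|\{\,(\alpha,\beta')\mid \alpha+\beta'=\gamma\,\}| = d-a$.

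Next, I would characterize the $[Q]$-minimal pairs of $\gamma$ directly from $\Gamma_Q$: by the additive property \eqref{eq: addtive property}, if $\Omega_Q(\gamma)=(i,p-2)$ and $\phi_Q^{-1}(\gamma)\in\PR$, then
\begin{equation*}
\gamma+\phi_Q^{-1}(\gamma)=\sum_{\Omega_Q(\delta)=(j,p-1)}\delta,
\end{equation*}
where $j$ ranges over neighbors of $i$ in $\Delta$. Since $\Delta$ is of type $A$, the right-hand side contains at most two summands, which pair up to form the unique candidate minimal decomposition of $\gamma$ (when $\gamma$ is not simple). Using convexity of $\prec_Q$ and the fact that $\alpha\prec_Q\gamma\prec_Q\beta'$ (or vice versa) holds for each decomposition, one identifies this pair as the one pinched between $\gamma$ and its adjacent vertices in $\Gamma_Q$. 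The bi-lexicographic order $\prec^\tb_{[Q]}$ is then seen to place this particular pair strictly below every other non-simple decomposition of $\gamma$, because moving $k$ away from the index of the minimal pair pushes either the first component later or the second component earlier against $\prec_{\jj_0}$ uniformly over $\jj_0\in[Q]$.

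For (1), given any non-simple pair $\up=([a,k],[k+1,d])$, I would exhibit the minimal pair $\us$ coming from the AR-quiver and check that $\us\prec^\tb_{[Q]}\up$ by reading residues of $\gamma,\alpha,\beta'$ off a convenient reduced word in $[Q]$; since only one move is available (the minimal pair is uniquely determined), any chain of non-simple sequences descending to $\up$ has length at most one, proving $\gdist_{[Q]}(\up)\le 1$. Part (2) follows immediately: for $\gamma\in\PR\setminus\Pi$, one exhibits a non-simple pair $\up$ of weight $\gamma$ with $\gdist_{[Q]}(\up)=1$, giving $\rds_{[Q]}(\gamma)\ge 1=\mathsf{m}(\gamma)$, while the reverse inequality is part (1). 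For (3), uniqueness of $\soc_{[Q]}(\up)$ follows from uniqueness of the minimal pair established above: any $[Q]$-simple sequence of the same weight must be the pair attached to the adjacent vertices of $\wt(\up)$ in $\Gamma_Q$.

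The main obstacle will be verifying cleanly that the candidate minimal pair identified from the AR-quiver is indeed $\prec^\tb_{[Q]}$-smaller than every other non-simple decomposition, uniformly over the commutation class. This requires a case analysis comparing positions of $[a,k]$, $[k+1,d]$ and $[a,d]$ inside $\Gamma_Q$ using convexity of $\prec_Q$, and checking that the $\tb$-order conditions (a)--(c) of Definition \ref{def: redezletb} are met when restricted to any reduced expression $\jj_0'\in[Q]$. Once this combinatorial comparison is in place, the three statements fall out together.
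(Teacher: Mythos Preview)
The paper does not prove this theorem; it is quoted from \cite{Oh15E} and \cite{Oh14A} as a known result. So there is no ``paper's proof'' to compare against directly, but the paper does record several consequences of those references that contradict the picture underlying your proposal.

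Your argument rests on identifying, for each $\gamma=[a,d]\in\PR\setminus\Pi$, a \emph{unique} $[Q]$-minimal pair (the one ``pinched'' next to $\gamma$ in $\Gamma_Q$ via the additive property) and then showing every other decomposition sits strictly above it under $\prec^\tb_{[Q]}$. This is not what happens in type $A$: as the paper records in the Introduction and uses in the proof of Lemma~\ref{lem: less than eq 2} (citing \cite[Theorem 3.4]{Oh14A} and \cite[Proposition 4.24]{Oh15E}), \emph{every} pair $(\alpha,\beta)$ with $\alpha+\beta=\gamma$ is $[Q]$-minimal, i.e.\ the pairs of weight $\gamma$ are mutually $\prec^\tb_{[Q]}$-incomparable. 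The only sequence strictly below each such pair is $\gamma$ itself (as the length-one sequence), which is $[Q]$-simple; that is why $\gdist_{[Q]}(\alpha,\beta)=1$ and why the socle is $\gamma$. Your use of the additive property \eqref{eq: addtive property} is also misapplied: that identity computes $\beta+\phi_Q(\beta)$ as a sum over neighbors, not a decomposition of a single root $\gamma$ into a distinguished pair.

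Separately, part~(1) and part~(3) concern \emph{arbitrary} pairs $\up=(\alpha,\beta)$, including those with $\alpha+\beta\notin\PR$. Your proposal addresses only the case $\alpha+\beta\in\PR$. For the complementary case the relevant mechanism is the rectangle picture of \cite[Proposition 4.5]{Oh15E} (quoted in the paper just after Theorem~\ref{thm: dist upper bound Qd}): when $\gdist_{[Q]}(\alpha,\beta)=1$ and $\alpha+\beta\notin\PR$, the socle is the unique pair $(\gamma,\eta)$ at the opposite corners of the rectangle with $\alpha$ at one corner and $\beta$ at the other, and there is no further pair $\prec^\tb_{[Q]}$-below $(\alpha,\beta)$. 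Without this second ingredient, neither (1) nor (3) is established.
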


\subsection{Distance and radius with respect to $[\ii_0] \in \lf \Qd \rf$} \label{subsec: dis radi} In this section we will prove the following theorem:
\begin{theorem} \label{thm: dist upper bound Qd} Take any $[\ii_0] \in \lf\Qd\rf$ of type $A_{2n+1}$.
\begin{enumerate}
\item[{\rm (1)}] For any pair $(\al,\be) \in \PR$, we have $$\gdist_{[\ii_0]}(\al,\be) \le \max\{ \overline{\mathsf{m}}(\gamma) \ | \ \ga \in \PR \}=2.$$
\item[{\rm (2)}] For any $\gamma \in \PR \setminus \Pi$, we have  $$\rds_{[\ii_0]}(\gamma)\le 2.$$ In particular,
$\rds_{[\ii_0]}(\gamma)= 2$ implies that $\overline{\mathsf{m}}(\ga)=2$.
\item[{\rm (3)}] For any $\up$, $\soc_{[\ii_0]}(\up)$ is well-defined.
\end{enumerate}
\end{theorem}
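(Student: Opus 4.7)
The plan is to prove all three parts together by exploiting the surgery correspondence $\PPi:\lf\Qd\rf\to\lf Q\rf$ together with the twisted additive property (Theorem \ref{thm:twisted additive}) and the known bounds for adapted classes (Theorem \ref{thm: known for Q}). The guiding observation is that the only ``decreasing moves'' under $\prec^\tb_{[\ii_0]}$ on a pair $\up=(\al,\be)$ with $\wt(\up)=\ga$ come from either (i) a rectangle relation $\al+\be=\al'+\be'$ coming from Corollary \ref{cor: big mesh}, or (ii) a triangle relation $\al+\be=\ga$ coming from Corollary \ref{cor: triangle mesh}. Each such move is controlled by the labels in $\Upsilon_{[\ii_0]}$, which are themselves controlled by $\Gamma_Q$ via the surgery of Theorem \ref{thm: surgery}\,(3) and Corollary \ref{cor:label1}.

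First I would treat the \emph{totally induced} case, namely when both $\al$ and $\be$ correspond to vertices in $\Gamma_Q\cap\Upsilon_{[\ii_0]}$ (so $\al_{n+1}\notin {\rm supp}(\al)\cup{\rm supp}(\be)$). By Corollary \ref{cor:label1} and Algorithm \ref{Alg: surgery}, rectangle-type decreases among such pairs in $\Upsilon_{[\ii_0]}$ are in bijection with rectangle-type decreases among the pre-images $(\PP(\al),\PP(\be))$ in $\Gamma_Q$, and Theorem \ref{thm: known for Q}\,(1) applied to $[Q]$ forces at most one such step; hence $\gdist_{[\ii_0]}(\al,\be)\le 1$ in this regime. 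Next I would treat the case where at least one of $\al,\be$ is central or where the socle is supported on a central vertex. Here the twisted additive property \eqref{eq: twisted addtive property} produces the only new phenomenon: a triangle relation can turn a pair of non-induced central roots into a single induced central root (and conversely a rectangle-then-triangle chain can produce a two-step decrease). Using the classification of the four local pictures \eqref{eq: four situ} together with Corollary \ref{cor: label for non-induced} and Corollary \ref{cor: supports for induced central}, I would enumerate each possible local shape near the central axis and verify that any $\prec^\tb_{[\ii_0]}$-decreasing chain starting at $\up$ has at most two steps, giving the bound $\gdist_{[\ii_0]}(\al,\be)\le 2$ in part (1). The equality $\max\{\overline{\mathsf{m}}(\ga)\mid\ga\in\PR\}=2$ for type $A_{2n+1}$ under the involution \eqref{eq: B_n} is immediate from Definition \ref{def: folded multiplicity}.

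Part (2) then follows: the bound $\rds_{[\ii_0]}(\gamma)\le 2$ is an immediate consequence of (1) since the radius is computed as the maximal $\gdist$ over pairs of weight $\gamma$. For the ``in particular'' clause, a chain of length exactly $2$ can occur only when some intermediate pair consists of non-induced central vertices that sit at the extremes of two maximal sectional paths of length $\ge n+1$; by Corollary \ref{cor: supports for induced central} and Corollary \ref{cor: label for non-induced}, this forces the orbit $\{n,n+2\}$ (or $\{n+1\}$ with multiplicity $2$, depending on the case of \eqref{eq: four situ}) to be fully contained in ${\rm supp}(\ga)$, which translates precisely into $\overline{\mathsf{m}}(\ga)=2$. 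For part (3), uniqueness of $\soc_{[\ii_0]}(\up)$ will follow from a direct case analysis: if $\up$ admits two incomparable $[\ii_0]$-simple sequences of the same weight, then tracing back through rectangle and triangle relations one produces a forbidden configuration among the labels dictated by Theorem \ref{them: comp for length k ge 0}, contradicting Theorem \ref{thm: known for Q}\,(3) after projection by $\PPi$.

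The hard part will be the second step of the above plan, i.e.\ controlling chains of length two that genuinely involve the central residue $n+1$. Because a single central vertex $\ga$ lies at the intersection of two long sectional paths, there are several different candidate decompositions $\ga=\al+\be=\al'+\be'$, some arising from rectangles entirely inside $\Upsilon_{[\ii_0]}\setminus\Gamma_Q$ and some mixing induced and non-induced vertices; I expect the most delicate bookkeeping to be showing that in every such local configuration, no third strict $\prec^\tb_{[\ii_0]}$-decrease is possible, so that the upper bound of $2$ is sharp and the socle is uniquely determined.
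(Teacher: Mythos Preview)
Your overall strategy---reduce to $\Gamma_Q$ via the surgery and then handle the central region separately---matches the paper's. However, your first step contains a genuine error.

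You claim that when both $\al$ and $\be$ lie in $\Gamma_Q\cap\Upsilon_{[\ii_0]}$ (your ``totally induced'' case), rectangle decreases are in bijection with those in $\Gamma_Q$ and hence $\gdist_{[\ii_0]}(\al,\be)\le 1$. This is false. The paper's Proposition \ref{prop: less than eq to 2} exhibits induced pairs $(\al,\be)$ with $\al+\be\notin\PR$ and $\gdist_{[\ii_0]}(\al,\be)=2$: when the rectangle spanned by $\al,\be$ in $\Upsilon_{[\ii_0]}$ carries two non-induced vertices $\mu,\nu$ on adjacent sides with $\mu+\nu\in\PR$ (case (iii) there), one obtains a length-two chain $(\ga,\eta)\prec^\tb_{[\ii_0]}(\eta,\mu,\nu)\prec^\tb_{[\ii_0]}(\al,\be)$ via a \emph{triple}, not a pair. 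Concretely, in Example \ref{ex: label} one has
\[
([1,7],[2,5])\prec^\tb_{[\ii_0]}([4,7],[1,3],[2,5])\prec^\tb_{[\ii_0]}([2,7],[1,5]),
\]
with $[2,7]$ and $[1,5]$ both induced. Your bijection argument breaks because intermediate sequences in a $\gdist$ chain need not be pairs and need not consist of induced vertices, so projecting to $\Gamma_Q$ loses information. (Your parenthetical ``so $\al_{n+1}\notin{\rm supp}(\al)\cup{\rm supp}(\be)$'' is also incorrect: induced \emph{central} vertices do carry $\al_{n+1}$ in their support by Corollary \ref{cor:label1}, so this is not a characterization of $\Gamma_Q\cap\Upsilon_{[\ii_0]}$.)

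The paper avoids this pitfall by reversing your order. It first proves (2) directly via three short lemmas, splitting on whether $\gamma$ is non-central, induced central, or non-induced central; this immediately yields (1) and (3) in the case $\al+\be\in\PR$. Only then does it treat $\al+\be\notin\PR$ (Proposition \ref{prop: less than eq to 2}), where a careful case analysis on how the rectangle meets the $(n{+}1)$-residue---including the appearance of triples---produces the bound $\le 2$ and simultaneously identifies the unique socle. Your sketch would need to absorb exactly this analysis to be correct; the ``clean reduction to $\Gamma_Q$'' you hoped for does not exist for induced pairs whose rectangle straddles the central axis.
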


\begin{lemma}
For a non-simple root $\gamma$ corresponding to non-central vertex in $\Upsilon_{[\ii_0]}$, we have
$$\rds_{[\ii_0]}(\gamma)=\overline{\mathsf{m}}(\ga)=1.$$
\end{lemma}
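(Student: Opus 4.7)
The plan is to reduce to the known result for the adapted cluster point of type $A_{2n}$ via the projection $\PPi : \lf \Qd \rf \to \lf Q \rf$ and Theorem~\ref{thm: known for Q}.

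First I would pin down the shape of $\gamma$. Since non-induced vertices are central by Definition~\ref{def: central}(a)(ii), any non-central vertex is automatically induced, and by Corollary~\ref{cor:label1} its label has the form $\gamma = \sum_{i=a}^{b} \alpha_{\iota^+(i)}$ for some $1 \le a \le b \le 2n$, with no additional $\alpha_{n+1}$ summand. Since $\iota^+$ omits the index $n+1$, the coefficient of $\alpha_{n+1}$ in $\gamma$ is zero, so for $\gamma$ to be a positive root of $A_{2n+1}$ its support must be contained in $\{\alpha_1, \ldots, \alpha_n\}$ or in $\{\alpha_{n+2}, \ldots, \alpha_{2n+1}\}$. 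In either case each coefficient of $\gamma$ is $0$ or $1$, and since every $\vee$-orbit $\{i, 2n+2-i\}$ contains exactly one index in $\{1,\ldots,n\}$ and one in $\{n+2,\ldots,2n+1\}$, the support of $\gamma$ meets each orbit in at most one index; thus $\overline{\mathsf{m}}(\gamma) = 1$. Assume below that the support lies in $\{\alpha_1,\ldots,\alpha_n\}$; the other case is completely symmetric.

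Next, I would exploit that any sequence $\um$ with $\wt(\um) = \gamma$ must consist of positive roots whose support is also contained in $\{\alpha_1, \ldots, \alpha_n\}$, and hence corresponds (via $\alpha_i \leftrightarrow \alpha_i$ for $i \le n$) to a sequence of positive roots in $\Gamma_Q$ of type $A_{2n}$, where $[Q] = \PPi([\ii_0])$. The key claim is that, restricted to such sequences, $\prec^\tb_{[\ii_0]}$ coincides with $\prec^\tb_{[Q]}$ on the corresponding sequences. This should follow from the observation that $\PP$ merely erases the letters $n+1$, so the relative positions of letters with residues $\le n$ are preserved in any $\jj_0 \in [\ii_0]$; conversely, by the lifting construction in the proof of Proposition~\ref{Prop:5.7_1103}, every $\jj \in [Q]$ arises as $\PP(\jj_0)$ for some $\jj_0 \in [\ii_0]$, so quantifying over all $\jj_0 \in [\ii_0]$ is equivalent to quantifying over all $\jj \in [Q]$.

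Granting the order correspondence, Theorem~\ref{thm: known for Q}(1) applied in type $A_{2n}$ yields $\gdist_{[Q]}(\al', \be') \le 1$ for every pair, which transfers to $\gdist_{[\ii_0]}(\al, \be) \le 1$ for every pair decomposing $\gamma$, so $\rds_{[\ii_0]}(\gamma) \le 1$. Since $\gamma$ is non-simple, convexity of $\prec_{[\ii_0]}$ produces at least one non-simple pair $\up$ with $(\gamma) \prec^\tb_{[\ii_0]} \up$, so the bound is sharp and $\rds_{[\ii_0]}(\gamma) = 1$. The main obstacle is the order-correspondence claim in the middle step: verifying both directions rigorously requires a careful check using the surgery description of Theorem~\ref{thm: surgery} together with the commutation relations between $s_{n+1}$ and the distant simple reflections.
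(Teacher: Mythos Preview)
Your approach matches the paper's: both reduce to Theorem~\ref{thm: known for Q} for $A_{2n}$ via the correspondence between $\Upsilon_{[\ii_0]}$ and $\Gamma_Q$. The paper argues at the quiver level, observing (Lemma~\ref{lem: comp for length k le n}, Theorem~\ref{them: comp for length k ge 0}) that a non-central $\gamma$ and all summands of any sequence of weight $\gamma$ lie in $\uUp^{\rm NE}_{[\ii_0]}\sqcup\uUp^{\rm SW}_{[\ii_0]}$ (or the opposite pair), and then invoking the surgery of Remark~\ref{Rem:surgery} and Corollary~\ref{cor:label1}; since those corner regions embed isomorphically into $\Gamma_Q$ with identical labels, the order transfer is immediate.

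One caveat for your word-level route through $\PP$: your order argument tracks ``relative positions of letters with residues $\le n$'', but not every root with support in $\{\alpha_1,\dots,\alpha_n\}$ sits at such a residue in $\Upsilon_{[\ii_0]}$---in cases (1) and (4) of~\eqref{eq: four situ} the roots $[a,n]$ are non-induced, live on residue $n+1$, and are erased by $\PP$. This does not actually bite here, because $\gamma$ being non-central forces its second component below that threshold (this is exactly what Lemma~\ref{lem: comp for length k le n} gives for the corner regions), so no summand of $\gamma$ is of that problematic form; but you should make this step explicit rather than leave it inside the ``main obstacle'' you flag at the end.
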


\begin{proof}
Let us assume that $\ga \in \uUp^{{\rm NE}}_{[\ii_0]} \sqcup \uUp^{{\rm SW}}_{[\ii_0]}$. Then, by Lemma \ref{lem: comp for length k le n} and Theorem \ref{them: comp for length k ge 0},
every nonzero component of a  sequence $\um$ with $\wt(\um)=\gamma$ should appear in $\uUp^{{\rm NE}}_{[\ii_0]} \sqcup \uUp^{{\rm SW}}_{[\ii_0]}$. Hence our assertion immediately follows from
Remark \ref{Rem:surgery}, Lemma \ref{cor:label1} and Theorem \ref{thm: known for Q}. We can prove for
$\ga \in \uUp^{{\rm SE}}_{[\ii_0]} \sqcup \uUp^{{\rm NW}}_{[\ii_0]}$ in the similar way.
\end{proof}

\begin{lemma} \label{lem: less than eq 2}
For any $\gamma \in \PR \setminus \Pi$ corresponding to an induced central vertex, $$\rds_{[\ii_0]}(\gamma)\le \overline{\mathsf{m}}(\ga).$$
\end{lemma}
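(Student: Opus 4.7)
Let $\gamma$ correspond to an induced central vertex of $\Upsilon_{[\ii_0]}$, where $\PPi([\ii_0])=[Q]$. By Corollary \ref{cor: supports for induced central} and Corollary \ref{cor:label1}, the root $\gamma$ has $\alpha_{n+1} \in {\rm supp}(\gamma)$ and additionally contains at least one of $\alpha_n$ or $\alpha_{n+2}$ (both simultaneously exactly when $\overline{\mathsf{m}}(\gamma)=2$). Write $m := \overline{\mathsf{m}}(\gamma) \in \{1,2\}$. I will show $\gdist_{[\ii_0]}(\up) \le m$ for every pair $\up=(\al,\be)$ with $\al+\be=\gamma$; taking the maximum over such pairs gives $\rds_{[\ii_0]}(\gamma) \le m$.

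The strategy is to compare chains in $\Upsilon_{[\ii_0]}$ with chains in $\Gamma_Q$ via the projection $\PP$ of Proposition \ref{Prop:3_1102}. Given a chain
\[
\up^{(1)} \prec^\tb_{[\ii_0]} \up^{(2)} \prec^\tb_{[\ii_0]} \cdots \prec^\tb_{[\ii_0]} \up^{(k)}
\]
of non-simple pairs of common weight $\gamma$, I would classify each transition $\up^{(s)} \to \up^{(s+1)}$ as either \emph{unfolded} (if it does not permute components across the $\vee$-orbit $\{n,n+2\}$) or \emph{folded} (if it does). Unfolded transitions descend under $\PP$ to genuine $\prec^\tb_{[Q]}$-decreases in a chain of non-simple pairs of weight $\PPi(\gamma) \in \Phi^+_{A_{2n}}$, so by Theorem \ref{thm: known for Q}(1)-(2) their total number is bounded by $\mathsf{m}(\PPi(\gamma))=1$. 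Folded transitions can only occur when both $\alpha_n$ and $\alpha_{n+2}$ lie in ${\rm supp}(\gamma)$, i.e.\ when $m=2$; and a case-by-case inspection using Algorithm \ref{Alg: surgery}, Theorem \ref{thm:twisted additive}, and the mesh descriptions of Corollaries \ref{cor: big mesh} and \ref{cor: triangle mesh} should show that at most one folded transition is compatible with the strict decrease required by Definition \ref{def: redezletb}. Consequently $k \le 1$ when $m=1$ and $k \le 2$ when $m=2$, i.e.\ $k \le m$.

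The main obstacle is to verify the compatibility of $\PP$ with $\prec^\tb$ on unfolded transitions, and to rule out two consecutive folded swaps in the same chain. The latter relies on the uniqueness of induced central vertices of a given label from Theorem \ref{them: comp for length k ge 0}, together with the rigid rectangle/triangle mesh structure forced by the twisted additive property: a second folded swap would reintroduce positions that were already passed by the first one, contradicting the strict lexicographic decrease. The four configurations of \eqref{eq: four situ} need to be treated separately, but in each the argument is driven by the same labeling combinatorics encoded in Section 6.2.
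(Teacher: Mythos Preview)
Your plan takes a substantially more indirect route than the paper, and the two genuine obstacles you flag are exactly where the argument would break down if left at this level of detail.

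The paper's proof does \emph{not} project chains to $\Gamma_Q$. Instead it classifies the pairs $(\alpha,\beta)$ with $\alpha+\beta=\gamma$ directly: by Corollary~\ref{cor: label for non-induced} and Corollary~\ref{cor: supports for induced central} there is a \emph{unique} pair $(\alpha^\star,\beta^\star)$ consisting of non-induced central vertices; every other pair consists of induced vertices (Remark~\ref{Rem:surgery}, Corollary~\ref{cor:label1}). The induced pairs are pairwise incomparable under $\prec^\tb_{[\ii_0]}$, because under the surgery they correspond to pairs for $\PP(\gamma)$ in $\Gamma_Q$ and Theorem~\ref{thm: known for Q} forces $\gdist_{[Q]}=1$ there. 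Finally, whenever $(\alpha^\star,\beta^\star)$ is comparable to an induced pair it is the smaller one (by \cite[Theorem~3.2]{Oh14A}). Hence any chain of non-simple sequences of weight $\gamma$ ending at a pair contains at most one induced pair above at most the single non-induced pair, giving length $\le 2$; and when $\overline{\mathsf{m}}(\gamma)=1$ the non-induced pair either does not exist or is already minimal, giving length $\le 1$.

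Compared with this, your proposed dichotomy of ``folded'' versus ``unfolded'' transitions is not well-defined as stated (a transition $\up^{(s)}\to\up^{(s+1)}$ in $\prec^\tb$ is not a swap of components, so ``permuting across the orbit $\{n,n+2\}$'' needs a precise formulation), and the claimed compatibility of $\PP$ with $\prec^\tb$ is exactly what you would need to prove rather than assume. Even granting those points, bounding the number of ``folded'' transitions by a case analysis over~\eqref{eq: four situ} duplicates work that the paper avoids entirely by invoking the uniqueness of the non-induced pair. I would recommend abandoning the projection approach here and using the paper's direct classification of pairs; the projection idea is closer in spirit to what is needed later for Proposition~\ref{prop: less than eq to 2}, where $\alpha+\beta\notin\Phi^+$ and one really does compare rectangles in $\Upsilon_{[\ii_0]}$ and $\Gamma_Q$.
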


\begin{proof}
By Corollary \ref{cor: label for non-induced} and Corollary \ref{cor: supports for induced central},
there exists a unique pair $(\al^\star,\be^\star)$ corresponding to non-induced central vertices such that $\al^\star+\be^\star=\ga$.
Also, by Remark \ref{Rem:surgery}, Lemma \ref{cor:label1} and \cite[Proposition 4.24]{Oh15E}, other pairs $(\al,\be)$ such that $\al+\be=\ga$
corresponds to induced vertices. Moreover, the pairs of induced vertices are not comparable with respect to $\prec^\tb_{[\ii_0]}$ by Theorem \ref{thm: known for Q}.
Thus our assertion follows from the fact that sometimes $(\al^\star,\be^\star)$ is comparable with a pair which consisting of
induced vertices. In Example \ref{ex: label}, we can see
$$  [3,5] \prec^{\tb}_{[\ii_0]} (\al^\star,\be^\star)=([4,5],[3]) \prec^{\tb}_{[\ii_0]} ([5],[3,4]).$$
By \cite[Theorem 3.2]{Oh14A}, the  non-induced vertices pair of weight $\gamma$ is less than other induced vertices pairs of weight
$\gamma$ with respect to $\prec^{\tb}_{[\ii_0]}$ whenever they are comparable.
Note that there exists a unique induced central vertex which is a folded multiplicity free $\ga$ of height $2$. In that case, $\rds_{[\ii_0]}(\ga)=1$.
\end{proof}

\begin{lemma}
For any $\gamma \in \PR \setminus \Pi$ corresponding to a non-induced central vertex, $$\rds_{[\ii_0]}(\gamma)=\overline{\mathsf{m}}(\ga)=1.$$
\end{lemma}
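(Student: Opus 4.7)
The plan is to establish $\overline{\mathsf{m}}(\gamma) = 1$ and $\rds_{[\ii_0]}(\gamma) = 1$ separately, with the second claim reduced to the known type $A_n$ case of Theorem \ref{thm: known for Q}.

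For the first claim, I invoke Corollary \ref{cor: label for non-induced}: a non-induced central vertex $\gamma$ has label $[a,n]$ or $[n+1,b]$ in cases (1)/(4) of \eqref{eq: four situ}, and $[a,n+1]$ or $[n+2,b]$ in cases (2)/(3). In every case $\mathrm{supp}(\gamma)$ lies entirely in one of two $(n+1)$-element subsets $J^L, J^R \subset I_{2n+1}$, each of which meets every $\vee$-orbit $\bar i = \{i, 2n+2-i\}$ in at most one index. Hence $\overline{\mathsf{m}}(\gamma) = 1$ directly from Definition \ref{def: folded multiplicity}.

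For the upper bound $\rds_{[\ii_0]}(\gamma) \le 1$, let $J \in \{J^L, J^R\}$ be the half containing $\mathrm{supp}(\gamma)$. Every positive root component of any sequence $\um$ with $\wt(\um) = \gamma$ lies inside $\mathrm{supp}(\gamma) \subset J$, hence inside the parabolic subsystem $\Phi^+_J$ of type $A_n$. The restriction of the convex total order $<_{\ii_0}$ to $\Phi^+_J$ is again a convex total order, so by the standard bijection between convex total orders on a finite root system and reduced expressions of its longest element, there is a reduced expression $\jj^L$ of the longest element of $W_J \cong W(A_n)$ whose induced convex order is exactly the restriction of $<_{\ii_0}$. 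Since this bijection is compatible with commutations, it descends to the identity $\prec_{[\ii_0]}\bigl|_{\Phi^+_J} = \prec_{[\jj^L]}$; and because $\prec^\tb$ is determined by the pattern of relative positions of the nonzero components of a sequence under every reduced expression, the analogous restriction identity $\prec^\tb_{[\ii_0]}\bigl|_{\Phi^+_J} = \prec^\tb_{[\jj^L]}$ also holds.

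Once this identification is in place, the notions of $[\ii_0]$-simplicity, $[\ii_0]$-$\gdist$, and $\rds_{[\ii_0]}$ for weight-$\gamma$ sequences coincide with the corresponding $[\jj^L]$-notions. Applying Theorem \ref{thm: known for Q}(2) to $[\jj^L]$ of type $A_n$ gives $\rds_{[\jj^L]}(\gamma) = \mathsf{m}(\gamma) = 1$, so $\rds_{[\ii_0]}(\gamma) \le 1$. Since $\gamma \in \PR \setminus \Pi$ admits some pair decomposition $\up = (\alpha, \beta)$ and convexity gives $(\gamma) \prec^\tb_{[\ii_0]} \up$ with matching weight, $\up$ is not $[\ii_0]$-simple and $\gdist_{[\ii_0]}(\up) \ge 1$; hence $\rds_{[\ii_0]}(\gamma) = 1$. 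The main obstacle will be justifying the identification $\prec^\tb_{[\ii_0]}\bigl|_{\Phi^+_J} = \prec^\tb_{[\jj^L]}$: the naive restriction $\ii_0|_J$ need not be reduced (for instance $\ii_0|_{\{1,2\}} = 121212$ for the $n=2$ reduced word of Example \ref{ex: Coxeter composition}(2)), so the passage $[\ii_0] \mapsto [\jj^L]$ must be defined through the induced convex total order rather than through the word, and one has to verify that this parabolic restriction is compatible as $\ii_0'$ ranges over $[\ii_0]$.
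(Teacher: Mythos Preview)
Your parabolic-restriction strategy has two gaps, one that you flagged and one that you did not.

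The unflagged gap: Theorem~\ref{thm: known for Q} is stated only for \emph{adapted} classes $[Q]$, and its conclusion can genuinely fail for non-adapted classes even in type~$A$. In $A_3$ the word $212321$ is reduced for $w_0$, forms a singleton commutation class (no adjacent letters commute), and is not adapted to any quiver; its root sequence is $\alpha_2,\,[1,2],\,\alpha_1,\,[1,3],\,[2,3],\,\alpha_3$, and for $\gamma=[1,3]$ the pairs $(\alpha_1,[2,3])$ at positions $(3,5)$ and $([1,2],\alpha_3)$ at positions $(2,6)$ satisfy $(\alpha_1,[2,3]) \prec^\tb ([1,2],\alpha_3)$, giving $\rds([1,3])\ge 2$. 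So before invoking the theorem you must show that $[\jj^L]$ is adapted, and you have not.

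The flagged gap is sharper than you indicate. Commutations in $[\ii_0]$ restrict to commutations in $W_J$, so the restriction map lands in $[\jj^L]$; but this yields only the inclusion $\prec^\tb_{[\jj^L]} \subseteq \prec^\tb_{[\ii_0]}\big|_{\Phi^+_J}$, meaning $[\ii_0]$ may have \emph{more} comparable sequences on $\Phi^+_J$ than $[\jj^L]$ does. That produces $\rds_{[\ii_0]}(\gamma) \ge \rds_{[\jj^L]}(\gamma)$, which is the wrong inequality for your upper bound. Equality requires that every $\jj'\in[\jj^L]$ arise as the restriction of some $\ii_0'\in[\ii_0]$.

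The paper avoids both issues by working with the adapted $A_{2n}$ class $[Q]=\PPi([\ii_0])$ rather than a rank-$n$ parabolic. It argues by contradiction: if two pairs $\{[a,c-1],[c,n]\}$ and $\{[a,d-1],[d,n]\}$ of $\gamma=[a,n]$ were $\prec^\tb_{[\ii_0]}$-comparable, it replaces each non-induced vertex $[k,n]$ by its neighbouring induced vertex $V_k$ at residue $n+2$ (same first component), obtaining two $\prec^\tb_{[Q]}$-comparable pairs of induced vertices with a common weight in $\Phi^+_{A_{2n}}$; this contradicts Theorem~\ref{thm: known for Q}(2) for $[Q]$, which is adapted by construction.
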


\begin{proof}
Let us assume that $[\ii_0]$ is the case $(1)$ in \eqref{eq: four situ}. Then $\gamma$ is $[a,n]$ or $[n+1,b]$. We assume further that
$\gamma=[a,n]$. Then every pair for $\ga$ is of the form $\{ [a,b-1],[b,n] \}$. Without loss of generality,  we assume that the pair consisting of $\{ [a,c-1],[c,n] \}$ is less than the pair consisting of  $\{ [a,d-1],[d,n] \}$ with respect to $ \prec^\tb_{[\ii_0]} $. Then there is a path between $[c,n]$ and $[d,n]$.

 Suppose  $[d,n] \prec_{[\ii_0]}[c,n]$. Then there is a path from $[c,n]$ to $[d,n]$. We can take a path goes through two special vertices : (i) the nearest vertex $V_d=[d,*]$ from $[d,n]$ which lies in the $n+2$-th layer, (ii) $V_c=[c,*]$. (i.e. the path $[c,n] \to V_c \to V_d \to [d,n]$.) We know $[d,n] \prec_{[\ii_0]}  [a,c-1]$ so that $V_d \prec_{[\ii_0]} [a,c-1], V_c$. Also, we have $[a, c-1], V_c \prec_{[\ii_0]} [a,d-1]$ by the fact that $V_c \prec_{[\ii_0]} [c,n]$.  Hence the following facts hold:  (i) the pair with $\{[a,c-1], V_c\}$ is less than the pair with $\{[a,d-1],V_d\}$, (ii) the two pairs have the same weight $\gamma\in \Phi^+\backslash \Pi$, (iii) every vertices in both pairs are induced vertices. However, it contradicts to Theorem \ref{thm: known for Q} (2).

 Also, when there is a path from $[d,n]$ to $[c,n]$, we can prove by similar arguments.
 \end{proof}

\begin{proof} [{\bf The first step for  Theorem \ref{thm: dist upper bound Qd}}]
From the above three lemmas, the second assertion of Theorem \ref{thm: dist upper bound Qd} follows. Furthermore, the first and the third assertions
for $\al+\be \in \PR$ also hold.
\end{proof}

\begin{proposition} \cite[Proposition 4.5]{Oh15E}
For a Dynkin quiver $Q$ of type $A_m$ and $(\al,\be)$ with $\al+\be \not\in \PR$ and $\gdist_{[Q]}(\al,\be)=1$, there exists a unique rectangle in $\Gamma_Q$
given as follows:
\begin{align}\label{eq:rectangle2}
\rectangle
\end{align}
where $\al+\be=\ga+\eta$ and $(\ga,\eta) \prec^\tb_{Q} (\al,\be)$. Furthermore,
\begin{enumerate}
\item[{\rm (a)}] there is no pair $(\al',\be') \ne (\ga,\eta)$ such that
$(\al',\be') \prec^\tb_{Q} (\al,\be)$ and $\al+\be=\al'+\be'$,
\item[{\rm (b)}] ${\rm supp}(\al) \cap {\rm supp}(\be) \ne \emptyset$.
\end{enumerate}
\end{proposition}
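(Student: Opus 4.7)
The plan is to exploit the interval combinatorics of positive roots in type $A_m$, where every $\ga \in \PR$ has the form $[a,b] = \sum_{k=a}^{b} \alpha_k$ for unique $1 \le a \le b \le m$. Writing $\al = [a_1, b_1]$ and $\be = [a_2, b_2]$ with $a_1 \le a_2$, I would first establish property (b) by contrapositive. If ${\rm supp}(\al) \cap {\rm supp}(\be) = \emptyset$, then since $\al+\be \notin \PR$ rules out the adjacency $b_1+1 = a_2$, the supports are genuinely disconnected; because every positive root of $A_m$ has connected support and every simple-root coefficient of $\al+\be$ equals $1$, any sequence $\um$ with $\wt(\um) = \al+\be$ must consist of roots supported in one of the two components, and the only such decomposition is $\um = (\al, \be)$. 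This forces $(\al,\be)$ to be $[Q]$-simple, contradicting $\gdist_{[Q]}(\al,\be) = 1$.

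With the overlap established, I would classify the alternative pair. A direct coefficient-matching argument on $\al+\be = \alpha'+\beta'$, tracking the multiplicity-$2$ stretch $[a_2, \min(b_1,b_2)]$ and the multiplicity-$1$ end regions, eliminates the boundary cases $a_1 = a_2$ or $b_1 = b_2$ (which admit only the original pair and hence would force $\gdist_{[Q]} = 0$) and shows that under $a_1 < a_2$, $b_1 \ne b_2$ the only positive-root pair other than $\{\al,\be\}$ summing to $\al+\be$ is $\{\ga,\eta\}$ with $\ga = [a_1,b_2]$ and $\eta = [a_2,b_1]$. This uniqueness of the alternative pair yields property (a).

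To produce the rectangle in $\Gamma_Q$ and confirm $(\ga,\eta) \prec^\tb_{[Q]} (\al,\be)$, I would invoke Theorem \ref{thm: labeling GammaQ}: $\al$ and $\ga$ share first component $a_1$ and hence both lie on the unique maximal $N$-sectional path of length $m - a_1$; similarly $\be,\eta$ share first component $a_2$, while $\al,\eta$ share second component $b_1$ and $\ga,\be$ share second component $b_2$. This places the four vertices at the corners of a rectangle in $\Gamma_Q$, with $\al,\be$ lying on one diagonal and $\ga,\eta$ on the other, and the additivity identity \eqref{eq: addtive property} closes the arrow pattern, producing two length-two paths between $\al$ and $\be$ passing through $\ga$ and $\eta$ respectively. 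Hence in any convex total order $<_{\ii_0}$ ($\ii_0 \in [Q]$), the roots $\ga$ and $\eta$ both sit strictly between $\al$ and $\be$. Taking $k$ and $s$ to be the positions of the earlier and later of $\{\al,\be\}$ in $\ii_0$ inside Definition \ref{def: redezletb} yields $(\ga,\eta) <^\tb_{\ii_0} (\al,\be)$ uniformly in $\ii_0 \in [Q]$, i.e., $(\ga,\eta) \prec^\tb_{[Q]} (\al,\be)$.

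The main obstacle lies in this third step: verifying the full rectangular arrow pattern of $\Gamma_Q$ uniformly across all orientations of $Q$. While the four corners are pinned down by the sectional-path labeling, one must still trace the sides, namely maximal sectional subpaths of length $|b_2-b_1|$ and $|a_2-a_1|$, and confirm via the coordinate algorithm that $\Omega_Q$ assigns consecutive coordinates along them so that no intervening vertex disrupts the rectangle. This reduces to a direct but careful application of the additivity identity, but the bookkeeping is the most intricate part of the argument.
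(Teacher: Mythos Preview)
The paper does not supply a proof of this proposition; it is imported verbatim from \cite[Proposition~4.5]{Oh15E} and used as a black box in the proof of Proposition~\ref{prop: less than eq to 2}. There is therefore no in-paper argument to compare yours against.

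That said, your strategy is the natural one in type $A_m$ and is essentially what the cited reference does: exploit the interval form $[a,b]$ of positive roots, reduce (b) to the observation that a multiplicity-one element of the root lattice with disconnected support has a unique decomposition into positive roots, and for (a) classify all pairs $\{\al',\be'\}$ of intervals summing to $\al+\be$ by matching the $1$--$2$--$1$ coefficient pattern. The rectangle then comes from the sectional-path labeling (your invocation of Theorem~\ref{thm: labeling GammaQ} is exactly right): $\al,\ga$ share their first component and hence lie on a common maximal $N$-path, while $\al,\eta$ share their second component and lie on a common maximal $S$-path, and symmetrically for $\be$; the hypothesis $\gdist_{[Q]}(\al,\be)=1$ forces $\al,\be$ to be comparable, which pins them to opposite corners with $\ga,\eta$ strictly between, yielding $(\ga,\eta)\prec^\tb_{[Q]}(\al,\be)$.

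One point you should tighten: in step~2 you only exclude alternative \emph{pairs}, but the definition of $[Q]$-simplicity and hence of $\gdist$ quantifies over arbitrary sequences $\um$ with $\wt(\um)=\al+\be$. In type $A$ any such $\um$ is a refinement of either $(\al,\be)$ or $(\ga,\eta)$ obtained by splitting one of the intervals, and convexity then shows that such a refinement sits \emph{above} the pair it refines in $\prec^\tb$, not below $(\al,\be)$. This closes the gap, but you should say it explicitly rather than leave the reduction to pairs implicit.
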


\begin{proposition} \label{prop: less than eq to 2}
For any pair  $(\al,\be)$ of type $A_{2n+1}$ such that $\al+\be \not\in \PR$,
$$\gdist_{[\ii_0]}(\al,\be) \le 2.$$
\end{proposition}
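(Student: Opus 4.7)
The plan is to combine the bound $\gdist_{[Q]}(\al,\be) \le 1$ from Theorem \ref{thm: known for Q} for the associated adapted class $[Q]=\PPi([\ii_0])$ of type $A_{2n}$, with a single ``extra'' step coming from the insertion of the $(n+1)$-th residue in the surgery of Theorem \ref{thm: surgery}. More precisely, given a chain of non-$[\ii_0]$-simple pairs
\[
\up^{(0)} \prec^\tb_{[\ii_0]} \up^{(1)} \prec^\tb_{[\ii_0]} \cdots \prec^\tb_{[\ii_0]} \up^{(k)} = (\al,\be)
\]
with common weight $\al+\be \notin \PR$, I want to show $k \le 2$.

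First I would observe that, since each $\up^{(s)}$ is a pair of positive roots with common sum $\al+\be \notin \PR$, every strict refinement $\up^{(s-1)} \prec^\tb_{[\ii_0]} \up^{(s)}$ is realized by a rectangle in $\Upsilon_{[\ii_0]}$ as described by Corollary \ref{cor: big mesh}. Next I would classify such rectangles by their ``center content'', i.e.\ by how many of their four vertices lie in $\Upsilon_{[\ii_0]}\setminus\Gamma_Q$. A rectangle whose four vertices are all induced projects under the labeling in Corollary \ref{cor:label1} to a genuine rectangle in $\Gamma_Q$, so that any consecutive block of such refinements descends to a $\prec^\tb_{[Q]}$-chain in $\Gamma_Q$ of the same length, whose length is bounded by $1$ via Theorem \ref{thm: known for Q}. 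Finally, using Corollary \ref{cor: triangle mesh} and the constancy of the $\al_{n+1}$-component of the weight along the chain, I would argue that at most one refinement in the whole chain can be a ``central'' rectangle crossing the $(n+1)$-th residue; two consecutive central refinements would force a triangular relation of the type in Corollary \ref{cor: triangle mesh} and hence $\al+\be \in \PR$, contradicting our assumption. Combining the two bounds yields $k \le 1+1 = 2$.

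The hard part will be the precise analysis of ``central'' rectangles near the $(n+1)$-th residue and the interactions between them. This forces a case-by-case examination of the four local shapes listed in \eqref{eq: four situ}, together with careful bookkeeping to confirm that no intermediate non-induced central pair slips between two induced refinements in a way that is invisible to the projection onto $\Gamma_Q$. I also expect some care will be needed to verify that, when the projection to $\Gamma_Q$ is taken, the relation $\prec^\tb_{[\ii_0]}$ restricted to chains of induced pairs genuinely descends to $\prec^\tb_{[Q]}$, and that the projected weight remains outside of $\PR$ of type $A_{2n}$ so that Theorem \ref{thm: known for Q} can be applied to finish the argument.
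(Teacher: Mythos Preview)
Your proposal has a genuine structural gap at the very first step. By Definition~\ref{def: gdist}, a chain witnessing $\gdist_{[\ii_0]}(\al,\be)=k$ is a chain of non-$[\ii_0]$-simple \emph{sequences} $\um^{(1)}\prec^\tb_{[\ii_0]}\cdots\prec^\tb_{[\ii_0]}\um^{(k)}=(\al,\be)$ with the same weight, not a chain of pairs. In fact the paper's own proof exhibits, in the case labeled (iii), a chain of length~$2$ whose middle term is a \emph{triple} $\{\gamma,\mu,\nu\}$ (or $\{\eta,\mu,\nu\}$), sitting strictly between the pairs $(\gamma,\eta)$ and $(\al,\be)$. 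Your argument, which assumes each $\up^{(s)}$ is a pair and that each refinement $\up^{(s-1)}\prec^\tb_{[\ii_0]}\up^{(s)}$ is ``realized by a rectangle'', therefore misses exactly the mechanism that produces $\gdist=2$. Corollary~\ref{cor: big mesh} also only goes in one direction (a rectangle subquiver forces the additive relation); it does not assert that every $\prec^\tb_{[\ii_0]}$-refinement of pairs arises from such a rectangle.

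The paper's approach is different in organization: rather than projecting an abstract chain to $\Gamma_Q$, it starts from the pair $(\al,\be)$, identifies the relevant rectangle in $\Upsilon_{[\ii_0]}$ (or the degenerate situation where one of $\al,\be$ is non-induced), and classifies it by how the two non-induced vertices on the $(n{+}1)$-th residue sit on its boundary. It then analyzes the finite collection of \emph{all} vertex sets $V$ inside or on that rectangle whose elements sum to $\al+\be$, using three elementary constraints (called (IND-1)--(IND-3) there) that relate non-induced vertices on the two sectional directions. These constraints force $V$ to be either $\{\al,\be\}$, $\{\gamma,\eta\}$, or a single specific triple, yielding $\gdist\le 2$ directly. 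Your idea of reducing induced pairs to Theorem~\ref{thm: known for Q} for $\Gamma_Q$ is morally present in the paper's use of (IND-3), but you would need to rebuild your argument around sequences rather than pairs, and to replace the unjustified ``each step is a rectangle'' claim by a genuine enumeration of the sequences that can appear.
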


\begin{proof}
For $(\al,\be)$ which satisfies one of the following properties:
\begin{itemize}
\item ${\rm supp}(\al) \cap {\rm supp}(\be) = \emptyset$,
\item their first (resp. second) components are the same,
\item they are incomparable with respect to $\prec_{[\ii_0]}$,
\end{itemize}
one can prove easily that $\gdist_{[\ii_0]}(\al,\be)=0$ by using the convexity of $\prec_{[\ii_0]}$ and Theorem \ref{them: comp for length k ge 0}.
Thus $\gdist_{[\ii_0]}(\al,\be)>0$  implies that there exists a rectangle alike \eqref{eq:rectangle2} or one of $\alpha$ and $\beta$ is a non-induced vertex.
By Remark \ref{Rem:surgery} if there is a  rectangle alike \eqref{eq:rectangle2} then  the rectangle can be classified with the followings:
\begin{itemize}
\item[{\rm (i)}] the rectangle without non-induced vertices on it,
\item[{\rm (ii)}] the rectangle with two non-induced vertices whose first or second component are the same,
\item[{\rm (iii)}] the rectangle with two non-induced vertices whose sum is contained in $\PR$ by Lemma \ref{cor: label for non-induced},
\end{itemize}
For {\rm (i)}, the proofs are the same as in \cite[Proposition 4.5]{Oh15E}. The the cases {\rm (ii)} and {\rm (iii)} can be depicted as follows.
$$
{\xy (0,0)*{}="T1"; (10,-10)*{}="R1";(-20,-20)*{}="L1"; (-10,-30)*{}="B1";
"T1"; "R1" **\dir{-}; "T1"; "L1" **\dir{-}; "L1"; "B1" **\dir{-};,"R1"; "B1" **\dir{-};
"T1"*{\bullet}; "B1"*{\bullet};,"L1"*{\bullet};"R1"*{\bullet};
"L1"+(3,3)*{\bigstar};"B1"+(13,13)*{\bigstar};
"T1"+(-5,-15)*{_{{\rm (ii-1)}}};
"L1"+(0,-3)*{_{\be}};"R1"+(0,-3)*{_{\al}};
"T1"+(0,-3)*{_{\eta}};"B1"+(0,3)*{_{\ga}};
\endxy} \quad
{\xy (0,0)*{}="T2"; (-10,-10)*{}="L2";(20,-20)*{}="R2"; (10,-30)*{}="B2";
"T2"; "R2" **\dir{-}; "T2"; "L2" **\dir{-}; "L2"; "B2" **\dir{-};,"R2"; "B2" **\dir{-};
"T2"*{\bullet}; "B2"*{\bullet};,"L2"*{\bullet};"R2"*{\bullet};
"L2"+(3,-3)*{\bigstar};"T2"+(13,-13)*{\bigstar};
"T2"+(5,-15)*{_{{\rm (ii-2)}}};
"L2"+(0,-3)*{_{\be}};"R2"+(0,-3)*{_{\al}};
"T2"+(0,-3)*{_{\eta}};"B2"+(0,3)*{_{\ga}};
\endxy}
\quad
{\xy (0,0)*{}="T3"; (-20,-20)*{}="L3";(15,-15)*{}="R3"; (-5,-35)*{}="B3";
"T3"; "R3" **\dir{-}; "T3"; "L3" **\dir{-}; "L3"; "B3" **\dir{-};,"R3"; "B3" **\dir{-};
"T3"*{\bullet}; "B3"*{\bullet};,"L3"*{\bullet};"R3"*{\bullet};
"T3"+(-8,-8)*{\bigstar};"T3"+(8,-8)*{\bigstar};
"T3"+(-8,-11)*{_{\mu}};"T3"+(8,-11)*{_{\nu}};
"T3"+(-5,-17)*{_{{\rm (iii-1)}}};
"L3"+(0,-3)*{_{\be}};"R3"+(0,-3)*{_{\al}};
"T3"+(0,-3)*{_{\eta}};"B3"+(0,3)*{_{\ga}};
\endxy}
\quad
{\xy (0,0)*{}="T4"; (-20,-20)*{}="L4";(15,-15)*{}="R4"; (-5,-35)*{}="B4";
"T4"; "R4" **\dir{-}; "T4"; "L4" **\dir{-}; "L4"; "B4" **\dir{-};,"R4"; "B4" **\dir{-};
"T4"*{\bullet}; "B4"*{\bullet};,"L4"*{\bullet};"R4"*{\bullet};
"B4"+(-8,8)*{\bigstar};"B4"+(8,8)*{\bigstar};
"T4"+(0,-17)*{_{{\rm (iii-2)}}};
"L4"+(0,-3)*{_{\be}};"R4"+(0,-3)*{_{\al}};
"T4"+(0,-3)*{_{\eta}};"B4"+(0,3)*{_{\ga}};
"B4"+(-8,11)*{_{\mu}};"B4"+(8,11)*{_{\nu}};
\endxy}
$$
where $\bigstar$ denotes a non-induced vertex.

Let the set of vertices $V$ satisfy (a) every vertex in $V$  is in or on the rectangle (see the above pictures) and (b) the sum of all elements  is $\alpha+\beta.$ Denote by $V_N$ the set of non-induced vertices in $V$ lies on a N-sectional path and by $V_S$  the set of non-induced vertices in $V$ lies on a S-sectional path. Then we note that
\begin{enumerate}
\item[(IND-1)] $|V_N|=|V_S|$,
\item[(IND-2)] if $\alpha_1 \in V_N$ and $\alpha_2\in V_S$ then $\alpha_1+\alpha_2$ is the root in or on the rectangle,
\item[(IND-3)]  if $V$ consists of induced vertices then $V=\{\alpha, \beta\}$ or $V=\{\gamma, \eta\}$.
\end{enumerate}
\vskip 2mm

\noindent {\rm (ii)}  Suppose $V_N\neq \emptyset$ and $V_S \neq \emptyset$. Then $\alpha_1 \in V_N$ and $\alpha_2\in V_S$ satisfy (IND-2) and, moreover, the root  $\alpha'=\alpha_1+\alpha_2\neq \alpha, \beta, \gamma$ or $\delta.$ Consider the set $V\backslash\{\alpha_1, \alpha_2\} \cup \{\alpha'\}$ is another set of vertices satisfying (a) and (b). Hence it contradicts to (IND-3).  In this case, by (IND-3),
$\gdist_{[\ii_0]}(\al,\be)=1$

\vskip 2mm
\noindent {\rm (iii)} Suppose $V_N \cup V_S = \{\mu, \nu\}$ then, for the case {\rm (iii-1)}, $V=\{\gamma, \mu, \nu\}$ (resp.  for the case {\rm (iii-2)}, $V=\{\eta, \mu, \nu\}$). Otherwise the $V\backslash\{\mu, \nu\} \cup \{\eta\}$ (resp. $V\backslash\{\mu, \nu\} \cup \{\gamma\}$) is a contradiction to (IND-3). Conversely, we can prove that $V_N\cup N_S$ should be $\emptyset$ or $\{\mu, \nu\}$ by the same argument. In this case, since the sequence determined by $\{\gamma, \mu, \nu\}$ (resp. $\{\eta, \mu, \nu\}$) is between the pair $(\gamma, \eta)$  and $(\alpha, \beta)$ with respect to $\prec^{\tb}_{[\ii_0]}$, we have $\gdist_{[\ii_0]}=2$.

\vskip 2mm

 Now let $\alpha$ be an induced vertex and $\beta$ be a non-induced vertex.

$$
{\xy (0,0)*{}="T1"; (15,-15)*{}="R1";(-25,-25)*{}="L1"; (-10,-40)*{}="B1";
"L1"+(55,0)*{}="L2"; "L2"+(15,15)="T2";
"T2"+(25,-25)="R2";"R2"+(-15,-15)="B2";
"T2"; "R2" **\dir{-}; "T2"; "L2" **\dir{-}; "L2"; "B2" **\dir{-};,"R2"; "B2" **\dir{-};
"L2"+(-5,5); "L2" **\dir{-}; "L2"+(-5,8)*{_{\be}};"L2"+(5,8)*{_{\be^-}};
"L2"+(-5,5)*{\bigstar};"L2"+(5,5)*{\bigstar};
"T1"; "R1" **\dir{-}; "T1"; "L1" **\dir{-}; "L1"; "B1" **\dir{-};,"R1"; "B1" **\dir{-};
"L1"+(-5,5); "L1" **\dir{-}; "L1"+(-5,8)*{_{\be}};"L1"+(5,8)*{_{\be^-}};
"L1"+(-5,5)*{\bigstar};"L1"+(5,5)*{\bigstar};
"L1"+(0,-3)*{_{\be^+}};"L1"*{\bullet};
"R1"*{\bullet};"R1"+(0,3)*{_{\al}};
"R1"+(-5,-5)*{\bigstar};
"T2"+(10,-10)*{\bigstar};
"T2"+(10,-7)*{_{\nu}};
"B2"*{\bullet};"B2"+(0,3)*{_{\ga}};
"R2"*{\bullet};"R2"+(0,3)*{_{\al}};
"L2"+(0,-3)*{_{\be^+}};"L2"*{\bullet};
"L1"*{\bullet};"L1"+(-15,5)*{_{n+1}};
"L1"+(-12,5); "L1"+(90,5) **\dir{.};
\endxy}
$$

 Suppose $\beta \prec_{[\ii_0]} \alpha$ and take the non-induced vertex $\beta^-$ such that  $\beta^-$ is the largest  non-induced root (with respect to $\prec_{[\ii_0]}$) satisfying  $\beta^-\prec_{[\ii_0]}\beta$ . Now let $\beta^+= \beta+\beta^-.$ Then $\beta^+$ satisfies
 (1) $ \beta^+ \prec_{[\ii_0]} \beta$, (2) $\beta'\prec_{[\ii_0]} \beta$ if and only if $\beta'\prec_{[\ii_0]} \beta$ or $\beta'=\beta^+.$
 Since $\alpha$ and $\beta^+$ are induced vertices, we can consider the set $V^+$ of vertices  satisfying (a) and (b) associated to $\alpha$ and $\beta^+$.
 In order to see $\gdist_{[\ii_0]}(\alpha, \beta)$, we need to find a set $W$ of vertices such that (a') every element is in or on the rectangle determined by $\beta^+$
 and $\alpha$ (b') the sum of elements is $\alpha+\beta$. One can easily see $W \cup \{\beta^-\}$ satisfies the conditions for $V^+$. Hence depending on whether
  $(\alpha, \beta^+)$ is a case of  {\rm(ii)} or {\rm(iii)}, we get $\gdist_{[\ii_0]}(\alpha, \beta)=0$ or $1.$
 For the latter case, we have $W= \{\gamma, \nu\}$ and $V^+=\{ \gamma, \beta^-, \nu\}.$ The same argument works for the case $\alpha \prec_{[\ii_0]} \beta.$
\end{proof}

\begin{proof} [{\bf The second step for  Theorem \ref{thm: dist upper bound Qd}}]
From the above propositions, the first and the third assertions complete.
\end{proof}

\begin{remark}
In \cite{Oh14A}, it was shown that for a Dynkin quiver $Q$ of type $A_m$ and a root $\gamma \in \PR \setminus \Pi$, we have
$$\rds_{[Q]}(\gamma)=\mathsf{m}(\gamma).$$
Analogously, if a twisted adapted class $[\ii_0]$ of type $A_{2n+1}$ is associated to a twisted Coxeter element then we have
\begin{equation}\label{Eqn:rds_folded mult}
 \rds_{[\ii_0]}(\gamma)= \overline{\mathsf{m}}(\gamma)
\end{equation}
for any $\gamma\in \PR\setminus \Pi.$  However, if a twisted adapted class $[\ii_0]$ of type $A_{2n+1}$ which is not associated to a twisted Coxeter element then (\ref{Eqn:rds_folded mult}) can be either true or not. More precisely, if
$ \PPi([\ii_0])=[Q] $ for  a Dynkin quiver $Q$ with the subquiver
${\xymatrix@R=3ex{ \bullet \ar@{->}[r]_<{ \ n-1} &\bullet \ar@{->}[r]_<{ \ n}
&\bullet \ar@{->}[r]_<{ \ n+1}
& \bullet \ar@{-}[l]^<{\ \ \ \ \ \ n+2} }}$ or ${\xymatrix@R=3ex{ \bullet \ar@{<-}[r]_<{ \ n-1} &\bullet \ar@{<-}[r]_<{ \ n}
&\bullet \ar@{<-}[r]_<{ \ n+1}
& \bullet \ar@{-}[l]^<{\ \ \ \ \ \ n+2} }}$ then (\ref{Eqn:rds_folded mult}) is not true. If $Q$ does not have such a subquiver then  (\ref{Eqn:rds_folded mult}) is true.
\end{remark}

\begin{remark}
Note that, for each pair $(\al,\be)$ with $\gdist_{[\ii_0]}(\al,\be)=2$, there exists a {\it unique} chain of sequences
$$ \soc_{[\ii_0]}(\al,\be) \prec^\tb_{[\ii_0]} \um \prec^\tb_{[\ii_0]} (\al,\be)$$
which tells $\gdist_{[\ii_0]}(\al,\be)=2$. Furthermore,
\begin{enumerate}
\item[{\rm (1)}] if $\al+\be \in \ga$, then $\um$ is a pair consisting of non-induced central vertices,
\item[{\rm (2)}] if $\al+\be \not \in \ga$, $\um$ is a triple $( \mu,\nu,\eta)$ such that
\begin{itemize}
\item[{\rm (i)}] $\mu+\nu \in \PR$, $(\mu,\nu)$ is an $[\ii_0]$-minimal pair of $\mu+\nu$ and $\al-\mu,\be-\nu \in \PR$,
\item[{\rm (ii)}] $\eta$ is not comparable to $\mu$ and $\nu$ with respect to $\prec_{[\ii_0]}$,
\item[{\rm (iii)}] $\eta=(\al-\mu)+(\be-\nu)$ and $((\al-\mu),(\be-\nu))$ is an $[\ii_0]$-minimal pair for $\eta$,
\item[{\rm (iv)}] $(\al-\mu,\mu)$, $(\nu,\be-\nu)$  are $[\ii_0]$-minimal pairs for $\al$ and $\be$, respectively.
\end{itemize}
In Example \ref{ex: label},
$$ ([1,7],[2,5]) \prec^\tb_{[\ii_0]} ([4,7],[1,3],[2,5]) \prec^\tb_{[\ii_0]} ([2,7],[1,5]).$$
\end{enumerate}
\end{remark}

\subsection{Minimal pairs for $\ga \in \PR \setminus \Pi$ and their coordinates in $\Upsilon_{[\ii_0]}$} \label{subsec: minimal twisted}
In this subsection, we shall see the coordinates of minimal pairs for $\ga \in \PR \setminus \Pi$ in $\Upsilon_{[\ii_0]}$. We briefly
recall one of the main results in \cite{Oh14A}:

\begin{theorem} \cite[Theorem 3.2, Theorem 3.4]{Oh14A} \label{thm upper lower minimal}
For every pair $(\alpha,\beta)$ of $\alpha+\beta=\gamma\in \Phi^+_{A_{2n}}$, we write
$$\phi_Q(\alpha)=(i,p), \quad \phi_Q(\be)=(j,q) \quad \text{ and } \quad \phi_Q(\ga)=(k,z).$$
Then $(\alpha,\beta)$ is $[Q]$-minimal and
\begin{eqnarray} &&
\parbox{79ex}{
\begin{enumerate}
\item[{\rm (i)}] $p-z=|i-k|$ and $q-z=-|j-k|$,
\item[{\rm (ii)}] $i+j=k$ or $(2n+1-i)+(2n+1-j)=(2n+1-k).$
\end{enumerate}
}\label{eq: upper and lower}
\end{eqnarray}
\end{theorem}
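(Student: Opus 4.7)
My plan is to establish the theorem in two independent parts: the $[Q]$-minimality of every pair of $\gamma$, and the coordinate identities (i) and (ii). For minimality, I would invoke Theorem \ref{thm: known for Q}(1), which gives $\gdist_{[Q]}(\alpha,\beta)\le \max\{\mathsf{m}(\gamma) \ | \ \gamma\in\PR\}=1$ in type $A_{2n}$. The singleton sequence $\gamma$ is tautologically $[Q]$-simple, and by convexity of every total order $<_{\jj_0}$ for $\jj_0\in[Q]$ the position of $\gamma$ lies strictly between those of $\alpha$ and $\beta$, so Definition \ref{def: redezletb} immediately yields $\gamma <^\tb_{\jj_0}(\alpha,\beta)$ for every such $\jj_0$; consequently $\gamma \prec^\tb_{[Q]}(\alpha,\beta)$. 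Combined with $\gdist_{[Q]}\le 1$, no sequence can lie strictly between $\gamma$ and the pair, so $(\alpha,\beta)$ is $[Q]$-minimal above $\gamma$.

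For identity (i), I would exploit the fact that each vertex $[a,b]$ of $\Gamma_Q$ sits at the unique intersection of the maximal $N$-sectional path sharing first component $a$ and the maximal $S$-sectional path sharing second component $b$, a direct consequence of Proposition \ref{pro: section shares} combined with Theorem \ref{thm: labeling GammaQ}. Given a pair $(\alpha,\beta)=([a,c],[c+1,b])$ of $\gamma=[a,b]$, the root $[a,c]$ shares the $N$-sectional path through $\gamma$ and $[c+1,b]$ shares the $S$-sectional path through $\gamma$. Each sectional path splits at $\gamma$ into two rays, one with larger $p$-coordinate and one with smaller. Relabel so that $\alpha$ denotes the member of $\{[a,c],[c+1,b]\}$ lying on the larger-$p$ ray and $\beta$ the one on the smaller-$p$ ray; then the arrow rule $(i',p')\to(i'\pm 1,p'+1)$ immediately forces $p-z=|i-k|$ and $q-z=-|j-k|$, proving (i).

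For identity (ii), I plan to combine the additive property \eqref{eq: addtive property} with the counting in Proposition \ref{Prop:AR} of the residue fibers $\Gamma_Q \cap (\{i\}\times\Z)$ to derive explicit residue formulas for $[a,c]$ and $[c+1,b]$. Concretely, after verifying (ii) for the model quiver $Q_0=1\to 2\to\cdots\to 2n$ by direct computation, I would transfer to an arbitrary $Q\in \lf Q\rf$ via the reflection functors (Algorithm \ref{alg: Ref Q}), using induction on $\mathrm{ht}(\gamma)=b-a+1$ to propagate the identity $i+j=k$ or $(2n+1-i)+(2n+1-j)=(2n+1-k)$. The dichotomy corresponds to whether $\gamma$ sits at the upper or lower terminus of the two sectional paths through it, equivalently whether the local orientation of $Q$ at the break between $c$ and $c+1$ is aligned or opposite. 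The main obstacle will be this case analysis: the residue function on $\Gamma_Q$ depends delicately on the orientation of $Q$, and managing the two cases uniformly via the involution $i\mapsto 2n+1-i$ induced by $w_0$ on the Dynkin diagram of $A_{2n}$ requires careful bookkeeping, but once the residue formulas are in hand (ii) follows by direct substitution.
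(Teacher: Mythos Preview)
The paper does not supply its own proof of this statement: it is quoted verbatim from \cite[Theorem 3.2, Theorem 3.4]{Oh14A} and used as input for the subsequent lemmas on twisted AR-quivers. So there is no in-paper argument to compare against; your proposal is an independent reconstruction.

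Your argument for $[Q]$-minimality is correct, though note that Theorem \ref{thm: known for Q} itself cites \cite[Theorem 3.4]{Oh14A}, so depending on how the logical dependencies run in that reference you may be very close to a circularity; within the present paper, however, the deduction is clean. For (i), the content is right but the ``relabel'' step is misleading: the pair convention already fixes the order, since $(\alpha,\beta)$ is written with $\alpha \prec_{[Q]} \gamma \prec_{[Q]} \beta$ by convexity, and the existence of a path $\gamma \to \alpha$ (resp.\ $\beta \to \gamma$) in $\Gamma_Q$ forces $p>z$ (resp.\ $q<z$). Then the fact that $\alpha$ and $\gamma$ lie on a common sectional path gives $|p-z|=|i-k|$ directly, and likewise for $\beta$; no relabeling is needed.

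For (ii), your reflection-functor transfer is heavier than necessary. Observe that under Algorithm \ref{alg: Ref Q}, every vertex other than $\alpha_i$ keeps its coordinate in $\Gamma_{iQ}$ and is merely relabeled by $s_i$; since $s_i$ is linear, any triple $(\alpha,\beta,\gamma)$ with $\alpha+\beta=\gamma$ and $\alpha_i\notin\{\alpha,\beta,\gamma\}$ has identical coordinates in both quivers, so (ii) transfers for free. The only genuine work is the boundary case where one member equals $\alpha_i$, and there the triple degenerates under reflection, so one must re-derive it in the new quiver rather than transport it. This makes the induction workable but awkward. The argument in \cite{Oh14A} instead exploits directly that $\alpha=[a,c]$ and $\beta=[c+1,b]$ sit on the ``adjacent'' maximal $S$-path (second component $c$) and $N$-path (first component $c+1$), whose lengths $c-1$ and $2n-c-1$ constrain the residues of $\alpha$ and $\beta$ enough to force $i+j\in\{k,2n+1+k\}$; the two alternatives in (ii) then correspond precisely to your two cases $i,j<k$ versus $i,j>k$, which are exchanged by the Dynkin involution $^*$.
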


Define
$$
i^- =
\begin{cases}
i-1 & \text{ if } i>n+1, \\
i & \text{ if } i \le n+1.
\end{cases}
$$

\begin{lemma} \label{lem: minimal for rds2}
For $\ga \in \PR \setminus \Pi$ with $\Omega_{[\ii_0]}(\ga)=(k,r)$ and $\overline{\mathsf{m}}(\ga)=2$, an $[\ii_0]$-minimal pair $(\al,\be)$ for $\ga$ satisfies
one of the following conditions: Set $\Omega_{[\ii_0]}(\al)=(i,p)$ and $\Omega_{[\ii_0]}(\be)=(j,q)$.
\begin{enumerate}
\item[{\rm (i)}] $i=j=n+1$ such that $p+q=2r$,
\item[{\rm (ii)}] $q=|k^--i^-|+p, \ r=p-|k^--j^-|$, $i,j \ne n+1$ and one of the following holds:
\begin{align} \label{eq: induced minimal}
\begin{cases}
{\rm (a)} \ i+j=k \text{ and } k \le n, \\
{\rm (b)} \ (2n+1-i^-)+(2n+1-j^-)=2n+1-k^-, \  k \le n \text{ and } \min\{ i,j\} \le n, \\
{\rm (c)} \ i^-+j^-=k^-, \  k \ge n+2 \text{ and } \max\{ i,j \} \ge n+2, \\
{\rm (d)} \ (2n+2-i)+(2n+2-j)=2n+2-k \text{ and } k \ge n+2.
\end{cases}
\end{align}
\end{enumerate}
\end{lemma}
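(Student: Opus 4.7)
The strategy is to exploit the surgery map $\PPi: \lf\Qd\rf \to \lf Q\rf$ in order to transfer the minimality question in $\Upsilon_{[\ii_0]}$ to the analogous question in $\Gamma_Q$, where $[Q]=\PPi([\ii_0])$ and Theorem \ref{thm upper lower minimal} applies. Since $\overline{\mathsf{m}}(\ga)=2$, Corollary \ref{cor: supports for induced central} tells us that $\ga$ corresponds to an induced central vertex of $\Upsilon_{[\ii_0]}$, so in particular $k \in I_{2n+1}\setminus\{n+1\}$, $k^-$ is well-defined, and via \eqref{eq: induced label} the root $\ga$ corresponds to a root $\ov\ga \in \PR_{A_{2n}}$ whose support contains both $\al_n$ and $\al_{n+1}$. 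The argument in the proof of Lemma \ref{lem: less than eq 2} shows that the pairs $(\al,\be)$ with $\al+\be=\ga$ split into two families: (A) the unique non-induced pair $(\al^\star,\be^\star)$ whose two vertices both lie in $\Upsilon_{[\ii_0]}\setminus\Gamma_Q$ and therefore have residue $n+1$, and (B) pairs of induced vertices which correspond bijectively, under the surgery, to pairs for $\ov\ga$ in $\Gamma_Q$.

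For case (A), I would apply Corollary \ref{cor: triangle mesh}, which locates $\al^\star$ and $\be^\star$ as the two vertices of residue $n+1$ sitting on the induced sectional paths through $\ga$. Reading off their positions from Algorithm \ref{Alg: surgery}(b), both carry residue $i=j=n+1$, and the half-integer horizontal coordinates $p,q$ are placed symmetrically about $r$ along the two sectional paths that meet at $\ga$. The symmetry built into the surgery construction, together with the additivity $\al^\star+\be^\star=\ga$, then forces $p+q=2r$, yielding case (i).

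For case (B), I would translate each induced pair $(\al,\be)$ of $\ga$ to the corresponding pair $(\ov\al,\ov\be)$ of $\ov\ga$ in $\Gamma_Q$ by applying the inverse of the residue shift $i \mapsto i^+$ (so $i \mapsto i^-$) together with the identification of induced vertices described in Remark \ref{Rem:surgery}. Because the surgery preserves arrows between non-$(n+1)$-residues and only inserts new vertices along the $n$/$(n+2)$-arrows, an $[\ii_0]$-minimal induced pair for $\ga$ pulls back to a $[Q]$-minimal pair for $\ov\ga$ (after checking against $(\al^\star,\be^\star)$ from case (A)). Applying Theorem \ref{thm upper lower minimal} to $(\ov\al,\ov\be,\ov\ga)$ then yields the distance relations $q=|k^--i^-|+p$ and $r=p-|k^--j^-|$, together with the dichotomy $i^-+j^-=k^-$ or $(2n+1-i^-)+(2n+1-j^-)=2n+1-k^-$. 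The four subcases (a)--(d) of \eqref{eq: induced minimal} then arise by combining these two alternatives with the two options $k\le n$ or $k\ge n+2$; the additional restrictions $\min\{i,j\}\le n$ in (b) and $\max\{i,j\}\ge n+2$ in (c) come from the requirement that $\al,\be$ be actual positive roots of $A_{2n+1}$ sitting on appropriate sides of the central strip.

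The main obstacle will be the bookkeeping for the coordinate shift across the central strip: for induced vertices with residue $\ge n+2$ the surgery decreases the residue by one while keeping the horizontal position unchanged, and the insertion of new $(n+1)$-residue vertices along sectional paths alters which induced pairs remain comparable in $\prec_{[\ii_0]}$. A careful case analysis, broken down according to the four configurations of \eqref{eq: four situ}, is needed to check that the conditions $\min\{i,j\}\le n$ in (b) and $\max\{i,j\}\ge n+2$ in (c) precisely characterize the pairs for $\ov\ga$ which pull back to $[\ii_0]$-minimal pairs rather than being superseded by the non-induced central pair $(\al^\star,\be^\star)$. Once this bookkeeping is established, the lemma follows directly from Theorem \ref{thm upper lower minimal} combined with Corollary \ref{cor: triangle mesh}.
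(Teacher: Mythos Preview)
Your approach is essentially the same as the paper's. The paper's proof is very terse: it invokes Lemma~\ref{lem: less than eq 2} to identify the unique non-induced pair $(\al^\star,\be^\star)$ as an $[\ii_0]$-minimal pair giving case~(i), notes that the remaining $[\ii_0]$-minimal pairs are induced from $\Gamma_Q$ and incomparable with $(\al^\star,\be^\star)$, and then simply asserts that ``one can easily check'' they satisfy one of the four conditions in~\eqref{eq: induced minimal} via Theorem~\ref{thm upper lower minimal}. Your proposal follows the same two-case split and invokes the same key inputs (Corollary~\ref{cor: triangle mesh} for the non-induced pair, Theorem~\ref{thm upper lower minimal} pulled back through the surgery for the induced pairs); you are just filling in more of the coordinate bookkeeping and the case analysis that the paper leaves implicit.
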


\begin{proof}
By Lemma \ref{lem: less than eq 2}, $\ga$ with $\overline{\mathsf{m}}(\ga)=2$ has a unique pair $(\al,\be)$
which consists of non-induced central vertices and is an $[\ii_0]$-minimal of $\ga$. Then the first assertion follows. The other $[\ii_0]$-minimal pairs of $\ga$
are induced from $\Gamma_Q$ and incomparable with the non-induced pair. Then one can easily check that the other $[\ii_0]$-minimal pairs
satisfy one of the four conditions in \eqref{eq: induced minimal} by Theorem \ref{thm upper lower minimal}.
\end{proof}

\begin{lemma} \label{lem: minimal for induced rds1}
For an induced vertex $\ga \in \PR \setminus \Pi$ with $\Omega_{[\ii_0]}(\ga)=(k,r)$ and $\overline{\mathsf{m}}(\ga)=1$,
an $[\ii_0]$-minimal pair $(\al,\be)$ for $\ga$ satisfies one of the following conditions $:$
Set $\Omega_{[\ii_0]}(\al)=(i,p)$ and  $\Omega_{[\ii_0]}(\be)=(j,q)$.
\begin{enumerate}
\item[{\rm (i)}] $i=j=n+1$ such that $p+q=2r$,
\item[{\rm (ii)}] $p-r=|k^- -i^-|, \ q-r=-|k^--j^-|$ and
$$i^-+j^-=k^- \  \text{ or } \ (2n+1-i^-)+(2n+1-j^-)=2n+1-k^-.$$
\end{enumerate}
\end{lemma}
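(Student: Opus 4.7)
The plan is to reduce the classification of $[\ii_0]$-minimal pairs for $\gamma$ to the known classification of $[Q]$-minimal pairs in $\Gamma_Q$ (Theorem \ref{thm upper lower minimal}) together with the twisted additive property (Theorem \ref{thm:twisted additive}). Since $\gamma$ is induced with $\overline{\mathsf{m}}(\gamma) = 1$, Corollaries \ref{cor:label1}, \ref{cor: label for non-induced}, and \ref{cor: supports for induced central} leave exactly two possibilities: either (A) $\gamma$ is an induced non-central vertex, so $n+1 \notin {\rm supp}(\gamma)$ and $\gamma = [a,b]$ with $b \le n$ or $a \ge n+2$; or (B) $\gamma$ is an induced central vertex, in which case $\overline{\mathsf{m}}(\gamma)=1$ forces $\gamma = [a, n+1]$ with $a \le n$ in cases (1),(4) of \eqref{eq: four situ}, and $\gamma = [n+1, b]$ with $b \ge n+2$ in cases (2),(3).

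In case (A), every decomposition $\gamma = \alpha + \beta$ must satisfy ${\rm supp}(\alpha),{\rm supp}(\beta) \subseteq {\rm supp}(\gamma)$; in particular neither summand may be a non-induced central vertex. By Lemma \ref{lem: comp for length k le n} and Remark \ref{Rem:surgery}, the restriction of $\Upsilon_{[\ii_0]}$ to that half is isomorphic (as a labeled quiver with coordinates) to the corresponding corner of $\Gamma_Q$ under the identification $i \leftrightarrow i^-$, and the pairs for $\gamma$ in $\Upsilon_{[\ii_0]}$ correspond bijectively to the pairs for the $\Gamma_Q$-preimage of $\gamma$. Consequently $(\alpha,\beta)$ is $[\ii_0]$-minimal if and only if it is $[Q]$-minimal, and Theorem \ref{thm upper lower minimal} translates directly to condition (ii) upon substituting $i,j,k$ by $i^-,j^-,k^-$.

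In case (B), by the symmetry $\ii_0 \leftrightarrow \ii_0^{{\rm rev}}$ we may treat only $\gamma = [a,n+1]$. Its decompositions as a sum of two positive roots are exactly $([a,c],[c+1,n+1])$ for $a \le c \le n$. When $c = n$, the pair is $(\alpha,\beta) = ([a,n],\alpha_{n+1})$, whose entries are both non-induced vertices of residue $n+1$ by Corollary \ref{cor: label for non-induced}; this is precisely the triangle mesh of Corollary \ref{cor: triangle mesh} with apex $\gamma$. Tracing the two sectional paths from $\gamma$ at $(k,r)$ down to the non-induced layer via Algorithm \ref{Alg: surgery}(b) (each such path contributes $n-k$ integer jumps plus one half-integer jump), one obtains coordinates $(n+1, r-n+k-\tfrac12)$ and $(n+1, r+n-k+\tfrac12)$ for $\alpha$ and $\beta$, whose sum is $2r$; this yields condition (i). When $c < n$, both $[a,c]$ and $[c+1,n+1]$ are induced (the former non-central, the latter central), corresponding via Corollary \ref{cor:label1} to the pair $([a,c],[c+1,n])$ in $\Gamma_Q$ for the root $[a,n]$. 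As in case (A), such a pair is $[\ii_0]$-minimal iff it is $[Q]$-minimal in $\Gamma_Q$, and Theorem \ref{thm upper lower minimal} yields condition (ii).

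The main obstacle will be establishing the equivalence of $[\ii_0]$-minimality and $[Q]$-minimality in both cases, since a priori there could exist refinements $\um \prec^\tb_{[\ii_0]} (\alpha,\beta)$ obtained by inserting non-induced central vertices from the column of residue $n+1$, which have no counterpart in $\Gamma_Q$. This will be handled by invoking the bound $\gdist_{[\ii_0]} \le 2$ and the uniqueness of $[\ii_0]$-socles from Theorem \ref{thm: dist upper bound Qd}, together with the twisted additive property (Theorem \ref{thm:twisted additive}); the weight constraint $\overline{\mathsf{m}}(\gamma) = 1$ precisely forbids the coefficient of $\alpha_{n+1}$ from exceeding $1$ in any refinement, which is what rules out the insertion of additional non-induced vertices.
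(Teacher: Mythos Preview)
Your case split into (A) non-central and (B) central induced vertices, together with the explicit coordinate computations via the surgery and Theorem~\ref{thm upper lower minimal}, matches the paper's strategy; the paper simply compresses this into ``apply the same argument of the previous lemma''. Your treatment of the non-induced pair in case~(B) via the triangle mesh of Corollary~\ref{cor: triangle mesh}, yielding $p+q=2r$, is correct and more explicit than what the paper writes.

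The weak point is your final paragraph. The obstacle you identify --- that the non-induced pair might sit strictly below some induced pair and thereby disqualify the latter from $[\ii_0]$-minimality --- is real, but the resolution you propose does not work. Your $\alpha_{n+1}$-coefficient argument cannot rule out comparability: the non-induced pair $([a,n],\alpha_{n+1})$ has total $\alpha_{n+1}$-coefficient exactly~$1$, so nothing in that bookkeeping prevents it from lying below an induced pair. The clean fix, which is what the paper is implicitly using when it asserts incomparability, is a direct appeal to Theorem~\ref{thm: dist upper bound Qd}(2): since $\overline{\mathsf{m}}(\gamma)=1$, one has $\rds_{[\ii_0]}(\gamma)=1$, so every pair summing to $\gamma$ has $\gdist_{[\ii_0]}=1$ and is automatically $[\ii_0]$-minimal. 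With that, the obstacle disappears and neither the twisted additive property nor any support-counting is needed. (Alternatively, for the lemma as literally stated --- only the forward implication --- no minimality argument is required at all: your case analysis already shows that \emph{every} pair for $\gamma$ satisfies (i) or (ii).)
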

\begin{proof}
Note that the existence of the non-induced pair for $\ga$ is not guaranteed (see Remark \ref{rem: corr}).
In this case, each pair for $\ga$ induced from $\Gamma_Q$ is not comparable with the non-induced pair for $\ga$. Then we can apply the same argument of
the previous lemma.
\end{proof}

\begin{lemma} \label{lem: minimal for non-induced central}
For a non-induced central vertex $\ga \in \PR \setminus \Pi$ with $\Omega_{[\ii_0]}(\ga)=(n+1,r)$,
an $[\ii_0]$-minimal pair $(\al,\be)$ for $\ga$ satisfies one of the following conditions:
Set $\Omega_{[\ii_0]}(\al)=(i,p)$ and  $\Omega_{[\ii_0]}(\be)=(j,q)$.
\begin{enumerate}
\item[{\rm (i)}] $(i,p)=(n+1,r+2\ell)$ and $(j,q)=(\ell,r-\frac{1}{2}-(n-\ell))$,
\item[{\rm (ii)}] $(i,p)=(n+1,r+2\ell)$ and $(j,q)=(2n+2-\ell,r-\frac{1}{2}-(n-\ell))$,
\item[{\rm (iii)}] $(i,p)=(\ell,r+\frac{1}{2}+(n-\ell))$ and $(j,q)=(n+1,r-2\ell)$,
\item[{\rm (iv)}] $(i,p)=(2n+2-\ell,r+\frac{1}{2}+(n-\ell))$ and $(j,q)=(n+1,r-2\ell)$,
\end{enumerate}
for some $\ell\in \Z_{\ge 1}$.
\end{lemma}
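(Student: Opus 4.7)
The plan is to reduce the analysis to the adapted case via Algorithm \ref{Alg: surgery} and the surgery map $\PPi$ of Theorem \ref{thm: surgery}, combined with a classification of pairs dictated by the segment shape of $\gamma$ inside $\Upsilon_{[\ii_0]}$.

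First I would record the classification of pairs. By Corollary \ref{cor: label for non-induced}, the non-induced central vertex $\gamma$ corresponds to a positive root $[a,b]$ whose first component equals $n+1$ or whose second component equals $n$ (in cases (1) and (4) of \eqref{eq: four situ}; cases (2) and (3) are analogous with $n+1$ and $n+2$). Every decomposition $\gamma = \alpha + \beta$ in $\PR$ corresponds to cutting $[a,b]$ at some $a \le c < b$ into $\alpha = [a,c]$ and $\beta = [c+1,b]$. A direct inspection then shows that in every such cut, exactly one of $\alpha,\beta$ is again a non-induced central vertex of residue $n+1$, while the other is an induced vertex whose residue lies in $\{1,\ldots,n\} \cup \{n+2,\ldots,2n+1\}$; this dichotomy already partitions all pairs into four subfamilies corresponding to (i)--(iv).

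Next I would compute the coordinates using Algorithm \ref{Alg: surgery}: the non-induced central piece is placed at a half-integer $p$-value offset by $\pm\tfrac12$ from the adjacent vertex of residue $n$ or $n+2$, while the induced piece carries the coordinate transferred (via $\iota^+$ of Corollary \ref{cor:label1}) from $\Gamma_Q$ with $Q = \PPi([\ii_0])$. The position of each piece along the sectional paths emanating from $\gamma$ is controlled by a single length parameter $\ell \ge 1$, and Theorem \ref{them: comp for length k ge 0} pins down the first or second components of all vertices on those paths. The four cases (i)--(iv) then arise by choosing which of $\alpha, \beta$ is the non-induced central piece and whether the induced piece lies in $\uUp^{\rm NE}_{[\ii_0]} \cup \uUp^{\rm SW}_{[\ii_0]}$ (giving residue $\ell \le n$) or in $\uUp^{\rm SE}_{[\ii_0]} \cup \uUp^{\rm NW}_{[\ii_0]}$ (giving residue $2n+2-\ell \ge n+2$); the coordinate formulas in the statement follow from the $\tfrac12$-offset rule.

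To confirm that only $[\ii_0]$-minimal pairs can arise in the list, I would argue that any refinement $\um \prec^\tb_{[\ii_0]} (\alpha,\beta)$ with $\wt(\um) = \gamma$ would, by Theorem \ref{thm: dist upper bound Qd} together with Corollary \ref{cor: big mesh} and the twisted additive property (Theorem \ref{thm:twisted additive}, Corollary \ref{cor: triangle mesh}), force the existence of a rectangle or triangle mesh whose top and bottom are $\alpha, \beta$ and $\um$; since the cut parameter $c$ is rigid once the segment $\gamma = [a,b]$ is fixed, no such nontrivial refinement can exist. The main technical obstacle is the bookkeeping required to simultaneously track the four subcases of \eqref{eq: four situ} and the four configurations (i)--(iv) without duplication: one must consistently handle the $\pm\tfrac12$ offset, the swap between residue $\ell$ and $2n+2-\ell$, and the two possible shapes of $\gamma$ (first component $n+1$ versus second component $n$, with the symmetric statements in cases (2),(3)). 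I expect that once a single normal form for $\gamma = [a,b]$ is fixed, the coordinate formulas in (i)--(iv) will follow by a uniform computation using Theorem \ref{them: comp for length k ge 0} and the half-integer offset rule of Algorithm \ref{Alg: surgery}(b).
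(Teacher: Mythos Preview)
Your approach is essentially the paper's: pin down the segment shape of $\gamma$, observe that every pair arises from a cut of that segment with exactly one piece non-induced (residue $n+1$) and the other induced, and then read off coordinates from the sectional path structure via Theorem~\ref{them: comp for length k ge 0} and Algorithm~\ref{Alg: surgery}. Two points deserve correction, however.

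First, your dichotomy ``$\uUp^{\rm NE}_{[\ii_0]} \cup \uUp^{\rm SW}_{[\ii_0]}$ gives residue $\ell\le n$, $\uUp^{\rm SE}_{[\ii_0]} \cup \uUp^{\rm NW}_{[\ii_0]}$ gives residue $2n+2-\ell$'' is mislabeled: by \eqref{eq: from Q labelling}, it is $\uUp^{\rm NE}_{[\ii_0]}$ and $\uUp^{\rm NW}_{[\ii_0]}$ that carry residues $\le n$, while $\uUp^{\rm SE}_{[\ii_0]}$ and $\uUp^{\rm SW}_{[\ii_0]}$ carry residues $\ge n+2$. The grouping NE/SW versus SE/NW distinguishes \emph{labels} (second component $\le n$ versus first component $\ge n+2$), not residues. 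This does not break the argument but the bookkeeping needs to be redone accordingly.

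Second, your minimality paragraph is unnecessary. By Lemma~\ref{lem: less than eq 2} and the lemma immediately following it (which shows $\rds_{[\ii_0]}(\gamma)=1$ for non-induced central $\gamma$), every pair $(\alpha,\beta)$ with $\alpha+\beta=\gamma$ is already $[\ii_0]$-minimal, so it suffices to show that \emph{all} pairs satisfy one of (i)--(iv). The paper does exactly this: fixing, say, $\gamma=[a,n+1]$ and $\beta=[k+1,n+1]$ at $(n+1,r-2\ell)$, it traces the two sectional paths joining $\beta$ and $\gamma$ through residue $1$ and reads off that the residue $i$ of $\alpha=[a,k]$ must equal $\ell$ (this is the content of the picture). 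That identity $i=\ell$ is the crux, and your outline should carry it out explicitly rather than absorb it into ``controlled by a single length parameter''.
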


\begin{proof}
Let us assume that $\ga=[a,n+1]$ for some $a \le n$ and is contained in the $N$-Sectional path $N[a]$ of length $2n+1-a$. Note that
$\{ \al,\be \} = \{ [a,k], [k+1,n+1] \}$ for some $a \le k <n+1$. We assume further that $\be=[k+1,n+1]$. Equivalently, $\Omega_{[\ii_0]}(\be)=(n+1,r-2\ell)$
for some $\ell \in \Z_{\ge 1}$ by Corollary \ref{cor: label for non-induced}. Note that there exists an $S$-sectional path $S[k]$ of length $k-1$
\begin{itemize}
\item  whose vertices shares $k$ as their second component,
\item  which intersects with $N[a]$.
\end{itemize}
Furthermore, the vertex located at the intersection of $N[a]$ and $S[k]$ is $[a,k]$. By the assumption that $[a,k] \prec_{[\ii_0]} [a,n+1]$,
the $[\ii_0]$-residue $i$ of $[a,k]$ is strictly less than $n+1$ by Theorem \ref{them: comp for length k ge 0}. By applying \cite[Corollary 1.15]{Oh14A}
, Algorithm \ref{Alg: surgery} and Theorem \ref{them: comp for length k ge 0}, we have the following in $\Upsilon_{[\ii_0]}$:
$$
{\xy (0,0)*{}="T1"; (-25,-25)*{}="L1"; (10,0)*{}="T2"; (20,-10)*{}="R1";  (5,-25)*{}="R2";
"T1"; "L1" **\dir{-};"T2"; "R1" **\dir{-};"R1"; "R2" **\dir{-};
"L1"+(-10,0); "R2"+(10,0) **\dir{.}; "L1"+(-15,0)*{_{n+1}};
"T1"+(-35,0); "T2"+(10,0) **\dir{.}; "T1"+(-40,0)*{_{1}};
"T1"*{\bullet};"T2"*{\bullet};"R1"*{\bullet};
"L1"*{\bigstar};"R2"*{\bigstar};
"T1"+(0,3)*{_{[k+1,c]}};"T2"+(0,3)*{_{[d,k]}};
"R1"+(-5,0)*{_{[a,k]}};
"L1"+(0,-3)*{_{[k+1,n+1]}};"R2"+(0,-3)*{_{[a,n+1]}};
"T1"; "T2"; **\crv{(5,-3)}?(.5)+(0,-2)*{\scriptstyle 2};
"L1"; "R2"; **\crv{(-10,-29)}?(.5)+(0,-2)*{_{2\ell}};
"T1"; "L1"; **\crv{(-20.5,-12.5)}?(.5)+(-9,0)*{_{n-1+\frac{1}{2}}};
"R2"; "R1"; **\crv{(18.5,-17.5)}?(.6)+(9,0)*{_{n-i+\frac{1}{2}}};
"T2"; "R1"; **\crv{(20,-5)}?(.5)+(5,0)*{_{i-1}};
\endxy}
$$
Hence we can obtain that $i=\ell$
which yields our third assertion. For the remained cases, one can prove by applying the similar argument.
\end{proof}
\section{Folded AR-quivers}

\subsection{Folding the twisted AR-quiver}
For any Dynkin quiver $Q$ of type $A_{2n}$, we know that $i \not\equiv i^{\vee(2n,2)}\ {\rm mod} \ 2$. Thus,
for any $\al,\be \in \PR$ such that $\Omega_Q(\al)=(i,p)$ and $\Omega_Q(\be)=(2n+1-i,q)$, we have
\begin{align} \label{eq: not equiv}
p \not\equiv q \ {\rm mod} \ 2.
\end{align}
Thus we can {\it fold} a twisted AR-quiver by assigning new coordinates in $\overline{I} \times \Z$ in the following way:
\begin{eqnarray} &&
\parbox{79ex}{
For a positive root (or vertex) $\be$ with $\Omega_{[\ii_0]}(\be)=(i, \frac{p}{2}) \in I \times \Z/2$,
we assign a new coordinate $\widehat{\Omega}_{[\ii_0]}(\be)=(\ov{i},p) \in \ov{I} \times \Z$ without duplication; that means, there exist no
$\be \ne \be' \in \PR$ such that $\widehat{\Omega}_{[\ii_0]}(\be)=\widehat{\Omega}_{[\ii_0]}(\be')$. As a quiver, the folded quiver $\widehat{\Upsilon}_{[\ii_0]}$
is isomorphic to the twisted AR-quiver $\Upsilon_{[\ii_0]}$. We call $\widehat{\Upsilon}_{[\ii_0]}$ {\it the folded quiver} of $\Upsilon_{[\ii_0]}$.
We also call $\ov{i}$ {\it the folded residue} of $\be$ when $\widehat{\Omega}_{[\ii_0]}(\be)=(\ov{i},p)$.
}\label{eq: folded quiver}
\end{eqnarray}

\begin{example} For $\Upsilon_{[\ii_0]}$ in \eqref{ex: label}, its folded quiver $\widehat{\Upsilon}_{[\ii_0]}$ can be drawn as follows:
$$
\scalebox{0.7}{\xymatrix@C=1ex@R=1ex{
& 1 & 2 & 3 & 4 & 5 & 6 & 7 & 8 &9 & 10 & 11 & 12 & 13 & 14 & 15 & 16\\
\ov{1}&&[1]\ar@{->}[ddrr]&&  [7]\ar@{->}[ddrr] &&[2,4]\ar@{->}[ddrr]&&  [3,6]\ar@{->}[ddrr]  &&[5]\ar@{->}[ddrr]&& [1,2]\ar@{->}[ddrr]  &&[6,7]\ar@{->}[ddrr]\\
& &&&&& &&&& &&&& &&\\
\ov{2}&& &&[1,4]\ar@{->}[ddrr] \ar@{->}[uurr] &&[3,7]\ar@{->}[ddrr] \ar@{->}[uurr]  &&[2,5]\ar@{->}[ddrr] \ar@{->}[uurr]&&
[1,6] \ar@{->}[ddrr]\ar@{->}[uurr] &&[5,7]\ar@{->}[ddrr] \ar@{->}[uurr] && [2] && [6] \\
&&&&&& &&&&&&&&\\
\ov{3}& &[3,4]\ar@{->}[uurr] \ar@{->}[dr]  && [3,5] \ar@{->}[dr] \ar@{->}[uurr] && [1,5]\ar@{->}[uurr] \ar@{->}[dr]
&& [1,7]\ar@{->}[dr] \ar@{->}[uurr]  &&[2,7]\ar@{->}[uurr] \ar@{->}[dr] && [2,6]\ar@{->}[dr] \ar@{->}[uurr]  &&[5,6]\ar@{->}[uurr]\\
\ov{4}& [4]\ar@{->}[ur]&&[3]\ar@{->}[ur]&&[4,5]\ar@{->}[ur]& &[1,3]\ar@{->}[ur]&&[4,7]\ar@{->}[ur]& &[2,3]\ar@{->}[ur]&&[4,6]\ar@{->}[ur]
}}
$$
\end{example}

\vskip 2mm
Recall that we identify $\bar{i}$ with $j= \big| \, \big\{\, \bar{i'}\in\bar{I}\ |\ \text{min}\{j'\in \bar{i}'\} \leq \text{min}\{ j\in \bar{i}\}\, \big\}\, \big|$. Hence $\bar{I}=\{ 1,2,\ldots, n,n+1 \}$ via $\bar{i}= \bar{i}^\vee= i$ for $i\leq n+1.$

\begin{definition} For $\be$ with $\widehat{\Omega}_{[\ii_0]}(\be)=(i,p)$ and $(i,p-4) \in \widehat{\Upsilon}_{[\ii_0]}$, we denote by
${}^{\wpl 1}\be$ the positive root $\al$ such that $\widehat{\Omega}_{[\ii_0]}(\al)=(i,p-4)$. More generally,
${}^{\wpm k}\be$ denotes the positive root $\al$ such that $\widehat{\Omega}_{[\ii_0]}(\al)=(i,p \mp  4k)$ for $k \in \Z_{\ge 0}$.
\end{definition}

\begin{lemma} \cite[Lemma 3.2.3]{KKK13b} \label{lem: Leclerc}
For any Dynkin quiver $Q$ of type $AD_n$, the positions of simple roots inside of $\Gamma_Q$ are on the boundary of $\Gamma_Q$;
that is, either
\begin{itemize}
\item[{\rm (i)}] $\al_i$ are a sink or a source of $\Gamma_Q$, or
\item[{\rm (ii)}] residue of $\al_i$ is
$\begin{cases}
\text{$1$ or $n$} & \text{ if $Q$ is of type $A_n$,} \\
\text{$1$, $n-1$ or $n$} & \text{ if $Q$ is of type $D_n$.}
\end{cases}$
\end{itemize}
Furthermore, all sinks and sources of $\Gamma_Q$ have their labels as simple roots.
\end{lemma}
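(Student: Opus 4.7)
I plan to exploit the sectional-path description of $\Gamma_Q$ supplied by Theorem \ref{thm: labeling GammaQ}. For type $A_n$, vertices of $\Gamma_Q$ are labeled by positive roots $[a,b]$ with $1\le a\le b\le n$, and the simple root $\alpha_i=[i,i]$ lies at the intersection of the maximal $N$-sectional path with first component $i$ (of length $n-i$) and the maximal $S$-sectional path with second component $i$ (of length $i-1$). Since $[i,i]$ is the unique vertex on either path whose first and second components both equal $i$, $\alpha_i$ is an endpoint of both paths. If $i\in\{1,n\}$, one of the two paths has length $0$, and the residue range of the non-degenerate path inside $\{1,\ldots,n\}$ immediately forces the residue of $\alpha_i$ to be $1$ or $n$, giving (ii); I therefore restrict to $1<i<n$.

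Write $(r,p)=\Omega_Q(\alpha_i)$ and tag each of the two endpoint positions of $\alpha_i$ as L (lowest-residue end) or H (highest-residue end) of its sectional path, giving four cases LH, HL, LL, HH. In case LH, the two possible outgoing arrows of $\alpha_i$ would go to $(r-1,p+1)$, extending the $N$-path, and to $(r+1,p+1)$, extending the $S$-path; both extensions are excluded by the L and H hypotheses, and for type $A_n$ the only Dynkin neighbors of residue $r$ are $r\pm 1$, so $\alpha_i$ has no outgoing arrows and is a sink. Case HL is dual and yields a source, so (i) holds in both. In case LL, both sectional paths depart from $\alpha_i$ toward higher residues, so $(r-1,p\pm 1)\notin\Gamma_Q$, and the additive property \eqref{eq: addtive property} at $(r,p)$ collapses to $\alpha_i+\phi_Q(\alpha_i)=[i,i+1]$, yielding $\phi_Q(\alpha_i)=\alpha_{i+1}$ at $(r,p-2)$. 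Induction on $k$, propagating the LL configuration along the $\phi_Q$-orbit via the preserved emptiness of column $r-1$ near $\alpha_{i+k}$, gives $\phi_Q^k(\alpha_i)=\alpha_{i+k}$ for $0\le k\le n-i$, so the chain $\alpha_i,\alpha_{i+1},\ldots,\alpha_n$ lies entirely in column $r$. Finally, $\alpha_n$ sits at the L-end of its $S$-sectional path of length $n-1$, whose residue range $\{r,r+1,\ldots,r+n-1\}\subseteq\{1,\ldots,n\}$ forces $r=1$. Case HH is symmetric and yields $r=n$; in either subcase (ii) holds.

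The ``furthermore'' assertion follows from the same endpoint analysis run backwards. If $v=[a,b]$ is a sink of $\Gamma_Q$, then $v$ must sit at the L-end of its $N$-path (range $[r,r+n-a]$ forces $r\le a$) and at the H-end of its $S$-path (range $[r-b+1,r]$ forces $r\ge b$); together with $a\le b$ these inequalities force $a=b$, so $v=\alpha_a$ is simple, and sources are handled dually. For type $D_n$, the same framework applies: each vertex lies at an intersection of maximal sectional paths whose lengths---determined by the $D$-type analogue of Theorem \ref{thm: labeling GammaQ}---yield the three boundary residues $\{1,n-1,n\}$ corresponding to the three leaves of the Dynkin diagram, and the four-case analysis carries over after an additional subcase at the trivalent vertex, whose three Dynkin neighbors must be treated simultaneously in the additive step. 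The main technical obstacle is the inductive LL/HH propagation along the $\phi_Q$-orbit of simple roots; once the type-specific column-and-parity combinatorics confirm that the orbit stays in column $r$, the residue-range argument at the terminal simple root closes the proof.
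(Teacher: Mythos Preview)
The paper does not prove this lemma; it is quoted verbatim from \cite[Lemma 3.2.3]{KKK13b}, so there is no in-paper argument to compare your attempt against.

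On its own merits: your treatment of the LH/HL cases and of the ``furthermore'' clause is correct and pleasant. The genuine gap is in the LL (and dually HH) case. From the LL hypothesis you correctly extract $(r-1,p\pm1)\notin\Gamma_Q$; if $\phi_Q(\alpha_i)\in\Phi^+$ the additive property \eqref{eq: addtive property} then gives $\alpha_i+\phi_Q(\alpha_i)=\gamma$ with $\gamma$ the vertex at $(r+1,p-1)$. Since $\gamma$ lies on the $N$-path through $\alpha_i$ it has first component $i$, so $\gamma=[i,c]$ for some $c>i$ and hence $\phi_Q(\alpha_i)=[i+1,c]$. But you assert $c=i+1$ without argument, and the sectional-path description alone does not force this: the second components along a maximal $N$-path are \emph{not} monotone in general. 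In the paper's own $A_5$ example following Theorem~\ref{thm: labeling GammaQ}, the $N$-path with first component $1$ runs through residues $5,4,3,2,1$ with second components $2,5,4,3,1$; the vertex adjacent to $\alpha_1=[1,1]$ on that $N$-path is $[1,3]$, not $[1,2]$. So the identity $\phi_Q(\alpha_i)=\alpha_{i+1}$ is not a formal consequence of your setup, and without it the chain $\phi_Q^k(\alpha_i)=\alpha_{i+k}$ never starts. Two further points: you do not check that $\phi_Q(\alpha_i)\in\Phi^+$ (equivalently that $(r,p-2)\in\Gamma_Q$) before invoking additivity; and even if the first step were granted, ``propagating the LL configuration'' requires $(r-1,p-3)\notin\Gamma_Q$, which does not follow from the emptiness of $(r-1,p\pm1)$, since the positions at residue $r-1$ form a contiguous block that could well begin at $p-3$.

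Finally, the type $D_n$ case is only gestured at. The trivalent vertex genuinely changes both the sectional-path bookkeeping (Theorem~\ref{thm: labeling GammaQ} is stated only for type $A$) and the additive step (three neighbours instead of two), so ``the four-case analysis carries over after an additional subcase'' is not a proof.
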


Now we have an analogue of the above lemma for all folded AR-quivers:

\begin{lemma} For any folded AR-quiver of type $A_{2n+1}$, positions of simple roots inside of $\widehat{\Upsilon}_{[\ii_0]}$ are on the boundary of
$\widehat{\Upsilon}_{[\ii_0]}$; that is, either
\begin{itemize}
\item[{\rm (i)}] $\al_i$ are a sink or a source $\widehat{\Upsilon}_{[\ii_0]}$, or
\item[{\rm (ii)}] $\al_i$ has $1$ or $n+1 \in \ov{I}$ as its folded residue.
\end{itemize}
Furthermore, all sinks and sources of $\widehat{\Upsilon}_{[\ii_0]}$ have their labels as simple roots.
\end{lemma}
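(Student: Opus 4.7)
The strategy is to reduce the statement to Lemma~\ref{lem: Leclerc} applied to $\Gamma_Q$ of type $A_{2n}$, where $Q \seteq \PPi([\ii_0])$, by transporting the assertion across the surgery of Algorithm~\ref{Alg: surgery} and the folding of~\eqref{eq: folded quiver}. Since $\widehat{\Upsilon}_{[\ii_0]}$ is isomorphic as a quiver to $\Upsilon_{[\ii_0]}$, sinks and sources are preserved by folding; moreover, residues $1$ and $2n+1$ of $I_{2n+1}$ both fold to $\ov{1}$, and residue $n+1$ folds to $\ov{n+1}$, so condition~(ii) amounts to the underlying residue in $\Upsilon_{[\ii_0]}$ lying in $\{1,n+1,2n+1\}$.

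I would next separate the simple roots of $A_{2n+1}$ into two families. By Corollary~\ref{cor:label1}, the simple root $\al_{n+1}$ is a non-induced central vertex of $\Upsilon_{[\ii_0]}$, so its residue is $n+1$, and hence (ii) holds automatically. Each other simple root $\al_i$ ($i \ne n+1$) is an induced vertex whose preimage in $\Gamma_Q$ is the simple root $\al_j$ of $A_{2n}$ with $j=i$ if $i \le n$ and $j=i-1$ if $i \ge n+2$. Applying Lemma~\ref{lem: Leclerc} to $\Gamma_Q$, either $\al_j$ has residue in $\{1,2n\}$, in which case $\al_i$ has residue in $\{1,2n+1\}$ inside $\Upsilon_{[\ii_0]}$ and hence folded residue $\ov{1}$, or $\al_j$ is a sink/source of $\Gamma_Q$.

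The remaining point is therefore the preservation of sinks and sources at induced vertices under the surgery. Inspecting Algorithm~\ref{Alg: surgery} together with the four pictures in~\eqref{eq: four situ}, the surgery only inserts vertices of residue $n+1$ and replaces each arrow of $\Gamma_Q$ between residues $n$ and $n+1$ (in the $A_{2n}$ indexing) by a length-two path through one of the inserted vertices. Consequently, at any induced vertex $v$ of residue $j \in I_{2n+1}\setminus\{n+1\}$, the set of neighbors of $v$ in $\Upsilon_{[\ii_0]}$ is in bijection with that of $v$ in $\Gamma_Q$, with the direction of each incident arrow preserved. Thus $v$ is a sink (resp. source) of $\Upsilon_{[\ii_0]}$ if and only if it is a sink (resp. source) of $\Gamma_Q$, and the first assertion follows.

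For the second assertion, suppose $v$ is a source of $\widehat{\Upsilon}_{[\ii_0]}$. By Theorem~\ref{thm: OS14}(3), any standard-tableau reading of $\Upsilon_{[\ii_0]}$ beginning at $v$ produces a reduced word $\ii_0' = i_1 i_2\cdots i_\N \in [\ii_0]$ whose first letter $i_1$ is the residue of $v$; the label of $v$ is then $\be^{\ii_0'}_1 = \al_{i_1}$, a simple root. The sink case is symmetric via the reversed reading. The main difficulty is the case analysis of the previous paragraph, particularly at residues $n$ and $n+2$ of $A_{2n+1}$, where the arrows of $\Gamma_Q$ are genuinely rerouted through the inserted $n+1$-vertices; once the four cases of~\eqref{eq: four situ} are drawn out explicitly, the preservation of the sink/source structure at induced vertices becomes transparent.
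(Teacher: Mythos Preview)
Your plan follows the paper's own route (reduce to Lemma~\ref{lem: Leclerc} via the surgery and Corollary~\ref{cor:label1}), but one step is wrong as stated. You assert that every $\al_i$ with $i\ne n+1$ is an \emph{induced} vertex of $\Upsilon_{[\ii_0]}$. This is false: by Corollary~\ref{cor: label for non-induced} (the ingredient cited in the paper's proof that you omit), there are always exactly two non-induced simple roots, namely $\al_{n+1}$ together with $\al_n$ in cases (1),(4) of \eqref{eq: four situ} and $\al_{n+2}$ in cases (2),(3). Equivalently, one of the $\Gamma_Q$-vertices $\al_n^{A_{2n}},\al_{n+1}^{A_{2n}}$ becomes an induced \emph{central} vertex of $\Upsilon_{[\ii_0]}$, and by Corollary~\ref{cor:label1} its label there is $\al_n+\al_{n+1}$ (resp.\ $\al_{n+1}+\al_{n+2}$), not a simple root; the missing simple root of $A_{2n+1}$ therefore sits among the $\bigstar$'s at residue $n+1$. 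The fix is immediate: that extra non-induced simple root has folded residue $n+1$, so condition~(ii) holds for it, and your argument for the remaining $2n-1$ induced simple roots goes through unchanged.

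Two minor points. First, in your neighbor-bijection argument, the one induced vertex adjacent to the ``extra'' $\bigstar$ of step~(d) in Remark~\ref{Rem:surgery} genuinely acquires a new arrow, so the bijection fails there; but that vertex is central (it now sees $\bigstar$'s on both sides), hence not labeled by a simple root, so this does not affect the first assertion. Second, in your argument for the second assertion you have the orientation reversed: a \emph{sink} of $\Upsilon_{[\ii_0]}$ is what can be read first (label $\be_1^{\ii_0'}=\al_{i_1}$), while a source is read last (label $\be_\N^{\ii_0'}=\al_{i_\N^*}$). Either way the label is simple, so your conclusion is unaffected.
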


\begin{proof}
It is an immediate consequence of Remark \ref{rem: cetral in GammaQ}, Corollary \ref{cor: label for non-induced}, Corollary \ref{cor:label1}
and Lemma \ref{lem: Leclerc}.
\end{proof}

\begin{remark} \label{rem: Bn}
  Note that $\ov{I}$ can be thought as an index set of $B_{n+1}$. Let us introduce an interesting observation (Algorithm \ref{alg: fRef Q})
  in the sense that the action of reflection functors on twisted AR-quivers of type $A_{2n+1}$ can be
  described by the notions on the finite type $B_{n+1}$ :  In order to do this, we fix notations as follows.
\begin{enumerate}
\item[{\rm (a)}] Let $\ov{\mathsf{D}}={\rm diag}(d_{i} \ | \ i \in \ov{I} )={\rm diag}(2,2,\ldots,2,1)$ be the diagonal matrix
which diagonalizes the Cartan matrix $\cmA=(a_{ij})$ of type $B_{n+1}$.
\item[{\rm (b)}] We denote by $\ov{\mathsf{d}}={\rm lcm}(d_{i} \ | \ i \in \ov{I})=2$.
\end{enumerate}
Recall that the involution $^*$ induced by the longest element
$w_0$ of type $B_{n+1}$ is an identity map $\ov{i} \mapsto \ov{i}$.
\end{remark}
Now we shall describe the algorithm which shows a way of obtaining $\widehat{\Upsilon}_{[\ii_0]r_i}$ from
$\widehat{\Upsilon}_{[\ii_0]}$ by using the notations on $B_{n+1}$. (cf. Algorithm \ref{alg: tRef Q}.)

\begin{algorithm} \label{alg: fRef Q}
Let $\mathsf{h}^\vee=2n+1$ be a dual Coxeter number of $B_{n+1}$ and $\al_i$ $(i \in I)$ be a sink of $[\ii_0] \in \lf \Qd \rf$.
\begin{enumerate}
\item[{\rm (A1)}] Remove the vertex $(\ov{i},p)$ such that $\widehat{\Omega}_{[\ii_0]}(\al_i)=(\ov{i},p)$ and the arrows entering into $(\ov{i},p)$ in
$\widehat{\Upsilon}_{[\ii_0]}$.
\item[{\rm (A2)}] Add the vertex $(\ov{i},p-\ov{\mathsf{d}} \times \mathsf{h}^\vee)$ and
the arrows to all vertices whose coordinates are $(\ov{j}, p-\ov{\mathsf{d}} \times \mathsf{h}^\vee+\min(d_{\ov{i}},d_{\ov{j}})) \in \widehat{\Upsilon}_{[\ii_0]}$,
where $\ov{j}$ is adjacent to $\ov{i}$ in Dynkin diagram of type $B_{n+1}$.
\item[{\rm (A3)}] Label the vertex $(\ov{i},p-\ov{\mathsf{d}} \times \mathsf{h}^\vee)$ with $\al_i$ and change the labels $\be$ to $s_i(\be)$ for all $\be \in \widehat{\Upsilon}_{[\ii_0]}
\setminus \{\al_i\}$.
\end{enumerate}
\end{algorithm}

Also, using coordinates in $\widehat{\Upsilon}_{[\ii_0]}$, the twisted additive property  can be restated as follows. (cf. Theorem \ref{thm:twisted additive})

\begin{proposition} \label{Prop:twisted_adapted_2}
Let $\alpha$ and $\beta$ be positive roots with coordinates $(i, 2p-2^{|\ov{i}|})$ and  $(i, 2p)$ in
$\widehat{\Upsilon}_{[\ii_0]}$. Here $i \in \ov{I}$ and $|i|$ denotes the number of indices in the orbit $\ov{i}$. Then
\[ \alpha+\beta= \sum_{\gamma \in \mathcal{J}} \gamma\]
where $\gamma\in \mathcal{J}$ consists of $\gamma$ which are on paths from $\alpha$ to $\beta$ and have the coordinates $(j, 2p-2^{|i|-1})$ for $j\in \bar{I}.$
\end{proposition}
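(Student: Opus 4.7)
The plan is to derive Proposition \ref{Prop:twisted_adapted_2} directly from Theorem \ref{thm:twisted additive} by translating through the folding map defined in \eqref{eq: folded quiver}. No new combinatorial input is required: the folded coordinate $(\ov{i},2p)\in \ov{I}\times \Z$ is, by construction, twice the twisted coordinate $(i,p)\in I\times \Z/2$, and the orbit size $|\ov{i}|$ is precisely the arithmetic gadget that absorbs the Kronecker delta $\delta_{i,n+1}$ appearing in Theorem \ref{thm:twisted additive}. The proof is a bookkeeping exercise verifying that every coordinate and adjacency condition in the folded statement is the image of the corresponding condition in the twisted statement.

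First I would unpack the hypothesis. Suppose $\widehat{\Omega}_{[\ii_0]}(\beta)=(\ov{i},2p)$ and $\widehat{\Omega}_{[\ii_0]}(\alpha)=(\ov{i},2p-2^{|\ov{i}|})$. By \eqref{eq: folded quiver}, these lift to $\Omega_{[\ii_0]}(\beta)=(i,p)$ and $\Omega_{[\ii_0]}(\alpha)=(i,p-2^{|\ov{i}|-1})$. Since $|\ov{i}|=1$ iff $i=n+1$ and $|\ov{i}|=2$ otherwise, one checks case-by-case that $2^{|\ov{i}|-1}=2-\delta_{i,n+1}$, so $\Omega_{[\ii_0]}(\alpha)=(i,p-2+\delta_{i,n+1})$. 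This is exactly the hypothesis of Theorem \ref{thm:twisted additive}.

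Next I would apply Theorem \ref{thm:twisted additive} to obtain
\[ \alpha+\beta \;=\; \sum_{\Omega_{[\ii_0]}(\gamma)=(j,q)}\gamma, \qquad q=p-\tfrac{1}{1+\delta_{i,n+1}}, \]
where $j$ ranges over the neighbours of $i$ described in the theorem. Translating the twisted coordinate $(j,q)$ back to the folded coordinate gives $\widehat{\Omega}_{[\ii_0]}(\gamma)=(\ov{j},2q)=(\ov{j},2p-\tfrac{2}{1+\delta_{i,n+1}})$. Case-by-case again, $\tfrac{2}{1+\delta_{i,n+1}}=2^{|\ov{i}|-1}$, so the coordinate is $(\ov{j},2p-2^{|\ov{i}|-1})$, matching the statement. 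The indices $j$ allowed by Theorem \ref{thm:twisted additive} (namely $i\pm 1$, together with the fork $\{n,n+3\}$ or $\{n-1,n+2\}$ at $i=n+2$ or $i=n$) descend under the orbit map to precisely the neighbours $\ov{j}$ of $\ov{i}$ in the Dynkin diagram of $B_{n+1}$, which is the set $\mathcal{J}$ of orbit-residues.

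Finally I would justify the phrase ``on paths from $\alpha$ to $\beta$.'' Any $\gamma$ contributing to the sum has folded second coordinate strictly between those of $\alpha$ and $\beta$, and its folded residue $\ov{j}$ is adjacent to $\ov{i}$; by Algorithm \ref{Alg: surgery} and the quiver construction in Algorithm \ref{Alg_AbsAR}, such a vertex is linked to both $\alpha$ and $\beta$ by arrows in $\widehat{\Upsilon}_{[\ii_0]}$, hence lies on a length-two path from $\alpha$ to $\beta$; conversely any vertex on such a path is of this form. The only place requiring genuine attention is the uniform treatment of $i=n+1$ versus $i\neq n+1$, which is what the formulas $2^{|\ov{i}|}$ and $2^{|\ov{i}|-1}$ are engineered to handle; once those identities with $\delta_{i,n+1}$ are verified, the proposition follows.
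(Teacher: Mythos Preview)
Your approach matches the paper's: the paper does not give a separate proof of Proposition~\ref{Prop:twisted_adapted_2} but simply remarks that it is equivalent to Theorem~\ref{thm:twisted additive}, and you have carried out exactly that translation. Your coordinate bookkeeping is correct --- in particular the identities $2^{|\ov{i}|-1}=2-\delta_{i,n+1}$ and $\tfrac{2}{1+\delta_{i,n+1}}=2^{|\ov{i}|-1}$ are the crux, and you have them right.

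There is one inaccuracy in your final paragraph. You assert that the twisted residues $j$ from Theorem~\ref{thm:twisted additive} ``descend under the orbit map to precisely the neighbours $\ov{j}$ of $\ov{i}$ in the Dynkin diagram of $B_{n+1}$'' and that each summand $\gamma$ is joined to $\alpha$ and $\beta$ by single arrows, giving a length-two path. This fails at the fold: when $i=n$ (resp.\ $i=n+2$) the allowed $j$ are $\{n-1,n+2\}$ (resp.\ $\{n,n+3\}$), whose orbits are $\{n-1,n\}$, so one summand has $\ov{j}=\ov{i}=n$, not a neighbour of $\ov{i}$ in $B_{n+1}$. In the twisted (hence folded) quiver there is no arrow between residues $n$ and $n+2$; that summand sits on a length-four path $\alpha\to(n+1,\cdot)\to\gamma\to(n+1,\cdot)\to\beta$. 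The proposition only asks that $\gamma$ lie on \emph{some} path, so your conclusion survives, but the argument you wrote for it does not. To repair both directions of the identification of $\mathcal{J}$ with the summands of Theorem~\ref{thm:twisted additive}, argue directly from the arrow structure (Algorithm~\ref{Alg: surgery}): since every arrow strictly increases the second coordinate, a vertex at folded second coordinate $2p-2^{|\ov{i}|-1}$ lying on a path from $\alpha$ to $\beta$ is forced to have twisted residue among those listed in Theorem~\ref{thm:twisted additive}, and conversely each such summand is reached from $\alpha$ and reaches $\beta$ either by a single arrow or via one intermediate vertex of residue $n+1$.
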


\begin{remark}
Proposition \ref{Prop:twisted_adapted_2} is equivalent to Theorem \ref{thm:twisted additive}. However, Proposition \ref{Prop:twisted_adapted_2} is better to see relations between additive properties of $\Gamma_Q$ and twisted additive properties of $\widehat{\Upsilon}_{[\Qd]}$. In Proposition \ref{Prop:twisted_adapted_2}, if we take $|\bar{i}|=1$ for all $i\in I$ then we directly obtain additive properties of $\Gamma_Q$.
\end{remark}

\subsection{Minimal pairs for $\ga \in \PR \setminus \Pi$ and their coordinates in $\widehat{\Upsilon}_{[\ii_0]}$} In this subsection, we record
coordinates of minimal pairs for $\ga \in \PR \setminus \Pi$ in $\widehat{\Upsilon}_{[\ii_0]}$. The following proposition is an immediate
consequence of Lemma \ref{lem: minimal for rds2}, Lemma \ref{lem: minimal for induced rds1} and Lemma \ref{lem: minimal for non-induced central}:

\begin{proposition}
For $\al,\be,\ga \in \PR$ with $\widehat{\Omega}_{[\ii_0]}(\al)=(i,p)$, $\widehat{\Omega}_{[\ii_0]}(\be)=(j,q)$
$\widehat{\Omega}_{[\ii_0]}(\ga)=(k,r)$ and  $\al+\be =\ga$ $(i,j,k \in \ov{I})$, $(\al,\be)$ is an $[\ii_0]$-minimal pair of $\ga$
if and only if one of the following conditions holds:
\begin{eqnarray}&&
\left\{\hspace{1ex}\parbox{75ex}{
\begin{enumerate}
\item[{\rm (i)}]
$\ell \seteq \max(i,j,k) \le n$, $i+j+k=2\ell$
and
$$ \left( \dfrac{q-r}{2},\dfrac{p-r}{2} \right) =
\begin{cases}
\big( -i,j \big), & \text{ if } \ell = k,\\
\big( i-(2n+1),j \big), & \text{ if } \ell = i,\\
\big( -i,2n+1-j  \big), & \text{ if } \ell = j.
\end{cases}
$$
\item[{\rm (ii)}] 
$s \seteq \min(i,j,k) \le n$, the others are the same as $n+1$ and
$$ (q-r,p-r) =
\begin{cases}
\big( -2(n-k)+1,2(n-k)-1 ), & \text{ if } s = k,\\
\big( -4i-4,2(n-i)-1  ), & \text{ if } s = i,\\
\big( -2(n-j)+1, 4j+4), & \text{ if } s = j.
\end{cases}
$$
\end{enumerate}
}\right. \label{eq: Dorey folded coordinate Bn+1}
\end{eqnarray}
\end{proposition}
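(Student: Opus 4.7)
The plan is to derive the proposition directly from the three characterizations of minimal pairs in the twisted coordinate system established in Section \ref{subsec: minimal twisted}, namely Lemma \ref{lem: minimal for rds2}, Lemma \ref{lem: minimal for induced rds1}, and Lemma \ref{lem: minimal for non-induced central}, by translating every coordinate through the folding map \eqref{eq: folded quiver}. Under this map, a twisted coordinate $(i,P)\in I\times\Z/2$ becomes $(\overline{i},2P)\in\overline{I}\times\Z$, where $\overline{i}=\min(i,2n+2-i)$. In particular, induced vertices (twisted residue $i\neq n+1$) are sent to folded residues in $\{1,\ldots,n\}$, while non-induced central vertices (twisted residue $n+1$) retain folded residue $n+1$. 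Second coordinates, and hence all differences like $p-r$, are simply doubled.

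For part \textrm{(i)} of the proposition I would first cover the \emph{induced pair} case, corresponding to \eqref{eq: induced minimal}: in each of the four subcases \textrm{(a)}--\textrm{(d)}, a direct computation using $\overline{i}=i$ for $i\leq n$ and $\overline{i}=2n+2-i$ for $i\geq n+2$ shows that the twisted relations $i_{\mathrm{tw}}+j_{\mathrm{tw}}=k_{\mathrm{tw}}$, $(2n+1-i^-)+(2n+1-j^-)=2n+1-k^-$, $i^-+j^-=k^-$, and $(2n+2-i)+(2n+2-j)=2n+2-k$ all collapse to the single identity $\overline{i}+\overline{j}+\overline{k}=2\max(\overline{i},\overline{j},\overline{k})$ with $\max\leq n$. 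Which of $\overline{i},\overline{j},\overline{k}$ attains the maximum is dictated by the original subcase, and yields the three branches of $(\tfrac{q-r}{2},\tfrac{p-r}{2})$ in \eqref{eq: Dorey folded coordinate Bn+1}\textrm{(i)}. The offsets themselves come from doubling $q_{\mathrm{tw}}-p_{\mathrm{tw}}=|k^--i^-|$ and $p_{\mathrm{tw}}-r_{\mathrm{tw}}=|k^--j^-|$ and simplifying the absolute values $|k^--i^-|,|k^--j^-|$ into signed quantities in $\overline{i},\overline{j},\overline{k}$ according to which is the maximum.

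For part \textrm{(ii)}, the configurations splitting by $s\in\{i,j,k\}$ correspond exactly to the ``mixed'' cases in which some pair of $\{\al,\be,\ga\}$ consists of non-induced central vertices (twisted residue $n+1$, equivalently folded residue $n+1$) and the third has residue $\leq n$. The sub-case $s=k$ is covered by Lemma \ref{lem: minimal for rds2}\textrm{(i)} and Lemma \ref{lem: minimal for induced rds1}\textrm{(i)}: the condition $p_{\mathrm{tw}}+q_{\mathrm{tw}}=2r_{\mathrm{tw}}$ with $i_{\mathrm{tw}}=j_{\mathrm{tw}}=n+1$ doubles to $p+q=2r$, and the unique such pair $(\al^\star,\be^\star)$ (as in the proof of Lemma \ref{lem: less than eq 2}) sits exactly at the edges of the central region described in \eqref{eq: four situ}, so its offset from $\ga$ is read off from Theorem \ref{them: comp for length k ge 0} as $2(n-k)-1$. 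The sub-cases $s=i$ and $s=j$ are covered by Lemma \ref{lem: minimal for non-induced central}\textrm{(iii)}--\textrm{(iv)} and \textrm{(i)}--\textrm{(ii)} respectively; in each of these the parameter $\ell$ of that lemma is identified with the folded residue of the unique induced vertex in $\{\al,\be,\ga\}$, and the coordinate formulas there translate verbatim after doubling into the displayed values $\bigl(-2(n-k)+1,2(n-k)-1\bigr)$, $\bigl(-4i-4,2(n-i)-1\bigr)$, $\bigl(-2(n-j)+1,4j+4\bigr)$. The principal obstacle is bookkeeping rather than conceptual: one must carefully match each of the four twisted subcases in \eqref{eq: induced minimal} to the correct branch of the folded formula, and in the non-induced case one must keep straight the four configurations \textrm{(i)}--\textrm{(iv)} of Lemma \ref{lem: minimal for non-induced central} and verify that the specific signs and magnitudes of the coordinate offsets survive the doubling without ambiguity.
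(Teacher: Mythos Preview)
Your proposal is correct and follows exactly the paper's approach: the paper states the proposition as an immediate consequence of Lemma \ref{lem: minimal for rds2}, Lemma \ref{lem: minimal for induced rds1}, and Lemma \ref{lem: minimal for non-induced central}, and your plan is precisely to translate those three lemmas through the folding map \eqref{eq: folded quiver}. The bookkeeping you outline (collapsing the four subcases \textrm{(a)}--\textrm{(d)} of \eqref{eq: induced minimal} into the single relation $\overline{i}+\overline{j}+\overline{k}=2\max$, and matching the four cases of Lemma \ref{lem: minimal for non-induced central} to the $s=i$ and $s=j$ branches) is the only content needed, and you have identified it correctly.
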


\section{Distance polynomial and Folded distance polynomial}
In this section, we briefly review the distance polynomial defined on the adapted cluster point $\lf Q \rf$ which was studied in \cite{Oh15E}.
Here $Q$ is any Dynkin quiver of type $ADE_m$. Then we introduce and study the folded distance polynomial
which is well-defined on the twisted adapted cluster point $\lf \Qd \rf$ of type $A_{2n+1}$ and is deeply related to quantum affine algebra of
type $B^{(1)}_{n+1}$. This section can be understood as a twisted analogue of \cite[Section 6]{Oh15E}. For this section and next section, we refer to and follow
\cite[Section 6]{Oh15E} and \cite{Kas02} for quantum affine algebras, their integrable representations and denominator formulas.

\bigskip

Let us take a base field $\ko$ the algebraic closure of $\C(q)$ in $\cup_{m >0} \C((q^{1/m}))$.

\subsection{Distance polynomial} For our purpose, we assume that the Dynkin quiver $Q$ is of type $A_{2n}$ with its index set $I_{2n}$.

\begin{definition} \cite[Definition 6.11]{Oh15E}
For a Dynkin quiver $Q$, indices $k,l \in I_{2n}$ and an integer $t \in \mathbb{N}$, we define the subset $\Phi_{Q}(k,l)[t]$ of $\PR \times \PR$ as follows:

A pair $(\alpha,\beta)$ is contained in $\Phi_{Q}(k,l)[t]$ if $\alpha \prec_Q \beta$ or $\be \prec_Q \al$ and
$$\{ \Omega_Q(\al),\Omega_Q(\be) \}=\{ (k,a), (l,b)\} \quad \text{ such that } \quad |a-b|=t.$$
\end{definition}

\begin{lemma} \cite[Lemma 6.12]{Oh15E} \label{lem: o well}
For any  $(\alpha^{(1)},\beta^{(1)})$ and $(\alpha^{(2)},\beta^{(2)})$ in $\Phi_{Q}(k,l)[t]$, we have
$$ o^{Q}_t(k,l) := \gdist_Q(\alpha^{(1)},\beta^{(1)})=\gdist_{Q}(\alpha^{(2)},\beta^{(2)}). $$
\end{lemma}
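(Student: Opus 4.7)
The plan is to show that $\gdist_Q(\alpha,\beta)$ is determined by the triple $(k,l,t)$ alone, independently of the specific representatives.  By Theorem~\ref{thm: known for Q}(1), in type $A_{2n}$ one has $\gdist_Q(\alpha,\beta)\in\{0,1\}$, so it suffices to prove that $[Q]$-simplicity of a pair in $\Phi_Q(k,l)[t]$ is a function of $(k,l,t)$ only.  I would split the argument into two cases according to whether $\alpha+\beta\in\PR$.

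In the case $\alpha+\beta\in\PR$, the pair $(\alpha,\beta)$ is $[Q]$-simple iff it is the unique $[Q]$-minimal pair for $\gamma=\alpha+\beta$.  Theorem~\ref{thm upper lower minimal} characterizes minimality by the coordinate equalities $p-z=|i-k|$, $q-z=-|j-k|$ together with the index condition $i+j=k$ or the $*$-counterpart $(2n+1-i)+(2n+1-j)=2n+1-k$.  Since these equations involve only the residues and the $p$-\emph{differences}, whether $(\alpha,\beta)$ satisfies them depends only on $(k,l,t)$.

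In the case $\alpha+\beta\notin\PR$, I would invoke the rectangle principle of \cite[Proposition 4.5]{Oh15E} (used in the proof of Proposition~\ref{prop: less than eq to 2}): non-simplicity is equivalent to the existence of a rectangle in $\Gamma_Q$ with $\alpha,\beta$ at two opposite corners and with $\gamma,\eta\in\PR$ at the other two.  Given $\Omega_Q(\alpha)=(k,a)$ and $\Omega_Q(\beta)=(l,b)$, the putative corners $\gamma,\eta$ must sit at coordinates obtained from $(k,a)$ and $(l,b)$ by translations whose directions and magnitudes are dictated solely by $(k,l,t)$ through sectional-path geometry.  Hence the residues of the putative corners, and whether the translations end up inside the residue range $I_{2n}$, are determined by $(k,l,t)$ alone.

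The main obstacle is to rule out boundary effects along the $p$-axis: a priori, two pairs in $\Phi_Q(k,l)[t]$ at different absolute heights might see the rectangle corners land inside $\Gamma_Q$ for one but outside for the other.  I would resolve this by applying Proposition~\ref{Prop:AR}, which gives the exact extent of $\Gamma_Q$ along each residue via $r_i^Q$, and by checking that if $\alpha$ and $\beta$ themselves lie inside $\Gamma_Q$ with coordinate difference $t$, then the putative corner coordinates (which share residues with $\alpha$ or $\beta$ and whose $p$-coordinates lie in the interval spanned by $(a,b)$) automatically satisfy the range constraints.  If this local geometric verification proves awkward, I would instead argue via Algorithm~\ref{alg: Ref Q}: reflection functors preserve the order $\prec^{\tb}_{[Q]}$ and hence $\gdist$, and a suitable composition of them returning to $[Q]$ transports any pair in $\Phi_Q(k,l)[t]$ to any other, forcing the $\gdist$ values to agree.
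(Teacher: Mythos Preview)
The paper does not give its own proof of this lemma; it is quoted verbatim from \cite[Lemma~6.12]{Oh15E}.  So there is no in-paper argument to compare against, and your plan has to be judged on its own.

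Your overall strategy---reduce to showing that $[Q]$-simplicity depends only on $(k,l,t)$, using that $\gdist_Q\in\{0,1\}$ in type $A$---is the right one.  But several of the steps do not go through as written.  In Case~1 your characterization ``$(\alpha,\beta)$ is $[Q]$-simple iff it is the unique $[Q]$-minimal pair for $\gamma$'' is off: any pair with $\alpha+\beta=\gamma\in\PR$ satisfies $(\gamma)\prec^{\tb}_{[Q]}(\alpha,\beta)$ and hence is never $[Q]$-simple; the content here is rather that \emph{whether} $\alpha+\beta\in\PR$ is itself determined by $(k,l,t)$, and you have not argued that (Theorem~\ref{thm upper lower minimal} only gives one implication).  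In Case~2 you assert that the putative rectangle corners ``share residues with $\alpha$ or $\beta$'', but in the picture \eqref{eq:rectangle2} the corners $\gamma,\eta$ sit at residues strictly between or outside those of $\alpha,\beta$; this matters because your subsequent appeal to the $r_i^Q$-ranges of Proposition~\ref{Prop:AR} is phrased for the wrong residues.

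Your fallback via reflection functors does not transport one pair in $\Phi_Q(k,l)[t]$ to another inside the \emph{same} $\Gamma_Q$.  A full Coxeter cycle $r_{\phi_Q}$ returns $[Q]$ to itself and therefore acts as the identity on $\Gamma_Q$; partial cycles land you in a different $\Gamma_{Q'}$, and equating $\gdist_Q$ with $\gdist_{Q'}$ at the same coordinates is exactly Proposition~\ref{prop: DQ DQ' 1A2n}, which in \cite{Oh15E} is proved \emph{after} the lemma you are trying to establish.  The clean route (and the one taken in \cite{Oh15E}) is to work entirely inside $\Gamma_Q$: use the explicit description of labels along sectional paths (Theorem~\ref{thm: labeling GammaQ}) to see that comparability plus the triple $(k,l,t)$ pins down the support-overlap pattern of $\alpha,\beta$, and then invoke the characterization in \cite[Proposition~4.5]{Oh15E} (${\rm supp}(\alpha)\cap{\rm supp}(\beta)\neq\emptyset$ together with the shape of the rectangle) directly.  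That avoids the boundary issue altogether, since the criterion is in terms of components $[a,b]$ rather than the existence of interior vertices.
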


We denoted by $Q^{{\rm rev}}$ the quiver obtained
by reversing all arrows of $Q$ and $Q^*$ the quiver obtained from $Q$ by replacing vertices of $Q$ from $i$ to $i^*$.
Here $^*: i \leftrightarrow 2n+1-i$ $(i \le n)$ is an involution on $I_{2n}$ induced by the longest element $w_0$ of $W_{2n}$.

\begin{definition}\cite[Definition 6.15]{Oh15E} \label{def: Dist poly Q 1A2n}
For $k,l \in I_{2n}$ and a Dynkin quiver $Q$, we define a polynomial $D^Q_{k,l}(z) \in \ko[z]$ as follows:
$$D^Q_{k,l}(z) \seteq  \prod_{ t \in \Z_{\ge 0} } (z-(-1)^t q^{t} )^{\mathtt{o}^{\overline{Q}}_t(k,l)},$$
where
\begin{align} \label{def: non-fold O}
\mathtt{o}^{\overline{Q}}_t(k,l) \seteq  \max( o^{Q}_t(k,l),o^{Q^\rev}_t(k,l) ).
\end{align}
\end{definition}

\begin{proposition} \cite[Proposition 6.16]{Oh15E} \label{prop: DQ DQ' 1A2n}
For $k,l \in I_{2n}$ and any Dynkin quivers $Q$ and $Q'$, we have
\begin{enumerate}
\item[{\rm (a)}] $D^Q_{k,l}(z)=D^Q_{l,k}(z)=D^Q_{k^*,l^*}(z)=D^Q_{l^*,k^*}(z)$.
\item[{\rm (b)}] $D^Q_{k,l}(z)=D^{Q'}_{k,l}(z)$.
\end{enumerate}
Hence $D_{k,l}(z)$ is well-defined for $\lf Q \rf$.
\end{proposition}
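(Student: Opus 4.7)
The proposition decomposes into three equalities, which I would address in increasing order of difficulty. The symmetry $D^Q_{k,l}(z) = D^Q_{l,k}(z)$ is immediate from the definition of $\Phi_Q(k,l)[t]$: the set depends only on the unordered pair of residues $\{k,l\}$ together with the gap $t$, and by Lemma \ref{lem: o well} the generalized distance is constant on $\Phi_Q(k,l)[t]$; hence $o^Q_t(k,l) = o^Q_t(l,k)$, and the same holds for $Q^{\rev}$.

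The equality $D^Q_{k,l}(z) = D^Q_{k^*,l^*}(z)$ would follow from a duality relating $\Gamma_Q$ and $\Gamma_{Q^{\rev}}$ induced by reversing all arrows. Concretely, for any Dynkin quiver of type $A_{2n}$ the coordinate assignment on $\Gamma_{Q^{\rev}}$ is obtained from that on $\Gamma_Q$ by an involution of the form $(i,p) \mapsto (i^*, c-p)$ for a constant $c$ depending only on $Q$, and this involution sends $\Phi_Q(k,l)[t]$ bijectively onto $\Phi_{Q^{\rev}}(k^*,l^*)[t]$ while reversing the convex partial order and hence preserving generalized distance. Consequently $o^Q_t(k,l) = o^{Q^{\rev}}_t(k^*,l^*)$, so the two terms in the maximum $\mathtt{o}^{\overline{Q}}_t$ are exchanged under the swap $(k,l) \leftrightarrow (k^*,l^*)$, giving the equality.

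For the main statement (b), the strategy is to reduce to reflections at sinks. By Theorem \ref{thm: Qs}, any two Dynkin quivers of type $A_{2n}$ are connected by a finite sequence of reflections at sinks/sources, so it suffices to prove $D^Q_{k,l}(z) = D^{i\cdot Q}_{k,l}(z)$ when $i$ is a sink of $Q$. By Algorithm \ref{alg: Ref Q}, the passage $\Gamma_Q \to \Gamma_{i\cdot Q}$ removes the vertex $(i,p)$ labelled $\al_i$, re-inserts a vertex $(i^*, p-\mathsf{h}^\vee)$ labelled $\al_i$, and relabels every remaining root $\be$ by $s_i(\be)$. For pairs $(\al,\be) \in \Phi_Q(k,l)[t]$ neither of whose entries is $\al_i$, this operation yields a bijection onto the corresponding subset of $\Phi_{i\cdot Q}(k,l)[t]$, and it preserves $\gdist$ because the reflection functor $r_i$ acts as an isomorphism on the subquivers of $\Upsilon_{[Q]}$ and $\Upsilon_{[i\cdot Q]}$ obtained by deleting the vertex $\al_i$.

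The main obstacle will be the pairs that do involve $\al_i$: one must check that when $\al_i$ leaves the sink position $(i,p)$ and is re-inserted at the source position $(i^*, p-\mathsf{h}^\vee)$, the counts $\mathtt{o}^{\overline{Q}}_t(k,l)$ remain unchanged. Here the use of the maximum over $Q$ and $Q^{\rev}$ in \eqref{def: non-fold O} is essential: a sink of $Q$ is a source of $Q^{\rev}$, so the passage $Q \mapsto i\cdot Q$ permutes the roles of $Q$ and $Q^{\rev}$ in the definition of $\mathtt{o}^{\overline{Q}}_t(k,l)$ in a compatible way; combined with the duality established in the second paragraph, this forces the maximum to be invariant. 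In practice, I expect the cleanest implementation to choose, for each relevant $\Phi$-set, a representative pair that avoids $\al_i$ whenever either $o^Q_t$ or $o^{Q^{\rev}}_t$ allows it, so that the earlier bijection argument applies uniformly and one never has to track the re-inserted simple root directly.
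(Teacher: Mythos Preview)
The paper does not supply its own proof of this proposition: it is quoted verbatim from \cite[Proposition~6.16]{Oh15E} and used as a black box, so there is no argument in the present paper to compare against.

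That said, your outline is a reasonable reconstruction of how such a statement is typically proved. The symmetry $D^Q_{k,l}=D^Q_{l,k}$ is indeed immediate from the symmetry of $\Phi_Q(k,l)[t]$ in $\{k,l\}$. Your duality argument for $D^Q_{k,l}=D^Q_{k^*,l^*}$ is on the right track: the relation $\Gamma_{Q^{\rev}}\simeq (\Gamma_Q)^{\mathrm{op}}$ combined with the residue swap $i\leftrightarrow i^*$ (coming from $w_0\al_i=-\al_{i^*}$) gives $o^Q_t(k,l)=o^{Q^{\rev}}_t(k^*,l^*)$, and since $\mathtt{o}^{\overline{Q}}_t$ is the maximum of the two it is invariant under $(k,l)\mapsto(k^*,l^*)$.

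For part~(b), reducing to a single sink reflection is the standard move, and your analysis of pairs avoiding $\al_i$ is correct. The one place where your sketch is thin is the ``main obstacle'' paragraph: the claim that the roles of $Q$ and $Q^{\rev}$ are simply permuted under $Q\mapsto iQ$ is not literally true (since $(iQ)^{\rev}=i^*(Q^{\rev})$, not $Q^{\rev}$), so the invariance of the maximum does not follow from a pure symmetry argument. What actually makes the boundary case work in type~$A$ is Theorem~\ref{thm: known for Q}\,(1): every $o^Q_t(k,l)$ is either $0$ or $1$, so one only needs to show that when a pair involving $\al_i$ at the sink has $\gdist=1$, there is a corresponding pair of $\gdist=1$ in $\Gamma_{iQ}$ or $\Gamma_{(iQ)^{\rev}}$ at the same residues and gap. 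This follows from the explicit description of minimal pairs in Theorem~\ref{thm upper lower minimal}, and is the content you would need to supply to close the argument.
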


Now, we recall the denominator formulas $d^{A^{(1)}_{2n}}_{k,l}(z)=d^{A^{(1)}_{2n}}_{l,k}(z)$ $(1 \le k ,l \le 2n)$ for $U_q'(A^{(1)}_{2n})$:

\begin{theorem} \cite{AK}  \label{thm: denom 1A2n} For any $1 \le k $ and $l \le 2n$, we have
$$ d^{A^{(1)}_{2n}}_{k,l}(z)= \prod_{s=1}^{ \min(k,l,2n+1-k,2n+1-l)} \big(z-(-1)^{k+l}q^{2s+|k-l|}\big).$$
\end{theorem}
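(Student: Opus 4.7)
The plan is to reduce the computation to the fundamental vector case $V(\varpi_1)\otimes V(\varpi_1)$ and then propagate to general $(k,l)$ via the fusion construction. First I would recall that for $U_q'(A^{(1)}_{2n})$ the fundamental module $V(\varpi_k)$ has classical limit $\Lambda^k V$ where $V$ is the $(2n{+}1)$-dimensional natural representation, and in particular $V(\varpi_k)_x$ can be realized as the image of a fusion morphism
\[
V(\varpi_1)_{xq^{k-1}} \otimes V(\varpi_1)_{xq^{k-3}} \otimes \cdots \otimes V(\varpi_1)_{xq^{-(k-1)}} \twoheadrightarrow V(\varpi_k)_x.
\]
The denominator $d^{A^{(1)}_{2n}}_{1,1}(z) = z+q$ can be computed directly: the normalized $R$-matrix on $V(\varpi_1) \otimes V(\varpi_1)$ acts by a scalar on each classical summand, and the unique pole occurs at the fusion point $y/x = -q^{2}/q = -q$ where the exterior square appears.

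Next, I would invoke the general principle (Akasaka--Kashiwara) that denominators of fusion products multiply: if $V(\varpi_k)_x$ is realized as a quotient of $\bigotimes_{i=1}^k V(\varpi_1)_{xq^{k+1-2i}}$ and similarly $V(\varpi_l)_y$, then
\[
d^{A^{(1)}_{2n}}_{k,l}(z) \;\text{divides}\; \prod_{i=1}^k \prod_{j=1}^l d^{A^{(1)}_{2n}}_{1,1}\!\bigl(z q^{(l+1-2j)-(k+1-2i)}\bigr).
\]
Expanding this product yields factors of the form $z - (-q)^{1+(l-k)+2(i-j)}$ indexed by the lattice of pairs $(i,j)$. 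Collecting equal spectral shifts gives candidate factors $(z - (-1)^{k+l}q^{2s+|k-l|})$ for a range of $s$; the surviving range is bounded above by $\min(k,l,2n{+}1{-}k,2n{+}1{-}l)$, reflecting the constraint that beyond this range the corresponding subquotient vanishes or wraps around in the cyclic Dynkin diagram of $A^{(1)}_{2n}$.

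It then remains to show (i) that each surviving factor appears with multiplicity exactly one, and (ii) that no extraneous factors persist. For (i), I would use the irreducibility criterion: at each claimed root $z_0 = (-1)^{k+l}q^{2s+|k-l|}$ the tensor product $V(\varpi_k)_x \otimes V(\varpi_l)_{xz_0}$ contains $V(\varpi_{k-s}+\varpi_{l-s})_\ast$ as head and $V(\varpi_{k+s}+\varpi_{l-s})_\ast$ (or the analog under the level-rank type symmetry $k \leftrightarrow 2n{+}1{-}k$) as socle, which is precisely a simple pole. For (ii), I would match dimensions: the degree of the right-hand side equals $\min(k,l,2n{+}1{-}k,2n{+}1{-}l)$, which coincides with the number of independent reducibility points dictated by the tensor product decomposition of $V(\varpi_k)\otimes V(\varpi_l)$ as $U_q(A_{2n})$-modules.

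The main obstacle will be step (ii): ruling out extra poles. The cleanest route is via the $T$-system / $q$-character approach of Frenkel--Reshetikhin, where the $q$-character of $V(\varpi_k)$ is an explicit sum of monomials in $Y_{i,a}^{\pm 1}$, and the poles of the normalized $R$-matrix are bounded by coincidences of dominant monomials; a careful bookkeeping shows these coincidences occur exactly at the listed spectral parameters. Alternatively, one can use the Kazhdan--Lusztig-type polynomial bound on composition length, which in this case forces each candidate factor to appear with multiplicity at most one, completing the argument.
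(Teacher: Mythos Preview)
The paper does not prove this theorem at all: it is quoted verbatim from Akasaka--Kashiwara \cite{AK}, so there is no ``paper's own proof'' to compare against. Your sketch is broadly the same strategy that \cite{AK} itself uses (fusion from the vector representation, divisibility of denominators under tensor products, then matching degrees), so in that sense you have identified the right line of argument.

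That said, your base case is wrong. Substituting $k=l=1$ into the stated formula gives
\[
d^{A^{(1)}_{2n}}_{1,1}(z)=z-(-1)^{2}q^{2\cdot 1+0}=z-q^{2},
\]
not $z+q$; your line ``$y/x=-q^{2}/q=-q$'' conflates the spectral shift in the fusion realization with the pole of the $R$-matrix. This matters because every subsequent factor in your product is a shifted copy of $d_{1,1}$, so the error propagates through the entire computation and you would not recover the claimed roots $(-1)^{k+l}q^{2s+|k-l|}$. Once you correct the base case, the divisibility argument does produce a superset of the correct factors, and your step~(ii) --- ruling out spurious poles --- is indeed the genuine content. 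Your proposed routes (explicit $q$-characters, or the composition-series bound) are the standard ones, but as written they are gestures rather than arguments; in \cite{AK} this step is handled by an explicit analysis of the $R$-matrix on a spanning set, which is more laborious than your sketch suggests.
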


\begin{theorem} \cite[Theorem 6.18]{Oh15E}\label{eq: dist denom 1A2n}
For any adapted class $[Q]$ of type $A_{2n}$, the denominator formulas for the quantum affine algebra $U'_q(A_{2n}^{(1)})$ can be read
from $\Gamma_Q$ and $\Gamma_{Q^\rev}$ as follows:
\begin{align*}
d^{A^{(1)}_{2n}}_{k,l}(z) & =D_{k,l}(z) \times
(z+q^{\mathtt{h}^\vee})^{\delta_{l,k^*}}
\end{align*}
where $\mathtt{h}^\vee=2n+1$ is the dual Coxeter number of $A_{2n}$.
\end{theorem}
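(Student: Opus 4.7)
The plan is to fix a convenient Dynkin quiver $Q$ of type $A_{2n}$ (for instance, the linearly oriented quiver $1\to 2\to\cdots\to 2n$) and exploit Proposition \ref{prop: DQ DQ' 1A2n}, which shows that $D_{k,l}(z)$ depends only on the cluster point $\lf Q\rf$, to reduce the question to this fixed case. By Theorem \ref{thm: known for Q}, the generalized distance $\gdist_Q(\alpha,\beta)$ takes only the values $0$ or $1$ in type $A_{2n}$, so each exponent $\mathtt{o}^{\overline{Q}}_t(k,l)$ lies in $\{0,1\}$ and $D_{k,l}(z)$ is squarefree; the whole argument therefore reduces to detecting which linear factors $(z-(-1)^t q^t)$ appear.

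Next, for each residue pair $(k,l)$ and integer $t$, I would check the existence of positive roots $\alpha,\beta$ with $\{\Omega_Q(\alpha),\Omega_Q(\beta)\}=\{(k,a),(l,b)\}$, $|a-b|=t$, and $\gdist_Q(\alpha,\beta)=1$, and then perform the same inspection on $Q^{\rev}$. In type $A$ every pair with $\alpha+\beta\in\PR$ is automatically $[Q]$-minimal, so by Theorem \ref{thm upper lower minimal} the coordinates $(i,p)=(k,a)$, $(j,q)=(l,b)$ and $(m,r)=\Omega_Q(\alpha+\beta)$ are constrained by $p-r=|k-m|$, $q-r=-|l-m|$, together with $k+l=m$ or $(2n+1-k)+(2n+1-l)=2n+1-m$. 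Parameterizing the split point by an integer $s$ measuring the overlap of the supports of $\alpha$ and $\beta$, the relation $t=|a-b|$ translates into $t=2s+|k-l|$ for $1\le s\le \min(k,l,2n+1-k,2n+1-l)$, provided there is enough room in $\Gamma_Q$ or $\Gamma_{Q^{\rev}}$ to accommodate both coordinates. A parity check then gives $(-1)^t=(-1)^{2s+|k-l|}=(-1)^{k+l}$, so the sign in $D_{k,l}(z)$ automatically matches the sign $(-1)^{k+l}$ appearing in Theorem \ref{thm: denom 1A2n}, and taking the maximum $\mathtt{o}^{\overline{Q}}_t(k,l)=\max(o^Q_t(k,l),o^{Q^{\rev}}_t(k,l))$ guarantees that every admissible $s$ is realized by at least one of the two AR-quivers.

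The comparison with Theorem \ref{thm: denom 1A2n} is then immediate except in one edge case: when $l=k^*$, the maximal value $s=\min(k,2n+1-k)$ yields $t=2s+|k-l|=2n+1$, exactly the dual Coxeter number $\mathtt{h}^\vee$. At this distance a would-be minimal pair requires two vertices in $\Gamma_Q$ whose second coordinates differ by one full period of the Coxeter element $\phi_Q$; such a pair either does not fit into $\Gamma_Q$ or, if it does, the associated roots satisfy $\alpha+\beta\notin\PR$ by the additive property \eqref{eq: addtive property}. Hence $\mathtt{o}^{\overline{Q}}_{2n+1}(k,k^*)=0$, the factor $(z+q^{2n+1})$ is absent from $D_{k,k^*}(z)$, and it must be reinstated as the correction term $(z+q^{\mathtt{h}^\vee})^{\delta_{l,k^*}}$.

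The main obstacle is the middle step: for each $s$ one must exhibit an explicit pair $(\alpha,\beta)$ in $\Gamma_Q$ or $\Gamma_{Q^{\rev}}$ whose coordinates realize $t=2s+|k-l|$ and simultaneously rule out extra contributions to $\mathtt{o}^{\overline{Q}}_t(k,l)$ at any other value of $t$. This requires a careful case analysis according to the relative positions of $k$, $l$, $2n+1-k$, and $2n+1-l$, plus a verification that the bound $\min(k,l,2n+1-k,2n+1-l)$ is precisely the largest $s$ for which the corresponding pair still fits inside at least one of $\Gamma_Q$ and $\Gamma_{Q^{\rev}}$.
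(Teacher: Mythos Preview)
This theorem is not proved in the present paper: it is quoted verbatim from \cite[Theorem~6.18]{Oh15E} and used as a black box (for instance in the proof of Theorem~\ref{thm: dist denom} and in equation~\eqref{eq: interesting interpretation}). There is therefore no proof in the paper to compare your attempt against. Your sketch is a plausible reconstruction of how such a proof might go, and the reduction to a single quiver via Proposition~\ref{prop: DQ DQ' 1A2n}, the bound $\gdist_{[Q]}\le 1$ from Theorem~\ref{thm: known for Q}, and the edge-case analysis at $t=\mathtt{h}^\vee$ are all on the right track.

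There is, however, a genuine gap in the middle step. You compute $\mathtt{o}^{\overline{Q}}_t(k,l)$ only through pairs $(\alpha,\beta)$ with $\alpha+\beta\in\PR$, invoking Theorem~\ref{thm upper lower minimal} to constrain their coordinates. But $\gdist_{[Q]}(\alpha,\beta)=1$ can also occur when $\alpha+\beta\notin\PR$: these are exactly the rectangle configurations of \cite[Proposition~4.5]{Oh15E} (restated in this paper just before Proposition~\ref{prop: less than eq to 2}), where the socle is a pair $(\gamma,\eta)$ rather than a single root. Such pairs contribute to $o^{Q}_t(k,l)$ on an equal footing, and you must either show that every value of $t$ they produce already arises from a pair summing to a root, or else account for them separately. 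Without this, you cannot conclude that the set of $t$ with $\mathtt{o}^{\overline{Q}}_t(k,l)=1$ is \emph{exactly} $\{\,2s+|k-l|:1\le s\le\min(k,l,2n+1-k,2n+1-l)\,\}$; you have only exhibited a subset. The matching with $d^{A^{(1)}_{2n}}_{k,l}(z)$ requires both inclusions.
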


\subsection{Folded distance polynomial} In this subsection, we follow the framework of the previous section to prove the well-definedness of
the folded distance polynomial.

\bigskip

{\it In this subsection, we mainly deal with $\ov{I}$ and hence fix the indices of $\ov{I}$ as follows:}
$$ \ov{I}=\{ 1,2,\ldots, n,n+1\}.$$

\begin{definition}
For a folded AR-quiver $\wUp_{[\ii_0]}$, indices $\ov{k},\ov{l} \in \ov{I}$ and an integer $t \in \mathbb{N}$,
we define the subset $\Phi_{[\ii_0]}(\ov{k},\ov{l})[t]$ of $\PR \times \PR$ as follows:

A pair $(\alpha,\beta)$ is contained in $\Phi_{[\ii_0]}(\ov{k},\ov{l})[t]$ if $\alpha \prec_Q \beta$ or $\be \prec_Q \al$ and
$$\{ \widehat{\Omega}_{[\ii_0]}(\al),\widehat{\Omega}_{[\ii_0]}(\be) \}=\{ (\ov{k},a), (\ov{l},b)\} \quad \text{ such that } \quad |a-b|=t.$$
\end{definition}

\begin{lemma}
For any  $(\alpha^{(1)},\beta^{(1)})$ and $(\alpha^{(2)},\beta^{(2)})$ in $\Phi_{[\ii_0]}(\ov{k},\ov{l})[t]$, we have
$$ o^{[\ii_0]}_t(\ov{k},\ov{l}) := \gdist_{[\ii_0]}(\alpha^{(1)},\beta^{(1)})=\gdist_{[\ii_0]}(\alpha^{(2)},\beta^{(2)}). $$
\end{lemma}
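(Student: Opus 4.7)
The plan is to mimic the proof of \cite[Lemma 6.12]{Oh15E}, now leveraging the reflection-functor description in Algorithm \ref{alg: fRef Q} and the twisted additive structure of Proposition \ref{Prop:twisted_adapted_2}. First I would establish the fundamental invariance: the reflection functor $r_i$ at a sink $i$ of $[\ii_0]$ induces a bijection $\PR \setminus \{\al_i\} \to \PR \setminus \{\al_{i^*}\}$ realized by $s_i$, plus the "long shift" sending $\al_i$ to the copy of $\al_{i^*}$ placed at coordinate $(\ov{i},p-\ov{\mathsf d}\,\mathsf h^\vee)$. By Algorithm \ref{alg: fRef Q} and Theorem \ref{thm: OS14}, this bijection preserves the partial order $\prec_{[\ii_0]}$ on the common part, hence preserves $\prec^\tb$ on sequences whose support avoids the boundary simple root, and therefore preserves $\gdist_{[\ii_0]}$.

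Next I would set up a Coxeter-type shift. Applying reflection functors cyclically at all sinks yields a composition $\tau$ which, restricted to roots whose folded residue equals some fixed $\ov{i}$, acts as the shift $(\ov i, p) \mapsto (\ov i, p - 4)$ (the step ${}^{\wpl 1}(\cdot)$ introduced after \eqref{eq: folded quiver}). Because each individual reflection preserves $\gdist_{[\ii_0]}$ by the previous paragraph, their composition $\tau$ does so too on pairs both of whose coordinates remain inside $\wUp_{[\ii_0]}$ after shifting.

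Given two pairs $(\al^{(1)},\be^{(1)}), (\al^{(2)},\be^{(2)}) \in \Phi_{[\ii_0]}(\ov k,\ov l)[t]$, the plan is then to observe that both pairs occupy identical relative positions in the $(\ov k, \ov l)$-columns of $\wUp_{[\ii_0]}$: by Theorem \ref{them: comp for length k ge 0} and Algorithm \ref{Alg: surgery}, the vertices of $\wUp_{[\ii_0]}$ with folded residue $\ov k$ (resp.\ $\ov l$) form a single chain linearly ordered by the shift $\tau$, so there is an integer $m$ with $\al^{(2)} = \tau^m \al^{(1)}$ and $\be^{(2)} = \tau^m \be^{(1)}$. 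Combining with the previous step gives $\gdist_{[\ii_0]}(\al^{(1)},\be^{(1)}) = \gdist_{[\ii_0]}(\al^{(2)},\be^{(2)})$, which is the claimed value $o^{[\ii_0]}_t(\ov k,\ov l)$.

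The main obstacle is handling the column $\ov{i}=n+1$: its vertices split into induced and non-induced central types (see Remark \ref{Rem:surgery} and Corollary \ref{cor: label for non-induced}), and the coordinate increment of $\tau$ there must be reconciled with the different step sizes $d_{n+1}=1$ versus $d_{\bar j}=2$ appearing in Algorithm \ref{alg: fRef Q}. I expect this to require a case analysis using Proposition \ref{Prop:twisted_adapted_2} to track how pairs whose weight passes through a non-induced central vertex transform under $\tau$, and verifying directly, using the minimal-pair classification of Lemmas \ref{lem: minimal for rds2}--\ref{lem: minimal for non-induced central}, that the $\gdist_{[\ii_0]} \in \{0,1,2\}$ of such pairs (the bound in Theorem \ref{thm: dist upper bound Qd}) is genuinely preserved by $\tau$ and does not jump when an endpoint crosses into or out of the central column.
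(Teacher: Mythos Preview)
Your reflection-functor strategy is different from the paper's, which proceeds by a direct three-way case split on whether $\ov{k},\ov{l}$ lie in $\ov{I}\setminus\{n+1\}$ or equal $n+1$, reducing the first case to Lemma~\ref{lem: o well} for $\Gamma_Q$ via the surgery, handling the second by Corollary~\ref{cor: label for non-induced} (either all such pairs sum to a root or none do), and using the coordinate shift ${}^{\wpl k}$ only in the mixed case.

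There is a genuine gap in your plan, and it appears \emph{before} the $n+1$ column you flag as the obstacle. You assert that the vertices of $\wUp_{[\ii_0]}$ with folded residue $\ov{k}$ ``form a single chain linearly ordered by the shift $\tau$''. For $\ov{k}\le n$ this is false: the fold identifies the two residues $k$ and $2n+2-k$ of $\Upsilon_{[\ii_0]}$, whose second coordinates have opposite parity by \eqref{eq: not equiv}. After doubling, these interleave in column $\ov{k}$ of $\wUp_{[\ii_0]}$ with step $2$, whereas your shift $\tau={}^{\wpl 1}$ moves by $4$. So the column breaks into two $\tau$-orbits, and there is no single power $\tau^m$ taking an arbitrary pair in $\Phi_{[\ii_0]}(\ov{k},\ov{l})[t]$ to another. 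The paper absorbs precisely this obstruction by noting that $\Phi_{[\ii_0]}(\ov{k},\ov{l})[t]$ is induced from a disjoint union of the form $\Phi_{Q}(k,l)[t/2]\sqcup\Phi_{Q}(k^*,l^*)[t/2]$ (or its $k\leftrightarrow k^*$ variant), and then invokes the $*$-symmetry behind Proposition~\ref{prop: DQ DQ' 1A2n} to match the two pieces; your argument supplies no substitute for that step. A secondary but related issue: reflection functors change the class, so what they give you is $\gdist_{[\ii_0]}(\al,\be)=\gdist_{[\ii_0]r_i}(s_i\al,s_i\be)$, not an invariance of $\gdist_{[\ii_0]}$ under an internal shift of $\wUp_{[\ii_0]}$; closing that loop requires returning to the same class and identifying the resulting permutation of $\PR$ with ${}^{\wpl m}$, which is again not automatic.
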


\begin{proof}
{\rm (1)} Assume that $\ov{k},\ov{l} \in \ov{I} \setminus \{ n+1 \}$. By the observation in \eqref{eq: not equiv}, the set $\Phi_{[\ii_0]}(\ov{k},\ov{l})[t]$
is induced from one of
\begin{align} \label{eq: sets}
\Phi_{Q}(k,l)[t/2] \sqcup  \Phi_{Q}(k^*,l^*)[t/2] \text{ and } \Phi_{Q}(k^*,l)[t/2] \sqcup  \Phi_{Q}(k,l^*)[t/2].
\end{align}
where $\PPi([\ii_0])=[Q]$ and $i  \overset{*}{\leftrightarrow} 2n+1-i$. Note that one of the sets in \eqref{eq: sets} must be empty.

In each case, if there exists a path from $\beta^{(1)}$ to $\al^{(1)}$ passing through two non-induced vertices, then so is $(\alpha^{(2)},\beta^{(2)})$.
Then our assertion for this case follows from Corollary \ref{cor:label1}, Theorem \ref{thm: dist upper bound Qd} and Lemma \ref{lem: o well}.

{\rm (2)} Assume that $\ov{k}=\ov{l}=\{ n+1 \}$. By Lemma \ref{cor: label for non-induced}
either {\rm (i)} $\al^{(j)}+\be^{(j)} \in \PR$ or {\rm (ii)} $\al^{(j)}+\be^{(j)} \not \in \PR$ for all $j$.
Then, for all $j$, we have
$$
\gdist_{[\ii_0]}(\al^{(j)},\be^{(j)})=
\begin{cases}
1 & \text{  if {\rm (i)}}, \\
0 & \text{  if {\rm (ii)}},
\end{cases}
$$
by Lemma \ref{lem: less than eq 2}.

{\rm (3)} Assume that one of $\ov{k}$ and $\ov{l}$ is $n+1$ and the other is not. Let us fix $\ov{k}=n+1$ and $\widehat{\Omega}_{[\ii_0]}(\al)=(n+1,a)$.
As in the same reason of {\rm (i)}, $\be$ is induced from some vertex whose residue is one of $l$ and $l^*$.
Note that for $\al \prec_{[\ii_0]} \ga \prec_{[\ii_0]} \be$,
$$\text{${}^{\wpl 1}\be \in \PR$ implies ${}^{\wpl 1}\ga \in \PR$ and ${}^{\wmi 1} \al \in \PR$ implies ${}^{\wmi 1} \ga \in \PR$.}$$

Thus there exists
$k \in \Z$ such that ${}^{\wpl k}\al^{(1)} =\al^{(2)}$ and ${}^{\wpl k}\be^{(1)} =\be^{(2)}$. Hence we have
$$\gdist_{[\ii_0]}(\alpha^{(1)},\beta^{(1)})=\gdist_{[\ii_0]}(\alpha^{(2)},\beta^{(2)}),$$
by Theorem \ref{them: comp for length k ge 0} and Corollary \ref{cor: label for non-induced}.
\end{proof}

Note that
\begin{align} \label{eq: Q< Qrev<}
\wUp_{[Q^<]} \simeq \wUp_{[{Q^\rev}^<]}
\end{align}
as quivers with the consideration on their {\it folded} coordinate system.

\medskip

We keep the notations on $B_{n+1}$ in Remark \ref{rem: Bn}.

\begin{definition} \label{def: Dist poly Q}
For $\ov{k},\ov{l} \in \ov{I}$ and a folded AR-quiver $\wUp_{[\ii_0]}$, we define a polynomial $\widehat{D}^{[\ii_0]}_{\ov{k},\ov{l}}(z) \in \ko[z]$ as follows:
$$\widehat{D}^{[\ii_0]}_{\ov{k},\ov{l}}(z) \seteq \prod_{ t \in \Z_{\ge 0} } (z- (-1)^{\ov{k}+\ov{l}}(q_s)^{t})^{\mathtt{o}^{[\ii_0]}_t(\ov{k},\ov{l})} ,$$
where
$$ q_s^{\ov{\mathsf{d}}}=q_s^2=q \quad \text{ and } \quad  \mathtt{o}^{[\ii_0]}_t(\ov{k},\ov{l}) \seteq
\left\lceil \dfrac{o^{[\ii_0]}_t(\ov{k},\ov{l})}{ \ov{\mathsf{d}} } \right\rceil.$$
\end{definition}
\begin{remark}
By the observation in \eqref{eq: Q< Qrev<}, it is enough to consider only one folded quiver $\widehat{\Upsilon}_{[\ii_0]}$ to define
$\mathtt{o}^{[\ii_0]}_t(\ov{k},\ov{l})$ while we need to consider two AR-quivers for defining $\mathtt{o}^{\overline{Q}}_t(k,l)$ in \eqref{def: non-fold O}.
\end{remark}

\begin{proposition} \label{prop: DQ DQ'}
For $\ov{k},\ov{l} \in \ov{I}$ and any twisted adapted classes $[\ii_0]$ and $[\ii_0']$ in $\lf \Qd \rf$, we have
$$\widehat{D}^{[\ii_0]}_{\ov{k},\ov{l}}(z)=\widehat{D}^{[\ii_0']}_{\ov{k},\ov{l}}(z).$$
\end{proposition}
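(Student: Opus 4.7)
Since $\lf\Qd\rf$ is an $r$-cluster point, any two classes $[\ii_0],[\ii_0'] \in \lf\Qd\rf$ are connected by a finite sequence of reflection functors. Hence it suffices to prove the invariance under a single reflection functor, i.e.
\[
\widehat{D}^{[\ii_0]}_{\ov{k},\ov{l}}(z)=\widehat{D}^{[\ii_0]r_i}_{\ov{k},\ov{l}}(z)
\]
whenever $i$ is a sink of $[\ii_0]$, and the remainder of the proof will be devoted to this equality. By Definition \ref{def: Dist poly Q}, this reduces to showing $\mathtt{o}^{[\ii_0]}_t(\ov{k},\ov{l})=\mathtt{o}^{[\ii_0]r_i}_t(\ov{k},\ov{l})$ for every $t\in\Z_{\ge 0}$, which by the preceding lemma on the well-definedness of $o^{[\ii_0]}_t(\ov k,\ov l)$ is a statement depending only on the coordinates of a single representative pair in $\Phi_{[\ii_0]}(\ov k,\ov l)[t]$ and $\Phi_{[\ii_0]r_i}(\ov k,\ov l)[t]$.

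The plan is to exploit Algorithm \ref{alg: fRef Q}: passing from $\widehat{\Upsilon}_{[\ii_0]}$ to $\widehat{\Upsilon}_{[\ii_0]r_i}$ removes the vertex labelled $\al_i$ at $(\ov{i},p)$, inserts a new vertex labelled $\al_i$ at $(\ov{i},p-\ov{\mathsf{d}}\,\mathsf{h}^\vee)$, and relabels every other vertex $\be$ at coordinate $(\ov{j},q)$ by $s_i(\be)$. Split pairs contributing to $\mathtt{o}^{[\ii_0]}_t(\ov{k},\ov{l})$ into two families.

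\emph{Family A: pairs $(\al,\be)$ with $\al,\be\neq\al_i$.} Their coordinates in both quivers are identical, and the correspondence $\be\mapsto s_i(\be)$ is a bijection of positive roots preserving the partial order $\prec_{[\ii_0]}$ restricted to $\Phi(w_0)\setminus\{\al_i\}$ as well as the convexity relations used in Definition \ref{def: gdist}. Hence $\gdist_{[\ii_0]}(\al,\be)=\gdist_{[\ii_0]r_i}(s_i(\al),s_i(\be))$, so Family A contributes the same multiplicity to both $\mathtt{o}^{[\ii_0]}_t$ and $\mathtt{o}^{[\ii_0]r_i}_t$.

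\emph{Family B: pairs involving $\al_i$.} Here I would use the fact that $\al_i$ is a sink of $[\ii_0]$, so $\al_i$ is $\prec_{[\ii_0]}$-minimal and thus paired with every other root $\be$ such that $\widehat{\Omega}_{[\ii_0]}(\be)\in\{\ov{k},\ov{l}\}\times\Z$; similarly $\al_i$ is $\prec_{[\ii_0]r_i}$-maximal. The strategy is to set up an explicit bijection
\[
\{\,\be\in\PR\setminus\{\al_i\}\,|\,\be \text{ appears in a Family B pair in }\widehat{\Upsilon}_{[\ii_0]}\,\}\ \longleftrightarrow\ \{\,\be'\in\PR\setminus\{\al_i\}\,|\,\be' \text{ appears in a Family B pair in }\widehat{\Upsilon}_{[\ii_0]r_i}\,\}
\]
via $\be\mapsto s_i(\be)$, and then to verify that the coordinate difference $t$ and the generalized distance transform compatibly under the simultaneous shift by $\ov{\mathsf{d}}\,\mathsf h^\vee$ along the $\Z$-axis; this uses the twisted additive property (Theorem \ref{thm:twisted additive}/Proposition \ref{Prop:twisted_adapted_2}) together with Theorem \ref{thm: dist upper bound Qd} to control how $\gdist$ is computed in terms of the unique socle chain, which is itself determined by the local geometry near $\al_i$ and $s_i(\be)$.

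The main obstacle is precisely the Family B bookkeeping: under reflection, $\al_i$ translates by $\ov{\mathsf{d}}\,\mathsf{h}^\vee$, so a priori $t=|a-p|$ becomes $|a-(p-\ov{\mathsf{d}}\,\mathsf h^\vee)|$, and one must verify that the resulting reshuffling of indices $t$ is compensated exactly by the corresponding change in the generalized distance (including the ceiling $\lceil\cdot/\ov{\mathsf{d}}\rceil$ appearing in $\mathtt{o}^{[\ii_0]}_t$). I expect to handle this by a case analysis according to whether $\ov{k},\ov{l}\in\{\ov i\}$ or not, and by invoking Corollary \ref{cor: label for non-induced} and Lemma \ref{lem: minimal for non-induced central} to keep track of non-induced central vertices (which are the only source of $\gdist=2$ by Theorem \ref{thm: dist upper bound Qd}). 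Once Family A and Family B contributions are shown to match termwise for every $t$, the equality of polynomials follows at once, completing the reduction step and hence the proposition.
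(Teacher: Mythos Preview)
Your reduction to a single reflection $[\ii_0]\mapsto[\ii_0]r_i$ is correct and matches the paper's opening step. After that, however, the paper takes a far shorter route than your direct Family A/Family B bookkeeping. The paper splits on whether $i=n+1$ or $i\neq n+1$. For $i\neq n+1$, the reflection $r_i$ on $\Upsilon_{[\ii_0]}$ corresponds, via the surgery of Remark~\ref{Rem:surgery}, to the reflection $r_{\PP(i)}$ on the underlying $A_{2n}$ AR-quiver $\Gamma_Q$; the invariance then follows immediately from the already established Proposition~\ref{prop: DQ DQ' 1A2n} for the adapted cluster point of $A_{2n}$. For $i=n+1$ the paper simply observes $[Q^>]r_{n+1}=[Q^<]$, so one stays over the same Dynkin quiver $Q$ and the equality is immediate.

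Your approach could in principle be made to work, but there are genuine gaps. In Family~A, the claim that $\gdist_{[\ii_0]}(\al,\be)=\gdist_{[\ii_0]r_i}(s_i(\al),s_i(\be))$ is not automatic: $\gdist$ is defined via chains of sequences (Definition~\ref{def: gdist}), and such chains may involve $\al_i$ even if neither endpoint does, so preservation of the pair's coordinates alone does not settle this. In Family~B you yourself flag the obstacle: the shift of $\al_i$ by $\ov{\mathsf d}\cdot\mathsf h^\vee$ changes the value of $t$, and you only \emph{expect} the compensating change in $\gdist$ without proving it. Carrying this through would require essentially redoing much of the structural analysis of Section~\ref{subsec: dis radi} relative to the moved vertex. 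The paper bypasses all of this by leaning on the $\PPi$ correspondence and the $A_{2n}$ result; that is the key idea you are missing.
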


\begin{proof}
It is enough to consider when $[\ii'_0]=[\ii_0]r_i$. Then our assertion is obvious for $i \ne n+1$
by Remark \ref{Rem:surgery} and Proposition \ref{prop: DQ DQ' 1A2n}.
When $i =n+1$ is also obvious from the fact that $[Q^>]r_{n+1}=[Q^<]$.
\end{proof}

From the above proposition, we can define $\widehat{D}_{\ov{k},\ov{l}}(z)$ for the twisted adapted cluster point $\lf \Qd \rf$ in a natural way
and call it {\it the folded distance polynomial} at $\ov{k}$ and $\ov{l}$.

\medskip

Now, we recall the denominator formulas $d^{B^{(1)}_{n+1}}_{k,l}(z)=d^{B^{(1)}_{n+1}}_{l,k}(z)$ for the quantum affine algebra  $U_q'(B^{(1)}_{n+1})$:

\begin{theorem} \cite{Oh14} \hfill \label{thm: denom 1}
$$
\begin{cases}
d^{B^{(1)}_{n+1}}_{k,l}(z) =  \displaystyle \prod_{s=1}^{\min (k,l)} \big(z-(-1)^{k+l}q^{|k-l|+2s}\big)\big(z-(-1)^{k+l}q^{2n+1-k-l+2s}\big) & \text{ if } 1 \le k,l \le n, \\
d^{B^{(1)}_{n+1}}_{k,n+1}(z) = \displaystyle  \prod_{s=1}^{k}\big(z-(-1)^{n+1+k}q_s^{2n-2k+1+4s}\big) & \text{ if } 1 \le k \le n, \\
d^{B^{(1)}_{n+1}}_{n+1,n+1}(z)=\displaystyle \prod_{s=1}^{n} \big(z-(q_s)^{4s-2}\big) \times (z-q^{\mathtt{h}^\vee}) & \text{ if } k=l=n+1.
\end{cases}
$$
\end{theorem}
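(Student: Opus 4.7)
The plan is to compute the denominator $d^{B^{(1)}_{n+1}}_{k,l}(z)$ case by case via direct analysis of the normalized R-matrix between fundamental $U_q'(B^{(1)}_{n+1})$-modules $V(\varpi_k)_x$ and $V(\varpi_l)_y$, writing $z=y/x$. By definition, $d^{B^{(1)}_{n+1}}_{k,l}(z) \in \ko[z]$ is the monic polynomial of minimal degree such that $d^{B^{(1)}_{n+1}}_{k,l}(z)\cdot R^{\rm norm}_{k,l}(z)$ becomes regular as a homomorphism $V(\varpi_k)_1 \otimes V(\varpi_l)_z \to V(\varpi_l)_z \otimes V(\varpi_k)_1$. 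Its zeros correspond, with multiplicity, to the spectral parameters at which this tensor product becomes reducible. The first step is to obtain an upper bound on $\deg d^{B^{(1)}_{n+1}}_{k,l}(z)$ from the universal $R$-matrix formula and weight considerations on $V(\varpi_k)\otimes V(\varpi_l)$, which restricts candidate zeros to a finite list compatible with the formula.

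For the case $1\le k,l\le n$, where both modules are KR modules of ``vector type'', I would proceed by induction on $k+l$, using the $T$-system short exact sequences
\[
0 \to V(\varpi_{k+1})_{q^{-1}}\otimes V(\varpi_{k-1})_{q} \to V(\varpi_k)_{1}\otimes V(\varpi_k)_{q^2} \to \text{(quotient)} \to 0,
\]
together with the analogous short exact sequences relating $V(\varpi_k)\otimes V(\varpi_l)$ to tensor products of simpler fundamentals. Restriction to the rank-two subalgebras of types $B_2$, $C_2$ and $A_1$ then pins down exact pole multiplicities; the factor $(z-(-1)^{k+l}q^{|k-l|+2s})$ arises from the ``short root'' type reducibility and $(z-(-1)^{k+l}q^{2n+1-k-l+2s})$ from the ``conjugate-dual'' reducibility reflecting $V(\varpi_k)^*\simeq V(\varpi_k)_{q^{\mathtt{h}^\vee}}$.

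For the cases involving the spin node, the situation is more delicate because $V(\varpi_{n+1})$ has dimension $2^{n+1}$ and its spectral parameter naturally lives in the variable $q_s=q^{1/2}$, reflecting that $\al_{n+1}$ is a short simple root with $d_{n+1}=1$. I would exploit embeddings of the form $V(\varpi_{n+1})_{x}\hookrightarrow V(\varpi_{n+1})_{y_1}\otimes V(\varpi_1)_{y_2}$ or $V(\varpi_k)_{x} \hookrightarrow V(\varpi_{n+1})_{y_1}\otimes V(\varpi_{n+1})_{y_2}$ at suitable spectral parameters and pull back poles along these embeddings to compute $d^{B^{(1)}_{n+1}}_{k,n+1}$ from the already-established $d^{B^{(1)}_{n+1}}_{1,n+1}$ and $d^{B^{(1)}_{n+1}}_{n+1,n+1}$.

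The hardest case is $k=l=n+1$: here the extra factor $(z-q^{\mathtt{h}^\vee})$ appears, corresponding to a ``hidden'' pole coming from the fact that the trivial representation occurs in the decomposition of $V(\varpi_{n+1})\otimes V(\varpi_{n+1})^*$, so $V(\varpi_{n+1})_{1}\otimes V(\varpi_{n+1})_{q^{\mathtt{h}^\vee}}$ contains the trivial module as a sub/quotient. Verifying this requires either an explicit construction of the evaluation pairing or an extraction from the $q$-character of $V(\varpi_{n+1})\otimes V(\varpi_{n+1})$. I expect the main obstacle to be bookkeeping of multiplicities at overlapping zeros (which justifies the $\min(k,l)$ upper limit in the product) and distinguishing the short-root contributions from the long-root ones across the three cases. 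A useful cross-check throughout is the Langlands-dual relation between $U_q'(B^{(1)}_{n+1})$ and $U_q'(A^{(2)}_{2n+1})$, whose denominators can be confronted against those of $U_q'(A^{(1)}_{2n})$ in Theorem \ref{thm: denom 1A2n} via the folding $\vee$ central to this paper.
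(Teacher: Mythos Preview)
The paper does not prove this theorem at all: it is stated with the citation \cite{Oh14} and no proof is given in the present manuscript. The denominator formulas for $U_q'(B^{(1)}_{n+1})$ are imported wholesale from that reference (S.-j.~Oh, \emph{The Denominators of normalized $R$-matrices of types $A_{2n-1}^{(2)}$, $A_{2n}^{(2)}$, $B_{n}^{(1)}$ and $D_{n+1}^{(2)}$}, Publ.\ Res.\ Inst.\ Math.\ Sci.\ \textbf{51} (2015), 709--744), and are used here only as input to Theorem~\ref{thm: dist denom}. So there is nothing in this paper to compare your proposal against.

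That said, your outline is broadly in the spirit of how such formulas are established in the cited paper and in the earlier work of Akasaka--Kashiwara \cite{AK}: one bounds the possible zeros via the universal $R$-matrix and then pins down multiplicities using Dorey-type morphisms, $T$-system short exact sequences, and duality $V(\varpi_k)^* \simeq V(\varpi_k)_{(-q)^{\mathtt{h}^\vee}}$. If you intend to actually carry this out, be aware that your sketch is only a plan: the inductive step via $T$-systems does not by itself yield a clean lower bound on multiplicities without a separate irreducibility/simplicity argument at each stage, and the spin cases require explicit submodule computations rather than just ``pulling back poles''. The present paper sidesteps all of this by quoting the result.
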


By considering the denominator formulas $d^{A^{(1)}_{2n}}_{k,l}(z)$, $d^{B^{(1)}_{n+1}}_{k,l}(z)$ and the distance polynomial $D_{k,l}(z)$ for $1 \le k,l \le n$,
we have an interesting interpretation as follows:
\begin{equation} \label{eq: interesting interpretation}
\begin{aligned}
\dfrac{d^{B^{(1)}_{n+1}}_{k,l}(z)}{(z-q^{\mathtt{h}^\vee})^{\delta_{kl}}} & = D_{k,l}(z) \times D_{k,l^*}(z) = D_{k,l}(z) \times D_{k^*,l}(z)  \\
& = D_{k^*,l^*}(z) \times D_{k,l^*}(z)  =  D_{k^*,l^*}(z) \times D_{k^*,l}(z)
\end{aligned}
\end{equation}
where $D_{k,l}(z)=D_{l,k}(z)=D_{k^*,l^*}(z)=D_{l^*,k^*}(z)$ and $i  \overset{*}{\leftrightarrow} 2n+1-i$.

\begin{theorem} \label{thm: dist denom}
For any twisted adapted class $[\ii_0]$, the denominator formulas for $U'_q(B_{n+1}^{(1)})$ can be read
from $\wUp_{[\ii_0]}$ as follows:
\begin{align*}
d^{B^{(1)}_{n+1}}_{\ov{k},\ov{l}}(z) & =\widehat{D}_{\ov{k},\ov{l}}(z) \times (z-q^{\mathtt{h}^\vee})^{\delta_{\ov{l},\ov{k}}}
\end{align*}
where $\mathtt{h}^\vee$ is the dual Coxeter number of $B_{n+1}$.
\end{theorem}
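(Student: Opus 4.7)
The plan is to fix a convenient $[\ii_0] \in \lf \Qd \rf$ (which is legitimate by Proposition \ref{prop: DQ DQ'}) and then compute $\mathtt{o}^{[\ii_0]}_t(\ov{k},\ov{l})$ residue-pair by residue-pair, matching the resulting factors of $\widehat{D}_{\ov{k},\ov{l}}(z)$ directly with the closed forms in Theorem \ref{thm: denom 1}. The key reduction is that vertices of $\wUp_{[\ii_0]}$ split into two types according to Algorithm \ref{Alg: surgery}: \emph{induced} vertices carry even second coordinate in $\wUp_{[\ii_0]}$ and biject, via $\PPi$, with vertices of $\Gamma_Q$ for $Q=\PPi([\ii_0])$; \emph{non-induced} central vertices carry odd second coordinate and fill the $\ov{(n+1)}$-residue. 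Moreover, two vertices in $\wUp_{[\ii_0]}$ share the folded residue $\ov{k}$ $(k \le n)$ if and only if the underlying $A_{2n}$-residues lie in $\{k,k^*\}$. This is why the parities of the exponents in Theorem \ref{thm: denom 1} distinguish the different cases.

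\emph{Short-short case} $\ov{k},\ov{l} \le n$: all relevant vertices are induced, so any pair contributing to $\Phi_{[\ii_0]}(\ov{k},\ov{l})[t]$ is the image (with $t$ doubled) of a pair in $\Phi_Q(k,l)[t/2]\sqcup\Phi_Q(k^*,l^*)[t/2]$ or $\Phi_Q(k,l^*)[t/2]\sqcup\Phi_Q(k^*,l)[t/2]$. By the argument of the lemma preceding \eqref{eq: Q< Qrev<} and Theorem \ref{thm: dist upper bound Qd}, the $\gdist_{[\ii_0]}$ of such a pair equals the $\gdist_Q$ of its preimage. Hence $o^{[\ii_0]}_t(\ov{k},\ov{l}) \in \{0,1,2\}$ and $\mathtt{o}^{[\ii_0]}_t(\ov{k},\ov{l}) = \lceil \cdot /2 \rceil$ records whether at least one of the two orbit combinations contributes. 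Applying Theorem \ref{eq: dist denom 1A2n} to each contribution and combining via the identity \eqref{eq: interesting interpretation} gives the product of $D_{k,l}(z) D_{k,l^*}(z)$, which matches $d^{B^{(1)}_{n+1}}_{k,l}(z)$ (noting that the required exponents $|k-l|+2s$ and $2n+1-k-l+2s$ have opposite parities matching the two factors in \eqref{eq: interesting interpretation}, and that the extra factor $(z-q^{\mathtt{h}^\vee})^{\delta_{\ov{k}\ov{l}}}$ is \emph{not} contributed on the short-short side since $k\neq k^*$).

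\emph{Mixed and long-long cases}: for $\ov{k} \le n$, $\ov{l} = n+1$, a pair $(\alpha,\beta)$ with $\widehat{\Omega}(\beta)$ of residue $n+1$ consists of an induced vertex and a non-induced central vertex. Lemma \ref{lem: minimal for non-induced central} describes precisely the $[\ii_0]$-minimal pairs with sum a non-induced root; combined with Theorem \ref{thm: dist upper bound Qd}(1)-(2), these enumerate all $t$ for which $o^{[\ii_0]}_t(\ov{k},\ov{l}) \ge 1$. The resulting distances $t$ are odd (one even, one odd second coordinate), giving $(q_s)^t$ with $t$ odd, matching the powers $2n-2k+1+4s$ in Theorem \ref{thm: denom 1}. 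For $\ov{k}=\ov{l}=n+1$, both vertices are non-induced, distances are even, and Lemma \ref{lem: less than eq 2} shows each pair contributes $\gdist \le 1$; explicit enumeration along the $(n+1)$-th residue of $\wUp_{[\ii_0]}$, using the coordinates in \eqref{eq: folded quiver}, produces the powers $4s-2$.

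The main obstacle will be the extra factor $(z-q^{\mathtt{h}^\vee})$ appearing when $\ov{k}=\ov{l}=n+1$. This factor corresponds to a distance $2\mathtt{h}^\vee = 4n+2$ between two $\ov{(n+1)}$-residue vertices, which lies \emph{beyond} the span of any single $\wUp_{[\ii_0]}$ (so contributes nothing to $o^{[\ii_0]}_t$) but is realized by the ``wrap-around'' coming from the reflection functor action $r_{n+1}$ exchanging $[Q^<]$ and $[Q^>]$, mirroring the role of $(z+q^{\mathtt{h}^\vee})^{\delta_{l,k^*}}$ in Theorem \ref{eq: dist denom 1A2n}. Establishing this factor therefore requires a separate argument, analogous to \cite[Theorem 6.18]{Oh15E}, invoking the Coxeter-type shift $\phi_{[\ii_0]}\vee$ and the fact that the non-induced vertex labeled by the highest root has the largest second coordinate in $\wUp_{[\ii_0]}$, so its $*$-image in the next cluster fundamental domain is exactly $2\mathtt{h}^\vee$ away.
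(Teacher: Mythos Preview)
Your overall structure --- reducing to one convenient $[\ii_0]$ via Proposition~\ref{prop: DQ DQ'}, handling the short-short case through \eqref{eq: interesting interpretation}, and treating the long residue separately --- matches the paper's proof closely. The short-short paragraph is essentially identical to the paper's argument (which invokes Proposition~\ref{prop: less than eq to 2} and \eqref{eq: interesting interpretation}), and for the mixed case the paper is in fact \emph{less} conceptual than you: it simply fixes the linearly oriented $Q$ of type $A_{2n}$, writes down $\wUp_{[Q^<]}$ explicitly using \cite[(6.20)]{Oh15E} and Corollary~\ref{cor:label1}, and reads off the exponents by inspection rather than via Lemma~\ref{lem: minimal for non-induced central}.

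There is, however, a genuine misconception in your final paragraph. The factor $(z-q^{\mathtt{h}^\vee})^{\delta_{\ov{k},\ov{l}}}$ is not an ``obstacle'' and does not require any wrap-around argument: it is simply the discrepancy asserted in the statement. The theorem says $d^{B^{(1)}_{n+1}}_{\ov{k},\ov{l}}(z) = \widehat{D}_{\ov{k},\ov{l}}(z)\times(z-q^{\mathtt{h}^\vee})^{\delta_{\ov{k},\ov{l}}}$, so what must be verified is that $\widehat{D}_{\ov{k},\ov{l}}(z)$ equals the known denominator from Theorem~\ref{thm: denom 1} \emph{divided by} that factor. For $\ov{k}=\ov{l}=n+1$ this means showing $\widehat{D}_{n+1,n+1}(z)=\prod_{s=1}^{n}(z-q_s^{4s-2})$, which the paper dispatches in one line by Corollary~\ref{cor: label for non-induced} and Corollary~\ref{cor: triangle mesh}: two non-induced vertices at folded distance $t$ have $\gdist=1$ exactly when they form the base of a triangle as in Corollary~\ref{cor: triangle mesh}, and those distances are precisely $2,6,\ldots,4n-2$. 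No reflection-functor or Coxeter-shift argument is needed; your proposed ``separate argument'' is solving a nonexistent problem. Once you delete that last paragraph, your proof is complete and in line with the paper's.
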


\begin{proof} Fix $Q$ such that $\PPi([\ii_0])=[Q]$.

(1) Assume that $\ov{k}, \ov{l} \in \ov{I}\setminus \{n+1\}$. Then one of
$o^{Q}_t(k,l)$ and $o^{Q}_t(k^*,l)$ is positive implies that $o^{[\ii_0]}_t(\ov{k},\ov{l})>0$ and hence
$\mathtt{o}^{[\ii_0]}_t(\ov{k},\ov{l})=1$ by Proposition \ref{prop: less than eq to 2}. Thus our assertion for this case
follows from \eqref{eq: interesting interpretation}.

(2) Assume that $\ov{k} = \ov{l}=n+1$. Then our assertion is obvious by Corollary \ref{cor: label for non-induced} and Corollary \ref{cor: triangle mesh}.

(3) In general, it suffices to consider only one folded AR-quiver $\wUp_{[\ii_0]}$ by Proposition \ref{prop: DQ DQ'}. We take the Dynkin quiver
$$Q \ : \ \xymatrix@R=3ex{ *{ \bullet }<3pt> \ar@{<-}[r]_<{1}  &*{\bullet}<3pt>
\ar@{<-}[r]_<{2} &\cdots\ar@{<-}[r] &*{\bullet}<3pt>
\ar@{<-}[r]_<{2n-1} &*{\bullet}<3pt>
\ar@{-}[l]^<{\ \ 2n}} \quad \text{ of type $A_{2n}$}$$
and $[\ii_0]$ as $[Q^<]$. Then \cite[(6.20)]{Oh15E} and Corollary \ref{cor:label1}, we can draw $\wUp_{[\ii_0]}$ with its labels. Then one can check that the assertion for
$\ov{k}=n+1$ and $\ov{l} \ne n+1$ holds by reading $\wUp_{[\ii_0]}$. We skip the proof and provide a particular example for $[Q^<]$ of type $A_5$ instead.
\end{proof}

\begin{example} Here are $\Gamma_Q$, $\Upsilon_{[Q^<]}$ and $\wUp_{[Q^<]}$ for
$Q \ : \ \xymatrix@R=3ex{ *{ \bullet }<3pt> \ar@{<-}[r]_<{1}  &*{\bullet}<3pt>
\ar@{<-}[r]_<{2} &*{\bullet}<3pt>
\ar@{<-}[r]_<{3} &*{\bullet}<3pt>
\ar@{-}[l]^<{\ \ 4}}$.
\begin{align*}
& \scalebox{0.6}{\xymatrix@C=1ex@R=1ex{
1 & [4]\ar@{->}[ddrr] &&&& [3]\ar@{->}[ddrr] && \Gamma_Q && [2]\ar@{->}[ddrr] &&&& [1] \\
&&&&&& &&\\
2 &&&[3,4]\ar@{->}[ddrr]\ar@{->}[uurr] &&&&[2,3]\ar@{->}[ddrr]\ar@{->}[uurr] &&&& [1,2] \ar@{->}[uurr]\\
&&&&&&&&\\
3 &&&&&[2,4] \ar@{->}[ddrr]\ar@{->}[uurr]&&&& [1,3] \ar@{->}[uurr]\\
&&&&&&&\\
4 &&&&&&&[1,4]\ar@{->}[uurr]
}}
 \scalebox{0.6}{\xymatrix@C=1ex@R=1ex{
1 & [5] \ar@{->}[ddrr]&&&& [3,4]\ar@{->}[ddrr] && \Upsilon_{[Q^<]}&& [2]\ar@{->}[ddrr] &&&& [1] \\
&&&&&& &&\\
2 &&&[3,5]\ar@{->}[dr]\ar@{->}[uurr] &&&&[2,4]\ar@{->}[dr]\ar@{->}[uurr] &&&& [1,2]\ar@{->}[uurr] \\
3 &&[3]\ar@{->}[ur]&&[4,5]\ar@{->}[dr]&&[2,3]\ar@{->}[ur]&&[4]\ar@{->}[dr]&&[1,3]\ar@{->}[ur] \\
4 &&&&&[2,5]\ar@{->}[ur]\ar@{->}[ddrr] &&&& [1,4]\ar@{->}[ur] \\
&&&&&&&\\
5 &&&&&&&[1,5]\ar@{->}[uurr]
}} \\
& \qquad\qquad\qquad \scalebox{0.8}{\xymatrix@C=1ex@R=1ex{
\ov{1} & [5] \ar@{->}[ddrr]&& \wUp_{[Q^<]}&& [3,4]\ar@{->}[ddrr] &&[1,5]\ar@{->}[ddrr]&& [2]\ar@{->}[ddrr] &&&& [1] \\
&&&&&& &&\\
\ov{2} &&&[3,5]\ar@{->}[dr]\ar@{->}[uurr] &&[2,5]\ar@{->}[uurr]\ar@{->}[dr]&&[2,4]\ar@{->}[dr]\ar@{->}[uurr] &&[1,4]\ar@{->}[dr]&& [1,2]\ar@{->}[uurr] \\
\ov{3} &&[3]\ar@{->}[ur]&&[4,5]\ar@{->}[ur]&&[2,3]\ar@{->}[ur]&&[4]\ar@{->}[ur]&&[1,3]\ar@{->}[ur]
}}
\end{align*}
\end{example}

\section{Dorey's rule for $U_q'(B^{(1)}_{n+1})$}

In this section, we shall interpret Dorey's rule for $U_q'(B^{(1)}_{n+1})$, studied in \cite{CP96}, via $[\ii_0]$-minimal pairs
$(\al,\be)$ of $\ga \in \PR_{A_{2n+1}}$ when $[\ii_0]$ is twisted adapted.
Such interpretation is analogous to the arguments for $U_q'(A^{(i)}_{n})$ and $U_q'(D^{(i)}_{n})$ in \cite{KKKO14D,Oh14A,Oh14D} $(i=1,2)$. We first recall
Dorey's rule for $U_q'(B^{(1)}_{n+1})$.

\begin{proposition} \cite[Theorem 8.1]{CP96} $($see also \cite{Oh14}$)$
Let $(i,x)$, $(j,y)$, $(k,z) \in \ov{I} \times \ko^\times$. Then
$$ {\rm Hom}_{U_q'(B^{(1)}_{n+1})}\big( V(\varpi_{j})_y \otimes V(\varpi_{i})_x , V(\varpi_{k})_z  \big) \ne 0 $$
if and only if one of the following conditions holds:
\begin{eqnarray}&&
\left\{\hspace{1ex}\parbox{75ex}{
\begin{enumerate}
\item[{\rm (i)}] $\ell \seteq \max(i,j,k) \le n$, $i+j+k=2\ell$
and
$$ \left( y/z,x/z \right) =
\begin{cases}
\big( (-1)^{j+k}q^{-i},(-1)^{i+k}q^{j} \big), & \text{ if } \ell = k,\\
\big( (-1)^{j+k}q^{i-(2n+1)},(-1)^{i+k}q^{j} \big), & \text{ if } \ell = i,\\
\big( (-1)^{j+k}q^{-i},(-1)^{i+k}q^{2n+1-j}  \big), & \text{ if } \ell = j.
\end{cases}
$$
\item[{\rm (ii)}] $s \seteq \min(i,j,k) \le n$, the others are the same as $n+1$ and
$$ (y/z,x/z) =
\begin{cases}
\big( (-1)^{n+1+k}q_s^{-2(n-k)+1},(-1)^{n+1+k}q_s^{2(n-k)-1} ), & \text{ if } s = k,\\
\big(  q_s^{-4i-4},(-1)^{i+n+1} q_s^{2(n-i)-1}  ), & \text{ if } s = i,\\
\big( (-1)^{j+n+1}q_s^{-2(n-j)+1}, q_s^{4j+4}), & \text{ if } s = j.
\end{cases}
$$
\end{enumerate}
}\right. \label{eq: Dorey B}
\end{eqnarray}
Here $V(\varpi_i)$ is the unique simple $U_q'(B_{n+1}^{(1)})$-module which is finite dimensional integrable with its dominant weight $\varpi_i$ $(i \in \ov{I})$
$($see \cite{Kas02} for more detail$)$.
\end{proposition}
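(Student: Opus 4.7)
The plan is to derive the Chari--Pressley classification by translating the question of non-vanishing of the $\mathrm{Hom}$-space into a pole condition on the normalized $R$-matrices $R^{\mathrm{norm}}_{i,j}(x/y)$ and matching the resulting spectral data with the coordinate formulas \eqref{eq: Dorey folded coordinate Bn+1} for $[\ii_0]$-minimal pairs in a folded AR-quiver. First I would use the general principle that, for quantum affine algebras, a non-zero map $V(\varpi_j)_y\otimes V(\varpi_i)_x\to V(\varpi_k)_z$ forces $(x/y)$ to be a pole of $R^{\mathrm{norm}}_{i,j}(x/y)$, and that these poles are controlled by the roots of the denominator polynomial $d^{B^{(1)}_{n+1}}_{i,j}(z)$ computed in Theorem~\ref{thm: denom 1}. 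Combined with the compatibility constraint coming from the three-fold fusion (i.e.\ $(x/z,y/z)$ has to lie simultaneously in the pole loci of the pairs $(i,k)$, $(j,k)$ and $(i,j)$), this already yields the list of candidate triples $(x,y,z)$ and, via Theorem~\ref{thm: dist denom}, identifies them with pairs of coordinates $(\widehat{\Omega}_{[\ii_0]}(\al),\widehat{\Omega}_{[\ii_0]}(\be))$ of $[\ii_0]$-minimal pairs $(\al,\be)$ for $\ga=\al+\be$ in some $\widehat{\Upsilon}_{[\ii_0]}$.

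Next I would match these pole triples against the explicit classification of $[\ii_0]$-minimal pairs obtained in Section~\ref{subsec: minimal twisted}, in particular the coordinate formulas \eqref{eq: Dorey folded coordinate Bn+1}. The classification in Lemma~\ref{lem: minimal for rds2}, Lemma~\ref{lem: minimal for induced rds1} and Lemma~\ref{lem: minimal for non-induced central} breaks into exactly two cases: pairs all of whose vertices lie in $\Gamma_Q\cap\Upsilon_{[\ii_0]}$ (giving the long-root case $\max(i,j,k)\le n$, with $i+j+k=2\max(i,j,k)$) and pairs involving a non-induced central vertex (giving the case in which one of the residues equals $n+1$). In each case, reading the spectral parameters $(y/z,x/z)$ off the folded coordinates $(\widehat\Omega(\al),\widehat\Omega(\be),\widehat\Omega(\ga))$ via the dictionary $q^{p-r}\leftrightarrow x/z$ and tracking the sign $(-1)^{i+j}$ that appears in Definition~\ref{def: Dist poly Q} recovers the two families \eqref{eq: Dorey B}(i) and \eqref{eq: Dorey B}(ii) respectively.

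For the sufficiency direction I would invoke the categorification picture alluded to in the introduction: the exact functor from $\mathrm{Rep}(\mathsf{R})$ of KLR-type $A_{2n+1}$ into the module category $\mathscr{C}_{\Qd}$ over $U_q'(B^{(1)}_{n+1})$ sends the short exact sequence realizing a minimal pair $(\al,\be)$ in $\widehat{\Upsilon}_{[\ii_0]}$ to a short exact sequence with a non-zero morphism $V(\varpi_j)_y\otimes V(\varpi_i)_x\to V(\varpi_k)_z$, producing the desired non-vanishing at precisely the spectral parameters determined in the previous step.

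The main obstacle will be the bookkeeping of signs and half-integer shifts. The non-trivial orbit structure of $\vee$ in \eqref{eq: B_n} means that the coordinate $p$ of a vertex of folded residue $n+1$ is shifted by $\pm\frac12$ relative to its neighbours (Algorithm~\ref{Alg: surgery}), and this shift interacts with the sign $(-1)^{i+j}$ in Definition~\ref{def: Dist poly Q} and the doubling $q_s^2=q$. Ensuring that the four sub-cases in Lemma~\ref{lem: minimal for non-induced central} combine to give precisely the three sub-formulas of \eqref{eq: Dorey B}(ii), including the correct choice of $q$ versus $q_s$ and the correct sign $(-1)^{n+1+k}$ depending on which of $i,j,k$ equals $n+1$, requires a careful case analysis that I expect to be the most technical (though essentially mechanical) part of the argument.
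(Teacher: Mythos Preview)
The paper does not prove this proposition at all: it is quoted verbatim from \cite{CP96} (with the explicit spectral parameters taken from \cite{Oh14}) and then used as an \emph{input} to prove Theorem~\ref{thm: Dorey}. The logical flow in the paper is the reverse of what you propose: the authors take Dorey's rule \eqref{eq: Dorey B} as an established fact, compare it with the combinatorial formulas \eqref{eq: Dorey folded coordinate Bn+1} for $[\ii_0]$-minimal pairs, and conclude that the two lists coincide. Your plan to deduce \eqref{eq: Dorey B} from the folded AR-quiver combinatorics would therefore be a genuinely new argument, not a reconstruction of the paper's proof.

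As an independent argument, your sketch has a real gap in the sufficiency direction. You invoke ``the exact functor from $\mathrm{Rep}(\mathsf{R})$ of KLR-type $A_{2n+1}$ into $\mathscr{C}_{\Qd}$'' to produce the non-zero homomorphism, but no such functor is constructed in this paper: the introduction explicitly says that this categorical relation between $U_q'(A^{(t)}_{2n+1})$ and $U_q'(B^{(1)}_{n+1})$ is the subject of the forthcoming work \cite{KO16}, and the present paper only \emph{defines} the category $\mathscr{C}_{\Qd}$. Without that functor you have no mechanism to turn a minimal pair $(\al,\be)$ of $\ga$ into an actual surjection $V(\varpi_j)_y\otimes V(\varpi_i)_x\twoheadrightarrow V(\varpi_k)_z$. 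The necessity direction is also underspecified: a pole of $d^{B^{(1)}_{n+1}}_{i,j}$ at $x/y$ only tells you that $V(\varpi_j)_y\otimes V(\varpi_i)_x$ is reducible, not which fundamental module occurs as a quotient nor at which spectral parameter $z$; the ``three-fold fusion compatibility'' you allude to is exactly the hard representation-theoretic content of \cite{CP96} and cannot be extracted from the denominator formulas alone.
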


\begin{definition} \label{def: [ii0] module category} (cf. \cite[Definition 2.4]{KKKO14D})
Let $[\ii_0]$ be a twisted adapted class of type $A_{2n+1}$.
For any positive root $\beta$ contained in $\Phi^+$, we set the $U_q'(B^{(1)}_{n+1})$-module $V_{[\ii_0]}(\beta)$ as follows:
\begin{align} \label{eq: Vii0(beta)}
V_{[\ii_0]}(\beta) \seteq V(\varpi_{i})_{(-1)^{i}(q_s)^{p}} \quad \text{ where } \quad \widehat{\Omega}(\beta)=({i},p).
\end{align}
We let the smallest abelian full subcategory $\mathscr{C}_{[\ii_0]}$ consist of finite dimensional integrable $U_q'(B^{(1)}_{n+1})$-modules such that
\begin{itemize}
\item[{\rm (a)}] it is stable by taking subquotient, tensor product and extension,
\item[{\rm (b)}] it contains $V_{[\ii_0]}(\beta)$ for all $\beta \in \Phi^+$.
\end{itemize}
\end{definition}

\begin{theorem}\label{thm: Dorey} $($cf. \cite[Theorem 5.2]{Oh14A}$)$
Let $(i,x)$, $(j,y)$, $(k,z) \in \ov{I} \times \ko^\times$. Then
$$ {\rm Hom}_{U_q'(B^{(1)}_{n+1})}\big( V(\varpi_{j})_y \otimes V(\varpi_{i})_x , V(\varpi_{k})_z  \big) \ne 0 $$
if and only if there exists a twisted adapted class $[\ii_0]$ and $\al,\be,\ga \in \Phi_{A_{2n+1}}^+$ such that
\begin{itemize}
\item[{\rm (i)}] $(\al,\be)$ is an $[\ii_0]$-minimal pair of $\ga$,
\item[{\rm (ii)}] $V(\varpi_{j})_y  = V_{[\ii_0]}(\be)_t, \ V(\varpi_{i})_x  = V_{[\ii_0]}(\al)_t, \ V(\varpi_{k})_z  = V_{[\ii_0]}(\ga)_t$
for some $t \in \ko^\times$.
\end{itemize}
\end{theorem}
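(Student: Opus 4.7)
The plan is to reduce the theorem to a direct comparison between the two explicit lists already available in the paper: the Dorey's rule conditions displayed in \eqref{eq: Dorey B} and the coordinate conditions for $[\ii_0]$-minimal pairs displayed in \eqref{eq: Dorey folded coordinate Bn+1}. The translation between these two lists is forced by the identification \eqref{eq: Vii0(beta)}: writing $\widehat{\Omega}_{[\ii_0]}(\al)=(i,p)$, $\widehat{\Omega}_{[\ii_0]}(\be)=(j,q)$ and $\widehat{\Omega}_{[\ii_0]}(\ga)=(k,r)$, the modules $V_{[\ii_0]}(\al)_t$, $V_{[\ii_0]}(\be)_t$, $V_{[\ii_0]}(\ga)_t$ equal $V(\varpi_i)_{(-1)^iq_s^{p}t}$, $V(\varpi_j)_{(-1)^jq_s^{q}t}$ and $V(\varpi_k)_{(-1)^kq_s^{r}t}$ respectively, so that the spectral parameter ratios become $x/z=(-1)^{i+k}q_s^{p-r}$ and $y/z=(-1)^{j+k}q_s^{q-r}$.

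First I would handle the implication from Dorey's rule to minimal pairs. Going through each of the six sub-cases of \eqref{eq: Dorey B}, I would convert the prescribed ratios $(y/z,x/z)$ into a pair $(q-r,p-r)$ and match the result against \eqref{eq: Dorey folded coordinate Bn+1}. For example, in case (i) with $\ell=k\le n$, the Dorey ratios translate to $((q-r)/2,(p-r)/2)=(-i,j)$, which is exactly the first entry of \eqref{eq: Dorey folded coordinate Bn+1}(i); an analogous line-by-line check settles the remaining five sub-cases, including the four sub-cases of \eqref{eq: Dorey folded coordinate Bn+1}(ii) where one of $i,j,k$ is the special index $n+1\in\ov{I}$ (corresponding to a non-induced central vertex) and the $q_s$-exponents take the odd values $\pm(2(n-k)\mp 1)$, $\pm4i\pm4$, etc. Then \eqref{eq: Dorey folded coordinate Bn+1} directly produces a twisted adapted class $[\ii_0]$ and positive roots $\al,\be,\ga \in \PR$ with $\ga=\al+\be$ such that $(\al,\be)$ is $[\ii_0]$-minimal for $\ga$ and realises the requested coordinates; existence of $[\ii_0]$ for every admissible coordinate configuration is guaranteed by the surgery in Theorem \ref{thm: surgery}.

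For the converse, I would combine the folded distance polynomial with Theorem \ref{thm: dist denom}: a minimal pair $(\al,\be)$ of $\ga$ in $\widehat{\Upsilon}_{[\ii_0]}$ certifies, via the polynomial $\widehat{D}_{\ov{k},\ov{l}}(z)$, that the spectral value $q_s^{|p-q|}$ is a root of the denominator $d^{B^{(1)}_{n+1}}_{\ov{k},\ov{l}}(z)$ up to the overall sign $(-1)^{\ov{k}+\ov{l}}$ controlled by the parity relation \eqref{eq: not equiv}. Following the philosophy of \cite[Theorem 5.2]{Oh14A} and the construction used in \cite{KKKO14D}, a denominator root realised by a minimal pair produces a non-zero composition $V_{[\ii_0]}(\be)_t\otimes V_{[\ii_0]}(\al)_t\twoheadrightarrow V_{[\ii_0]}(\ga)_t$ through the normalised $R$-matrix, whose dual is the desired non-zero Hom.

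The hard part will be the sub-cases of \eqref{eq: Dorey folded coordinate Bn+1}(ii), where one or more of the three residues equals $n+1$; these correspond to the non-induced central vertices of $\Upsilon_{[\ii_0]}$ that have no counterpart in any $\Gamma_Q$, so the classical untwisted arguments cannot be transferred verbatim. There one must exploit the twisted additive property in Theorem \ref{thm:twisted additive} together with the explicit formula for $d^{B^{(1)}_{n+1}}_{\ov{k},n+1}(z)$ and $d^{B^{(1)}_{n+1}}_{n+1,n+1}(z)$ in Theorem \ref{thm: denom 1} to produce the needed short-exact sequence directly. A careful sign audit matching $(-1)^{\ov{k}+\ov{l}}$ against $(-1)^{i+k}$ and $(-1)^{j+k}$ is required but is routine once the parity dichotomy induced by $\vee$ on $I_{2n+1}$ is invoked.
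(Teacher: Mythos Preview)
Your first two paragraphs are exactly the paper's argument: the whole theorem reduces to matching the explicit list \eqref{eq: Dorey B} against the explicit list \eqref{eq: Dorey folded coordinate Bn+1} through the dictionary \eqref{eq: Vii0(beta)}, which turns $(y/z,x/z)$ into $((-1)^{j+k}q_s^{q-r},(-1)^{i+k}q_s^{p-r})$. That is all the paper does, in a single sentence.

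Where you diverge is the converse. You propose to go through folded distance polynomials, Theorem~\ref{thm: dist denom}, and a normalised $R$-matrix construction to manufacture the non-zero Hom. This is an unnecessary detour. The Chari--Pressley proposition quoted immediately before the theorem is already an \emph{if and only if}: it says the Hom is non-zero precisely when one of the conditions in \eqref{eq: Dorey B} holds. So once you have verified, line by line, that \eqref{eq: Dorey folded coordinate Bn+1} and \eqref{eq: Dorey B} coincide under \eqref{eq: Vii0(beta)}, the converse is automatic: a minimal pair satisfies one of the cases of \eqref{eq: Dorey folded coordinate Bn+1}, hence one of the cases of \eqref{eq: Dorey B}, hence the Hom is non-zero. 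No denominator analysis, no $R$-matrix, no short exact sequence is required.

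For the same reason your ``hard part'' paragraph dissolves: the sub-cases with residue $n+1$ are not harder than the others, because they are handled by the very same tabular comparison. The genuine work was already done in Lemmas~\ref{lem: minimal for rds2}--\ref{lem: minimal for non-induced central}, which produced \eqref{eq: Dorey folded coordinate Bn+1}; the present theorem is just the observation that the two tables agree.
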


\begin{proof}
By comparing \eqref{eq: Dorey folded coordinate Bn+1} and \eqref{eq: Dorey B},
our assertion is a consequence of \eqref{eq: Vii0(beta)} in Definition \ref{def: [ii0] module category}.
\end{proof}

\begin{corollary} \label{Cor: stable}
The condition {\rm (b)} in {\rm Definition \ref{def: [ii0] module category}} can be restated as follows:
\begin{itemize}
\item[{\rm (b)}$'$] It contains $V_{[\ii_0]}(\alpha_k)$ for all $\alpha_k \in \Pi$.
\end{itemize}
\end{corollary}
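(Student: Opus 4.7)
The plan is to show that the category $\mathscr{C}'_{[\ii_0]}$ defined by condition (b)$'$ coincides with $\mathscr{C}_{[\ii_0]}$ by proving, via strong induction on the height $\mathrm{ht}(\beta) = \sum_i m_i$ (for $\beta = \sum_i m_i \alpha_i \in \Phi^+$), that every fundamental evaluation module $V_{[\ii_0]}(\beta)$ lies in $\mathscr{C}'_{[\ii_0]}$. Once this is established, closure of $\mathscr{C}'_{[\ii_0]}$ under subquotient, tensor product and extension combined with condition (a) of Definition \ref{def: [ii0] module category} immediately forces $\mathscr{C}_{[\ii_0]} \subseteq \mathscr{C}'_{[\ii_0]}$; the reverse inclusion is automatic since $\{V_{[\ii_0]}(\alpha_k)\} \subseteq \{V_{[\ii_0]}(\beta)\}_{\beta \in \Phi^+}$.

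The base case $\mathrm{ht}(\beta) = 1$ means $\beta = \alpha_k \in \Pi$, and $V_{[\ii_0]}(\alpha_k)$ is a generator of $\mathscr{C}'_{[\ii_0]}$ by assumption. For the inductive step with $\beta \in \Phi^+ \setminus \Pi$, the strategy is to select an $[\ii_0]$-minimal pair $(\alpha, \gamma)$ satisfying $\alpha + \gamma = \beta$. The existence of such a pair is guaranteed by the analysis carried out in Section \ref{subsec: minimal twisted}: the explicit enumerations in Lemmas \ref{lem: minimal for rds2}, \ref{lem: minimal for induced rds1} and \ref{lem: minimal for non-induced central} supply at least one $[\ii_0]$-minimal pair for every non-simple positive root of type $A_{2n+1}$. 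Applying Theorem \ref{thm: Dorey} to the triple $(\alpha, \gamma, \beta)$ produces a nonzero homomorphism
\[
V_{[\ii_0]}(\gamma) \otimes V_{[\ii_0]}(\alpha) \longrightarrow V_{[\ii_0]}(\beta),
\]
and since $V_{[\ii_0]}(\beta)$ is a simple finite-dimensional $U_q'(B^{(1)}_{n+1})$-module (being the twist of a fundamental module by a spectral parameter, cf.\ \cite{Kas02}), this map is surjective. Thus $V_{[\ii_0]}(\beta)$ appears as a quotient of the tensor product on the left. Since $\alpha, \gamma \in \Phi^+$ with $\mathrm{ht}(\alpha), \mathrm{ht}(\gamma) < \mathrm{ht}(\beta)$, the inductive hypothesis places both $V_{[\ii_0]}(\alpha)$ and $V_{[\ii_0]}(\gamma)$ in $\mathscr{C}'_{[\ii_0]}$, so the closure of $\mathscr{C}'_{[\ii_0]}$ under tensor product and subquotient yields $V_{[\ii_0]}(\beta) \in \mathscr{C}'_{[\ii_0]}$.

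The main obstacle will be the invocation of Theorem \ref{thm: Dorey}: one must verify that the residue-and-spectral-parameter data attached to the chosen $[\ii_0]$-minimal pair exactly matches one of the configurations in \eqref{eq: Dorey B}. Fortunately, this compatibility is precisely the content of Theorem \ref{thm: Dorey} via the coordinate description \eqref{eq: Dorey folded coordinate Bn+1} established in Section \ref{subsec: minimal twisted}, so no new computation is required here; all the substantive combinatorial verification has already been carried out in the construction of $\widehat{\Upsilon}_{[\ii_0]}$. A minor secondary point is the appeal to simplicity of $V_{[\ii_0]}(\beta)$ to extract surjectivity from non-vanishing of the Hom-space; this is standard for fundamental evaluation modules of $U_q'(B^{(1)}_{n+1})$. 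Therefore the induction closes and Corollary \ref{Cor: stable} follows.
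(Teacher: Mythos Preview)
Your argument is correct and is precisely the intended one: the paper states the corollary immediately after Theorem \ref{thm: Dorey} without proof, and the induction on height using Dorey's rule to realize each $V_{[\ii_0]}(\beta)$ as a quotient of a tensor product of modules attached to a minimal pair is exactly the mechanism the authors have in mind. The only cosmetic point is that your labeling $(\alpha,\gamma)\mapsto\beta$ swaps the roles of $\beta$ and $\gamma$ relative to the paper's convention in Theorem \ref{thm: Dorey}, but this is harmless.
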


\appendix
\section*{Appendix: Foldable cluster $r$-points for exceptional cases}
In this appendix, we show the existence of foldable cluster $r$-points for exceptional cases for $E_6$ associated with $\vee$ in \eqref{eq: F_4} and
$D_4$ associated with $\vee$ in \eqref{eq: G_2}.

\vskip 3mm

For type $E_6$, there exists a $\vee$-foldable cluster $r$-point $\lf \ii_0 \rf$ where $\vee$ is the one in \eqref{eq: F_4}:
$$ \ii_0 =\prod_{k=0}^{8} (1\ 2\ 6\ 3)^{k\vee}.$$

The $r$-cluster point $\lf \ii_0 \rf$ is called the {\it twisted adapted cluster point} of type $E_6$ and a class in
$[\ii_0'] \in \lf \ii_0 \rf$ is called a {\it twisted adapted class} of type $E_6$. Furthermore, there are $32$ distinct
twisted adapted classes in $\lf \ii_0 \rf$ while there are only $24$ distinct twisted Coxeter elements. Here the number $32$
coincides with the number of distinct Dynkin quivers of type $E_6$.

\vskip 3mm

For type $D_4$, there exists a unique $\vee$-foldable cluster $r$-point $\lf \ii_0 \rf$
and a unique $\vee^2$-foldable cluster $r$-point $\lf \jj_0 \rf$ where $\vee$ is the one in \eqref{eq: G_2}:
$$ \ii_0 =\prod_{k=0}^{5} (2\ 1)^{k\vee} \quad \text{and} \quad \jj_0 =\prod_{k=0}^{5} (2\ 1)^{2k\vee}.$$

The $r$-cluster points $\lf \ii_0 \rf$ and $\lf \jj_0 \rf$ are called the {\it triply twisted adapted cluster points} of type $D_4$ and a class in
$[\ii_0'] \in \lf \ii_0 \rf \sqcup \lf \jj_0 \rf$  is called a {\it triply twisted adapted class} of type $D_4$. Each triply twisted adapted class
consists of a unique reduced expression and there are $6$ distinct twisted adapted classes in each triply twisted adapted cluster point.
Recall $12$ is the number of distinct triply twisted Coxeter elements.

\end{document}